\newtheorem{thm}{Theorem}
 \newtheorem{cor}{Corollary}
 \newtheorem{lem}{Lemma}
 \newtheorem{prop}{Proposition}
 \theoremstyle{definition}
 \newtheorem{defn}{Definition}
 \newtheorem{ex}{Example}
\newcommand{\Hom}{\operatorname{Hom}}
\newcommand{\Ind}{\operatorname{Ind}}
\newcommand{\Res}{\operatorname{Res}}
\title[Two-parameter quantum algebras]{Two-parameter quantum algebras, canonical bases and categorifications}
\author{Zhaobing Fan and Yiqiang Li}
\address{Department of Mathematics\\ University at Buffalo, SUNY\\244 Mathematics Building
\\Buffalo, NY 14260}
\email{
zhaobing@buffalo.edu (Z.Fan),
yiqiang@buffalo.edu (Y.Li)
}
\date{\today}
\keywords{Two-parameter quantum algebra, mixed perverse sheaf, weight, canonical basis, categorification.}
\subjclass{17B37, 16G20, 14R20, 14F43}
\begin{document}
\begin{abstract}
A theory of canonical basis for a two-parameter quantum algebra is developed in parallel with the one in one-parameter case.
A geometric construction of the negative part of a two-parameter quantum algebra  is given by using mixed perverse sheaves and Deligne's weight theory based on Lusztig's work \cite{Lusztigbook}.
A categorification of the negative part of a two-parameter quantum algebra is provided.
A two-parameter quantum algebra is shown to be  a two-cocycle deformation, depending only on the second parameter,  of its one-parameter analogue.
\end{abstract}
\maketitle

\section{Introduction}

One of the landmarks in Lie theory is the theory of canonical basis for a one-parameter quantum algebra developed by Lusztig in the ADE case in ~\cite{Lusztigcanonical1},
and subsequently by Kashiwara \cite{Kashiwara91} and Lusztig ~\cite{Lusztigquiver, Lusztigbook} in general cases.
It serves many times as a source of inspirations for the creation of a new direction in Lie theory such as cluster algebras \cite{Fomin2002clus} and the categorification program \cite{CraneFrenkel}.

Among the various approaches to the  theory of one-parameter quantum algebras and  canonical bases, Lusztig's geometric construction of the negative part of a one-parameter quantum algebra by using perverse sheaves on representation varieties of a quiver plays a vital important role.
In his geometric setting, many algebraic objects have a very natural interpretation from which several hidden structures are revealed.
For example, the quantum parameter, the bar involution and  canonical basis elements are  incarnated as  the shift functor, the Verdier duality functor and simple perverse sheaves arising from the geometric setting, respectively.
The positivity of the structural constants of the canonical basis  follows naturally from this geometric setting.
 If one reads Lusztig's work carefully, one  notices that there is an ingredient, the Tate twist or the mixed structure, that Lusztig ignored in his geometric framework (see ~\cite[8.1.4]{Lusztigbook}).  It is desirable to see what Lusztig's geometric framework provides if the Tate twist is added.

In this paper, we construct an algebra from the mixed version of Lusztig's geometric framework by using mixed perverse sheaves on representation varieties of a quiver  and Deligne's theory of weight, and we show that this algebra is isomorphic to the negative part of a two-parameter quantum algebra,
in which the Tate twist corresponds to the second parameter.

From  this geometric construction,  we obtain several new features of a two-parameter quantum algebra.
We are able to get a new presentation of generators and relations for a two-parameter quantum algebra determined by a certain matrix. This matrix serves as the generalized Cartan matrix and its symmetrization  in the  definition of a one-parameter quantum algebra. It is determined by a chosen orientation of a graph in symmetric cases.  This presentation is new even in finite type. For example, the two parameters $v$ and $t$
we used in this paper are different from the one $(\alpha, \beta)$ used in literature in that they are related by $\alpha= vt$ and $\beta=vt^{-1}$.
Furthermore, this presentation covers all Kac-Moody cases, unlike the one in literature which mainly studies finite type and some affine types.
More importantly, it provides a new connection between  a one-parameter quantum algebra and  a two-parameter quantum algebra.
As is shown in this paper,  a two-parameter quantum algebra is a two-cocycle deformation, depending only on the second parameter, of its one-parameter analogue. As a consequence, if the underlying Cartan matrices of two two-parameter quantum algebras are the same, then they must be deformations of each other, and the deformation only depends on the second parameter. Last but not least, from the new presentation, we obtain a categorification of the negative part by utilizing Khovanov-Lauda's work ~\cite{Khovanov2009diag} and ~\cite{Khovanov2010diagrammatic}.

From  the geometric setting, we  also obtain a basis for the negative part of a two-parameter quantum algebra consisting of   simple perverse sheaves $of$ $weight$ $zero$. If one forgets the Tate twist, this basis is exactly the canonical basis in the one-parameter case.
Moreover, the basis is a deformation of the canonical basis in the one-parameter case.
In addition to its compatibility with the canonical basis in the one-parameter case, this basis admits many favorable properties such as integrality and positivity as does its one-parameter analogue. It also gives rise to a basis for each irreducible integrable highest weight module simultaneously.
Moreover, if the underlying Cartan matrices of two two-parameter quantum algebras are the same, the canonical bases coincide under the deformation from one algebra to another (see Corollary ~\ref{cor6}).
We follow Lusztig's approach in one-parameter case to give an algebraic characterization of this basis, and we call it the canonical basis of the negative half of a  two-parameter quantum algebra. The characterization is in complete analogy with the one in one-parameter case.
In particular, up to a sign, it is characterized by three properties:
it is in the integral form of the negative half, it is bar invariant and it is almost orthonormal with respect to a bilinear form.
This characterization is made possible by identifying the  negative part with an analogue of Lusztig's algebra $\mathbf f$, which again comes from the geometric construction.
In particular, both the bar involution and  the bilinear form  have natural interpretations in the geometric framework.
The process to get rid of the sign is completely algebraic and follows closely Lusztig's argument in the one-parameter case.

In short, the mixed version of  Lusztig's geometric framework  is a natural geometric setting to study two-parameter quantum algebras.
It provides  a geometric construction of the negative part of a two-parameter quantum algebra.
Based on the geometric work, we develop a canonical basis theory for the negative part  of a two-parameter quantum algebra in an approach parallel with Lusztig's approach in the one-parameter case. Moreover, we show that the two-parameter quantum algebra is a two-cocycle deformation, depending only on the second parameter, of its one-parameter analogue.
Finally, we give a categorification of the negative part  in the sense of  ~\cite{Khovanov2009diag}.

The intimate relationship revealed in this paper  between a two-parameter quantum algebra and its one-parameter analogue by the  specialization at $t=1$ and the deformation  should play an important role in a forthcoming paper,
 where we shall continue to develop the canonical basis theory for the tensor product of integrable representations of two-parameter quantum algebras and two-parameter analogues of Lusztig's modified quantum algebras $\dot{\mathbf U}$.

A similar relationship between a quantum super algebra and the related two-parameter  quantum algebra   by a specialization at $t=\pm \mathbf{i}$, the imaginary unit, and  a deformation with respect to the second parameter  will be  elaborated in ~\cite{Clark-Fan-Li-Wangtwo}.
Among others, we will provide a new  categorification of a  quantum super algebra different from the one in ~\cite{HillWang}.
This study, combining with the results in this paper and ~\cite{Clark-Fan-Li-Wangtwo}, should lead to interesting relations in the structural and representation theories of a one-parameter quantum algebra, its super  and  two-parameter analogues.

Meanwhile, the results obtained in this paper  strongly suggest that a theory of crystal basis for two-parameter quantum algebras can be developed
in parallel with the one in the one-parameter case by Kashiwara.
We also  hope that our work on two-parameter quantum algebras can  shed some light on the open problem
to develop a canonical basis theory  for multiparameter quantum algebras.

Finally we remark that two-parameter quantum algebras have been studied from the early 1990s   by various authors, see
\cite{D92, Takeuchi, benkart2001rep, Reineke2001,Christian2004, BergeronGaoHu2007, Humultipara, JingZhang2011,hu2012notes, KC09} and the references therein.
They also appear in I.B  Frenkel's philosophical observations on the interactions of affinizations and
quantizations of a Lie algebra (\cite{Igorwork}).
 A new exciting development  is Hill and Wang's categorification of the  covering quantum algebra $\mathbf f^{\pi}$  which has two parameters with the second parameter $\pi$ subject to  $\pi^2=1$ in ~\cite{HillWang}. These work also inspire us during the formation of this paper.  In Section ~\ref{Comparison}, we compare the two-parameter quantum Serre relations with those available in the literature.

This paper is organized as follows.
In Section 2, we review the geometric background, perverse sheaves and weight theory.
We  construct the algebra $\mathfrak{K}$ which is a geometric realization of $\mathfrak{f}$ for symmetric cases.
In Section 3, we algebraically construct the algebra $\mathfrak{f}$, a two-parameter analogue of Lusztig's algebra $\mathbf{f}$,
and compare it with various algebras in literatures. Those who are not interested in geometry can read this section directly.
Section 4 provides two relations between the algebra $\mathfrak{f}$ and Lusztig's algebra $\mathbf{f}$ by specialization and deformation.
These relations are generalized to the entire algebras.
In Section 5, we present the algebraic characterization of the canonical basis of $\mathfrak{f}$,
as well as that of the irreducible highest weight  $U_{v,t}$-module $L(\lambda, \epsilon)$.
Meanwhile, we show that the canonical basis of $\mathfrak{f}$ gets identified
with the set of simple perverse sheaves of weight zero appeared in the geometric construction.
In Section 6, we give an algebraic  categorification of $\mathfrak{f}$ which covers all symmetrizable cases.

\subsection*{Acknowledgements}
We learned from Weiqiang Wang that two-parameter quantum algebras should be able to be realized geometrically. We thank Weiqiang for sharing with us his great idea and numerous discussions and comments on this project.
We thank Zongzhu Lin for his valuable comments.
We thank Jonathan Kujawa for bringing  the paper \cite{Frenkel2011langlands} to our attention.
Y. Li is partially supported by the NSF grant:  DMS 1160351.

\setcounter{tocdepth}{1}
\tableofcontents

\section{The algebra $\mathfrak{K}$}\label{sec3}

\subsection{Review of mixed perverse sheaves}\label{sec3.1}

We review briefly the theory of mixed perverse sheaves. We
refer to Chapter 8 in \cite{Lusztigbook} and
\cite{BBD,FK} for more details.

Let $k$ be an  algebraic closure of  a finite field of $q$ elements.
All algebraic varieties considered in this paper are over $k$. Let $l$ be a fixed
prime number which is invertible in $k$, and
$\overline{\mathbb{Q}}_l$ be an algebraic closure of the field
$\mathbb{Q}_l$ of $l$-adic numbers. Denote by
$\mathcal{D}(X)$ 
the bounded derived category of
$\overline{\mathbb{Q}}_l$-constructible sheaves on the algebraic variety $X$. Let
$\mathcal{M}(X)$ be the full subcategory of $\mathcal{D}(X)$ consisting of perverse sheaves on $X$.
 We denote by ${\bf 1}_X$ the constant sheaf $\overline{\mathbb{Q}}_l$ on $X$, and simply by ${\bf 1}$ if $X$ is obvious from the context.

Given any integer $n$, let $[n]: \mathcal{D}(X) \rightarrow
\mathcal{D}(X)$ be the shift functor and $(n): \mathcal{D}(X) \rightarrow
\mathcal{D}(X)$ be the Tate twist functor.
Let ${}^p\!H^n: \mathcal{D}(X) \rightarrow \mathcal{M}(X)$ be the perverse cohomology functor,
and $\mathbb{D}: \mathcal{D}(X) \rightarrow \mathcal{D}(X)$ be the Verdier dual functor.
Let $f: X \rightarrow Y$ be a morphism of algebraic varieties. There are functors
 $f^*, f^!: \mathcal{D}(Y) \rightarrow \mathcal{D}(X)$ and $f_*, f_!:
\mathcal{D}(X) \rightarrow \mathcal{D}(Y)$.
Moreover, if $f: X \rightarrow Y$ is a locally trivial
principal $G$-bundle, then there is a well-defined functor $f_{\flat}: \mathcal{M}_G(X)[n] \rightarrow \mathcal{M}(Y)[n+d]$ defined by $f_{\flat}(K)={}^p\!H^{-n-d}(f_*K)[n+d]$, where $\mathcal{M}_G(X)$ is the full subcategory of $\mathcal{M}(X)$ consisting of all $G$-equivariant perverse sheaves and $d=\dim G$.

 Let $\mathcal{D}_m(X)$ be the full subcategory of $\mathcal{D}(X)$ consisting of all mixed complexes, and $\mathcal{D}_{\leq w}(X)$ (resp. $\mathcal{D}_{\geq w}(X)$)  be the full subcategory of $\mathcal{D}_m(X)$ consisting of all complexes whose $i$-th cohomology has weight $\leq w+i$ (resp. $\geq w+i$). We simply denote by $\mathcal{D}_{\leq w}$ (resp. $\mathcal{D}_{\geq w}$ ) instead of $\mathcal{D}_{\leq w}(X)$ (resp. $\mathcal{D}_{\geq w}(X)$) if $X$ is obvious from the context.

A complex $K \in \mathcal{D}_m(X)$ is called pure of weight $w$ if $K \in \mathcal{D}_{\leq w}(X)\bigcap \mathcal{D}_{\geq w}(X)$. Denote by ${\rm wt}(K)$ the weight of a pure complex $K$.

The functors $f^*, f_*, f^!, f_!, [j], \otimes$ and Tate twist $(n)$ send mixed complexes to  mixed complexes.
We list some more properties as follows.

\begin{itemize}
\item[(a)] Simple perverse sheaves are pure.

\item[(b)] If $X$ is smooth, then $\mathbf 1_X$ is pure of weight 0.

\item[(c)] If $K$ is a pure complex, then ${\rm wt}(K[1])={\rm wt}(K)+1, {\rm wt}(K(1))={\rm wt}(K)-2$.

\item[(d)] $\mathbb{D}(K[j])=\mathbb{D}(K)[-j]$, \ $\mathbb{D}(K(n))=\mathbb{D}(K)(-n)$, \  ${}^p\!H^n(K)={}^p\!H^0(K[n])$.

  \item[(e)] $\mathbb{D}(\mathcal{D}_{\leq w}) \subset \mathcal{D}_{\geq -w}$ and $\mathbb{D}(\mathcal{D}_{\geq w}) \subset \mathcal{D}_{\leq -w}$. In particular, the Verdier dual sends pure complexes of weight $w$ to pure complexes of weight $-w$.


  \item[(f)] The external tensor product functor $\boxtimes$ sends $\mathcal{D}_{\leq w_1} \times \mathcal{D}_{\leq w_2}$ (resp. $\mathcal{D}_{\geq w_1} \times \mathcal{D}_{\geq w_2}$) to $\mathcal{D}_{\leq w_1+w_2}$ (resp. $\mathcal{D}_{\geq w_1+w_2}$). In particular, if $K,L$ are pure complexes, then ${\rm wt}(K\boxtimes L)={\rm wt}(K)+{\rm wt}(L).$

  \item[(g)] If $f: X\rightarrow Y$ is a morphism of varieties, then $f^*$ and $f_!$ preserve $\mathcal{D}_{\leq w}$  and $f_*$ and  $f^!$ preserve $\mathcal{D}_{\geq w}$. In particular, if $f$ is a proper map, then
      $f_!$ sends pure complexes of weight $w$ to pure complexes of weight $w$.

  \item[(h)] If $f: X \rightarrow Y$ is smooth with connected fibers of fiber dimension
$d$, then
$f^*[d]=f^![-d](-d)$
and $\mathbb{D}f^*(L)=f^!(\mathbb{D}L)$. Moreover, ${\rm wt}(f^*K)={\rm wt}(K)$ for any pure complex $K$.
\end{itemize}

\subsection{The matrix $\Omega$}\label{sec2.1}

Let $I$ be a finite set. Throughout this paper, we fix a matrix $\Omega=(\Omega_{ij})_{i,j\in I}$ satisfying that
\begin{itemize}
  \item[(a)] $\Omega_{ii} \in \mathbb{Z}_{>0}$, $\Omega_{ij}\in \mathbb{Z}_{\leq 0}$ for all $i\neq j \in I$;
  \item[(b)] $\frac{\Omega_{ij}+\Omega_{ji}}{\Omega_{ii}}\in \mathbb{Z}_{\leq 0}$ for all $i\neq j \in I$;
  \item[(c)] the greatest common divisor of all $\Omega_{ii}$ is equal to 1.
\end{itemize}
To $\Omega$, we associate the following three bilinear forms on $\mathbb{Z}^I$.
\begin{eqnarray}
\langle i, j\rangle &=&
\Omega_{ij}, \quad \hspace{45pt}\forall i, j\in I. \label{eq47}\\
\begin{bmatrix} i,j \end{bmatrix}&=& 2\delta_{ij} \Omega_{ii} -\Omega_{ij}, \quad \forall i, j\in I. \label{eq48}\\
i\cdot j&=&\langle i, j\rangle +\langle j,i\rangle,  \quad \forall i, j\in I. \label{eq49}
\end{eqnarray}
Note that the form $``\cdot"$ satisfies the following properties:
$$i \cdot i \in 2\mathbb{Z}_{>0}\ {\rm for\ any}\ i \in I\ {\rm and}\ 2\frac{i \cdot j}{i\cdot i} \in \mathbb{Z}_{\leq 0}\ {\rm for\ any}\ i\neq j\ {\rm in}\ I.$$
It is a Cartan datum in Section 1.1.1 in \cite{Lusztigbook}.

The matrix $\Omega$ is called of  symmetric type if $\Omega_{ii}=1,\ \forall i\in I$.
In this case,  the associated  Cartan datum is of symmetric type.

{\it For simplicity, we assume that $\Omega$ is of symmetric type in the rest of this section.}
To such a matrix, we associate a quiver whose vertex set is $I$, and whose arrow set consisting of $-\Omega_{ij}$ many arrows from vertex $i$ to vertex $j$ if $i\not =j$.  By an abuse of notation, we  denote by $\Omega$ the associated quiver.  Since the matrix $\Omega$ is fixed, the quiver is thus fixed.

Note that the assignment of sending a matrix  to its associated quiver defines a bijection between the set of such matrices and the set of quivers, up to isomorphisms.

\subsection{The category $\mathfrak{Q}^m_V$}\label{sec3.3}

Let $V=\bigoplus_{i \in I}V_i$ be  an $I$-graded $k$-vector space and $\underline{\dim}V=(\dim V_i)_{i \in I}\in \mathbb{N}^I$.
We define
\begin{equation}\label{eq1}
  E_V=\bigoplus_{h \in \Omega} \Hom(V_{h'}, V_{h''}), \quad  G_V=\bigoplus_{i\in I} GL(V_i),
\end{equation}
where $h'$ and $h''$ are  the source and target of the  arrow $h$ in $\Omega$, respectively.
$G_V$ acts on $E_V$ by
conjugation, i.e., $gx=x'$ and $x'_h=g_{h''}x_hg^{-1}_{h'}$ for all
$h \in \Omega$.

A subset $I'$ in $I$ is said to be {\it discrete} if there is no arrow $h \in \Omega$ such that $\{h', h''\} \subset I'$. We set ${\rm supp}(\nu)=\{i \in I\ |\ \nu_i \neq 0\}$, for any $\nu \in \mathbb{N}^I$. We call $\nu \in \mathbb{N}^I$ discrete if ${\rm supp}(\nu)$ is discrete.

Let $\underline{\nu}=(\nu^1, \nu^2,\cdots, \nu^m)$ be a sequence in $\mathbb{N}^I$ such that $\sum_{1\leq l\leq m}\nu^l_i=\dim V_i$ and  $\nu^l$ is discrete for all $l=1,\cdots,m$.
A {\it flag of type}
$\underline{\nu}$ in  $V$ is a
sequence
$$f=(V=V^0\supset V^1 \supset \cdots \supset V^m=0)$$
of $I$-graded vector spaces such that $\underline{\dim}V^{l-1}/V^l=\nu^l,\ \forall 1\leq l\leq m$.
Let $\mathcal{F}_{\underline{\nu}}$ be the
$k$-variety of all flags of type $\underline{\nu}$ in $V$.
Let $\widetilde{\mathcal{F}}_{\underline{\nu}}=\{(x,f) \in E_V \times
\mathcal{F}_{\underline{\nu}} \mid$ $f$ is
$x$-stable$\}$, where $f$ is $x$-stable if $x_h(V^l_{h'}) \subset
V^l_{h''}$, for all $h \in \Omega$ and $1\leq l\leq m$.

Let $G_V$ act on $\mathcal{F}_{\underline{\nu}}$ by $g\cdot f \mapsto gf$, where
$gf=(gV^0\supset gV^1 \supset \cdots \supset gV^m=0),$ if $f=(V=V^0\supset V^1 \supset \cdots \supset V^m=0)$.
Let $G_V$ act diagonally on $\widetilde{\mathcal{F}}_{\underline{\nu}}$, i.e., $g\cdot (x,f) \mapsto
(gx,gf)$.
By Proposition 9.1.3 in \cite{Lusztigbook}, 
 we have that  $\widetilde{\mathcal{F}}_{\underline{\nu}}$ is a
smooth irreducible variety of dimension
\begin{equation}\label{eq102}
d(\underline{\nu})=\sum_{i,l<l'}\nu^{l'}_i \nu^l_i +\sum_{h, l'<l} \nu^{l'}_{h'}\nu^l_{h''}.
\end{equation}
Moreover, the first projection map
$\pi_{\underline{\nu}}: \widetilde{\mathcal{F}}_{\underline{\nu}} \rightarrow E_V$ is a proper $G_V$-equivariant morphism.
As a consequence, the complex $(\pi_{\underline{\nu}})_!{\bf 1}_{\widetilde{\mathcal{F}}_{\underline{\nu}}}$ is a
semisimple complex.

\begin{lem}\label{lem1}
The complex $\widetilde{L}_{\underline{\nu}}=
(\pi_{\underline{\nu}})_!{\bf 1}_{\widetilde{\mathcal{F}}_{\underline{\nu}}}$ is pure of weight 0.
\end{lem}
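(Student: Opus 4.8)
The plan is to deduce the statement directly from the purity properties recorded in Section~\ref{sec3.1}, with essentially no extra work. First I would recall, from Proposition 9.1.3 in \cite{Lusztigbook} quoted just above, that $\widetilde{\mathcal{F}}_{\underline{\nu}}$ is a smooth (and irreducible) variety. Hence, by property (b), the constant sheaf $\mathbf{1}_{\widetilde{\mathcal{F}}_{\underline{\nu}}}$ is a mixed complex which is pure of weight $0$.

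Next I would use that the first projection $\pi_{\underline{\nu}}\colon \widetilde{\mathcal{F}}_{\underline{\nu}} \to E_V$ is proper (also recorded above). Property (g) then applies: for a proper morphism $f$, the functor $f_!$ sends pure complexes of weight $w$ to pure complexes of weight $w$. Taking $w=0$ and applying $(\pi_{\underline{\nu}})_!$ to $\mathbf{1}_{\widetilde{\mathcal{F}}_{\underline{\nu}}}$ yields that $\widetilde{L}_{\underline{\nu}} = (\pi_{\underline{\nu}})_!\mathbf{1}_{\widetilde{\mathcal{F}}_{\underline{\nu}}}$ is pure of weight $0$, as claimed.

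There is no real obstacle; the argument is a two-step invocation of the listed properties. The only points to keep in mind are that the notion of purity is defined for all mixed complexes in $\mathcal{D}_m(X)$, not merely for perverse sheaves, so that it legitimately applies to the unshifted complex $\mathbf{1}_{\widetilde{\mathcal{F}}_{\underline{\nu}}}$, and that on a smooth variety the constant sheaf is indeed mixed, which is part of the content of property (b). One could alternatively observe that $(\pi_{\underline{\nu}})_!\mathbf{1}$ is already a semisimple complex and invoke the mixed decomposition theorem of \cite{BBD} to see that each of its shifted simple summands is pure, but this is a heavier tool than necessary; property (g) alone suffices.
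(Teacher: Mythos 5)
Your argument is correct and is exactly the paper's proof: the authors likewise deduce the lemma from property (b) (purity of the constant sheaf on the smooth variety $\widetilde{\mathcal{F}}_{\underline{\nu}}$) together with property (g) applied to the proper map $\pi_{\underline{\nu}}$. The extra remarks about the alternative via the decomposition theorem are fine but, as you note, unnecessary.
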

\begin{proof}
  The lemma follows from (b) and Section \ref{sec3.1}(g).
\end{proof}
We set
\begin{equation}\label{eq3}
  \mathfrak{L}_{\underline{\nu}}=\widetilde{L}_{\underline{\nu}}[d(\underline{\nu})](d(\underline{\nu})).
\end{equation}

Let $\mathfrak{Q}^m_V$ be the full subcategory of $\mathcal{D}_m(E_V)$
whose objects are isomorphic to finite direct sums of $L[d](n)$ for various $d \in
\mathbb{Z}$, $n\in \frac{1}{2}\mathbb{Z}$ and
various simple perverse sheaves $L $ satisfying the following property:
$L$ is a direct summand of $\widetilde{L}_{\underline{\nu}}$ up to a shift and a Tate twist for
 some $\underline{\nu} \in \mathbb{N}^I$ such that $\sum_{1\leq l\leq m}\nu^l_i=\dim V_i$.

 We set $\mathfrak{Q}^{\leq w}_V=\mathfrak{Q}^m_V \bigcap \mathcal{D}_{\leq w}(E_V)$. This is the full subcategory of $\mathfrak{Q}^m_V$ consisting of mixed complexes whose $i$-th cohomology sheaf has weight $\leq w+i$ for all $i\in \mathbb{Z}$. Similarly, let $\mathfrak{Q}^{\geq w}_V=\mathfrak{Q}^m_V \bigcap \mathcal{D}_{\geq w}(E_V)$. We notice that $\mathfrak{Q}^{\leq 0}_V\bigcap \mathfrak{Q}^{\geq 0}_V$ is the same as $\mathcal{Q}_V$ defined in \cite{Lusztigbook}.

 \subsection{Additive generators}

Let $\mathfrak{K}_V$ be the split
Grothendieck group of the category $\mathfrak{Q}^m_V$. More precisely,
 $\mathfrak{K}_V$ is the abelian group generated by the isomorphism classes of objects in $\mathfrak{Q}^m_V$ which subjects to the following relation:
\begin{equation}\label{eq90}
  [L\oplus L']=[L]+[L'], \quad \forall L,L'\in \mathfrak{Q}^m_V.
\end{equation}
Let $\mathfrak{M}_V$ be the split Grothendieck group of the full subcategory of $\mathfrak{Q}^m_V$ which
consists of all direct sums of $\mathfrak{L}_{\underline{\nu}}$
for various $\underline{\nu}$ up to shifts and Tate twists.
 Similarly, let $\mathfrak{K}^{\leq w}_V$ (resp. $\mathfrak{K}^{\geq w}_V$) be the split
Grothendieck group of the category $\mathfrak{Q}^{\leq w}_V$ (resp. $\mathfrak{Q}^{\geq w}_V$).

By an abuse of notation, we write $L$ instead of $[L]$ for elements in the Grothendieck group. Let $v$ and $t$ be two independent indeterminates and $\mathfrak{A}=\mathbb{Z}[v^{\pm 1}, t^{\pm 1}]$ be the subring of Laurent polynomials in  $\mathbb{Q}(v,t)$. We define an $\mathfrak{A}$-action on $\mathfrak{K}_V$ by
\begin{eqnarray}\label{eq10}
  v \cdot  L=L[1](\frac{1}{2}),\quad
  t \cdot L =L(\frac{1}{2}).
\end{eqnarray}
 Then $\mathfrak{K}_V$ is
an $\mathfrak{A}$-module generated by the simple perverse sheaves of weight 0 in $\mathfrak{Q}^m_V$. Moreover, $\mathfrak{M}_V$ is
an  $\mathfrak{A}$-submodule of $\mathfrak{K}_V$ generated by
$\mathfrak{L}_{\underline{\nu}}$.

\begin{thm}\label{thm1}
$\mathfrak{M}_V = \mathfrak{K}_V$ as $\mathfrak{A}$-modules,
 i.e., the set of $\mathfrak{L}_{\underline{\nu}}$ for various $\underline{\nu}$
  contains an $\mathfrak{A}$-basis of $\mathfrak{K}_V$.
\end{thm}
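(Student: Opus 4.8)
The plan is to adapt Lusztig's argument (the analogue of Lemma 9.2.4 and Proposition 9.2.5 in \cite{Lusztigbook}) to the mixed setting, keeping careful track of Tate twists. Since $\mathfrak{K}_V$ is, by construction, generated as an $\mathfrak{A}$-module by the simple perverse sheaves of weight $0$ appearing (up to shift and Tate twist) as summands of the various $\widetilde{L}_{\underline{\nu}}$, it suffices to show that each such simple perverse sheaf of weight $0$ lies in $\mathfrak{M}_V$. Equivalently, for every $\underline{\nu}$ one must express $\mathfrak{L}_{\underline{\nu}}=\widetilde{L}_{\underline{\nu}}[d(\underline{\nu})](d(\underline{\nu}))$ in $\mathfrak{K}_V$ as an $\mathfrak{A}$-linear combination of simple perverse sheaves of weight $0$ with a unimodular transition, so that the change of basis between $\{\mathfrak{L}_{\underline{\nu}}\}$ and the simple perverse sheaves of weight $0$ is invertible over $\mathfrak{A}$.

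First I would recall, following Lusztig, the decomposition theorem: since $\pi_{\underline{\nu}}$ is proper and $\widetilde{\mathcal{F}}_{\underline{\nu}}$ smooth, $\widetilde{L}_{\underline{\nu}}$ is a finite direct sum of simple perverse sheaves with shifts; by Lemma \ref{lem1} it is pure of weight $0$, hence every simple summand $L[j]$ occurring satisfies $\mathrm{wt}(L[j])=0$, i.e. $\mathrm{wt}(L)=-j$, so $L(-j/2)$ is a simple perverse sheaf of weight $0$ and $L[j]=L(-j/2)\cdot(vt^{-1})^{?}$—more precisely, using \eqref{eq10}, $L[j](n)$ equals a monomial $v^{a}t^{b}$ times a simple perverse sheaf of weight $0$. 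Thus in $\mathfrak{K}_V$ we get
\begin{equation}\label{eq-decomp}
\mathfrak{L}_{\underline{\nu}}=\sum_{B} m_{\underline{\nu},B}(v,t)\, B,
\end{equation}
where $B$ ranges over simple perverse sheaves of weight $0$ in $\mathfrak{Q}^m_V$ and $m_{\underline{\nu},B}\in\mathbb{Z}_{\ge 0}[v,v^{-1}]$ (the Tate twist degrees are all forced, so only powers of $v$ appear, exactly as the shifts do in the one-parameter case). The key point is then the same triangularity argument as in \cite{Lusztigbook}: order the relevant strata / the set of $\underline{\nu}$ so that the "diagonal" simple summand $B_{\underline{\nu}}$ of $\widetilde{L}_{\underline{\nu}}$ (the one supported on the open orbit of the relevant type, with coefficient $1$) occurs with multiplicity $m_{\underline{\nu},B_{\underline{\nu}}}=1$, and every other $B$ occurring in $\mathfrak{L}_{\underline{\nu}}$ is $B_{\underline{\mu}}$ for some $\underline{\mu}$ strictly smaller in the chosen order. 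This makes the matrix $(m_{\underline{\nu},B_{\underline{\mu}}})$ unitriangular over $\mathbb{Z}[v,v^{-1}]\subset\mathfrak{A}$, hence invertible over $\mathfrak{A}$, so the $\mathfrak{L}_{\underline{\nu}}$ and the $B_{\underline{\mu}}$ span the same $\mathfrak{A}$-submodule; since the $B$'s span $\mathfrak{K}_V$, this gives $\mathfrak{M}_V=\mathfrak{K}_V$ and, extracting a suitable subset of the $\underline{\nu}$, an $\mathfrak{A}$-basis.

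The main obstacle is the bookkeeping of weights: one must verify that purity of weight $0$ of $\widetilde{L}_{\underline{\nu}}$ genuinely forces every Tate-twist ambiguity in \eqref{eq-decomp} to be pinned down, so that the transition coefficients land in $\mathbb{Z}[v,v^{-1}]$ rather than in the larger ring $\mathfrak{A}$ with independent powers of $t$; this is precisely where the mixed structure differs cosmetically from Lusztig's setting but must be checked using Section \ref{sec3.1}(a),(c),(e). The second, more routine, obstacle is importing Lusztig's combinatorial ordering on the $\underline{\nu}$ (via the associated flag types and the dimension estimate \eqref{eq102}) to guarantee unitriangularity; this is a direct transcription of \cite[\S 9.2]{Lusztigbook} and requires no new idea beyond the weight tracking already described.
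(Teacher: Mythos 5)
Your proposal is correct and follows essentially the same route as the paper: the paper likewise reduces to the pure weight-zero subcategory, invokes Lusztig's unitriangularity result (Proposition 12.6.2 in \cite{Lusztigbook}) to see that the $\widetilde{L}_{\underline{\nu}}$ additively generate it, and then observes that every object of $\mathfrak{Q}^m_V$ is an $\mathfrak{A}$-monomial multiple of weight-zero objects. Your explicit check that purity forces the transition coefficients to be monomials in $v$ alone is a correct refinement of the same bookkeeping the paper handles by writing $K=\sum_s v^{n_s}t^{m_s}B_s$.
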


\begin{proof} Recall that $\mathfrak{Q}^{\leq 0}_V\bigcap \mathfrak{Q}^{\geq 0}_V$ is the full subcategory of $\mathfrak{Q}^m_V$ consisting of pure complexes of weight 0 and ${\rm wt}(\widetilde{L}_{\underline{\nu}})=0$ for any $\underline{\nu}$.
By Proposition 12.6.2 in \cite{Lusztigbook},  $\widetilde{L}_{\underline{\nu}}$
are additive generators of $\mathfrak{K}^{\leq 0}_V \bigcap \mathfrak{K}^{\geq 0}_V$.
Furthermore, by the definition of $\mathfrak{Q}^m_V$, for any element $K \in \mathfrak{K}_V$, there exist $B_s \in \mathfrak{K}^{\leq 0}_V \bigcap \mathfrak{K}^{\geq 0}_V$ such that $K=\sum_s v^{n_s}t^{m_s}B_s$ for some $n_s, m_s$.
This implies that $\widetilde{L}_{\underline{\nu}}$
 are additive generators of $\mathfrak{K}_V$. Moreover, $\mathfrak{L}_{\underline{\nu}}$
 are also additive generators of $\mathfrak{K}_V$. This proves the theorem.
\end{proof}

\subsection{Induction functor}\label{ind}
Let $W$ be an $I$-graded subspace of $V$ and $T=V/W$. Let $F=\{x\in E_V\ |\ x(W)\subset W\}$. For any $x\in F$, let $x_W$ be the restriction of $x$ to $W$ and $x_T: V/W \rightarrow V/W$ be the induced map of $x$ by passage to the quotient. Let $P$
be the stabilizer of $W$ in $G_V$ and $U$ be its unipotent radical.
 Consider Lusztig's diagram
 \begin{equation}\label{eq4}
\xymatrix{ E_T\times E_W & G_V \times^U F \ar[l]_-{p_1}\ar[r]^-{p_2}&
G_V \times^P F \ar[r]^-{p_3} &E_V,}
 \end{equation}
where $p_1(g,x)=(x_T, x_W), 
p_2(g,x)=(g,x),  p_3(g,x)=g(\iota(x))$ and $\iota: F \rightarrow E_V$ is the embedding map.
Let $G_T \times G_W$ act on $E_T \times E_W$ component-wise.
We define
$$\widetilde{\Ind}^V_{T,W}K=p_{3!}p_{2\flat}p^*_1K, \quad \forall K \in
\mathcal{D}_{G_T \times G_W}(E_T \times E_W).$$

\begin{prop}\label{prop2} If $K\in \mathfrak{Q}_T$ and $L \in \mathfrak{Q}_W$, then $\widetilde{\Ind}^V_{T,W}(K\boxtimes L) \in \mathfrak{Q}_V$. Moreover, if both $K$ and $L$ are pure, so is $\widetilde{\Ind}^V_{T,W}(K\boxtimes L)$, and its weight is equal to the sum of the weights of $K$ and $L$.
\end{prop}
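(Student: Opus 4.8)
The plan is to analyze the composite functor $\widetilde{\Ind}^V_{T,W} = p_{3!}\,p_{2\flat}\,p_1^*$ one map at a time, tracking both membership in the relevant $\mathfrak{Q}$-category and the weight. First, observe that $p_1 \colon G_V \times^U F \to E_T \times E_W$ is smooth with connected fibers (it is the composition of the projection $G_V \times^U F \to E_T \times E_W \times (G_V/U)$ followed by a vector-bundle-type projection, or one argues as in \cite[9.2]{Lusztigbook}), so by \ref{sec3.1}(h) the functor $p_1^*$ (up to the appropriate shift and twist built into $\widetilde{\Ind}$) preserves purity and does not change the weight: $\mathrm{wt}(p_1^* M) = \mathrm{wt}(M)$ for $M$ pure. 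Applied to $M = K \boxtimes L$ with $K, L$ pure, property \ref{sec3.1}(f) gives that $K \boxtimes L$ is pure of weight $\mathrm{wt}(K) + \mathrm{wt}(L)$, hence so is $p_1^*(K\boxtimes L)$.

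Second, I would handle $p_{2\flat}$. Here $p_2 \colon G_V \times^U F \to G_V \times^P F$ is a locally trivial principal $P/U$-bundle (a principal bundle with group the reductive part of $P$), so $p_{2\flat}$ is the functor of Section \ref{sec3.1} associated to such a bundle. Since $p_2$ is smooth with connected fibers, $p_2^*$ shifted/twisted is again weight-preserving, and $p_{2\flat}$ is essentially the (shifted) inverse on equivariant perverse sheaves that descend; concretely, for a pure complex that is pulled back from the base, $p_{2\flat}$ recovers a pure complex of the same weight. The cleanest way is to note that $p_{2\flat} p_2^*$ is (up to shift and twist) the identity, and since $p_1^*(K \boxtimes L)$ is $U$-equivariant and descends along $p_2$ by the standard argument (it is pulled back from $G_V \times^P F$), applying $p_{2\flat}$ returns a pure complex whose weight is unchanged by \ref{sec3.1}(h) and (c). One must check the bookkeeping of the shifts/Tate twists in the definition of $f_\flat$ so that the total normalization is weight-neutral; this is routine but must be done carefully.

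Third, $p_3 \colon G_V \times^P F \to E_V$ is a proper morphism (it factors through $G_V/P$, which is projective, exactly as in Lusztig's setup). By \ref{sec3.1}(g), $p_{3!}$ sends pure complexes of weight $w$ to pure complexes of weight $w$. Combining the three steps, $\widetilde{\Ind}^V_{T,W}(K \boxtimes L)$ is pure with $\mathrm{wt} = \mathrm{wt}(K) + \mathrm{wt}(L)$. For the membership claim $\widetilde{\Ind}^V_{T,W}(K \boxtimes L) \in \mathfrak{Q}_V$, I would reduce to the generators: it suffices to treat $K = \mathfrak{L}_{\underline{\mu}}$ on $E_T$ and $L = \mathfrak{L}_{\underline{\lambda}}$ on $E_W$, since every object of $\mathfrak{Q}_T$ (resp. $\mathfrak{Q}_W$) is a direct summand, up to shift and twist, of such a complex, and all the functors involved are additive and commute with shifts and Tate twists. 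For the generators one shows, exactly as in \cite[Lemma 9.2.x]{Lusztigbook}, that $\widetilde{\Ind}^V_{T,W}(\mathfrak{L}_{\underline{\mu}} \boxtimes \mathfrak{L}_{\underline{\lambda}})$ is isomorphic (up to shift and twist) to $\widetilde{L}_{\underline{\nu}}$ where $\underline{\nu}$ is the concatenation $(\mu^1,\dots,\mu^m,\lambda^1,\dots,\lambda^m)$ — this is a geometric identification of the relevant flag varieties. Hence the induction of a generator is a generator (up to shift and twist), so by the decomposition theorem its summands are of the required form, proving $\widetilde{\Ind}^V_{T,W}(K\boxtimes L) \in \mathfrak{Q}_V$.

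The main obstacle I anticipate is not any single functorial fact — each of (a)–(h) plugs in straightforwardly — but rather the precise shift-and-twist bookkeeping: one has to verify that the specific normalization in the definition $f_\flat(K) = {}^p\!H^{-n-d}(f_* K)[n+d]$, together with the conventions implicit in $\widetilde{\Ind}$, produces a functor that is exactly weight-preserving rather than off by a Tate twist. Getting this constant right is what makes the weight-additivity assertion true on the nose, and it is the one place where the mixed (two-parameter) setting genuinely differs from Lusztig's unmixed one, where such twists are invisible. I would resolve it by computing the weight on the single complex $\mathfrak{L}_{\underline{\nu}} = \widetilde{L}_{\underline{\nu}}[d(\underline{\nu})](d(\underline{\nu}))$, whose weight is $0$ by Lemma \ref{lem1} and \ref{sec3.1}(c), and checking the identity $\widetilde{\Ind}^V_{T,W}(\mathfrak{L}_{\underline{\mu}} \boxtimes \mathfrak{L}_{\underline{\lambda}}) \cong \mathfrak{L}_{\underline{\nu}}$ up to a shift and twist whose exponents are forced to be zero by comparing dimensions via \eqref{eq102}.
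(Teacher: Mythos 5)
Your proof is correct and follows essentially the same route as the paper's: weight-additivity of $\boxtimes$ via property (f), weight-preservation of the smooth pullbacks $p_1^*$ and $p_2^*$ (hence of $p_{2\flat}$ on descended complexes) via (h), weight-preservation of $p_{3!}$ for the proper map $p_3$ via (g), and the membership claim reduced to Lusztig's Lemma 9.2.3. One small slip in your closing paragraph: $\widetilde{\Ind}$ carries no built-in shift or twist, and in fact $\widetilde{\Ind}^V_{T,W}(\mathfrak{L}_{\underline{\mu}}\boxtimes\mathfrak{L}_{\underline{\lambda}})\cong\mathfrak{L}_{\underline{\mu}\underline{\lambda}}[-(d_1-d_2)](-(d_1-d_2))$ rather than $\mathfrak{L}_{\underline{\mu}\underline{\lambda}}$ on the nose, so the exponents are not forced to be zero; the weight is nonetheless additive because $d(\underline{\mu}\,\underline{\lambda})=d(\underline{\mu})+d(\underline{\lambda})+d_1-d_2$, and this does not affect your main argument.
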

\begin{proof} The proof of the first part of the proposition is similar to that of Lemma 9.2.3 in \cite{Lusztigbook}. We only need to show the second part of the proposition.
By Section \ref{sec3.1}(f) and (h), we have
$${\rm wt}(p_{2\flat}p_1^*(K \boxtimes L))={\rm wt}(K)+{\rm wt}(L).$$
The proposition follows from the fact that $p_3$ is a proper map and Section \ref{sec3.1}(g).
\end{proof}
We set
\begin{equation}\label{eq5}
\mathfrak{Ind}^V_{T,W}(K\boxtimes L)=\widetilde{\Ind}^V_{T,W}(K\boxtimes L)[d_1-d_2](d_1-d_2),
\end{equation}
where $d_1$ (resp. $d_2$) is the fiber dimension of $p_1$ (resp. $p_2$) in Diagram (\ref{eq4}).
\begin{prop}\label{prop3}
\begin{itemize}
\item[(a)] If both $K$ and $L$ are pure, then
$${\rm wt}(\mathfrak{Ind}^V_{T,W}(K\boxtimes L))={\rm wt}(K)+{\rm wt}(L)-(d_1-d_2).$$
\item[(b)] $\mathfrak{Ind}^V_{T,W}(\mathfrak{L}_{\underline{\nu}'} \boxtimes \mathfrak{L}_{\underline{\nu}''})=\mathfrak{L}_{\underline{\nu}'\underline{\nu}''},$ where $\mathfrak{L}_{\underline{\nu}}$ is defined in (\ref{eq3}).
\end{itemize}
\end{prop}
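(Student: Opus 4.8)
\emph{Part (a)} is immediate. By Proposition~\ref{prop2}, $\widetilde{\Ind}^V_{T,W}(K\boxtimes L)$ is pure of weight ${\rm wt}(K)+{\rm wt}(L)$, and $\mathfrak{Ind}^V_{T,W}(K\boxtimes L)$ is obtained from it by the shift $[d_1-d_2]$ and the Tate twist $(d_1-d_2)$ of~(\ref{eq5}); by Section~\ref{sec3.1}(c) the shift raises the weight by $d_1-d_2$ and the twist lowers it by $2(d_1-d_2)$, which is the asserted value.

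For \emph{part (b)}, my plan is to identify the underlying (non-mixed) complex first and then fix the surviving Tate twist by a weight count. Forgetting the mixed structure, the operations $p_1^{*}$, $p_{2\flat}$, $p_{3!}$ together with the shift $[d_1-d_2]$ reduce to Lusztig's induction functor and the Tate twist $(d_1-d_2)$ disappears, so the complex underlying $\mathfrak{Ind}^V_{T,W}(\mathfrak{L}_{\underline{\nu}'}\boxtimes\mathfrak{L}_{\underline{\nu}''})$ coincides with the one underlying $\mathfrak{L}_{\underline{\nu}'\underline{\nu}''}$, by the unmixed identity $\Ind^V_{T,W}(L_{\underline{\nu}'}\boxtimes L_{\underline{\nu}''})=L_{\underline{\nu}'\underline{\nu}''}$ of~\cite{Lusztigbook}. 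For a self-contained argument I would reestablish this identity by the standard diagram chase: by the K\"unneth formula $\widetilde{L}_{\underline{\nu}'}\boxtimes\widetilde{L}_{\underline{\nu}''}=(\pi_{\underline{\nu}'}\times\pi_{\underline{\nu}''})_{!}{\bf 1}$; by proper base change, $p_1^{*}$ of this is ${\bf 1}$ pushed forward from $G_V\times^{U}\widehat{F}$, where $\widehat{F}$ parametrizes triples consisting of $x\in F$, an $x_T$-stable flag of type $\underline{\nu}'$ in $T$, and an $x_W$-stable flag of type $\underline{\nu}''$ in $W$; the map $G_V\times^{U}\widehat{F}\to G_V\times^{P}\widehat{F}$ is a principal $(G_T\times G_W)$-bundle, so $p_{2\flat}$ turns the previous complex into ${\bf 1}$ on $G_V\times^{P}\widehat{F}$ up to a shift and a twist; and $G_V\times^{P}\widehat{F}$ is canonically $\widetilde{\mathcal{F}}_{\underline{\nu}'\underline{\nu}''}$, since a flag of type $\underline{\nu}'\underline{\nu}''$ in $V$ whose $m'$-th term equals $W$ is the same datum as such a triple (pull the flag in $T$ back along $V\to T$), and under this identification $p_3$ becomes $\pi_{\underline{\nu}'\underline{\nu}''}$, which is proper.

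It then remains to check that the two sides carry the same mixed structure. Both are pure: $\mathfrak{Ind}^V_{T,W}(\mathfrak{L}_{\underline{\nu}'}\boxtimes\mathfrak{L}_{\underline{\nu}''})$ by part~(a), since Lemma~\ref{lem1} and~(\ref{eq3}) show $\mathfrak{L}_{\underline{\nu}'},\mathfrak{L}_{\underline{\nu}''}$ to be pure of weights $-d(\underline{\nu}'),-d(\underline{\nu}'')$, and $\mathfrak{L}_{\underline{\nu}'\underline{\nu}''}$ to be pure of weight $-d(\underline{\nu}'\underline{\nu}'')$. A pure object of $\mathfrak{Q}^{m}_V$ is a direct sum of terms $S[a](b)$ with $S$ a weight-zero simple perverse sheaf and $a-2b$ independent of the summand, hence is determined by its underlying complex together with its weight. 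So the proof reduces to the equality of weights, i.e.\ by part~(a) to the combinatorial identity $d(\underline{\nu}')+d(\underline{\nu}'')+(d_1-d_2)=d(\underline{\nu}'\underline{\nu}'')$, which I would verify directly from~(\ref{eq102}) and the dimensions of $E$, $G$ and $U$ --- it is precisely what makes Lusztig's induction shift $d_1-d_2$ compatible with the shifts $d(\underline{\nu})$ of~(\ref{eq3}). The step I expect to be the main obstacle is this control of the Tate twists: unlike in Lusztig's unmixed argument, where only shifts are tracked, each use of Section~\ref{sec3.1}(h) and, above all, the functor $p_{2\flat}$ introduces a Tate twist --- a half-integral one, which is why $\mathfrak{Q}^{m}_V$ admits twists by $n\in\frac{1}{2}\mathbb{Z}$ --- and one must confirm that all of these, together with the twists built into the normalizations~(\ref{eq3}) and~(\ref{eq5}), cancel; the weight computation above is the clean way to see this.
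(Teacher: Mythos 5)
Your proof is correct and follows essentially the same route as the paper's: part (a) by combining Proposition~\ref{prop2} with the weight bookkeeping of the shift and Tate twist in~(\ref{eq5}), and part (b) by reducing, via Lusztig's unmixed identity and purity of both sides, to the dimension identity $d(\underline{\nu}')+d(\underline{\nu}'')+d_1-d_2=d(\underline{\nu}'\underline{\nu}'')$, which is exactly the fact the paper invokes. Your extra remark that a pure semisimple object of $\mathfrak{Q}^m_V$ is determined by its underlying complex together with its weight makes explicit the step the paper leaves implicit, but it is the same argument.
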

\begin{proof} Part (a) follows from Proposition \ref{prop2} and (\ref{eq5}). We now prove part (b).
\begin{equation*}
\begin{split}
   \mathfrak{Ind}&^V_{T,W}(\mathfrak{L}_{\underline{\nu}'} \boxtimes \mathfrak{L}_{\underline{\nu}''})
   = \widetilde{\Ind}^V_{T,W}(\widetilde{L}_{\underline{\nu}'} \boxtimes \widetilde{L}_{\underline{\nu}''})[d(\underline{\nu}')+d(\underline{\nu}'')+d_1-d_2]
   (d(\underline{\nu}')+d(\underline{\nu}'')+d_1-d_2)\\
   &= \widetilde{L}_{\underline{\nu}'\underline{\nu}''}[d(\underline{\nu}')+d(\underline{\nu}'')
   +d_1-d_2](d(\underline{\nu}')+d(\underline{\nu}'')+d_1-d_2)\\
   &= \mathfrak{L}_{\underline{\nu}'\underline{\nu}''}[d(\underline{\nu}')+d(\underline{\nu}'')+d_1-d_2-
   d(\underline{\nu}'\underline{\nu}'')]
   (d(\underline{\nu}')+d(\underline{\nu}'')+d_1-d_2-d(\underline{\nu}'\underline{\nu}'')).
\end{split}
\end{equation*}
Part (b) follows from the fact that $d(\underline{\nu}')+d(\underline{\nu}'')+d_1-d_2=d(\underline{\nu}'\underline{\nu}'').$
\end{proof}

\subsection{The algebra $(\mathfrak{K}, \mathfrak{Ind})$}\label{sec3.8}
 We notice that if $\underline{\dim}V_1=\underline{\dim}V_2$, then $E_{V_1}$ and $E_{V_2}$ are isomorphic.
Moreover, the categories $\mathfrak{Q}^m_{V_1}$ and $\mathfrak{Q}^m_{V_2}$ are isomorphic.
So we write $\mathfrak{Q}^m_{\nu}$ (resp. $\mathfrak{K}_{\nu}$) instead of $\mathfrak{Q}^m_{V}$ (resp. $\mathfrak{K}_{V}$) if $\underline{\dim}V=\nu$.
Now let $$\mathfrak{K}=\oplus_{\nu \in \mathbb{N}^I} \mathfrak{K}_{\nu}.$$
Define a multiplication on $\mathfrak{K}$ as follows.
 \begin{align*}
\mathfrak{Ind}: \mathfrak{K}
\times \mathfrak{K} &\rightarrow \mathfrak{K},\quad
 (K,L) \mapsto
\mathfrak{Ind}^{\nu}_{\tau, \omega}(K\otimes L),
 \end{align*}
for any homogenous elements $K,L$ with $K \in \mathfrak{K}_{\tau}$ and
$L \in \mathfrak{K}_{\omega}$.
\begin{thm}\label{thm2}
{\rm (1)} The pair $(\mathfrak{K},
 \mathfrak{Ind})$ is an $\mathbb{N}^I$-graded associative $\mathfrak{A}$-algebra.

{\rm (2)}
All simple perverse sheaves of weight 0 in $\mathfrak{Q}^m_{\nu}$ for various $\nu$ form an
$\mathfrak{A}$-basis of $\mathfrak{K}$ and a $\mathbb{Q}(v,t)$-basis of $\mathfrak{K}\otimes_{\mathfrak{A}}\mathbb{Q}(v,t)$.
\end{thm}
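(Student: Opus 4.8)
The plan is to treat the two assertions separately: part~(1) will follow from Theorem~\ref{thm1} together with Propositions~\ref{prop2} and~\ref{prop3}, while part~(2) is a Krull--Schmidt computation resting on Lemma~\ref{lem1}. For part~(1), the first task is to check that $\mathfrak{Ind}$ is a well-defined $\mathfrak{A}$-bilinear map. Fixing $I$-graded spaces $W\subset V$ with $\underline{\dim}W=\omega$, $\underline{\dim}(V/W)=\tau$ and $\underline{\dim}V=\nu=\tau+\omega$, one checks as in \cite{Lusztigbook} that $\widetilde{\Ind}^V_{T,W}$ is, up to canonical isomorphism, independent of the chosen flag $W\subset V$, so that $\mathfrak{Ind}^{\nu}_{\tau,\omega}$ is unambiguous. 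Since $p_1^*$, $p_{2\flat}$, $p_{3!}$ are additive and, by Proposition~\ref{prop2}, their composite carries $\mathfrak{Q}^m_T\boxtimes\mathfrak{Q}^m_W$ into $\mathfrak{Q}^m_V$, the functor $\mathfrak{Ind}^V_{T,W}$ descends to a biadditive map $\mathfrak{K}_{\tau}\times\mathfrak{K}_{\omega}\to\mathfrak{K}_{\nu}$; $\mathfrak{A}$-bilinearity follows because each of $p_1^*,p_{2\flat},p_{3!}$ commutes with the shift $[1]$ and with the Tate twist $(\tfrac12)$, because $(K[a](b))\boxtimes L\cong(K\boxtimes L)[a](b)\cong K\boxtimes(L[a](b))$, and because the fixed correction $[d_1-d_2](d_1-d_2)$ in \eqref{eq5} commutes with all these functors, whence $\mathfrak{Ind}((v\cdot K)\otimes L)=v\cdot\mathfrak{Ind}(K\otimes L)$ and likewise for $t$ and for the second variable. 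The $\mathbb{N}^I$-grading is immediate, since $\mathfrak{Ind}^{\nu}_{\tau,\omega}$ lands in $\mathfrak{K}_{\tau+\omega}$.

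For associativity I would reduce to additive generators rather than compute with sheaves directly. By Theorem~\ref{thm1} the classes $\mathfrak{L}_{\underline{\nu}}$ generate $\mathfrak{K}$ over $\mathfrak{A}$, so by the $\mathfrak{A}$-bilinearity just established it suffices to verify
\[
\mathfrak{Ind}\bigl(\mathfrak{Ind}(\mathfrak{L}_{\underline{\nu}^{(1)}}\otimes\mathfrak{L}_{\underline{\nu}^{(2)}})\otimes\mathfrak{L}_{\underline{\nu}^{(3)}}\bigr)
=\mathfrak{Ind}\bigl(\mathfrak{L}_{\underline{\nu}^{(1)}}\otimes\mathfrak{Ind}(\mathfrak{L}_{\underline{\nu}^{(2)}}\otimes\mathfrak{L}_{\underline{\nu}^{(3)}})\bigr)
\]
for all sequences $\underline{\nu}^{(1)},\underline{\nu}^{(2)},\underline{\nu}^{(3)}$ of the type considered in Section~\ref{sec3.3}. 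By Proposition~\ref{prop3}(b) both sides equal the class $\mathfrak{L}_{\underline{\nu}^{(1)}\underline{\nu}^{(2)}\underline{\nu}^{(3)}}$ attached to the concatenated sequence, since concatenation of finite sequences in $\mathbb{N}^I$ is associative; this proves part~(1). (Alternatively, one may establish at the functor level, via Lusztig's base-change argument, that the two iterated $\widetilde{\Ind}$'s agree, and then reconcile the shift and Tate-twist corrections by means of the dimension identity underlying Proposition~\ref{prop3}(b); the route above is shorter.)

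For part~(2), note that $\mathfrak{Q}^m_{\nu}$ is closed under direct summands and Krull--Schmidt holds in it: its objects are finite direct sums of the indecomposables $L[d](n)$, where $L$ ranges over the simple perverse sheaves occurring---up to shift and Tate twist---in some $\widetilde{L}_{\underline{\nu}}$, $d\in\mathbb{Z}$, $n\in\tfrac12\mathbb{Z}$, and $\End_{\mathcal{D}}(L[d](n))=\overline{\mathbb{Q}}_l$ is local. Hence $\mathfrak{K}_{\nu}$ is the free abelian group on the isomorphism classes of these indecomposables. By Lemma~\ref{lem1} and the weight rules of Section~\ref{sec3.1}, $\widetilde{L}_{\underline{\nu}}$ is pure of weight $0$, so each such $L$ is pure of some integer weight $w_L$, and $L_0:=L(\tfrac{w_L}{2})$ is the unique simple perverse sheaf of weight $0$ in the shift/Tate-twist orbit of $L$. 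Using $v^{a}t^{b}\cdot M=M[a](\tfrac{a+b}{2})$ one checks that $L[d](n)=v^{a}t^{b}\cdot L_0$ for the unique pair $(a,b)=(d,\,2n-w_L-d)\in\mathbb{Z}^2$. Since $\mathfrak{A}$ is $\mathbb{Z}$-free on the monomials $v^{a}t^{b}$ and non-isomorphic weight-$0$ simples lie in distinct orbits, it follows that $\mathfrak{K}_{\nu}=\bigoplus_{L_0}\mathfrak{A}\,L_0$, which is $\mathfrak{A}$-free on the set of simple perverse sheaves of weight $0$ in $\mathfrak{Q}^m_{\nu}$. Summing over $\nu\in\mathbb{N}^I$---these sheaves live on pairwise non-isomorphic varieties $E_V$---gives the asserted $\mathfrak{A}$-basis of $\mathfrak{K}$; since $\mathbb{Q}(v,t)$ is the fraction field of $\mathfrak{A}$, tensoring preserves freeness and the images of the same set form a $\mathbb{Q}(v,t)$-basis of $\mathfrak{K}\otimes_{\mathfrak{A}}\mathbb{Q}(v,t)$.

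The step that will demand the most care is the well-definedness and $\mathfrak{A}$-bilinearity of $\mathfrak{Ind}$ in part~(1): one must be sure that the composite $p_{3!}p_{2\flat}p_1^*$---in particular the perverse-cohomology functor $p_{2\flat}$---is additive on the semisimple complexes in play, so that it descends to the split Grothendieck groups, and that it intertwines correctly with the shift and Tate-twist functors defining the $\mathfrak{A}$-action. Granting Propositions~\ref{prop2} and~\ref{prop3} and Theorem~\ref{thm1}, everything else is formal or a direct transcription of Lusztig's one-parameter arguments.
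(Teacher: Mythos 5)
Your proposal is correct and follows essentially the same route as the paper, whose proof is a one-line appeal to Theorem~\ref{thm1}, Proposition~\ref{prop3} and the additivity of $\mathfrak{Ind}$ for part~(1), and to the definition of $\mathfrak{K}$ for part~(2). You simply fill in the details the paper leaves implicit (well-definedness and $\mathfrak{A}$-bilinearity of $\mathfrak{Ind}$, and the Krull--Schmidt bookkeeping that identifies each indecomposable $L[d](n)$ with $v^{a}t^{b}\cdot L_0$ for the unique weight-zero simple $L_0$ in its shift/Tate-twist orbit), and these details are accurate.
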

\begin{proof} (1) follows from Theorem \ref{thm1}, Proposition \ref{prop3} and the additivity of $\mathfrak{Ind}$.
(2) follows from the definition of $\mathfrak{K}$.
\end{proof}

\subsection{Defining relation}

For any $k, n\in \mathbb{N}$ and $k\leq n$, we set
$$\begin{array}{lll}
  [n]_v=\frac{v^n-v^{-n}}{v-v^{-1}},& [n]_v^!=\prod_{k=1}^n[k]_v, & \begin{bmatrix}n\\k\end{bmatrix}_v=\frac{[n]_v^!}{[k]_v^![n-k]_v^!},\\
   \begin{bmatrix}n\end{bmatrix}_{v,t}=\frac{(vt)^{n}-(vt^{-1})^{-n}}{vt-(vt^{-1})^{-1}},& [n]_{v,t}^!=\prod_{k=1}^n[k]_{v,t},& \begin{bmatrix}n\\k\end{bmatrix}_{v,t}=\frac{[n]_{v,t}^!}{[k]_{v,t}^!
  [n-k]_{v,t}^!}.
\end{array}$$
  For any $k, n\in \mathbb{N}$ and $k\leq n$, we have
  \begin{equation}\label{eq92}
\ [n]_{v,t}=t^{n-1}[n]_v,\quad \ [n]^!_{v,t}=t^{\frac{n(n-1)}{2}}[n]^!_v,\quad \ \begin{bmatrix}
      n\\k
    \end{bmatrix}_{v,t}=t^{k
    (n-k)}\begin{bmatrix}
      n\\k
    \end{bmatrix}_v.
  \end{equation}

\begin{ex}
  Let $\Omega=[1]$. The associated quiver consists of a single vertex without any arrow.
In this case, $E_V=\{{\rm pt}\}$ for any $V$.
 If $\underline{\nu}=n$, then $\mathcal{F}_{\underline{\nu}}$ is also a point.
 Then $\widetilde{L}_n={\bf 1}_{E_V}$ for $V=k^n$ where $\widetilde L_n$ is defined in Lemma ~\ref{lem1}.

If $\underline{\nu}=(n,1)$, then $\mathcal{F}_{\underline{\nu}}=\{f=(0 \subset V^1 \subset V)\ |\ \dim V^1=1\}$ is the Grassmannian $Gr(1,n+1)$.
 By Lemma 5.4.12 in \cite{BBD},
$\widetilde{L}_{(n,1)}=\oplus_{i=0}^n{\bf 1}_{E_V}[-2i](-i).$
By (\ref{eq3}), we have
 $$\mathfrak{L}_{(n,1)}= \oplus_{i=0}^n{\bf 1}_{E_V}[n-2i](n-i)=\sum_{0\leq i\leq n} v^{n-2i}t^{n}\mathfrak{L}_{n+1}=[n+1]_{v,t}\mathfrak{L}_{n+1}.$$
 In other words,
 \begin{equation}\label{eq23}
   \mathfrak{L}_n \cdot \mathfrak{L}_1=[n+1]_{v,t}\mathfrak{L}_{n+1}.
 \end{equation}
\end{ex}

\begin{ex}
Let
$\Omega=\begin{bmatrix}
  1 &-a'\\-a'' & 1
\end{bmatrix}$,
where $a', a'' \geq 0$.
The associated quiver has two vertices, say $i$ and $j$.
Let $\Omega'$ (resp. $\Omega''$) be the set of arrows from $i$ to $j$ (resp. from $j$ to $i$).
Then $a'=\#\Omega', a''=\#\Omega''$. Set $N=a'+a''$.

  Fix a vector space $V=V_i \oplus V_j$ such that $\dim V_i=1$ and $\dim V_j=N+1$. For any $p=0,1,\cdots, N+1$, let
  $$\widetilde{S}_p=\{(x,W)\in E_V\times Gr(p',V_j)\ |\ x_h(V_i)\subset W, \ {\rm if}\ h\in \Omega'; x_h|_W=0\ {\rm if}\ h\in \Omega''\},$$
  where $p'=N+1-p$ and  $Gr(p', V_j)$ is the Grassmannian of all $p'$-dimensional subspaces in $V_j$.
  Let $\pi(\widetilde{S}_p)$ be the image of the first projection map $\pi: \widetilde{S}_p\rightarrow E_V$.
  Let
  \begin{equation}\label{eq24}
    I_p'=\pi_!{\bf 1}_{\widetilde{S}_p}[\dim \widetilde{S}_p](\frac{1}{2}\dim \widetilde{S}_p).
  \end{equation}
  This is a  semisimple complex and pure  of weight 0 since $\pi$ is a proper map.
   Let $I_p:=IC(\pi(\widetilde{S}_p), {\bf 1})$ be the intersection cohomology complex of weight 0 on $E_V$ determined by $\pi(\widetilde{S}_p)$ and the constant sheaf on its smooth part.
   From  Proposition 9.4 in ~\cite{Lusztigquiver}, we have
  \begin{lem}
  \label{lem6}
  \begin{itemize}
    \item[(a)] $I_0'=I_0, \ I_{N+1}'=I_{N+1},$
    \item[(b)] $I'_p=I_p\oplus I_{p-1}\ {\rm if}\ 1 \leq p \leq a'';\quad  I'_p=I_p\oplus I_{p+1}\ {\rm if}\ a''+1 \leq p \leq N,$
    \item[(c)] $I_{a''}=I_{a''+1},$
    \item[(d)] $\dim(\widetilde{S}_p)=(p+a')(N+1-p)+a''p.$
  \end{itemize}
  \end{lem}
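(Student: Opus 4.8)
The plan is to reduce Lemma~\ref{lem6} to Lusztig's Proposition~9.4 in \cite{Lusztigquiver}, which is the same statement with the mixed (weight) structure forgotten, and then to supply the weight bookkeeping, which in this setting is essentially automatic.

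First I would dispose of~(d) and of the purity we shall need. The second projection $\widetilde{S}_p\to Gr(p',V_j)$, $(x,W)\mapsto W$, exhibits $\widetilde{S}_p$ as the total space of a vector bundle over $Gr(p',V_j)$: for fixed $W$ the defining conditions $x_h(V_i)\subset W$ ($h\in\Omega'$) and $x_h|_W=0$ ($h\in\Omega''$) carve out a linear subspace of $E_V$ of dimension $a'\dim W+a''\dim(V_j/W)=a'(N+1-p)+a''p$, using $\dim V_i=1$. Since $\dim Gr(p',V_j)=p(N+1-p)$, summing gives~(d); in particular $\widetilde{S}_p$ is smooth and irreducible, so ${\bf 1}_{\widetilde{S}_p}$ is pure of weight $0$ by Section~\ref{sec3.1}(b), whence $I'_p$ is a pure semisimple complex of weight $0$ because $\pi$ is proper (Section~\ref{sec3.1}(c),(g)). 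The complexes $I_p$ are pure of weight $0$ by construction.

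Next I would invoke Lusztig's Proposition~9.4: his analysis of the fibres of $\pi$ over the (finitely many) $G_V$-orbits in $E_V$ — with all coefficient systems trivial because $\dim V_i=1$ — yields the isomorphisms~(a),~(b),~(c) in $\mathcal{D}(E_V)$ after forgetting weights, with each simple constituent an $I_q$, and with the perverse degrees arranged so that in~(b) the summands $I_p$, $I_{p\pm1}$ occur without shift. To finish I would promote these to isomorphisms in $\mathcal{D}_m(E_V)$ by the following principle: a pure complex of weight $0$ whose underlying complex is a direct sum $\bigoplus_a IC(\overline{O_a},{\bf 1})[n_a]$ of shifted $IC$-sheaves of orbit closures has a \emph{unique} weight-$0$ mixed lift, since the only freedom is a Tate twist of each summand and purity pins it to $(\tfrac12(\dim\overline{O_a}+n_a))$. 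Both $I'_p$ and the right-hand sides of~(a)--(c) are weight-$0$ complexes of this shape with isomorphic underlying complexes by the previous step, hence are isomorphic as mixed complexes; the same remark, applied to the coincidence $\pi(\widetilde{S}_{a''})=\pi(\widetilde{S}_{a''+1})$ extracted from \cite{Lusztigquiver}, yields~(c).

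The one point needing care — and the only place the hypothesis $\dim V_i=1$ is really used beyond~(d) — is checking that every simple perverse constituent that appears is the $IC$-sheaf of an orbit closure with \emph{trivial} local system, so that the ``unique weight-$0$ lift'' principle applies verbatim; this is precisely what Lusztig's proof of Proposition~9.4 provides in this rank-two situation. Alternatively one could avoid the abstract lift and re-run Lusztig's fibre-dimension computation while carrying Frobenius weights along, but since every complex in sight is pure the bookkeeping collapses and the argument above is the shorter route.
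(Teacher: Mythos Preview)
Your proposal is correct and follows the paper's approach, which is simply to cite Proposition~9.4 of \cite{Lusztigquiver} without further argument. Your extra care with the weight bookkeeping and the direct verification of~(d) go beyond what the paper actually supplies but are entirely in its spirit.
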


  If $\underline{\nu}=(j,j,\cdots,j)$ for $p$ iterated $j$, then we denote by $\mathfrak{L}_{j^p}$ instead of $\mathfrak{L}_{\underline{\nu}}$.
  By Proposition \ref{prop3}, we have
  $\mathfrak{L}_{j^p,i,j^{p'}}=[p]_{v,t}^![p']_{v,t}^!\mathfrak{L}_{pj,i, p'j}.$
  By (\ref{eq3}) and the definition of $I_p'$ , we further have
  $\mathfrak{L}_{j^p,i,j^{p'}}=[p]_{v,t}^![p']_{v,t}^!I_p'(\frac{1}{2}\dim(\widetilde{S}_p)).$
Thus we have
\begin{eqnarray*}
&&  \frac{1}{[N+1]_{v, t}^!}  \sum_{0\leq p\leq N+1} (-1)^pt^{-p(p'-a'+a'')}\begin{bmatrix}N+1\\p\end{bmatrix}_{v,t}
  \mathfrak{L}_{j^p,i,j^{p'}}\\
&  =&  \sum_{0\leq p\leq N+1} (-1)^pt^{-p(p'-a'+a'')}t^{(p+a')p'+a''p} I_p'
  = \sum_{0\leq p\leq N+1}  (-1)^pt^{a'(N+1)} I_p'=0,
\end{eqnarray*}
where the last equality follows from Lemma \ref{lem6}.
By combining Examples 1 and 2, we have

\begin{prop}
The following relations satisfy in  $\mathfrak{K}$ associated to any quiver $\Omega$.
\begin{eqnarray}
\label{eq101}
&\mathfrak{L}_{ni} \cdot \mathfrak{L}_{i}=[n+1]_{v,t}\mathfrak{L}_{(n+1) i }, \quad  \forall i\in I.  \\
&\sum_{p=0}^{i\cdot j+1}(-1)^pt^{-p(p'-\langle i, j\rangle +\langle j, i\rangle)}\begin{bmatrix} i\cdot j +1\\p\end{bmatrix}_{v,t}
  \mathfrak{L}_{j^p,i,j^{p'}}=0, \quad \forall i\neq  j \in I.
\end{eqnarray}
\end{prop}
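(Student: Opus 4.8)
The plan is to deduce both identities from the rank $\le 2$ computations of Examples 1 and 2 by a localization-to-a-full-subquiver argument. Every term that occurs, namely $\mathfrak{L}_{ni}$ in the first relation and $\mathfrak{L}_{j^p,i,j^{p'}}$ in the second, lies in $\mathfrak{K}_\nu$ for a dimension vector $\nu$ with ${\rm supp}(\nu)\subseteq\{i\}$, respectively ${\rm supp}(\nu)\subseteq\{i,j\}$; so it is enough to describe the part of $\mathfrak{K}$ concentrated on such small subsets of $I$.

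First I would record the following general principle. For a subset $J\subseteq I$, write $\mathfrak{K}_J=\bigoplus_{{\rm supp}(\nu)\subseteq J}\mathfrak{K}_\nu$. Then $\mathfrak{K}_J$ is an $\mathfrak{A}$-subalgebra of $(\mathfrak{K},\mathfrak{Ind})$ — immediate, since $\mathfrak{Ind}^\nu_{\tau,\omega}$ lands in $\mathfrak{K}_{\tau+\omega}$ by Theorem \ref{thm2}(1) — and it is isomorphic, as a graded $\mathfrak{A}$-algebra, to the algebra $\mathfrak{K}$ attached to the full subquiver $\Omega_J$ of $\Omega$ on vertex set $J$, via an isomorphism sending $\mathfrak{L}_{\underline{\nu}}$ (for $\underline{\nu}$ supported on $J$) to the element of the same name for $\Omega_J$. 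The reason is that all the geometry only involves arrows with both endpoints in $J$: if $V$ is $I$-graded with ${\rm supp}(\underline{\dim}V)\subseteq J$, the vanishing of the relevant $V_i$ forces $E_V=\bigoplus_{h\in\Omega,\,h',h''\in J}\Hom(V_{h'},V_{h''})$ and $G_V=\prod_{i\in J}GL(V_i)$ to coincide with the data for $\Omega_J$; a subset of $J$ is discrete for $\Omega$ exactly when it is discrete for $\Omega_J$, so the admissible sequences $\underline{\nu}$ agree; and consequently the flag varieties $\mathcal{F}_{\underline{\nu}}$, $\widetilde{\mathcal{F}}_{\underline{\nu}}$, the proper maps $\pi_{\underline{\nu}}$, the complexes $\widetilde{L}_{\underline{\nu}}$ and $\mathfrak{L}_{\underline{\nu}}$ (including the shift and twist by $d(\underline{\nu})$ of (\ref{eq102})), the full subcategories $\mathfrak{Q}^m_V\subseteq\mathcal{D}_m(E_V)$, and Lusztig's diagram (\ref{eq4}) together with the normalization (\ref{eq5}) defining $\mathfrak{Ind}$, are literally the same objects whether formed for $\Omega$ or for $\Omega_J$.

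With this in hand, the first relation comes from specializing to $J=\{i\}$: then $\Omega_{\{i\}}$ is a single vertex with no arrows, i.e.\ the case $\Omega=[1]$ of Example 1, under which $\mathfrak{L}_{ni}\mapsto\mathfrak{L}_n$ and $\mathfrak{L}_i\mapsto\mathfrak{L}_1$, so (\ref{eq23}) yields $\mathfrak{L}_{ni}\cdot\mathfrak{L}_i=[n+1]_{v,t}\mathfrak{L}_{(n+1)i}$. For the quantum Serre relation I would take $J=\{i,j\}$ with $i\ne j$: the subquiver $\Omega_{\{i,j\}}$ has $a'=-\Omega_{ij}$ arrows from $i$ to $j$ and $a''=-\Omega_{ji}$ arrows from $j$ to $i$, which is precisely the two-vertex quiver of Example 2 with $N=a'+a''=-(\Omega_{ij}+\Omega_{ji})=-\,i\cdot j$. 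Transporting the vanishing identity displayed just before the Proposition through the isomorphism $\mathfrak{K}_{\{i,j\}}\cong\mathfrak{K}(\Omega_{\{i,j\}})$ and rewriting the parameters via $\langle i,j\rangle=\Omega_{ij}=-a'$, $\langle j,i\rangle=\Omega_{ji}=-a''$, and $i\cdot j=-N$ then gives exactly the asserted relation.

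I do not anticipate a serious difficulty. The only step needing care is the localization principle itself: one must check honestly that passing to dimension vectors supported on $J$ leaves the full subcategory $\mathfrak{Q}^m_V$ (and its split Grothendieck group $\mathfrak{K}_V$) unchanged, and that the induction functor together with its shifts and Tate twists matches on the nose — after which everything is a direct translation of Examples 1 and 2.
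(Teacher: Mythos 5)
Your proposal is correct and follows the same route as the paper, which simply combines Examples 1 and 2; the paper leaves the reduction to the full subquiver on $\{i\}$ resp.\ $\{i,j\}$ implicit, whereas you spell out why the categories $\mathfrak{Q}^m_V$, the complexes $\mathfrak{L}_{\underline{\nu}}$, and the induction functor for dimension vectors supported on a subset $J$ coincide with those of the full subquiver $\Omega_J$. That localization check is exactly the right (and only) point needing care, and your parameter dictionary $\langle i,j\rangle=-a'$, $\langle j,i\rangle=-a''$, $i\cdot j=-N$ matches the intended statement.
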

\end{ex}

\subsection{Restriction functor}

Consider the following diagram
$$\xymatrix{ E_T \times E_W & F \ar[l]_-{\kappa} \ar[r]^{\iota} & E_V,}$$
where
$\iota$ is an embedding and $\kappa(x)=(x_T,x_W)$.
We define $$\widetilde{\Res}^V_{T,W}(L)=\kappa_!\iota^*L, \quad \forall L\in \mathcal{D}(E_V).$$

\begin{prop}\label{prop4}
  $\widetilde{\Res}^V_{T,W}(\widetilde{L}_{\underline{\nu}})=\oplus_{\underline{\tau}, \underline{\omega}}\widetilde{L}_{\underline{\tau}}\boxtimes \widetilde{L}_{\underline{\omega}}[-2M(\underline{\tau}, \underline{\omega})](-M(\underline{\tau}, \underline{\omega}))$, where
  \begin{equation}\label{eq6}
M(\underline{\tau}, \underline{\omega})=\sum_{h;l'<l}\tau^{l'}_{h'}\omega^l_{h''}+\sum_{i;l<l'}\tau_i^{l'}\omega_i^l
  \end{equation} and the direct sum is taken over all $\underline{\tau}$ and  $\underline{\omega}$ such that $\tau^l+ \omega^l= \nu^l$, $\sum _l \omega_i^l=\dim W_i$ and $\sum _l \tau_i^l=\dim T_i$.
\end{prop}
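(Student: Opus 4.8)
The plan is to follow Lusztig's computation of the restriction of $\widetilde{L}_{\underline{\nu}}$ (\cite{Lusztigbook}, Section 9.2; see also \cite{Lusztigquiver}, Section 10), carrying along the Tate twist at every step; by the weight-theoretic properties in Section \ref{sec3.1} the twist transforms in complete parallel with the shift, so that each occurrence of a shift $[-2r]$ produced below comes accompanied by a Tate twist $(-r)$.

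First I would reduce to the pushforward of a constant sheaf by proper base change. Set $Z=\{(x,f)\in F\times\mathcal{F}_{\underline{\nu}}\mid f\text{ is }x\text{-stable}\}$. Pulling back the proper map $\pi_{\underline{\nu}}:\widetilde{\mathcal{F}}_{\underline{\nu}}\to E_V$ along $\iota:F\hookrightarrow E_V$ produces a Cartesian square with $Z$ in the corner, and $Z$ maps to $F$ by $\pi'_{\underline{\nu}}:(x,f)\mapsto x$. Proper base change gives $\iota^*\widetilde{L}_{\underline{\nu}}=(\pi'_{\underline{\nu}})_!\mathbf{1}_Z$, hence
\[
\widetilde{\Res}^V_{T,W}(\widetilde{L}_{\underline{\nu}})=(\kappa\circ\pi'_{\underline{\nu}})_!\,\mathbf{1}_Z,\qquad \kappa\circ\pi'_{\underline{\nu}}:Z\to E_T\times E_W,\ (x,f)\mapsto(x_T,x_W).
\]

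Next I would stratify. To a flag $f=(V=V^0\supset\cdots\supset V^m=0)$ assign its type relative to $W$: $\omega^l=\underline{\dim}\big((V^{l-1}\cap W)/(V^l\cap W)\big)$ and $\tau^l=\nu^l-\omega^l$ for $1\leq l\leq m$. This partitions $Z$ into locally closed, $G_T\times G_W$-stable pieces $Z_{\underline{\tau},\underline{\omega}}$ indexed exactly by the pairs $(\underline{\tau},\underline{\omega})$ occurring in the statement. On $Z_{\underline{\tau},\underline{\omega}}$, sending $(x,f)$ to $x_T$ with its induced flag $((V^l+W)/W)_l$ and $x_W$ with its induced flag $(V^l\cap W)_l$ defines a point of $\widetilde{\mathcal{F}}_{\underline{\tau}}\times\widetilde{\mathcal{F}}_{\underline{\omega}}$, and $(\kappa\circ\pi'_{\underline{\nu}})|_{Z_{\underline{\tau},\underline{\omega}}}$ factors through $\pi_{\underline{\tau}}\times\pi_{\underline{\omega}}$. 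Lusztig's computation, which I would reproduce, identifies $Z_{\underline{\tau},\underline{\omega}}\to\widetilde{\mathcal{F}}_{\underline{\tau}}\times\widetilde{\mathcal{F}}_{\underline{\omega}}$ as a Zariski-locally trivial affine-space bundle whose rank is exactly $M(\underline{\tau},\underline{\omega})$ of \eqref{eq6}; the fibre is the affine space of choices of the $\Hom(T_{h'},W_{h''})$-type arrow components of $x$ compatible with the flag (accounting for $\sum_{h;l'<l}\tau^{l'}_{h'}\omega^l_{h''}$) together with the graded complements reconstructing $f$ (accounting for $\sum_{i;l<l'}\tau^{l'}_i\omega^l_i$). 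Since $\pi_{\underline{\tau}}\times\pi_{\underline{\omega}}$ is proper with $(\pi_{\underline{\tau}}\times\pi_{\underline{\omega}})_!\mathbf{1}=\widetilde{L}_{\underline{\tau}}\boxtimes\widetilde{L}_{\underline{\omega}}$ by the Künneth formula, and since for an affine-space bundle $p$ of rank $r$ one has $p_!\mathbf{1}=\mathbf{1}[-2r](-r)$ (a standard computation, the $(-r)$ being the Tate twist in $H^{2r}_c(\mathbb{A}^r)\cong\overline{\mathbb{Q}}_l(-r)$; cf. Section \ref{sec3.1}(h)), we obtain
\[
(\kappa\circ\pi'_{\underline{\nu}})_!\,\mathbf{1}_{Z_{\underline{\tau},\underline{\omega}}}=\widetilde{L}_{\underline{\tau}}\boxtimes\widetilde{L}_{\underline{\omega}}\,[-2M(\underline{\tau},\underline{\omega})]\,(-M(\underline{\tau},\underline{\omega})).
\]

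Finally I would glue the strata together. Ordering the pairs $(\underline{\tau},\underline{\omega})$ compatibly with the closure order of the $Z_{\underline{\tau},\underline{\omega}}$, the successive open--closed decompositions of $Z$ yield a finite filtration of $(\kappa\circ\pi'_{\underline{\nu}})_!\mathbf{1}_Z$ whose associated graded is $\bigoplus_{\underline{\tau},\underline{\omega}}\widetilde{L}_{\underline{\tau}}\boxtimes\widetilde{L}_{\underline{\omega}}[-2M(\underline{\tau},\underline{\omega})](-M(\underline{\tau},\underline{\omega}))$, and I would check that this filtration splits exactly as in \cite{Lusztigbook}, Section 9.2 (all connecting morphisms vanish, which is insensitive to the extra Tate twists). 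I expect the bulk of the work, and the only delicate point, to be Lusztig's two geometric ingredients: the identification of $Z_{\underline{\tau},\underline{\omega}}$ as an affine bundle of rank exactly $M(\underline{\tau},\underline{\omega})$, and the splitting of the filtration. The genuinely new ingredient here, that each shift $[-2M(\underline{\tau},\underline{\omega})]$ drags along the Tate twist $(-M(\underline{\tau},\underline{\omega}))$, is immediate from the displayed identities and the weight theory recalled in Section \ref{sec3.1}.
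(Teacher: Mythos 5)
Your proof is correct in outline, but it takes a genuinely different route from the paper. You redo Lusztig's stratification argument from scratch in the mixed setting: proper base change to reduce to $(\kappa\circ\pi'_{\underline{\nu}})_!\mathbf{1}_Z$, the partition of $Z$ into pieces $Z_{\underline{\tau},\underline{\omega}}$ that are affine bundles of rank $M(\underline{\tau},\underline{\omega})$ over $\widetilde{\mathcal{F}}_{\underline{\tau}}\times\widetilde{\mathcal{F}}_{\underline{\omega}}$, the identity $p_!\mathbf{1}=\mathbf{1}[-2r](-r)$ for an affine bundle of rank $r$, and the splitting of the resulting filtration. The paper instead short-circuits all of this: it quotes Lusztig's 9.2.6(b) in \cite{Lusztigbook} wholesale for the underlying non-mixed isomorphism (so the decomposition, the shifts $[-2M]$, and the splitting are all taken as given, with the Tate twists undetermined), and then pins down the twists purely by a weight argument --- using the identity $\kappa_!\iota^*\simeq(\iota')^!\iota^*$ and Braden's Theorem 8 in \cite{braden2003hyper} to see that $\widetilde{\Res}^V_{T,W}$ is a hyperbolic localization and hence sends the weight-$0$ complex $\widetilde{L}_{\underline{\nu}}$ (Lemma \ref{lem1}) to a pure complex of weight $0$, which forces each summand's twist to be exactly $(-M(\underline{\tau},\underline{\omega}))$. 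Your approach buys independence from Braden's theorem at the cost of reproducing Lusztig's geometry, and the one point you should not wave away is the splitting of the filtration in the mixed category: Lusztig's vanishing of connecting morphisms is an argument about the non-mixed semisimple complexes, and the cleanest way to promote the splitting to $\mathcal{D}_m$ is again a purity/weight argument (the subquotients are pure of weight $0$ while the targets of the connecting maps are pure of weight $1$) --- which is essentially the same weight-theoretic input the paper extracts from Braden. The paper's route also has the advantage that the same hyperbolic-localization argument immediately gives the weight formula for $\mathfrak{Res}^V_{T,W}$ applied to an arbitrary pure complex (used in Corollary \ref{cor2}), which your stratum-by-stratum computation, being specific to $\widetilde{L}_{\underline{\nu}}$, would not yield.
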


\begin{proof}
By 9.2.6 (b) in \cite{Lusztigbook}, we have $\widetilde{\Res}^V_{T,W}(\widetilde{L}_{\underline{\nu}})\simeq \oplus_{\underline{\tau}, \underline{\omega}}(\widetilde{L}_{\underline{\tau}}\boxtimes \widetilde{L}_{\underline{\omega}})[-2M(\underline{\tau}, \underline{\omega})]$ up to a Tate twist.
It is enough to check that the weights of the two complexes on both sides in the proposition are the same.
Let $\iota': E_T\times E_W \rightarrow F$ be the embedding map.
 By (1) in \cite{braden2003hyper},
we have
$$\kappa_!\iota^*L \simeq (\iota')^!(\iota)^*L,\quad \forall L \in \mathcal{D}_{G_V}(E_V).$$
Note that the functor $(\iota')^!(\iota)^*L$ is the  hyperbolic localization functor.
By Lemma \ref{lem1} and Theorem 8 in \cite{braden2003hyper}, the weights of both complexes in the proposition are zero.
The proposition follows.
\end{proof}
 For any $L \in \mathcal{D}_{G_V}(E_V)$, we define
\begin{equation}\label{eq42}
  \mathfrak{Res}^V_{T,W}(L)=\widetilde{\Res}^V_{T,W}(L)[d_1-d_2-2\dim(G_V/P)](-\dim(G_V/P)),
\end{equation}
where $d_1$ and $d_2$ are the same as those in (\ref{eq5}).
By Theorem 8 in \cite{braden2003hyper}, Proposition \ref{prop4}, (\ref{eq3}) and (\ref{eq42}), we have the following corollary.
\begin{cor}\label{cor2}
  $\mathfrak{Res}^V_{T,W}(\mathfrak{L}_{\underline{\nu}})=\oplus_{\underline{\tau}, \underline{\omega}}\mathfrak{L}_{\underline{\tau}}\boxtimes \mathfrak{L}_{\underline{\omega}}[M'(\underline{\tau}, \underline{\omega})](M''(\underline{\tau}, \underline{\omega}))$, where
  \begin{equation}\label{eq7}
M'(\underline{\tau}, \underline{\omega})=d_1-d_2-2\dim(G_V/P)+d(\underline{\nu})-d(\underline{\tau})-d(\underline{\omega})-
2M(\underline{\tau},\underline{\omega}),
  \end{equation}
   \begin{equation}\label{eq8}
M''(\underline{\tau}, \underline{\omega})=d(\underline{\nu})-d(\underline{\tau})-d(\underline{\omega})-\dim(G_V/P)-
M(\underline{\tau},\underline{\omega}),
   \end{equation}
   and the direct sum is taken over all $\underline{\tau}$ and $\underline{\omega}$ such that $\tau^l+ \omega^l= \nu^l$, $\sum _l \omega_i^l=\dim W_i$ and $\sum _l \tau_i^l=\dim T_i$. Moreover, if $L$ is a pure complex in $\mathcal{D}_{G_V}(E_V)$, then
$${\rm wt}(\mathfrak{Res}^V_{T,W} L)={\rm wt}(L)+d_1-d_2.$$
\end{cor}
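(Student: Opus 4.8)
The plan is to deduce Corollary \ref{cor2} from Proposition \ref{prop4} by keeping careful track of shifts and Tate twists, together with the weight behaviour of hyperbolic localization already exploited in the proof of that proposition. I would first record that $\widetilde{\Res}^V_{T,W}=\kappa_!\iota^*$ commutes with the shift functor $[j]$ and with the Tate twist $(n)$, since each of $\kappa_!$ and $\iota^*$ does, and that $\boxtimes$ has the same compatibility, so that $\widetilde{L}_{\underline{\tau}}\boxtimes\widetilde{L}_{\underline{\omega}}$ and $\mathfrak{L}_{\underline{\tau}}\boxtimes\mathfrak{L}_{\underline{\omega}}$ differ by the shift $[d(\underline{\tau})+d(\underline{\omega})]$ and twist $(d(\underline{\tau})+d(\underline{\omega}))$, in view of (\ref{eq3}). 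Writing $\mathfrak{L}_{\underline{\nu}}=\widetilde{L}_{\underline{\nu}}[d(\underline{\nu})](d(\underline{\nu}))$ and applying Proposition \ref{prop4}, one gets
\begin{equation*}
\widetilde{\Res}^V_{T,W}(\mathfrak{L}_{\underline{\nu}})=\bigoplus_{\underline{\tau},\underline{\omega}}(\widetilde{L}_{\underline{\tau}}\boxtimes\widetilde{L}_{\underline{\omega}})[d(\underline{\nu})-2M(\underline{\tau},\underline{\omega})](d(\underline{\nu})-M(\underline{\tau},\underline{\omega})),
\end{equation*}
with the sum over the same $\underline{\tau},\underline{\omega}$ as in the statement. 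Substituting this into (\ref{eq42}) adds a shift by $d_1-d_2-2\dim(G_V/P)$ and a twist by $-\dim(G_V/P)$; rewriting each $\widetilde{L}_{\underline{\tau}}\boxtimes\widetilde{L}_{\underline{\omega}}$ in terms of $\mathfrak{L}_{\underline{\tau}}\boxtimes\mathfrak{L}_{\underline{\omega}}$ and collecting exponents, the shift of the $(\underline{\tau},\underline{\omega})$-summand becomes $d(\underline{\nu})-2M(\underline{\tau},\underline{\omega})+d_1-d_2-2\dim(G_V/P)-d(\underline{\tau})-d(\underline{\omega})=M'(\underline{\tau},\underline{\omega})$ and its twist becomes $d(\underline{\nu})-M(\underline{\tau},\underline{\omega})-\dim(G_V/P)-d(\underline{\tau})-d(\underline{\omega})=M''(\underline{\tau},\underline{\omega})$, which is exactly the claimed direct sum.

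For the weight statement I would argue as in the proof of Proposition \ref{prop4}: via the identification $\widetilde{\Res}^V_{T,W}L\simeq(\iota')^!\iota^*L$ of (1) in \cite{braden2003hyper}, the functor $\widetilde{\Res}^V_{T,W}$ is a hyperbolic localization functor, and by Theorem 8 in \cite{braden2003hyper} it sends a pure complex of weight $w$ to a pure complex of weight $w$. Hence ${\rm wt}(\widetilde{\Res}^V_{T,W}L)={\rm wt}(L)$ for any pure $L\in\mathcal{D}_{G_V}(E_V)$, and applying the shift and twist in (\ref{eq42}) together with Section \ref{sec3.1}(c) --- a shift by $a$ raises the weight by $a$, a twist by $-b$ raises it by $2b$ --- yields ${\rm wt}(\mathfrak{Res}^V_{T,W}L)={\rm wt}(L)+(d_1-d_2-2\dim(G_V/P))+2\dim(G_V/P)={\rm wt}(L)+d_1-d_2$.

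The computation is routine; the only things requiring care are that $\widetilde{\Res}^V_{T,W}$ genuinely commutes with $[j]$ and $(n)$, so that the normalizing shifts and twists in (\ref{eq3}) and (\ref{eq42}) may be moved across it freely, and that Braden's weight-preservation theorem applies in the $G_V$-equivariant mixed setting used here --- a point already relied upon in Proposition \ref{prop4}. There is no real obstacle beyond the index bookkeeping.
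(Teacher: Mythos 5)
Your proposal is correct and follows exactly the route the paper intends: the paper derives the corollary by citing Braden's Theorem 8, Proposition \ref{prop4}, (\ref{eq3}) and (\ref{eq42}), and your computation simply carries out that bookkeeping explicitly, with the shift and twist exponents matching $M'$ and $M''$ and the weight count matching ${\rm wt}(L)+d_1-d_2$.
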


\subsection{Coalgebra structure}Define
 an $\mathfrak{A}$-linear map $\mathfrak{r}: \mathfrak{K}
\rightarrow \mathfrak{K}\otimes \mathfrak{K}$ by
 \begin{align*}
 K \mapsto
\oplus_{\tau,\omega} \mathfrak{Res}^{\nu}_{\tau, \omega}K,
 \end{align*}
for any homogenous element $K\in \mathfrak{K}_{\nu}$, where the direct sum runs through all $\tau, \omega \in \mathbb{N}^I$ such that $\tau+\omega=\nu$.

Define a multiplication on $\mathfrak{K}\otimes \mathfrak{K}$ as follows.
\begin{equation}\label{eq13}(x \otimes y)(x' \otimes y')=v^{-|x'|\cdot|y|}t^{\langle|x'|,|y|\rangle-\langle |y|,|x'|\rangle}xx' \otimes yy',
\end{equation}
for homogenous elements $x,y,x'$ and $y'$, where $|x|$ is the grading of $x$ and $\langle ,\rangle$ is defined in (\ref{eq47}).
\begin{prop}\label{prop30}
  $\mathfrak{r}: \mathfrak{K}
\rightarrow \mathfrak{K}\otimes \mathfrak{K}$ is an algebra homomorphism, where the algebra structure on $\mathfrak{K}\otimes \mathfrak{K}$ is defined in (\ref{eq13}).
\end{prop}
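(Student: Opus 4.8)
The plan is to reduce the statement to a compatibility check between the multiplication and comultiplication at the level of the additive generators $\mathfrak{L}_{\underline{\nu}}$, which by Theorem~\ref{thm1} suffice, and then to verify it by comparing both sides against the one-parameter statement in \cite{Lusztigbook}. First I would note that since $\mathfrak{r}$ and $\mathfrak{Ind}$ are $\mathfrak{A}$-linear and $\mathfrak{K}$ is generated over $\mathfrak{A}$ by the $\mathfrak{L}_{\underline{\nu}}$ (Theorem~\ref{thm1}), and since $\mathfrak{L}_{\underline{\nu}'\underline{\nu}''}=\mathfrak{Ind}^{V}_{T,W}(\mathfrak{L}_{\underline{\nu}'}\boxtimes\mathfrak{L}_{\underline{\nu}''})$ by Proposition~\ref{prop3}(b), it is enough to check that $\mathfrak{r}(\mathfrak{L}_{\underline{\nu}'\underline{\nu}''})$ equals $\mathfrak{r}(\mathfrak{L}_{\underline{\nu}'})\cdot\mathfrak{r}(\mathfrak{L}_{\underline{\nu}''})$ in $\mathfrak{K}\otimes\mathfrak{K}$ with the twisted product \eqref{eq13}; by an iteration (induction on the number of discrete blocks) this in turn reduces to the case where one of the factors is a single $\mathfrak{L}_{i}$ or, more uniformly, to matching the geometric restriction–induction identity directly.

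The core of the argument is a geometric identity: for the diagrams defining $\widetilde{\Ind}$ and $\widetilde{\Res}$, one has a base-change/compatibility statement relating $\mathfrak{Res}^{\nu}_{\tau,\omega}\circ\mathfrak{Ind}^{\nu}_{\tau',\omega'}$ to a sum of $\mathfrak{Ind}\otimes\mathfrak{Ind}$ applied to $\mathfrak{Res}\otimes\mathfrak{Res}$ of the inputs. This is exactly Lusztig's Proposition~9.2.3 (and the computations in 9.2.x) of \cite{Lusztigbook} at the level of the underlying complexes; the induction functor was already shown in Proposition~\ref{prop2} to preserve $\mathfrak{Q}$ and weights, and the restriction functor in Proposition~\ref{prop4} and Corollary~\ref{cor2} was computed together with its weight shift via the hyperbolic localization description of \cite{braden2003hyper}. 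So the plan is: take Lusztig's non-mixed identity, which already fixes the underlying perverse sheaves and all the integer shifts $[\,\cdot\,]$; then track the Tate twists and weights through on both sides using Section~\ref{sec3.1}(c),(e),(f),(h), Proposition~\ref{prop2}, and Corollary~\ref{cor2}; and finally read off that the bookkeeping of the two gradings (the $v$-grading from shifts and the $t$-grading from Tate twists, via the $\mathfrak{A}$-action \eqref{eq10}) produces precisely the scalar $v^{-|x'|\cdot|y|}t^{\langle|x'|,|y|\rangle-\langle|y|,|x'|\rangle}$ appearing in \eqref{eq13}.

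The main obstacle, and the only genuinely new content beyond a citation of \cite{Lusztigbook}, is verifying that the extra $t$-power bookkeeping closes up correctly. In Lusztig's one-parameter setting the only grading is the shift, and $\mathfrak{r}$ is an algebra map for a $v$-twisted product with twist $v^{-|x'|\cdot|y|}$ (or its analogue); here one must show that the additional weight shifts introduced by $\mathfrak{Res}$ — namely the $+(d_1-d_2)$ weight shift recorded in Corollary~\ref{cor2}, combined with the weight additivity of $\mathfrak{Ind}$ from Proposition~\ref{prop2} and the weight of $\boxtimes$ from Section~\ref{sec3.1}(f) — assemble into exactly the antisymmetric combination $t^{\langle|x'|,|y|\rangle-\langle|y|,|x'|\rangle}$. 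Concretely, the asymmetry between the term $\sum_{h;l'<l}\tau^{l'}_{h'}\omega^{l}_{h''}$ in $M(\underline{\tau},\underline{\omega})$ in \eqref{eq6} and the corresponding term appearing when the roles of the two tensor factors are swapped accounts for $\langle\,\cdot\,,\cdot\,\rangle-\langle\,\cdot\,,\cdot\,\rangle$ via \eqref{eq47}, while the symmetric part matches the $v^{-|x'|\cdot|y|}$ factor through \eqref{eq49}. I would organize this as an explicit exponent computation for the basic cross term $\mathfrak{Res}^{\nu}_{\tau,\omega}\,\mathfrak{Ind}^{\nu}_{\tau',\omega'}(\mathfrak{L}_{\underline{\nu}'}\boxtimes\mathfrak{L}_{\underline{\nu}''})$, checking that the total shift and total Tate twist on each summand agree with those predicted by applying \eqref{eq13}, and then invoke the $\mathfrak{A}$-linearity and the generation statement of Theorem~\ref{thm1} to conclude on all of $\mathfrak{K}$.
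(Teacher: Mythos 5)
Your proposal is correct and follows essentially the same route as the paper: reduce to the additive generators $\mathfrak{L}_{\underline{\nu}}$ via Theorem~\ref{thm1} and Proposition~\ref{prop3}(b), import Lusztig's one-parameter identity (9.2.6(b)/13.1.5 in \cite{Lusztigbook}) for the underlying complexes and the shift exponents, and then verify by an explicit exponent computation with \eqref{eq6}--\eqref{eq8} that the Tate-twist bookkeeping produces exactly the twist in \eqref{eq13}. The paper carries out precisely the comparison $M''(\underline{\tau}'\underline{\tau}'',\underline{\omega}'\underline{\omega}'')=N''(\underline{\tau}'\underline{\tau}'',\underline{\omega}'\underline{\omega}'')$ that you describe, so no further reduction to a single $\mathfrak{L}_i$ factor is needed.
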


\begin{proof}
By Theorem \ref{thm1} and Proposition \ref{prop3}, it is enough to show that
$$\mathfrak{r}(\mathfrak{L}_{\underline{\nu}' \underline{\nu}''})=\mathfrak{r}(\mathfrak{L}_{\underline{\nu}'})\mathfrak{r}
(\mathfrak{L}_{\underline{\nu}''})\ {\rm for\ any}\ \underline{\nu}', \underline{\nu}''\in \mathbb{N}^I.$$
  By Corollary \ref{cor2},
  $$\mathfrak{r}(\mathfrak{L}_{\underline{\nu}'})=\sum \mathfrak{L}_{\underline{\tau}'} \boxtimes \mathfrak{L}_{\underline{\omega}'}[M'(\underline{\tau}',\underline{\omega}')](M''(\underline{\tau}',
  \underline{\omega}')),$$
  where the sum is taken over all $\underline{\tau}'$ and $\underline{\omega}'$ such that $\tau'^l+\omega'^l=\nu'^l$ for all $l=1,\cdots,m$.

  Similarly, we have $$\mathfrak{r}(\mathfrak{L}_{\underline{\nu}''})=\sum \mathfrak{L}_{\underline{\tau}''} \boxtimes \mathfrak{L}_{\underline{\omega}''}[M'(\underline{\tau}'',\underline{\omega}'')](M''(\underline{\tau}'',
  \underline{\omega}'')),$$
  where the sum is taken over all  $\underline{\tau}''$ and $\underline{\omega}''$ such that $\tau''^l+\omega''^l=\nu''^l$ for all $l=m+1,\cdots,m+n$. By (\ref{eq10}), we can rewrite (\ref{eq13}) as follows.
  $$(x \otimes y)(x' \otimes y')=xx' \otimes yy'[-|y|\cdot |x'|](-\langle |y|, |x'|\rangle),$$
  where $|y|\cdot |x'|=\langle|x'|, |y|\rangle+\langle |y|, |x'|\rangle$ is a symmetric bilinear form. Therefore,
  \begin{equation}\label{eq65}
\mathfrak{r}(\mathfrak{L}_{\underline{\nu}'})\mathfrak{r}(\mathfrak{L}_{\underline{\nu}''})=\sum \mathfrak{L}_{\underline{\tau}'\underline{\tau}''}\boxtimes \mathfrak{L}_{\underline{\omega}'\underline{\omega}''}[N'(\underline{\tau}'\underline{\tau}'',
  \underline{\omega}'\underline{\omega}'')]
  (N''(\underline{\tau}'\underline{\tau}'',\underline{\omega}'\underline{\omega}'')),
  \end{equation}
  where the sum is taken over all $\underline{\tau}'$, $\underline{\omega}'$, $\underline{\tau}''$ and $\underline{\omega}''$ such that $\tau'^l+\omega'^l=\nu'^l$ for all $l=1,\cdots,m$ and $\tau''^l+\omega''^l=\nu''^l$ for all $l=m+1,\cdots,m+n$. $$N'(\underline{\tau}'\underline{\tau}'',\underline{\omega}'\underline{\omega}'')
  =M'(\underline{\tau}',\underline{\omega}')+M'(\underline{\tau}'',\underline{\omega}'')-
  |\underline{\tau}''|\cdot |\underline{\omega}'|,\ {\rm and}$$
   $$N''(\underline{\tau}'\underline{\tau}'',\underline{\omega}'\underline{\omega}'')
  =M''(\underline{\tau}',\underline{\omega}')+M''(\underline{\tau}'',\underline{\omega}'')-\langle |\underline{\omega}'|, |\underline{\tau}''|\rangle.$$
  On the other hand, we have
  \begin{equation}\label{eq66}
\mathfrak{r}(\mathfrak{L}_{\underline{\nu}'\underline{\nu}''})=\sum \mathfrak{L}_{\underline{\tau}'\underline{\tau}''} \boxtimes \mathfrak{L}_{\underline{\omega}'\underline{\omega}''}[M'(\underline{\tau}'\underline{\tau}'',
  \underline{\omega}'\underline{\omega}'')]
  (M''(\underline{\tau}'\underline{\tau}'',\underline{\omega}'\underline{\omega}'')).
  \end{equation}
 By comparing (\ref{eq65}) with (\ref{eq66}), it remains to show that
  \begin{equation}\label{eq16}
 M'(\underline{\tau}'\underline{\tau}'',\underline{\omega}'\underline{\omega}'')
  =N'(\underline{\tau}'\underline{\tau}'',\underline{\omega}'\underline{\omega}''),\ {\rm and}\  M''(\underline{\tau}'\underline{\tau}'',\underline{\omega}'\underline{\omega}'')=
  N''(\underline{\tau}'\underline{\tau}'',\underline{\omega}'\underline{\omega}'').
  \end{equation}
  The proof of the first one is the same as that of Lemma 13.1.5 in \cite{Lusztigbook}. We only need to show that the second one holds.

  By equations (\ref{eq7}) and (\ref{eq8}), we have
 \begin{eqnarray}\label{eq17}
   & &\ \  M''(\underline{\tau}'\underline{\tau}'',\underline{\omega}'\underline{\omega}'')
   -N''(\underline{\tau}'\underline{\tau}'',\underline{\omega}'\underline{\omega}'') \\
    & & = d(\underline{\nu}'\underline{\nu}'')-d(\underline{\tau}'\underline{\tau}'')-
    d(\underline{\omega}'\underline{\omega}'')
    -\dim(G_V/P)_{\underline{\tau}'\underline{\tau}'',\underline{\omega}'\underline{\omega}''}-
    M(\underline{\tau}'\underline{\tau}'',\underline{\omega}'\underline{\omega}'') \nonumber\\
    & & -(d(\underline{\nu}')-d(\underline{\tau}')-d(\underline{\omega}')-
    \dim(G_V/P)_{\underline{\tau}',\underline{\omega}'}
    -M(\underline{\tau}',\underline{\omega}')) \nonumber\\
   & & -(d(\underline{\nu}'')-d(\underline{\tau}'')-d(\underline{\omega}'')-
   \dim(G_V/P)_{\underline{\tau}'',\underline{\omega}''}
   -M(\underline{\tau}'',\underline{\omega}''))+\langle|\underline{\omega}'|,|\underline{\tau}''| \rangle, \nonumber
  \end{eqnarray}
  where $\dim(G_V/P)_{\underline{\tau},\underline{\omega}}=\sum_i\underline{\tau}_i\underline{\omega}_i$ and $\underline{\tau}_i=\sum_l \tau^l_i.$ Moreover,
  \begin{eqnarray}\label{eq20}
    & &\dim(G_V/P)_{\underline{\tau}'\underline{\tau}'',\underline{\omega}'\underline{\omega}''}
    -\dim(G_V/P)_{\underline{\tau}',\underline{\omega}'}-
    \dim(G_V/P)_{\underline{\tau}'',\underline{\omega}''}
     = \sum_i\tau'_i\omega''_i+\tau''_i\omega'_i.
  \end{eqnarray}
  In general, if $\underline{\nu}=\underline{\tau}+\underline{\omega}$, we have
  $$d(\underline{\nu})-d(\underline{\tau})-d(\underline{\omega})=\sum_{h;l'<l}
  \tau^{l'}_{h'}\omega^l_{h''}+\omega^{l'}_{h'}\tau^l_{h''}+
  \sum_{i;l<l'}\tau_i^{l'}\omega_i^l+\omega_i^{l'}\tau_i^l.$$
  Hence,
  \begin{eqnarray}\label{eq18}
  & &d(\underline{\nu}'\underline{\nu}'')-d(\underline{\tau}'\underline{\tau}'')-
  d(\underline{\omega}'\underline{\omega}'')-(d(\underline{\nu}')-
  d(\underline{\tau}')-d(\underline{\omega}'))\\
  & &\ \ -(d(\underline{\nu}'')-d(\underline{\tau}'')-d(\underline{\omega}''))\nonumber
   =\sum_h \tau'_{h'}\omega''_{h''}+\omega'_{h'}\tau''_{h''}+\sum_i\tau''_i\omega'_i+\omega''_i\tau'_i, \nonumber
  \end{eqnarray}
  where $\tau'_{h'}=\sum_l \tau'^l_{h'}$.
  By (\ref{eq6}), we have
  \begin{eqnarray}\label{eq19}
    M(\underline{\tau}'\underline{\tau}'',\underline{\omega}'\underline{\omega}'')-
   M(\underline{\tau}',\underline{\omega}')-
   M(\underline{\tau}'',\underline{\omega}'')
    = \sum_h \tau'_{h'}\omega''_{h''}+\sum_i\tau''_i\omega_i'.
  \end{eqnarray}

  By (\ref{eq20}), (\ref{eq18}) and (\ref{eq19}), the right hand side of (\ref{eq17}) is 0. This shows that (\ref{eq16}) holds.
\end{proof}

\subsection{Bar involution}

Given any pure complex $K \in \mathfrak{Q}_V^m$, let
$$\mathfrak{D}(K)=(\mathbb{D}K)(-{\rm wt}(K)).$$
 Since objects in $\mathfrak{Q}_V^m$ are semisimple, this defines a functor $\mathfrak{D}: \mathfrak{Q}_V^m \rightarrow \mathfrak{Q}_V^m$.
We notice that $\mathfrak{D}^2$ is the identity functor. By (\ref{eq3}) and Section \ref{sec3.1}(d), $\mathbb{D}\mathfrak{L}_{\nu}=\mathfrak{L}_{\nu}(-d(\nu))$ and ${\rm wt}(\mathfrak{L}_{\nu})=-d(\nu)$. So we have
\begin{equation}\label{eq103}
  \mathfrak{D}(\mathfrak{L}_{\nu})=\mathfrak{L}_{\nu}.
\end{equation}

\begin{prop}\label{prop12}
If both $K \in \mathfrak{Q}^m_T$ and $L \in \mathfrak{Q}^m_W$ are pure, then we have
$$\mathfrak{D}\circ \mathfrak{Ind} (K\boxtimes L)=\mathfrak{Ind}\circ \mathfrak{D} (K\boxtimes L).$$
\end{prop}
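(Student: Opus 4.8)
The plan is to reduce the statement to the level of the additive generators $\mathfrak{L}_{\underline{\nu}}$ and then verify compatibility of $\mathfrak{D}$ with $\mathfrak{Ind}$ on those. By Theorem~\ref{thm1} and Proposition~\ref{prop3}(b), every pure complex of weight zero in $\mathfrak{Q}^m_T$ (resp. $\mathfrak{Q}^m_W$) is, up to shifts and Tate twists, an $\mathfrak{A}$-linear combination of the $\mathfrak{L}_{\underline{\nu}'}$ (resp. $\mathfrak{L}_{\underline{\nu}''}$), and $\mathfrak{Ind}$ is additive and $\mathfrak{A}$-bilinear in a controlled way; likewise $\mathfrak{D}$ acts $\mathfrak{A}$-semilinearly, sending $v\mapsto v^{-1}$, $t\mapsto t^{-1}$ (this follows from Section~\ref{sec3.1}(c),(d) together with the definitions in~(\ref{eq10})). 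So first I would record precisely how $\mathfrak{D}$ interacts with the shift and Tate-twist operators and with $\boxtimes$, namely $\mathfrak{D}(K\boxtimes L) = \mathfrak{D}(K)\boxtimes\mathfrak{D}(L)$ for pure $K,L$, which is immediate from Section~\ref{sec3.1}(d),(f) and the fact that $\mathrm{wt}(K\boxtimes L)=\mathrm{wt}(K)+\mathrm{wt}(L)$.

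Next I would prove the identity on the generators: by~(\ref{eq103}) we have $\mathfrak{D}(\mathfrak{L}_{\underline{\nu}'})=\mathfrak{L}_{\underline{\nu}'}$ and $\mathfrak{D}(\mathfrak{L}_{\underline{\nu}''})=\mathfrak{L}_{\underline{\nu}''}$, hence $\mathfrak{D}(\mathfrak{L}_{\underline{\nu}'}\boxtimes\mathfrak{L}_{\underline{\nu}''})=\mathfrak{L}_{\underline{\nu}'}\boxtimes\mathfrak{L}_{\underline{\nu}''}$, so the right-hand side is $\mathfrak{Ind}(\mathfrak{L}_{\underline{\nu}'}\boxtimes\mathfrak{L}_{\underline{\nu}''})=\mathfrak{L}_{\underline{\nu}'\underline{\nu}''}$ by Proposition~\ref{prop3}(b). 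For the left-hand side I compute $\mathfrak{D}(\mathfrak{Ind}(\mathfrak{L}_{\underline{\nu}'}\boxtimes\mathfrak{L}_{\underline{\nu}''}))=\mathfrak{D}(\mathfrak{L}_{\underline{\nu}'\underline{\nu}''})=\mathfrak{L}_{\underline{\nu}'\underline{\nu}''}$, again by Proposition~\ref{prop3}(b) and~(\ref{eq103}). Thus the two sides agree on all pairs $(\mathfrak{L}_{\underline{\nu}'},\mathfrak{L}_{\underline{\nu}''})$.

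To conclude for arbitrary pure $K,L$, I would expand $K$ and $L$ in the generators. Write (in the Grothendieck group, after the appropriate shift/twist normalization) $K=\sum_a c_a\,\mathfrak{L}_{\underline{\nu}'_a}$ and $L=\sum_b d_b\,\mathfrak{L}_{\underline{\nu}''_b}$ with $c_a,d_b\in\mathfrak{A}$; since $K$ and $L$ are pure, the coefficients that can occur are monomials in $v^{\pm1},t^{\pm1}$ attached to generators of matching weight, and purity of $\mathfrak{Ind}(K\boxtimes L)$ is guaranteed by Proposition~\ref{prop2}. Because $\mathfrak{Ind}$ is $\mathfrak{A}$-bilinear and $\mathfrak{D}$ is $\mathfrak{A}$-semilinear with $\overline{v}=v^{-1}$, $\overline{t}=t^{-1}$, both sides of the desired identity are obtained from their values on the pairs of generators by applying the same semilinear rule; since these values agree by the previous paragraph, the general identity follows. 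The main obstacle I anticipate is bookkeeping: making sure the shift and Tate-twist exponents produced by $\mathfrak{D}$ (which depend on $\mathrm{wt}$) match on both sides after $\mathfrak{Ind}$, i.e. that $\mathrm{wt}(\mathfrak{Ind}(K\boxtimes L))$ is exactly what Proposition~\ref{prop3}(a) predicts, so that normalizing by $(-\mathrm{wt})$ on the outside equals inducing the individually normalized complexes; this is really the content of Proposition~\ref{prop2} and~(\ref{eq5}), and once it is invoked carefully the argument is formal.
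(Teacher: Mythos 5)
Your argument is essentially correct, but it takes a genuinely different route from the paper. The paper's proof is a one-liner: Lusztig's Proposition 9.2.5 already gives $\mathbb{D}\circ\widetilde{\Ind}\cong\widetilde{\Ind}\circ\mathbb{D}$ up to shift and Tate twist, so one only needs to match the weights on the two sides, which is immediate because $\mathfrak{D}$ preserves the weight of a pure complex. You instead reduce to the additive generators via Theorem~\ref{thm1}, use $\mathfrak{D}(\mathfrak{L}_{\underline{\nu}})=\mathfrak{L}_{\underline{\nu}}$ from (\ref{eq103}) together with $\mathfrak{Ind}(\mathfrak{L}_{\underline{\nu}'}\boxtimes\mathfrak{L}_{\underline{\nu}''})=\mathfrak{L}_{\underline{\nu}'\underline{\nu}''}$ from Proposition~\ref{prop3}(b), and extend by (semi)linearity; this is valid and has the advantage of not invoking Lusztig's duality statement for $\widetilde{\Ind}$, at the cost of only proving an identity in the split Grothendieck group --- you should say explicitly that semisimplicity (Krull--Schmidt) lets you pass from equality of classes in $\mathfrak{K}$ back to an isomorphism of objects, since the generators only span $\mathfrak{K}_V$ by formal differences, not by direct-sum decompositions.

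Two corrections. First, $\mathfrak{D}$ is \emph{not} semilinear for $t\mapsto t^{-1}$: the normalization $\mathfrak{D}(K)=(\mathbb{D}K)(-\mathrm{wt}(K))$ is designed precisely so that $\mathfrak{D}(t\cdot K)=t\cdot\mathfrak{D}(K)$, i.e.\ $\mathfrak{D}$ is $\mathbb{Q}(t)$-linear and only inverts $v$ (this is the content of Proposition~\ref{prop13}). Your argument survives because whichever multiplicative rule governs $\mathfrak{D}$ is applied identically on both sides of the identity, so the comparison still collapses to the generator case, but the rule as you stated it is wrong. Second, the ``bookkeeping'' you defer in the last paragraph is not actually an outstanding obstacle in your approach: once you work with the normalized objects $\mathfrak{L}_{\underline{\nu}}$ and $\mathfrak{Ind}$, the shift/twist matching is exactly what (\ref{eq103}) and Proposition~\ref{prop3}(b) encode, so nothing further needs to be checked.
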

\begin{proof}
  By Proposition 9.2.5 in \cite{Lusztigbook}, it is enough to check that the weights on both sides equal. The proposition follows from the fact that $\mathfrak{D}$ preserves the weights of pure complexes.
\end{proof}
Define an involution $^-{}\!: \mathfrak{K} \rightarrow \mathfrak{K}$ by
\begin{eqnarray}\label{eq67}
v \mapsto v^{-1}, \ t \mapsto t,\ {\rm and}\ K\mapsto \mathfrak{D}(K).
\end{eqnarray}
\begin{prop}\label{prop13}
The map $^-{}\!: \mathfrak{K} \rightarrow \mathfrak{K}$ is a $\mathbb{Q}(t)$-algebra involution.
\end{prop}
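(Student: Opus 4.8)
The plan is to reduce the statement to material already in hand: the behaviour of the functor $\mathfrak{D}$ under the shift and Tate twist, the identity $\mathfrak{D}^2=\mathrm{id}$ noted before (\ref{eq103}), the fact (Theorem~\ref{thm2}(1)) that $\mathfrak{Ind}$ is $\mathfrak{A}$-bilinear, and the two computations $\mathfrak{D}(\mathfrak{L}_{\underline{\nu}})=\mathfrak{L}_{\underline{\nu}}$ in (\ref{eq103}) and $\mathfrak{Ind}(\mathfrak{L}_{\underline{\nu}'}\boxtimes\mathfrak{L}_{\underline{\nu}''})=\mathfrak{L}_{\underline{\nu}'\underline{\nu}''}$ in Proposition~\ref{prop3}(b), together with the spanning result of Theorem~\ref{thm1}.

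First I would verify that (\ref{eq67}) really defines an $\mathfrak{A}$-semilinear map, inverting $v$ and fixing $t$. This is a short weight bookkeeping: for a pure complex $K$, Section~\ref{sec3.1}(c) gives $\mathrm{wt}(K[1](\tfrac12))=\mathrm{wt}(K)$ and $\mathrm{wt}(K(\tfrac12))=\mathrm{wt}(K)-1$, and then Section~\ref{sec3.1}(d) yields $\mathfrak{D}(K[1](\tfrac12))=\mathfrak{D}(K)[-1](-\tfrac12)$ and $\mathfrak{D}(K(\tfrac12))=\mathfrak{D}(K)(\tfrac12)$. In the notation of (\ref{eq10}) these read $\overline{v\cdot K}=v^{-1}\cdot\overline{K}$ and $\overline{t\cdot K}=t\cdot\overline{K}$. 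Since $\mathfrak{D}$ is additive on the semisimple category $\mathfrak{Q}^m_V$, it descends to $\mathfrak{K}$, and together with the two identities just displayed this makes the map of (\ref{eq67}) a well-defined $\mathbb{Z}[t^{\pm1}]$-linear, hence (after extension of scalars) $\mathbb{Q}(t)$-linear, map $\mathfrak{K}\to\mathfrak{K}$. That it is an involution is then immediate from $\mathfrak{D}^2=\mathrm{id}$ and from $v\mapsto v^{-1}\mapsto v$, $t\mapsto t\mapsto t$ combined with semilinearity.

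It remains to prove multiplicativity, $\overline{K\cdot L}=\overline{K}\cdot\overline{L}$. Since $\mathfrak{Ind}$ is $\mathfrak{A}$-bilinear and the bar map is $\mathfrak{A}$-semilinear, and since by Theorem~\ref{thm1} every element of $\mathfrak{K}$ is an $\mathfrak{A}$-linear combination of the $\mathfrak{L}_{\underline{\nu}}$, it suffices to treat $K=\mathfrak{L}_{\underline{\nu}'}$ and $L=\mathfrak{L}_{\underline{\nu}''}$. Here $\overline{K}=\mathfrak{L}_{\underline{\nu}'}$ and $\overline{L}=\mathfrak{L}_{\underline{\nu}''}$ by (\ref{eq103}), while $\mathfrak{L}_{\underline{\nu}'}\cdot\mathfrak{L}_{\underline{\nu}''}=\mathfrak{Ind}(\mathfrak{L}_{\underline{\nu}'}\boxtimes\mathfrak{L}_{\underline{\nu}''})=\mathfrak{L}_{\underline{\nu}'\underline{\nu}''}$ by Proposition~\ref{prop3}(b); applying (\ref{eq103}) once more gives $\overline{\mathfrak{L}_{\underline{\nu}'}\cdot\mathfrak{L}_{\underline{\nu}''}}=\mathfrak{L}_{\underline{\nu}'\underline{\nu}''}=\overline{K}\cdot\overline{L}$, as needed. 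Alternatively, one may check multiplicativity on the $\mathfrak{A}$-basis of simple perverse sheaves of weight $0$: if $K,L$ are such, then $\mathfrak{Ind}(K\boxtimes L)$ is again pure by Proposition~\ref{prop3}(a), $\overline{K}=\mathbb{D}K$ and $\overline{L}=\mathbb{D}L$ are again simple of weight $0$ by Section~\ref{sec3.1}(e), and Proposition~\ref{prop12} gives $\overline{K\cdot L}=\mathfrak{D}\,\mathfrak{Ind}(K\boxtimes L)=\mathfrak{Ind}(\mathfrak{D}K\boxtimes\mathfrak{D}L)=\overline{K}\cdot\overline{L}$.

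I do not expect a genuine obstacle: the substantive work is already contained in Propositions~\ref{prop3} and \ref{prop12} and in identity (\ref{eq103}). The one point that needs a little care is the very first step --- confirming that $\mathfrak{D}$, which is a priori defined only on pure complexes by $K\mapsto(\mathbb{D}K)(-\mathrm{wt}(K))$, extends consistently to all of $\mathfrak{K}$ and acts on the scalars $v,t$ as claimed --- and this is exactly what the weight computation in the second paragraph settles.
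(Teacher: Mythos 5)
Your proposal is correct and follows essentially the same route as the paper: reduce multiplicativity to the generators $\mathfrak{L}_{\underline{\nu}}$ via Theorem~\ref{thm1}, Proposition~\ref{prop3}(b) and (\ref{eq103}), and then verify the semilinearity identities $\overline{v\cdot K}=v^{-1}\cdot\overline{K}$, $\overline{t\cdot K}=t\cdot\overline{K}$ by the same weight computation with $\mathfrak{D}(K)=(\mathbb{D}K)(-\mathrm{wt}(K))$. The only difference is that you spell out the reduction and add an alternative check of multiplicativity via Proposition~\ref{prop12}, both of which the paper leaves implicit.
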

\begin{proof}
  By Proposition \ref{prop3}, Theorem \ref{thm1} and (\ref{eq103}), it is enough to check that
  $$\overline{v \cdot K}=v^{-1}\cdot \overline{K}, \ {\rm and}\ \overline{t \cdot K}=t \cdot \overline{K}, \ \forall K\in \mathfrak{K}.$$
  By the definition of $``^-"$, we can assume that $K$ is the isomorphism class of a pure complex. Set ${\rm wt}(K)=w$. Then we have
  $$\overline{v \cdot K}=\overline{K[1](\frac{1}{2})}=(\mathbb{D}(K[1](\frac{1}{2})))(-w)=(\mathbb{D}(K))[-1](-w-\frac{1}{2}).$$
  On the other hand,
  $$v^{-1}\cdot \overline{K}=v^{-1}\cdot(\mathbb{D}K)(-w)=(\mathbb{D}(K))[-1](-w-\frac{1}{2}).$$
  Similarly, one can check that $\overline{t \cdot K}=t\cdot \overline{K}$.
\end{proof}

\subsection{Bilinear form}\label{sec5.4}

Recall that for any two $G$-equivariant semisimple complexes $K$, $L$ on $X$, one can define a number, $d_j(K,L) \in \mathbb{Z}_{\geq 0}$, for any $j \in \mathbb{Z}$ which Lusztig denotes by  $d_j(X, G; K,L)$ (\cite{GL,Lusztigcharsheaf1,Lusztigbook}).
Given any two pure complexes $K,L \in \mathfrak{Q}^m_V$, we define
\begin{equation}\label{eq41}
  (K,L)=\sum_{j\in \mathbb{Z}}d_j(K,L)v^{-j}t^{-({\rm wt}(K)+{\rm wt}(L))}.
\end{equation}
Since each element in $\mathfrak{Q}^m_V$ is semisimple and simple perverse sheaves are pure, we can extend this definition to entire $\mathfrak{Q}^m_V$. This induces a bilinear form on $\mathfrak{K}$.

Given any pure complexes $K_1, K_2 \in \mathfrak{Q}^m_T$ and $L_1, L_2 \in \mathfrak{Q}^m_W$, we define
$$(K_1 \boxtimes L_1, K_2 \boxtimes L_2)=t^{2d}(K_1, K_2)\cdot (L_1, L_2),$$
where $d=\sum_h\dim T_{h'}\dim W_{h''}+\sum_i\dim T_i\dim W_i.$
Similarly, this can be extended linearly to a bilinear form on $\mathfrak{K} \otimes \mathfrak{K}.$

\begin{prop}\label{prop9}
  Let $K \in \mathfrak{Q}^m_T, L \in \mathfrak{Q}^m_W$ and $M \in \mathfrak{Q}^m_V$ such that $V=T\oplus W$, then
  $$(\mathfrak{Ind}^V_{T,W}(K\boxtimes L), M)=(K\boxtimes L, \mathfrak{Res}^V_{T,W}M).$$
\end{prop}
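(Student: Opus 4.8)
The plan is to prove the adjunction-type identity
$$(\mathfrak{Ind}^V_{T,W}(K\boxtimes L), M) = (K\boxtimes L, \mathfrak{Res}^V_{T,W}M)$$
by reducing to the analogous statement in Lusztig's one-parameter setting (Lemma 13.1.2 in \cite{Lusztigbook}) and then carefully tracking how the Tate twists contribute to the $v$- and $t$-powers. By Theorem \ref{thm1} and the $\mathfrak{A}$-bilinearity of $(\ ,\ )$, it suffices to verify the identity when $K = \mathfrak{L}_{\underline{\tau}}$, $L = \mathfrak{L}_{\underline{\omega}}$, and $M = \mathfrak{L}_{\underline{\nu}}$ for sequences $\underline{\tau}$, $\underline{\omega}$ in $\mathbb{N}^I$ with $\sum_l \tau_i^l = \dim T_i$, $\sum_l \omega_i^l = \dim W_i$, and $\underline{\nu}$ with $\sum_l \nu_i^l = \dim V_i$. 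Indeed, both sides are $\mathfrak{A}$-linear in $M$; for the left side $\overline{v} = v^{-1}$-type issues do not arise since $(\ ,\ )$ is honestly bilinear over $\mathfrak{A}$, and $\mathfrak{Ind}$ and $\mathfrak{Res}$ are $\mathfrak{A}$-linear.

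First I would recall from Lusztig that $d_j(X, G; K, L)$ is insensitive to Tate twists but shifts as $d_j(K[1], L) = d_{j+1}(K, L)$ (equivalently $d_j(K(n), L) = d_j(K, L)$, $d_j(K[n],L)=d_{j-n}(K,L)$), so that the generating function $\sum_j d_j(K,L) v^{-j}$ picks up $v^{\mp n}$ under $K \mapsto K[\pm n]$ and is unchanged under Tate twist; the $t$-power in (\ref{eq41}) is carried separately by the weights. Next, using Proposition \ref{prop3}(b) I can express $\mathfrak{Ind}^V_{T,W}(\mathfrak{L}_{\underline{\tau}} \boxtimes \mathfrak{L}_{\underline{\omega}}) = \mathfrak{L}_{\underline{\tau}\underline{\omega}}$, and using Corollary \ref{cor2} I can write $\mathfrak{Res}^V_{T,W}\mathfrak{L}_{\underline{\nu}} = \oplus_{\underline{\tau}', \underline{\omega}'} \mathfrak{L}_{\underline{\tau}'} \boxtimes \mathfrak{L}_{\underline{\omega}'}[M'](M'')$ with $M', M''$ as in (\ref{eq7}), (\ref{eq8}). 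So the right side becomes $\sum_{\underline{\tau}', \underline{\omega}'} v^{-M'(\underline{\tau}',\underline{\omega}')} t^{-2M''(\underline{\tau}',\underline{\omega}')} (\mathfrak{L}_{\underline{\tau}} \boxtimes \mathfrak{L}_{\underline{\omega}}, \mathfrak{L}_{\underline{\tau}'} \boxtimes \mathfrak{L}_{\underline{\omega}'})$, which by the definition of the form on $\mathfrak{K} \otimes \mathfrak{K}$ factors through $t^{2d}(\mathfrak{L}_{\underline{\tau}}, \mathfrak{L}_{\underline{\tau}'}) (\mathfrak{L}_{\underline{\omega}}, \mathfrak{L}_{\underline{\omega}'})$; the inner products $(\mathfrak{L}_{\underline{\tau}}, \mathfrak{L}_{\underline{\tau}'})$ vanish unless $\underline{\tau}' = \underline{\tau}$ up to the right dimension data, reducing the sum to a single term (or a controlled set of terms). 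On the left, I expand $(\mathfrak{L}_{\underline{\tau}\underline{\omega}}, \mathfrak{L}_{\underline{\nu}})$ directly via (\ref{eq41}), noting ${\rm wt}(\mathfrak{L}_{\underline{\nu}}) = -d(\underline{\nu})$ from (\ref{eq103})'s surrounding discussion.

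The matching then comes down to two bookkeeping verifications: that the $v$-exponents agree, which is exactly Lusztig's Lemma 13.1.2 computation once one observes that in the weight-zero normalization $\widetilde{L}_{\underline{\nu}}$ the relevant $d_j$'s are the same, modulo the explicit shift $M'(\underline{\tau},\underline{\omega})$; and that the $t$-exponents agree, which requires combining the definition $d = \sum_h \dim T_{h'}\dim W_{h''} + \sum_i \dim T_i \dim W_i$ with $M''$, $d(\underline{\nu})$, $d(\underline{\tau})$, $d(\underline{\omega})$, $M(\underline{\tau},\underline{\omega})$, and $\dim(G_V/P)$ — essentially the same arithmetic identity that appeared in the proof of Proposition \ref{prop30}, namely that the correction terms telescope. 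I would isolate the claim that
$$2d - 2M''(\underline{\tau},\underline{\omega}) = -\bigl({\rm wt}(\mathfrak{L}_{\underline{\tau}\underline{\omega}}) + {\rm wt}(\mathfrak{L}_{\underline{\nu}})\bigr) + \bigl({\rm wt}(\mathfrak{L}_{\underline{\tau}}) + {\rm wt}(\mathfrak{L}_{\underline{\omega}}) + {\rm wt}(\mathfrak{L}_{\underline{\nu}})\bigr)$$
-type relation holds, i.e. that the $t$-powers produced by the $d$-shift in the tensor form and the Tate twist $M''$ in $\mathfrak{Res}$ exactly reconstitute the weight difference; this follows from $d(\underline{\nu}) - d(\underline{\tau}) - d(\underline{\omega}) - \dim(G_V/P) - M(\underline{\tau},\underline{\omega}) = M''$ together with the expansion of $d(\underline{\nu}) - d(\underline{\tau}\underline{\omega})$ across the two blocks. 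The main obstacle I anticipate is not any single step but the combinatorial discipline of simultaneously tracking shift data and weight data through $\mathfrak{Ind}$, $\mathfrak{Res}$, and the tensor-form twist; the cleanest route is to do the entire argument on the un-normalized complexes $\widetilde{L}_{\underline{\nu}}$, where only honest cohomological shifts appear, invoke Lusztig's Lemma 13.1.2 verbatim for the $v$-part, and then add back the dimension shifts and Tate twists, checking the resulting exponent identities against those already established in (\ref{eq17})--(\ref{eq19}).
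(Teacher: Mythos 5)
Your overall strategy --- take the one-parameter statement as known and then check that the Tate twists and weights match --- is the same as the paper's, but the paper executes it much more directly. It never reduces to the generators $\mathfrak{L}_{\underline{\nu}}$: since $\mathfrak{Ind}$ and $\mathfrak{Res}$ are additive, it suffices to take $K$, $L$, $M$ pure, and then Lemma~7 of \cite{GL} supplies the adjunction at the level of the integers $d_j$, namely $d_j(\mathfrak{Ind}^V_{T,W}(K\boxtimes L),M)=d_j(K\boxtimes L,\mathfrak{Res}^V_{T,W}M)$, for \emph{arbitrary} such complexes. The $t$-exponents are then handled abstractly by the weight formulas ${\rm wt}(\mathfrak{Ind}^V_{T,W}(K\boxtimes L))={\rm wt}(K)+{\rm wt}(L)-d$ (Proposition~\ref{prop3}(a), with $d=d_1-d_2$) and ${\rm wt}(\mathfrak{Res}^V_{T,W}M)={\rm wt}(M)+d$ (Corollary~\ref{cor2}), which exactly reproduce the factor $t^{2d}$ built into the form on $\mathfrak{K}\otimes\mathfrak{K}$; together with the multiplicativity of $d_j$ under $\boxtimes$ (8.1.10(f) in \cite{Lusztigbook}) this closes the argument in a few lines. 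Your closing suggestion --- invoke the one-parameter adjunction verbatim for the $v$-part and then restore the shifts and twists --- is in effect this proof; the input you want is Lemma~7 of \cite{GL} rather than the explicit restriction formula.

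The step in your main plan that would fail is the assertion that $(\mathfrak{L}_{\underline{\tau}},\mathfrak{L}_{\underline{\tau}'})$ vanishes unless $\underline{\tau}'=\underline{\tau}$, ``reducing the sum to a single term.'' These complexes are not orthogonal: the pairing vanishes only when the total degrees $|\underline{\tau}|$, $|\underline{\tau}'|$ in $\mathbb{N}^I$ differ, so for fixed degrees the sum over the restriction decomposition of Corollary~\ref{cor2} retains many nonzero terms. Consequently the left side $(\mathfrak{L}_{\underline{\tau}\underline{\omega}},\mathfrak{L}_{\underline{\nu}})$ cannot be matched against the right side term by term through exponent bookkeeping; the identity relating $d_j(\mathfrak{L}_{\underline{\tau}\underline{\omega}},\mathfrak{L}_{\underline{\nu}})$ to the sum of the $d_j$'s of the pieces \emph{is} the geometric adjunction being proved, so the detour through the generators of Theorem~\ref{thm1} gains nothing and, as written, risks circularity. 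Once that step is replaced by the $d_j$-level adjunction from \cite{GL}, the remaining weight checks you list are correct (they amount to the statement ${\rm wt}(\mathfrak{Res}^V_{T,W}M)={\rm wt}(M)+d$ already recorded in Corollary~\ref{cor2}) and the argument goes through.
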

\begin{proof}Since both $\mathfrak{Ind}$ and $\mathfrak{Res}$ are additive functors, we can assume that $K,L$ and $M$ are pure complexes. By Lemma 7 in \cite{GL}, we have
\begin{eqnarray*}
   &(\mathfrak{Ind}^V_{T,W}(K\boxtimes L), M)
  = \sum_{j\in \mathbb{Z}} d_j(\mathfrak{Ind}^V_{T,W}(K\boxtimes L), M)v^{-j}t^{-({\rm wt}(\mathfrak{Ind}^V_{T,W}(K\boxtimes L))+{\rm wt}(M))}\hspace{40pt}\\
  &= \sum_{j\in \mathbb{Z}} d_j(K\boxtimes L, \mathfrak{Res}^V_{T,W}M)v^{-j}t^{-({\rm wt}(K)+{\rm wt}(L)+{\rm wt}(M))+d}\hspace{180pt}\\
  &= \sum_{j\in \mathbb{Z}} d_j(K\boxtimes L, \oplus M_1 \boxtimes M_2)v^{-j}t^{-({\rm wt}(K)+{\rm wt}(L)+{\rm wt}(M_1)+{\rm wt}(M_2))+2d}
  = \sum (K, M_1)\cdot (L, M_2)t^{2d}.
\end{eqnarray*}
The last equality follows from 8.1.10 (f) in \cite{Lusztigbook}.
On the other hand,
$$(K\boxtimes L, \mathfrak{Res}^V_{T,W}M)=(K\boxtimes L, \oplus M_1\boxtimes M_2)=\sum (K, M_1)\cdot (L, M_2)t^{2d}.$$
So $(\mathfrak{Ind}^V_{T,W}(K\boxtimes L), M)=(K\boxtimes L, \mathfrak{Res}^V_{T,W}M).$
\end{proof}

\subsection{Fourier-Deligne transformation}

Let ${}'\!\Omega$ be a second quiver such that the underlying graph is the same as that of $\Omega$.
Denote the source of
the arrow $h$ in ${}'\!\Omega$ by $s(h)={}'\!h$ and its target by $t(h)={}''\!h$. Recall that the source and the target of
the arrow $h$ in $\Omega$ are denoted by $s(h)=h'$ and $t(h)=h''$, respectively. Let $\Omega_1=
\left\{ h \in \Omega \mid {}'\!h=h', {}''\!h=h'' \right\}$ and $\Omega_2= \left\{
h \in \Omega \mid {}'\!h=h'',{}''\!h=h' \right\}$. For a given $I$-graded
$k$-vector space $V$, we denote
\begin{equation*}
\begin{split}
&{}'\!E_V=\bigoplus_{h \in \Omega_1} \Hom(V_{h'},V_{h''}) \oplus \bigoplus_{h \in
\Omega_2} \Hom(V_{h''},V_{h'}),\\
&\dot{E}_V=\bigoplus_{h \in \Omega_1} \Hom(V_{h'},V_{h''}) \oplus
\bigoplus_{h \in \Omega_2} \Hom(V_{h'},V_{h''}) \oplus \bigoplus_{h \in \Omega_2}
\Hom(V_{h''},V_{h'}).
\end{split}
\end{equation*}
 We have the natural projection maps
$$\xymatrix{E_V & \dot{E}_V \ar[l]_-{s}\ar[r]^-{t}& {}'\!E_V.}$$
Recall that to a nontrivial character, $\varphi$, of $\mathbb{F}_p$, one can associate a local system $\mathcal{L}_{\varphi}$ on $k$ of rank one.
Let $\mathcal{T}_V: \dot{E}_V  \rightarrow k $ be the map defined by
\begin{equation}\label{eq9}
\mathcal{T}_V(a,b,c)=\sum_{h \in \Omega_2}  Tr(V_{h'}
\xrightarrow{b} V_{h''} \xrightarrow{c} V_{h'}),
\end{equation}
 where $Tr$ is the
trace function.
Denote $\mathcal{L}_{\mathcal{T}_V}=\mathcal{T}_V^*\mathcal{L}_{\varphi}$ which
is a rank one $\overline{\mathbb{Q}}_l$-local system on $\dot{E}_V$.
The {\it Fourier-Deligne transformation} $\Phi: \mathcal{D}(E_V) \rightarrow
\mathcal{D}({}'\!E_V)$ is defined by
$$ L \mapsto t_!(s^*(L) \otimes
\mathcal{L}_{\mathcal{T}_V}[d_V](\frac{1}{2}d_V)),$$
 where $d_V={\rm dim} (\oplus_{h \in \Omega_2}
\Hom(V_{h'}, V_{h''}))$.

For the quiver ${}'\!\Omega$, one can similarly define ${}'\!\widetilde{L}_{\underline{\nu}}$, (resp. ${}'\!\mathfrak{L}_{\underline{\nu}}$ and ${}'\!\mathfrak{Q}^m_V$) as we define $\widetilde{L}_{\underline{\nu}}$, (resp. $\mathfrak{L}_{\underline{\nu}}$ and $\mathfrak{Q}^m_V$) for the quiver $\Omega$.

\begin{prop}\label{prop6}
 {\rm (a)} The
Fourier-Deligne transformation $\Phi$ preserves purity and weight.

 {\rm (b)} $\Phi(\widetilde{L}_{\underline{\nu}})={}'\!\widetilde{L}_{\underline{\nu}}[M](\frac{M}{2}),$ where
 $M=\sum_{h \in \Omega_2; l'<l} \nu^l_{h'}\nu^{l'}_{h''}-\nu^{l'}_{h'}\nu^{l}_{h''}.$

 {\rm (c)} $\Phi(\mathfrak{L}_{\underline{\nu}})={}'\!\mathfrak{L}_{\underline{\nu}}(-\frac{M}{2}).$
\end{prop}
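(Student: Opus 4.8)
The plan is to follow Lusztig's treatment of the Fourier--Deligne transform (Section 10.3 and Chapter 11 of \cite{Lusztigbook}) and simply track the Tate twists that Lusztig suppresses. For part (a), I would invoke the standard facts about $\Phi$: it is an equivalence of triangulated categories, it sends $G_V$-equivariant semisimple complexes to semisimple complexes, and crucially $\mathcal{L}_\varphi$ is pure of weight $0$ (a character sheaf / Artin--Schreier sheaf has weight $0$, cf.\ \cite{FK}, Deligne's \emph{Weil II}). Then purity and weight are preserved because $s^*$ preserves weights (Section \ref{sec3.1}(h), since $s$ is a morphism with the relevant smoothness — actually $s$ is smooth with connected fibers here, as $\dot{E}_V \to E_V$ is a vector bundle projection), tensoring with the weight-$0$ local system $\mathcal{L}_{\mathcal{T}_V}$ preserves weight, the shift and Tate twist $[d_V](\tfrac12 d_V)$ shift the weight by $d_V - 2\cdot\tfrac12 d_V = 0$ by Section \ref{sec3.1}(c), and finally $t_!$ applied to a \emph{proper} enough situation — more precisely one uses that on the relevant locus $t$ restricted to the support behaves properly, or rather one cites the fundamental theorem that $\Phi$ preserves weights outright (this is part of Deligne's stationary phase, and is recorded in \cite{Lusztigbook}, Chapter 8, or \cite{BBD}). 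So part (a) is essentially a citation plus a weight bookkeeping line.

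For part (b), I would repeat verbatim the computation in \cite{Lusztigbook} that establishes $\Phi(\widetilde{L}_{\underline{\nu}}) \simeq {}'\!\widetilde{L}_{\underline{\nu}}[M]$ up to a Tate twist, where $M$ is the displayed integer; this is a direct geometric argument comparing the two flag varieties $\widetilde{\mathcal{F}}_{\underline\nu}$ (for $\Omega$) and ${}'\!\widetilde{\mathcal{F}}_{\underline\nu}$ (for ${}'\!\Omega$) via the diagram $E_V \leftarrow \dot{E}_V \rightarrow {}'\!E_V$ and noting that the $x$-stable flag condition is preserved. Then, to pin down the Tate twist, I use part (a): both $\widetilde{L}_{\underline{\nu}}$ and ${}'\!\widetilde{L}_{\underline{\nu}}$ are pure of weight $0$ by Lemma \ref{lem1}, so $\Phi(\widetilde{L}_{\underline{\nu}})$ is pure of weight $0$; on the other hand ${}'\!\widetilde{L}_{\underline{\nu}}[M](n)$ has weight $M - 2n$ by Section \ref{sec3.1}(c). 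Forcing $M - 2n = 0$ gives $n = M/2$, which identifies the twist as $(\tfrac{M}{2})$ and proves (b). One should note here that $M/2$ is indeed in $\tfrac12\mathbb{Z}$, consistent with the category $\mathfrak{Q}^m_V$, and record that $M$ depends only on $\underline\nu$ and the partition $\Omega = \Omega_1 \sqcup \Omega_2$.

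Part (c) is then pure bookkeeping from (b) and the definition \eqref{eq3}. We have $\mathfrak{L}_{\underline\nu} = \widetilde{L}_{\underline\nu}[d(\underline\nu)](d(\underline\nu))$, and since $\Phi$ commutes with shifts and Tate twists,
\begin{equation*}
\Phi(\mathfrak{L}_{\underline\nu}) = \Phi(\widetilde{L}_{\underline\nu})[d(\underline\nu)](d(\underline\nu)) = {}'\!\widetilde{L}_{\underline\nu}[M + d(\underline\nu)]\left(\tfrac{M}{2} + d(\underline\nu)\right).
\end{equation*}
Now ${}'\!\mathfrak{L}_{\underline\nu} = {}'\!\widetilde{L}_{\underline\nu}[d'(\underline\nu)](d'(\underline\nu))$ where $d'(\underline\nu)$ is the dimension formula \eqref{eq102} computed for the quiver ${}'\!\Omega$; comparing arrow contributions, $d'(\underline\nu) - d(\underline\nu) = \sum_{h\in\Omega_2; l'<l}(\nu^{l'}_{{}''h}\nu^l_{{}'h} - \nu^{l'}_{h'}\nu^l_{h''})$, and one checks this equals exactly $M$ (this is the same combinatorial identity underlying (b), since the source/target of arrows in $\Omega_2$ are swapped in passing to ${}'\!\Omega$). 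Hence ${}'\!\widetilde{L}_{\underline\nu}[M + d(\underline\nu)] = {}'\!\mathfrak{L}_{\underline\nu}[M + d(\underline\nu) - d'(\underline\nu)] = {}'\!\mathfrak{L}_{\underline\nu}[0] = {}'\!\mathfrak{L}_{\underline\nu}$ on the level of shifts, and we are left with the Tate twist $(\tfrac{M}{2} + d(\underline\nu) - d'(\underline\nu)) = (\tfrac{M}{2} - M) = (-\tfrac{M}{2})$, giving $\Phi(\mathfrak{L}_{\underline\nu}) = {}'\!\mathfrak{L}_{\underline\nu}(-\tfrac{M}{2})$ as claimed. The main obstacle I anticipate is not any of these steps individually but making sure the weight-preservation statement in (a) is on solid footing — i.e.\ that $t_!$ really does preserve weight in this non-proper setting. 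The clean way around it is to lean on the established theorem (stationary phase / \cite{BBD},\cite{FK}) that the Fourier--Deligne transform is exact for the perverse $t$-structure and preserves the weight filtration; once that black box is invoked, everything else is elementary twist-tracking that parallels Lusztig word for word.
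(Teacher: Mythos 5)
Your parts (b) and (c) follow the paper's proof essentially verbatim: cite Lusztig's Proposition 10.2.2 for the isomorphism up to an undetermined Tate twist, pin the twist down by matching weights (both sides pure of weight $0$ by Lemma~\ref{lem1} and part (a)), and then deduce (c) from the identity ${}'\!d(\underline{\nu})=d(\underline{\nu})+M$. (Minor slip: in your verification of that identity you wrote the ${}'\!\Omega$-contribution as $\nu^{l'}_{{}''\!h}\nu^{l}_{{}'\!h}$, i.e.\ target in the $l'$ slot; per \eqref{eq102} it should be $\nu^{l'}_{{}'\!h}\nu^{l}_{{}''\!h}$, which for $h\in\Omega_2$ gives $\nu^{l'}_{h''}\nu^{l}_{h'}$ and hence the difference $M$ as you claim. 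As written the difference would vanish.)

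The real divergence is in part (a), precisely at the point you flag as the obstacle: $t$ is not proper, so $t_!$ only gives the upper weight bound $\mathcal{D}_{\leq w}$. You resolve this by invoking the general theorem that the Fourier--Deligne transform preserves weights (Katz--Laumon / stationary phase); that is correct mathematics, though \cite{BBD} and \cite{FK} are not really where that statement lives. The paper instead gives a short self-contained argument: since $t_!(s^*L\otimes\mathcal{L}_{\mathcal{T}_V})\in\mathcal{D}_{\leq w}$ and $t_*(s^*L\otimes\mathcal{L}_{\mathcal{T}_V})\in\mathcal{D}_{\geq w}$ by the standard estimates of Section~\ref{sec3.1}(g), the canonical isomorphism $t_!(s^*L\otimes\mathcal{L}_{\mathcal{T}_V})\simeq t_*(s^*L\otimes\mathcal{L}_{\mathcal{T}_V})$ from \cite[2.1.3]{hotta1984} forces the pushforward to lie in both subcategories, hence to be pure of weight $w$. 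That $t_!\simeq t_*$ identity is exactly the ingredient that replaces your black box; it buys a proof that stays entirely within the weight formalism already set up in Section~\ref{sec3.1}, at the cost of one extra citation specific to the Fourier situation. Either route is acceptable, but if you want the argument to be checkable from the stated axioms (a)--(h), the $t_!\simeq t_*$ route is the one to take.
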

\begin{proof}
  By the definition of $\mathcal{L}_{\mathcal{T}_V}$ and Section \ref{sec3.1} (h), we have ${\rm wt}(\mathcal{L}_{\mathcal{T}_V})=0$. Set ${\rm wt}(L)=w$. By Section \ref{sec3.1} (h) again, we have
${\rm wt}(s^*(L) \otimes
\mathcal{L}_{\mathcal{T}_V})={\rm wt}(L)=w,$
i.e., $s^*(L) \otimes
\mathcal{L}_{\mathcal{T}_V} \in \mathcal{D}_{\leq w}({\dot E}_V)\bigcap \mathcal{D}_{\geq w}({\dot E}_V)$.
So
\begin{equation}\label{eq76}
t_!(s^*(L) \otimes
\mathcal{L}_{\mathcal{T}_V}) \in \mathcal{D}_{\leq w}({\dot E}_V).
\end{equation}
On the other hand, $t_*(s^*(L) \otimes
\mathcal{L}_{\mathcal{T}_V}) \in \mathcal{D}_{\geq w}({\dot E}_V).$
By ~\cite[2.1.3]{hotta1984},
$$t_!(s^*(L) \otimes
\mathcal{L}_{\mathcal{T}_V}) \simeq  t_*(s^*(L) \otimes
\mathcal{L}_{\mathcal{T}_V}),\quad \forall L\in \mathcal{D}(E_V).$$
Therefore, we have
\begin{equation}\label{eq77}
t_!(s^*(L) \otimes
\mathcal{L}_{\mathcal{T}_V}) \in \mathcal{D}_{\geq w}({\dot E}_V).
\end{equation}
By (\ref{eq76}) and (\ref{eq77}), we have ${\rm wt}(t_!(s^*(L) \otimes
\mathcal{L}_{\mathcal{T}_V}))=w={\rm wt}(L)$. Part (a) follows.

 By Proposition 10.2.2 in \cite{Lusztigbook}, we have
$\Phi(\widetilde{L}_{\underline{\nu}})\simeq {}'\!\widetilde{L}_{\underline{\nu}}[M]$ up to a Tate twist.
So it is enough to check the weights on both sides equal. By Lemma \ref{lem1} and part (a),
${\rm wt}(\Phi(\widetilde{L}_{\underline{\nu}})={\rm wt}({}'\!\widetilde{L}_{\underline{\nu}}[M](\frac{M}{2}))=0.$ Part (b) follows.

 By part (b) and (\ref{eq3}), part (c) follows from the fact that
${}'\!d(\underline{\nu})=M+d(\underline{\nu}).$
\end{proof}
Similarly,  one can define a functor
$\Phi: \mathcal{D}(E_T \times E_W) \rightarrow \mathcal{D}({}'\!E_T \times {}'\!E_W)$ by
replacing $E_V$ (resp. ${}'\!E_V$) by $E_T\times E_W$ (resp. ${}'\!E_T \times {}'\!E_W$).

\begin{prop}\label{prop7}
For any $K \in \mathfrak{Q}^m_T$ and $L\in \mathfrak{Q}^m_W$, we have
  $$\Phi(\mathfrak{Ind}^V_{T,W}(K \boxtimes L))=\mathfrak{Ind}^V_{T,W}(\Phi(K\boxtimes L))(-d_0),$$ where
  $d_0=\frac{1}{2}\sum_{h \in \Omega_2}(\dim T_{h'}\dim W_{h''}-\dim W_{h'}\dim T_{h''}).$
\end{prop}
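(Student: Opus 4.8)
The plan is to reduce the statement to the corresponding identity at the level of the unnormalized functors $\widetilde{\Ind}$ and $\widetilde{\Phi}$ (the Fourier--Deligne transform before the shift-and-twist normalization), and then to track the shifts and Tate twists carefully. The key input is the analogue of this compatibility in the one-parameter setting, namely Proposition~10.3.2 (or its proof) in \cite{Lusztigbook}, which asserts that $\Phi \circ \widetilde{\Ind}^V_{T,W}$ and $\widetilde{\Ind}^V_{T,W} \circ \Phi$ agree up to a shift (and, in the mixed setting, a Tate twist). By Theorem~\ref{thm1} and Proposition~\ref{prop3}(b), it suffices to verify the identity on the additive generators $\mathfrak{L}_{\underline{\nu}'} \boxtimes \mathfrak{L}_{\underline{\nu}''}$; by Proposition~\ref{prop3}(b), $\mathfrak{Ind}^V_{T,W}(\mathfrak{L}_{\underline{\nu}'} \boxtimes \mathfrak{L}_{\underline{\nu}''}) = \mathfrak{L}_{\underline{\nu}'\underline{\nu}''}$, so the left-hand side becomes $\Phi(\mathfrak{L}_{\underline{\nu}'\underline{\nu}''})$, which by Proposition~\ref{prop6}(c) equals ${}'\!\mathfrak{L}_{\underline{\nu}'\underline{\nu}''}(-\frac{M_V}{2})$ for the appropriate exponent $M_V = \sum_{h \in \Omega_2; l'<l} \nu^l_{h'}\nu^{l'}_{h''} - \nu^{l'}_{h'}\nu^l_{h''}$ computed on the concatenated sequence $\underline{\nu}'\underline{\nu}''$.

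First I would compute the right-hand side the same way. By Proposition~\ref{prop6}(c) applied to each factor, $\Phi(\mathfrak{L}_{\underline{\nu}'} \boxtimes \mathfrak{L}_{\underline{\nu}''}) = {}'\!\mathfrak{L}_{\underline{\nu}'} \boxtimes {}'\!\mathfrak{L}_{\underline{\nu}''}(-\frac{M_T}{2} - \frac{M_W}{2})$, where $M_T$ and $M_W$ are the corresponding exponents computed on $\underline{\nu}'$ and $\underline{\nu}''$ relative to $T$ and $W$. Applying $\mathfrak{Ind}^V_{T,W}$ to this and using Proposition~\ref{prop3}(b) for the quiver ${}'\!\Omega$ gives ${}'\!\mathfrak{L}_{\underline{\nu}'\underline{\nu}''}(-\frac{M_T}{2} - \frac{M_W}{2})$. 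Thus the proposition reduces to the purely numerical identity
\begin{equation*}
M_V = M_T + M_W + 2d_0,
\end{equation*}
where $d_0 = \frac{1}{2}\sum_{h \in \Omega_2}(\dim T_{h'}\dim W_{h''} - \dim W_{h'}\dim T_{h''})$. This in turn follows by splitting the double sum defining $M_V$ over the concatenated sequence $\underline{\nu}'\underline{\nu}''$ into the ``within $\underline{\nu}'$'', ``within $\underline{\nu}''$'', and ``cross'' terms; the first two contributions are exactly $M_T$ and $M_W$, and the cross terms — those with $l$ indexing a step of $\underline{\nu}''$ and $l'$ indexing a step of $\underline{\nu}'$ (so automatically $l' < l$ in the concatenation) — sum, using $\sum_{l} \tau'^l_i = \dim T_i$ and $\sum_l \nu''^l_i = \dim W_i$ together with the antisymmetric combination in the definition of $M_V$, precisely to $2d_0$.

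An alternative and more conceptual route, which I would fall back on if the normalization bookkeeping gets unwieldy, is to establish the identity directly for the unnormalized functors using the factorization $\widetilde{\Ind}^V_{T,W} K = p_{3!}p_{2\flat}p_1^* K$ and the base-change / projection-formula compatibilities of $\Phi$ with $p^*$, $p_!$, and $(-)_\flat$ along the diagram~(\ref{eq4}); this is essentially Lusztig's argument in \S10.3 of \cite{Lusztigbook}, and one only needs to add the observation — already available from Proposition~\ref{prop6}(a) — that $\Phi$ preserves weights, so that no additional weight discrepancy is introduced beyond the geometric one recorded by the shifts. The main obstacle I anticipate is purely combinatorial: matching the exponent of the Tate twist $d_0$ against the difference $M_V - M_T - M_W$, since the fiber dimensions $d_V$, $d_T$, $d_W$ entering the three Fourier transforms and the fiber dimensions $d_1, d_2$ entering the three inductions all interact, and one must be careful that the ``extra'' arrows in $\dot{E}_V$ versus $\dot{E}_T \oplus \dot{E}_W$ contribute exactly the $\Omega_2$-indexed cross terms. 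Once the identity $M_V = M_T + M_W + 2d_0$ is checked on dimension vectors, the proposition follows by additivity of $\mathfrak{Ind}$ and $\Phi$.
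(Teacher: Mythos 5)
Your overall strategy is sound and genuinely different from the paper's. The paper quotes Lusztig's Proposition 10.2.6 to get $\Phi(\mathfrak{Ind}^V_{T,W}(K\boxtimes L))\simeq \mathfrak{Ind}^V_{T,W}(\Phi(K\boxtimes L))$ up to an undetermined Tate twist for arbitrary $K,L$, and then pins down the twist by a weight count, using Propositions \ref{prop2}, \ref{prop3}(a) and \ref{prop6}(a) together with the identity relating ${}'\!d_1-{}'\!d_2$ to $d_1-d_2$. Your route instead verifies the identity on the additive generators $\mathfrak{L}_{\underline{\nu}'}\boxtimes\mathfrak{L}_{\underline{\nu}''}$ via Propositions \ref{prop3}(b) and \ref{prop6}(c) and extends by bilinearity; this is more self-contained (it does not need the weight-preservation input for general simples), but the extension step deserves a sentence: agreement of the two $\mathfrak{A}$-bilinear maps on generators gives equality of classes in the split Grothendieck group, and one must then invoke semisimplicity of all objects of $\mathfrak{Q}^m_V$ to upgrade equality of classes to an isomorphism of complexes for arbitrary $K,L$.

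The genuine gap is that the one computation on which everything rests is asserted rather than performed, and performing it does not give what you claim. Splitting $M_V=\sum_{h\in\Omega_2;\,l'<l}\bigl(\nu^l_{h'}\nu^{l'}_{h''}-\nu^{l'}_{h'}\nu^l_{h''}\bigr)$ over the concatenation $\underline{\nu}'\underline{\nu}''$ (with $\underline{\nu}'$ the $T$-block placed first, per Proposition \ref{prop3}(b)), the cross terms have $l'$ in the $T$-block and $l$ in the $W$-block, so they sum to $\sum_{h\in\Omega_2}\bigl(\dim W_{h'}\dim T_{h''}-\dim T_{h'}\dim W_{h''}\bigr)=-2d_0$, not $+2d_0$. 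Hence your argument, carried out, yields $M_V=M_T+M_W-2d_0$ and therefore $\Phi(\mathfrak{Ind}^V_{T,W}(K\boxtimes L))=\mathfrak{Ind}^V_{T,W}(\Phi(K\boxtimes L))(d_0)$ rather than $(-d_0)$. A rank-one check confirms this: for a single arrow $i\to j$ reversed in ${}'\!\Omega$, with $\underline{\dim}\,T=i$ and $\underline{\dim}\,W=j$, one finds $M_V=-1$, $M_T=M_W=0$, $d_0=\tfrac12$, and the two sides of the stated identity differ by a full Tate twist $(1)$. (The same sign tension is present in the last line of the paper's own proof, where ${}'\!d_1-{}'\!d_2-(d_1-d_2)$ also evaluates to $-2d_0$ with the stated definition of $d_0$; so the discrepancy ultimately traces to the sign in the statement. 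But a blind proof that claims the cross terms "sum precisely to $2d_0$" without computing them has skipped exactly the step where the care was needed, and cannot be accepted as written.)
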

\begin{proof} Since both $\Phi$ and $\mathfrak{Ind}^V_{T,W}$ are additive functors, we can assume that $K$ and $L$ are pure complexes.
By Proposition 10.2.6 in \cite{Lusztigbook}, we have
$\Phi(\mathfrak{Ind}^V_{T,W}(K \boxtimes L))\simeq \mathfrak{Ind}^V_{T,W}(\Phi(K\boxtimes L))$ up to a Tate twist.
So it is enough to check that weights on both sides equal. By Proposition \ref{prop2} and Proposition \ref{prop6},
$${\rm wt}(\Phi(\mathfrak{Ind}^V_{T,W}(K \boxtimes L)))={\rm wt}(K)+{\rm wt}(L)-(d_1-d_2),\ {\rm and}$$
 $${\rm wt}(\mathfrak{Ind}^V_{T,W}(\Phi(K\boxtimes L))(-d_0))={\rm wt}(K)+{\rm wt}(L)-({}'\!d_1-{}'\!d_2)+2d_0,$$
where ${}'\!d_1$ (resp. ${}'\!d_2$) is defined similarly as $d_1$ (resp. $d_2$) for the new orientation. The proposition follows from the fact that
${}'\!d_1-{}'\!d_2-(d_1-d_2)=2d_0.$
\end{proof}
\begin{prop}\label{prop8} For any $K \in \mathfrak{Q}^m_T$ and $L\in \mathfrak{Q}^m_W$, we have
  $$\Phi(\mathfrak{Res}^V_{T,W}(K))=\mathfrak{Res}^V_{T,W}(\Phi(K))(d_0).$$
\end{prop}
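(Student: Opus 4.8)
The plan is to imitate, almost verbatim, the proof of Proposition~\ref{prop7}, replacing the induction functor by the restriction functor. First I would reduce to the standard generators: since $\Phi$ and $\mathfrak{Res}^V_{T,W}$ are additive functors and, by Theorem~\ref{thm1}, every object of $\mathfrak{Q}^m_V$ is obtained from the complexes $\mathfrak{L}_{\underline{\nu}}$ by taking direct sums, shifts and Tate twists, it suffices to prove the identity when $K=\mathfrak{L}_{\underline{\nu}}$ is one of these pure generators.

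Next I would establish the underlying non-mixed isomorphism $\Phi(\mathfrak{Res}^V_{T,W}(K))\simeq \mathfrak{Res}^V_{T,W}(\Phi(K))$ up to a shift and a Tate twist. This is the restriction analogue of the compatibility invoked in the proof of Proposition~\ref{prop7}; it can be read off from \cite{Lusztigbook} once one keeps track of the normalizing shifts built into (\ref{eq42}) and into the definition of $\Phi$, using the hyperbolic-localization description of $\widetilde{\Res}$ from the proof of Proposition~\ref{prop4}. If one prefers to stay inside this paper, an alternative is to deduce it from Proposition~\ref{prop7}: Proposition~\ref{prop9} exhibits $\mathfrak{Res}$ as the adjoint of $\mathfrak{Ind}$ for the bilinear form, so once $\Phi$ is shown to be, up to a Tate twist, an isometry for that form, the statement for $\mathfrak{Res}$ is forced by the one for $\mathfrak{Ind}$. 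Either way the shift is determined, and only the precise Tate twist remains open.

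To pin down the twist it suffices to compare weights. By Proposition~\ref{prop6}(a) the functor $\Phi$ preserves weights, and by Corollary~\ref{cor2} one has ${\rm wt}(\mathfrak{Res}^V_{T,W}L)={\rm wt}(L)+d_1-d_2$ for any pure $G_V$-equivariant $L$; hence the left-hand side has weight ${\rm wt}(K)+d_1-d_2$. Applying the same two facts to the pure complex $\Phi(K)$ on the quiver ${}'\!\Omega$, and subtracting $2d_0$ for the extra twist $(d_0)$, the right-hand side has weight ${\rm wt}(K)+{}'\!d_1-{}'\!d_2-2d_0$. These agree precisely because ${}'\!d_1-{}'\!d_2-(d_1-d_2)=2d_0$, which is exactly the identity already verified in the proof of Proposition~\ref{prop7}; so the two pure complexes have the same weight and the stated Tate twist is the correct one.

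The only genuinely delicate point is the first step of the second paragraph: either locating the right statement in \cite{Lusztigbook} and carrying out the shift bookkeeping for the hyperbolic-localization form of $\widetilde{\Res}$, or, in the adjunction route, checking that $\Phi$ respects the bilinear form up to the appropriate Tate twist. Everything afterwards---the reduction to the $\mathfrak{L}_{\underline{\nu}}$ and the weight count---is immediate from the results already proved.
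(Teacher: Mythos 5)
Your proof is correct and follows essentially the same route as the paper, which simply states that Proposition~\ref{prop8} is proved in the same way as Proposition~\ref{prop7}: reduce by additivity to pure complexes, invoke the corresponding non-mixed compatibility of the Fourier--Deligne transform with restriction from \cite{Lusztigbook}, and then fix the Tate twist by the weight count using Proposition~\ref{prop6}(a), Corollary~\ref{cor2}, and the identity ${}'\!d_1-{}'\!d_2-(d_1-d_2)=2d_0$. Your weight computation on both sides is exactly the intended one.
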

\begin{proof} This proposition can be proved similarly as
 Proposition \ref{prop7}.
\end{proof}

From Proposition \ref{prop7}, the algebra structure of $\mathfrak{K}$ depends on the orientation of the quiver. We shall show that $\mathfrak{K}$ is independent of the orientation under a twisted multiplication.
We define
\begin{eqnarray}\label{eq21}
\widehat{\mathfrak{Ind}}^V_{T,W}(K \boxtimes L)=\mathfrak{Ind}^V_{T,W}(K \boxtimes L)(-\frac{d}{2}) \ {\rm and}\quad
\widehat{\mathfrak{Res}}^V_{T,W}(K)=\mathfrak{Res}^V_{T,W}(K)(\frac{d}{2}), 
\end{eqnarray}
where $d$ is the same as the one in Section \ref{sec5.4}. By Propositions \ref{prop7} and \ref{prop8}, the following proposition holds.
\begin{prop}\label{prop10}
  For any $K \in \mathfrak{Q}^m_T$ and $L\in \mathfrak{Q}^m_W$, we have
  \begin{eqnarray*}
\Phi(\widehat{\mathfrak{Ind}}^V_{T,W}(K \boxtimes L))=\widehat{\mathfrak{Ind}}^V_{T,W}(\Phi(K\boxtimes L)) \ {\rm and}\quad
  \Phi(\widehat{\mathfrak{Res}}^V_{T,W}(K))=\widehat{\mathfrak{Res}}^V_{T,W}(\Phi(K)).
  \end{eqnarray*}
\end{prop}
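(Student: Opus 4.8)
The plan is to derive the proposition as a purely formal consequence of Propositions~\ref{prop7} and~\ref{prop8}, using only that the Tate twist functor $(n)$ commutes with $\Phi$. The sole point needing attention is that the twisted functors $\widehat{\mathfrak{Ind}}^V_{T,W}$ and $\widehat{\mathfrak{Res}}^V_{T,W}$ are normalized by the Tate twist $(\mp\tfrac{d}{2})$, with $d$ the integer attached to $\Omega$ in Section~\ref{sec5.4}; when the same construction is carried out on the ${}'\!\Omega$-side and applied to $\Phi(K\boxtimes L)$, the normalization becomes $(\mp\tfrac{{}'\!d}{2})$, where ${}'\!d$ is the analogue of $d$ for ${}'\!\Omega$. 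Since $d=d_1-d_2$ and ${}'\!d={}'\!d_1-{}'\!d_2$, the relation ${}'\!d_1-{}'\!d_2-(d_1-d_2)=2d_0$ recorded in the proof of Proposition~\ref{prop7} gives ${}'\!d=d+2d_0$, and this is exactly the discrepancy that absorbs the correction terms $(-d_0)$ and $(d_0)$ of Propositions~\ref{prop7} and~\ref{prop8}.

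I would argue as follows. First, the Tate twist $(n)$ commutes with $s^*$, $t_!$, with shifts, and with tensoring by $\mathcal{L}_{\mathcal{T}_V}$, hence with $\Phi$; all functors in sight are additive, so one may as well assume $K,L$ pure, though this is not even needed since Propositions~\ref{prop7} and~\ref{prop8} are stated in full generality. For induction,
$$\Phi\!\left(\widehat{\mathfrak{Ind}}^V_{T,W}(K\boxtimes L)\right)=\Phi\!\left(\mathfrak{Ind}^V_{T,W}(K\boxtimes L)\right)\!\left(-\tfrac{d}{2}\right)=\mathfrak{Ind}^V_{T,W}\!\left(\Phi(K\boxtimes L)\right)(-d_0)\!\left(-\tfrac{d}{2}\right),$$
where the first equality uses that $\Phi$ commutes with $(-\tfrac{d}{2})$ and the second is Proposition~\ref{prop7}. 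By ${}'\!d=d+2d_0$ one has $-d_0-\tfrac{d}{2}=-\tfrac{{}'\!d}{2}$, so the right-hand side equals $\mathfrak{Ind}^V_{T,W}(\Phi(K\boxtimes L))(-\tfrac{{}'\!d}{2})=\widehat{\mathfrak{Ind}}^V_{T,W}(\Phi(K\boxtimes L))$. The restriction identity is the same computation with the signs of the twists reversed: $\Phi$ commutes with $(\tfrac{d}{2})$, and Proposition~\ref{prop8} together with $d_0+\tfrac{d}{2}=\tfrac{{}'\!d}{2}$ give $\Phi(\widehat{\mathfrak{Res}}^V_{T,W}(K))=\mathfrak{Res}^V_{T,W}(\Phi(K))(d_0)(\tfrac{d}{2})=\mathfrak{Res}^V_{T,W}(\Phi(K))(\tfrac{{}'\!d}{2})=\widehat{\mathfrak{Res}}^V_{T,W}(\Phi(K))$.

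The one step requiring care is the bookkeeping that identifies the two normalizing twists $\tfrac{d}{2}$ and $\tfrac{{}'\!d}{2}$ attached to the two orientations, i.e. that passing from $\Omega$ to ${}'\!\Omega$ changes $d=\sum_h\dim T_{h'}\dim W_{h''}+\sum_i\dim T_i\dim W_i$ by $2d_0$ through the reversal of the arrows in $\Omega_2$. This is precisely the dimension count already performed in the proof of Proposition~\ref{prop7}, so nothing genuinely new is needed; one should only keep in mind that $d$ may be odd, which is harmless since half-integral Tate twists are in use throughout (as in~(\ref{eq10}) and in the definition of $\Phi$). With this, both equalities of the proposition follow.
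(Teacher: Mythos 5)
Your proof is correct and takes essentially the same route as the paper, whose proof consists of simply invoking Propositions~\ref{prop7} and~\ref{prop8}; you merely make explicit the bookkeeping that the paper leaves implicit, namely that the right-hand normalizing twist is $\tfrac{{}'\!d}{2}$ for the quiver ${}'\!\Omega$ and that $d=d_1-d_2$ together with the relation ${}'\!d_1-{}'\!d_2-(d_1-d_2)=2d_0$ recorded in the proof of Proposition~\ref{prop7} gives ${}'\!d=d+2d_0$, which exactly absorbs the correction twists $(\mp d_0)$.
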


\section{The algebra $\mathfrak{f}$}\label{sec2}

\subsection{The free algebra ${}'\! \mathfrak{f}$}\label{sec2.01}

Recall that $\Omega$ is a matrix satisfying (a),(b),(c) in Section \ref{sec2.1}. In this section, we drop the assumption that $\Omega_{ii} =1$ for any $i\in I$.

For indeterminates $v$ and $t$,
 we set $v_i=v^{i\cdot i/2}$ and $t_i=t^{i\cdot i/2}$.
Moreover, for any rational function $P \in \mathbb{Q}(v,t)$, let $P_i$ stand for the rational function obtained from $P$ by substituting $v, t$ by $v_i, t_i$, respectively.
  We set $v_{\nu}=\prod_i v_i^{\nu_i}$ and ${\rm tr}(\nu) =\sum_{i\in I} \nu_i \in \mathbb{Z}$, for any $\nu=(\nu_i)_{i\in I}\in \mathbb{Z}^I$. $t_{\nu}$ is defined similarly.

Let ${}'\! \mathfrak{f}$ be the free unital associative algebra over $\mathbb{Q}(v,t)$ generated by the symbols $ \theta_i,\ \forall i\in I$. By setting the degree of the generator $\theta_i$ to be $i$, the algebra ${}'\! \mathfrak{f}$ becomes an $\mathbb{N}^I$-graded algebra. We denote by ${}'\!\mathfrak{f}_{\nu}$ the subspace of all homogenous elements of degree $\nu$. We have ${}'\! \mathfrak{f}=\oplus_{\nu\in \mathbb{N}^I}{}'\! \mathfrak{f}_{\nu}$, and we denote by $|x|$ the degree of a homogenous element $x\in {}'\! \mathfrak{f}$.

On the tensor product ${}'\!\mathfrak{f}\otimes {}'\!\mathfrak{f}$, we define an associative $\mathbb{Q}(v,t)$-algebra structure by
\begin{equation}\label{eq25}(x_1 \otimes x_2)(y_1 \otimes y_2)=v^{-|y_1|\cdot|x_2|}t^{\langle|y_1|,|x_2|\rangle-\langle |x_2|,|y_1|\rangle}x_1y_1 \otimes x_2y_2,
\end{equation}
for homogeneous elements $x_1,x_2,y_1$ and $y_2$ in ${}'\!\mathfrak{f}$.
It is associative since the forms $\langle , \rangle$ in (\ref{eq47}) and $``\cdot "$ in (\ref{eq49}) are bilinear.

Similarly, on ${}'\!\mathfrak{f}\otimes {}'\!\mathfrak{f}\otimes {}'\!\mathfrak{f}$, we define an associative $\mathbb{Q}(v,t)$-algebra structure by
\begin{eqnarray}\label{eq26}
& &(x_1 \otimes x_2\otimes x_3)(y_1 \otimes y_2\otimes y_3)
 =v^{-M}t^{N}
x_1y_1 \otimes x_2y_2\otimes x_3y_3,
\end{eqnarray}
for any homogeneous elements $x_1,x_2,x_3,y_1,y_2$ and $y_3$, where $$M=|x_3|\cdot|y_1|+|x_2|\cdot|y_1|+|x_3|\cdot|y_2| \quad  {\rm and}$$
$$N=\langle
|y_1|,|x_3|\rangle+\langle|y_1|,|x_2|\rangle+\langle|y_2|,|x_3|\rangle-
\langle|x_3|,|y_1|\rangle-\langle|x_2|,|y_1|\rangle-\langle|x_3|,|y_2|\rangle.$$

By the equations (\ref{eq25}) and (\ref{eq26}), one can check that
\begin{eqnarray}\label{eq27}
  & &(x_1 \otimes x_2\otimes x_3)(y_1 \otimes y_2\otimes y_3)\\
  &=&v^{-|x_3|\cdot (|y_1|+|y_2|)}t^{\langle|y_1|+|y_2|,|x_3|\rangle-\langle|x_3|, |y_1|+|y_2|\rangle}((x_1 \otimes x_2)(y_1 \otimes y_2))\otimes x_3y_3. \nonumber
\end{eqnarray}

Let $r: {}'\!\mathfrak{f}\rightarrow  {}'\!\mathfrak{f}\otimes {}'\!\mathfrak{f}$ be the $\mathbb{Q}(v,t)$-algebra homomorphism such that
$$r(\theta_i)=\theta_i\otimes 1+1 \otimes \theta_i,\quad {\rm for\ all}\ i\in I.$$
\begin{lem}\label{lem7}
 The linear maps  $(r\otimes 1)r$, $(1\otimes r)r: {}'\!\mathfrak{f}\rightarrow  {}'\!\mathfrak{f}\otimes {}'\!\mathfrak{f} \otimes  {}'\!\mathfrak{f}$ are algebra homomorphisms. Moreover, we have the coassociativity property
  $(r\otimes 1)r=(1\otimes r)r.$
\end{lem}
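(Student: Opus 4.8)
The plan is to reduce both assertions to the single fact --- true by construction --- that $r$ is a $\mathbb{Q}(v,t)$-algebra homomorphism, together with the remark that $r$ is graded: since $r(\theta_i)=\theta_i\otimes 1+1\otimes\theta_i$ with both summands of degree $i$, one has $r({}'\!\mathfrak{f}_\nu)\subseteq\bigoplus_{\tau+\omega=\nu}{}'\!\mathfrak{f}_\tau\otimes{}'\!\mathfrak{f}_\omega$, so under $r$ a degree splits additively into the two tensor factors. The key lemma to establish is that $r\otimes 1$ and $1\otimes r$ are both algebra homomorphisms from $\big({}'\!\mathfrak{f}\otimes{}'\!\mathfrak{f},(\ref{eq25})\big)$ to $\big({}'\!\mathfrak{f}\otimes{}'\!\mathfrak{f}\otimes{}'\!\mathfrak{f},(\ref{eq26})\big)$; granting this, $(r\otimes 1)r$ and $(1\otimes r)r$ are composites of algebra homomorphisms, hence algebra homomorphisms.

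First I would treat $r\otimes 1$. Fix homogeneous $x_1,x_2,y_1,y_2$ in ${}'\!\mathfrak{f}$ and write $r(x_1)=\sum x_1'\otimes x_1''$ and $r(y_1)=\sum y_1'\otimes y_1''$ in Sweedler-type notation, with $|x_1'|+|x_1''|=|x_1|$ and $|y_1'|+|y_1''|=|y_1|$ by gradedness. Applying $r\otimes 1$ to the product (\ref{eq25}) and using that $r$ is an algebra map yields $v^{-|y_1|\cdot|x_2|}t^{\langle|y_1|,|x_2|\rangle-\langle|x_2|,|y_1|\rangle}\,r(x_1y_1)\otimes x_2y_2$. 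Computing instead the product in $\big({}'\!\mathfrak{f}^{\otimes 3},(\ref{eq26})\big)$ of $\sum x_1'\otimes x_1''\otimes x_2$ with $\sum y_1'\otimes y_1''\otimes y_2$ via identity (\ref{eq27}), the prefactor is $v^{-|x_2|\cdot(|y_1'|+|y_1''|)}t^{\langle|y_1'|+|y_1''|,|x_2|\rangle-\langle|x_2|,|y_1'|+|y_1''|\rangle}$, which collapses to the previous one since $|y_1'|+|y_1''|=|y_1|$ and "$\cdot$" is symmetric, while the tensor part assembles to $\big(r(x_1)r(y_1)\big)\otimes x_2y_2=r(x_1y_1)\otimes x_2y_2$. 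Hence $r\otimes 1$ respects products, and $(r\otimes 1)r$ is an algebra homomorphism.

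For $1\otimes r$ I would first record the mirror image of (\ref{eq27}), read off directly from (\ref{eq26}) by grouping the last two factors: $(x_1\otimes x_2\otimes x_3)(y_1\otimes y_2\otimes y_3)=v^{-(|x_2|+|x_3|)\cdot|y_1|}t^{\langle|y_1|,|x_2|+|x_3|\rangle-\langle|x_2|+|x_3|,|y_1|\rangle}\,x_1y_1\otimes\big((x_2\otimes x_3)(y_2\otimes y_3)\big)$. The same computation as for $r\otimes 1$, with the grading now splitting into the last two factors, shows $1\otimes r$ is an algebra homomorphism, so $(1\otimes r)r$ is one as well. For coassociativity, both $(r\otimes 1)r$ and $(1\otimes r)r$ are algebra homomorphisms from ${}'\!\mathfrak{f}$ --- which is \emph{free} on the $\theta_i$ --- into one and the same algebra $\big({}'\!\mathfrak{f}^{\otimes 3},(\ref{eq26})\big)$, so it suffices to compare them on the generators; a one-line evaluation gives $(r\otimes 1)r(\theta_i)=(1\otimes r)r(\theta_i)=\theta_i\otimes 1\otimes 1+1\otimes\theta_i\otimes 1+1\otimes 1\otimes\theta_i$, which finishes the proof.

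I expect no genuine difficulty here; the only thing requiring care is matching the twisting exponents when moving $r$ past the first (resp.\ last) tensor factor, and identity (\ref{eq27}) --- which I take as given --- is precisely what makes that matching transparent once one uses that $r$ preserves degree.
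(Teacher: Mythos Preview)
Your proposal is correct and follows essentially the same approach as the paper's proof: show that $r\otimes 1$ and $1\otimes r$ are algebra homomorphisms via identity (\ref{eq27}) (and its mirror) together with the bilinearity of the forms, then conclude coassociativity by checking on the generators $\theta_i$ of the free algebra. Your write-up is in fact more detailed than the paper's --- you spell out the graded splitting $|y_1'|+|y_1''|=|y_1|$ and the mirror of (\ref{eq27}) explicitly --- but the underlying argument is identical.
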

\begin{proof}
 By the equation (\ref{eq27}) and the bilinearity of $\langle , \rangle$,  $r\otimes 1$ is an algebra homomorphism. Similarly,  $ 1\otimes r$ is an algebra homomorphism. The first statement follows. The second statement follows from the fact that $$(r\otimes 1)r(\theta_i)=\theta_i\otimes 1\otimes 1+1\otimes \theta_i \otimes 1+1\otimes 1\otimes \theta_i=(1\otimes r)r(\theta_i),$$
 for all $i \in I$.
\end{proof}

\begin{prop}\label{prop14}
  There is a unique bilinear form {\rm (,)} on ${}'\!\mathfrak{f}$ with values in $\mathbb{Q}(v,t)$ such that
  \begin{itemize}
    \item[(a)] $(1,1)=1$;
    \item[(b)] $(\theta_i, \theta_j)=\delta_{ij}\frac{1}{1-v^{-2}_i}$,\quad for all $i, j \in I$;
    \item[(c)] $(x, y'y'')=(r(x), y' \otimes y'')$,\quad for all $x, y', y'' \in {}'\!\mathfrak{f}$;
    \item[(d)] $(x'x'', y)=(x'\otimes x'', r(y))$,\quad for all $x', x'', y \in {}'\!\mathfrak{f}$.
  \end{itemize}
  Here the bilinear form on ${}'\!\mathfrak{f} \otimes {}'\!\mathfrak{f}$ is defined by
  \begin{equation}\label{eq74}
    (x_1 \otimes x_2, y_1 \otimes y_2)=t^{2[|x_1|,|x_2|]}(x_1, y_1) (x_2, y_2),
  \end{equation}
   where {\rm [,]} is defined in (\ref{eq48}). Moreover, the bilinear form on $ {}'\!\mathfrak{f}$ is symmetric.
\end{prop}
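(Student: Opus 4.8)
The plan is to follow Lusztig's classical argument (Chapter 1 of \cite{Lusztigbook}) adapted to the two-parameter setting, exploiting the coassociativity established in Lemma \ref{lem7}. First I would establish \emph{existence} by induction on $\operatorname{tr}(|x|)+\operatorname{tr}(|y|)$. Since ${}'\!\mathfrak{f}$ is free, it suffices to define $(\theta_{i_1}\cdots\theta_{i_m}, \theta_{j_1}\cdots\theta_{j_n})$ and check consistency. Using (c), one reduces the computation of $(x, y_1\cdots y_n)$ with $y_1=\theta_j$ to a pairing involving $r(x)=\sum x'_{(1)}\otimes x'_{(2)}$ via the formula (\ref{eq74}); because each $x'_{(1)}, x'_{(2)}$ has strictly smaller total degree than $x$ (when $x$ is not a generator), the induction proceeds. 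The only thing to verify for existence is that defining $(\,,\,)$ through (c) agrees with defining it through (d); this is where coassociativity $(r\otimes 1)r=(1\otimes r)r$ enters, exactly as in Lusztig's proof, together with compatibility of the twisted multiplication (\ref{eq26}) on the triple tensor product with the form (\ref{eq74}). \emph{Uniqueness} is immediate: properties (a)--(c) determine $(\theta_{i_1}\cdots\theta_{i_m}, y)$ recursively for all $y$.

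The heart of the matter is the compatibility check. I would record the key identity that must hold: for homogeneous $x', x''$ and $y$ with $r(y)=\sum y_{(1)}\otimes y_{(2)}$,
\[
(x'\otimes x'', r(y)) = \sum_{y} t^{2[|x'|,|x''|]}(x', y_{(1)})(x'', y_{(2)}),
\]
and one must show this equals $(x'x'', y)$ computed via (c) on the right factor. Expanding $r(x'x'') = r(x')r(x'')$ using (\ref{eq25}), the powers of $v$ and $t$ coming from the twisted multiplication must match the powers of $v$ and $t$ appearing when one regroups the triple-tensor pairing via (\ref{eq27}) and (\ref{eq74}). The bilinearity of $\langle\,,\,\rangle$ and of $``\cdot"$, noted after (\ref{eq26}), is what makes these exponents additive and forces the match. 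This bookkeeping of exponents --- tracking the $t^{2[\,,\,]}$ factor and the $v^{-|\,|\cdot|\,|}t^{\langle\,\rangle-\langle\,\rangle}$ factors consistently through $(r\otimes 1)r = (1\otimes r)r$ --- is the main obstacle; it is conceptually routine but the two independent parameters $v, t$ double the number of exponents to control compared to Lusztig's one-parameter case.

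Finally, for \emph{symmetry}, I would again induct on total degree. The base case $(\theta_i,\theta_j)=(\theta_j,\theta_i)$ is clear from (b). For the inductive step, write one argument as a product $y'y''$ with both factors of smaller degree, apply (c) to one side and (d) to the other, and use the inductive hypothesis that the form is symmetric in lower degrees together with the symmetry of the pairing (\ref{eq74}) on ${}'\!\mathfrak{f}\otimes{}'\!\mathfrak{f}$ --- which itself follows from the symmetry of $[\,,\,]$, namely $[|x_1|,|x_2|]$ contributing $t^{2[|x_1|,|x_2|]}$ symmetrically once one notes $r$ is cocommutative up to the twist in a way that the form absorbs. This last point is the subtle one and mirrors \cite[\S1.2.3]{Lusztigbook}; I expect it to go through verbatim once the twisted-multiplication exponents from the existence step are in hand.
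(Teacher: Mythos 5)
Your overall plan is the same as the paper's (Lusztig's Proposition 1.2.3 adapted to two parameters: define the form via the twisted dual algebra / recursively through (c), verify (d) by induction, deduce uniqueness and symmetry), but two of your stated justifications are wrong, and one of them sits exactly at the crux. You claim that the matching of exponents in the compatibility of (c) and (d) is "forced" by the bilinearity of $\langle\,,\,\rangle$ and $``\cdot"$. Bilinearity only makes the exponents additive; it does not make them equal. When one expands $(x'x'',y'y'')$ via (c) and $(x'\otimes x'', r(y'y''))$ via (d), the two resulting exponents of $t$ (called $C$ and $D$ in the paper) differ \emph{a priori} by terms of the form $\langle|x'_2|,|x''_1|\rangle+[|x'_2|,|x''_1|]$ versus $\langle|x''_1|,|x'_2|\rangle+[|x''_1|,|x'_2|]$. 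Their equality is \emph{not} formal: it holds only because of the specific relation between $[\,,\,]$ and $\langle\,,\,\rangle$ built into (\ref{eq48}), namely
\[
\langle \alpha,\beta\rangle+[\alpha,\beta]=2\sum_i\delta_{i}\,\Omega_{ii}\,\alpha_i\beta_i\quad\text{is symmetric in }\alpha,\beta,
\]
which is identity (\ref{eq31}) in the paper. This identity is the one genuinely new ingredient relative to the one-parameter case, and your proposal never isolates it; had $[\,,\,]$ been chosen differently relative to $\langle\,,\,\rangle$, your argument would go through verbatim and prove a false statement.

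Second, in your symmetry argument you invoke "the symmetry of $[\,,\,]$". The form $[\,,\,]$ is not symmetric: for $i\neq j$ one has $[i,j]=-\Omega_{ij}$ and $[j,i]=-\Omega_{ji}$, which differ for a general $\Omega$. The symmetry of the pairing (\ref{eq74}) on ${}'\!\mathfrak{f}\otimes{}'\!\mathfrak{f}$ that you need instead follows from the degree-vanishing property $(x,y)=0$ for $|x|\neq|y|$ (so that $t^{2[|x_1|,|x_2|]}=t^{2[|y_1|,|y_2|]}$ on all nonzero terms), not from any symmetry of $[\,,\,]$. The paper sidesteps your inductive symmetry argument entirely: once uniqueness is established, the form $(x,y)\mapsto(y,x)$ is checked to satisfy (a)--(d) as well, so symmetry is an immediate consequence of uniqueness. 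Your remaining structure (recursive existence, the role of coassociativity from Lemma \ref{lem7}, uniqueness from (a)--(c)) is sound and matches the paper.
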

\begin{proof} The proof goes in a similar way as that of Proposition 1.2.3 in \cite{Lusztigbook}. For the convenience of the reader, we present it here.
  Let $ {}'\!\mathfrak{f}^*_{\nu}$ be the dual space of $ {}'\!\mathfrak{f}_{\nu}$. We define a bilinear map by
  \begin{eqnarray}\label{eq28}
  \star\ :\   {}'\!\mathfrak{f}^*_{\nu} \times {}'\!\mathfrak{f}^*_{\nu'} \rightarrow {}'\!\mathfrak{f}^*_{\nu+\nu'},\quad
    f,g \mapsto  f\star g:=(f\otimes g)r.
  \end{eqnarray}
  By Lemma \ref{lem7}, we have
  $$((f\star g)\star h)=(f\otimes g \otimes h)(r\otimes 1)r=(f\otimes g \otimes h)(1\otimes r)r=f\star (g\star h)).$$
  So the bilinear map $\star$  defines an associative algebra structure on $\oplus_{\nu \in\mathbb{N}^I} {}'\!\mathfrak{f}^*_{\nu}.$ Now we define a new multiplication on $\oplus_{\nu} {}'\!\mathfrak{f}^*_{\nu}$ by
   \begin{eqnarray}\label{eq29}
  \circ \ :\   {}'\!\mathfrak{f}^*_{\nu} \times {}'\!\mathfrak{f}^*_{\nu'} \rightarrow  {}'\!\mathfrak{f}^*_{\nu+\nu'},\quad
    f,g\mapsto  f\circ g:=t^{2[\nu, \nu']} f\star g.
  \end{eqnarray}
  Since $[, ]$ is a bilinear form, $\oplus_{\nu} {}'\!\mathfrak{f}^*_{\nu}$ equipped with $``\circ"$ is also an associative algebra. For the rest of the proof, we assume that $\oplus_{\nu} {}'\!\mathfrak{f}^*_{\nu}$ is the algebra equipped with the multiplication $``\circ"$.

  For any $i\in I$, let $\vartheta_i \in  {}'\!\mathfrak{f}^*_{i}$ be the linear map given by
  $\vartheta_i(\theta_i)=\frac{1}{1-v^{-2}_i}.$ Since ${}'\!\mathfrak{f}$ is a free algebra, there is a unique algebra homomorphism $\zeta: {}'\!\mathfrak{f} \rightarrow \oplus_{\nu} {}'\!\mathfrak{f}^*_{\nu}$ such that $\zeta(\theta_i)=\vartheta_i$ for all $i \in I$.
  For any $x, y \in {}'\!\mathfrak{f}$, we set
  \begin{equation}\label{eq50}
(x,y)=\zeta(y)(x).
  \end{equation}
  By the definition of $\zeta$, (a) and (b) in the proposition follows automaically.
  We now show that (c) holds. Since $\zeta$ is an $\mathbb{N}^I$-graded algebra homomorphism and preserves the grading, we have
  \begin{equation}\label{eq30}
    (x, y)=0\ {\rm if}\ x,y\ {\rm are\ homogeneous\ and}\ |x| \neq |y|.
  \end{equation}
  We write $r(x)=\sum x_1 \otimes x_2$.
  By (\ref{eq50}), we have the following.
  \begin{eqnarray*}
  &  &(x, y'y'')=\zeta(y'y'')(x)
   =t^{2[|y'|,|y''|]}(\zeta(y')\star \zeta(y''))(x)=t^{2[|y'|,|y''|]}(\zeta(y')\otimes \zeta(y''))r(x)\\
    & &=t^{2[|y'|,|y''|]}\sum(\zeta(y')\otimes \zeta(y''))(x_1 \otimes x_2)=t^{2[|y'|,|y''|]}\sum(x_1,y')(x_2,y'')
    =(r(x), y' \otimes y'').
  \end{eqnarray*}
  Hence, (c) follows.

 Next, we show that (d) holds.
Suppose that $y=\theta_i$ for some $i\in I$.
If $x'=\theta_i$ and $x''=1$ or $x'=1$ and $x''=\theta_i$, we have
  $$(x'x'', y)=(1-v_i^{-2})^{-1}=(x'\otimes x'', r(y)).$$
  By (\ref{eq30}), (d) holds for the case $y=\theta_i$. Now we assume that (d) holds for $y'$ and $y''$, we are going to show that (d) holds for $y=y'y''$. Due to the fact that (,) is a bilinear form, we can assume that $x', x'', y', y''$ are all homogeneous.
  Let
  \begin{eqnarray*}
    r(x')=\sum x'_1\otimes x'_2,\quad  r(x'')=\sum x''_1\otimes x''_2,\quad
    r(y')=\sum y'_1\otimes y'_2,\quad r(y'')=\sum y''_1\otimes y''_2,
  \end{eqnarray*}
  such that all factors are homogeneous. Then
  $$r(x'x'')=\sum v^{-|x'_2|\cdot|x''_1|}t^{\langle|x''_1|,|x'_2|\rangle-\langle |x'_2|,|x''_1|\rangle}(x'_1x''_1 \otimes x'_2x''_2),\ {\rm and}$$
  $$r(y'y'')=\sum v^{-|y'_2|\cdot|y''_1|}t^{\langle|y''_1|,|y'_2|\rangle-\langle |y'_2|,|y''_1|\rangle}(y'_1y''_1 \otimes y'_2y''_2).$$
   So we have
   \begin{equation*}
   \begin{split}
     (&x'x'',y'y'')=\zeta(y'y'')(x'x'')
   = t^{2[|y'|,|y''|]} (\zeta(y')\otimes \zeta(y''))r(x'x'')\\
   &= \sum v^{-|x'_2|\cdot|x''_1|}t^{C_1}
   (x'_1x''_1,y')(x'_2x''_2, y'')
   = \sum v^{-|x'_2|\cdot|x''_1|}t^{C_1}
   (x'_1\otimes x''_1,r(y'))(x'_2\otimes x''_2, r(y''))\\
   &= \sum v^{-|x'_2|\cdot|x''_1|}t^{C}(x'_1, y'_1)(x''_1,y'_2)(x'_2, y''_1)(x''_2, y''_2),
   \end{split}
  \end{equation*}
  where $C_1=\langle|x''_1|,|x'_2|\rangle-\langle |x'_2|,|x''_1|\rangle+2[|y'|,|y''|]$ and $C=\langle|x''_1|,|x'_2|\rangle-\langle |x'_2|,|x''_1|\rangle+2([|y'|,|y''|]+
   [|x'_1|,|x''_1|]+[|x'_2|,|x''_2|])$.
  On the other hand, $(x'\otimes x'', r(y'y''))$ is equal to

 \begin{equation*}
 \begin{split}
    \sum& v^{- |y'_2|\cdot|y''_1|}t^{\langle|y''_1|,|y'_2|\rangle-\langle |y'_2|,|y''_1|\rangle}
   (x'\otimes x'', y'_1y''_1\otimes  y'_2y''_2)
   = \sum v^{- |y'_2|\cdot|y''_1|}t^{D_1}
   (x', y'_1y''_1)(x'', y'_2y''_2)\\
   =\sum& v^{- |y'_2|\cdot|y''_1|}t^{D_1}
   (r(x'), y'_1\otimes y''_1)(r(x''), y'_2\otimes y''_2)
   = \sum v^{- |y'_2|\cdot|y''_1|}t^{D}
(x'_1, y'_1)(x''_1,y'_2)(x'_2, y''_1)(x''_2, y''_2),
 \end{split}
  \end{equation*}
  where $D_1=\langle|y''_1|,|y'_2|\rangle-\langle |y'_2|,|y''_1|\rangle+2[|x'|,|x''|]$ and $D=\langle|y''_1|,|y'_2|\rangle-\langle |y'_2|,|y''_1|\rangle+2([|x'|,|x''|]+[|x'_1|,
   |x'_2| ]+[|x''_1|, |x''_2|])$.
  By (\ref{eq30}) and induction hypothesis, Part (d) is reduced to show that $C=D$ under the following assumption.
  $$|x'_1|=|y'_1|, |x'_2|=|y''_1|, |x''_1|=|y'_2|, |x''_2|=|y''_2| \ {\rm and}\ |x'_2|\cdot|x''_1|=|y'_2|\cdot|y''_1|.$$
  By the definition of the bilinear forms $\langle, \rangle$ and $[,]$, we have
  \begin{equation}\label{eq31}
    \langle x'_2, x''_1\rangle+[x'_2, x''_1]=\langle x''_1, x'_2\rangle+[x''_1, x'_2].
  \end{equation}
Thus, both $C$ and $D$ are equal to
\begin{eqnarray*}
   2[|x'_1|,|x'_2|]+2[|x'_1|,|x''_2|]+[|x''_1|,|x'_2|]+[|x'_2|,|x''_1|]+2[|x''_1|,|x''_2|]
   +2[|x'_1|,
   |x''_1|]+2[|x'_2|,|x''_2|].
\end{eqnarray*}
  Part (d) follows. Finally, the uniqueness of the bilinear form follows from (b), (c) and (d), and the symmetry of (,) follows from the uniqueness of
  (,).
\end{proof}
\subsection{The bialgebra $\mathfrak{f}$}\label{sec2.3}
 Let $\mathfrak{I}$ be the radical of the bilinear form (,). By an argument exactly the same as that in Section 1.2.3 in \cite{Lusztigbook}, we have
\begin{lem}\label{lem8}
$\mathfrak{I}$ is a two-sided ideal of ${}'\! \mathfrak{f}$.
\end{lem}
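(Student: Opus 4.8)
The plan is to follow Lusztig's argument from \cite[\S1.2.3]{Lusztigbook} essentially verbatim; the only inputs are properties (c) and (d) of the bilinear form established in Proposition \ref{prop14}, together with the fact that $(\,,\,)$ is graded, i.e.\ homogeneous elements of distinct degree are orthogonal (see (\ref{eq30})). Recall $\mathfrak{I}=\{x\in{}'\!\mathfrak{f}\mid (x,y)=0\text{ for all }y\in{}'\!\mathfrak{f}\}$; by bilinearity it suffices to test stability under multiplication on homogeneous elements.

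First I would check that $\mathfrak{I}$ is a left ideal. Fix $x\in\mathfrak{I}$ and homogeneous $z,y\in{}'\!\mathfrak{f}$, and write $r(y)=\sum y_1\otimes y_2$ with the factors homogeneous. Using property (d) and the definition (\ref{eq74}) of the form on ${}'\!\mathfrak{f}\otimes{}'\!\mathfrak{f}$,
\begin{equation*}
(zx,y)=(z\otimes x,\,r(y))=\sum t^{2[|z|,|x|]}(z,y_1)(x,y_2)=0,
\end{equation*}
since $(x,y_2)=0$ for every $y_2$ by hypothesis. Hence $zx\in\mathfrak{I}$.

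Next I would run the same computation with the two tensor factors exchanged to obtain the right ideal property:
\begin{equation*}
(xz,y)=(x\otimes z,\,r(y))=\sum t^{2[|x|,|z|]}(x,y_1)(z,y_2)=0,
\end{equation*}
again because $x$ lies in the radical. Thus $xz\in\mathfrak{I}$, and $\mathfrak{I}$ is a two-sided ideal. (Alternatively, the right ideal property follows from the left one using property (c) together with the symmetry of $(\,,\,)$ proved in Proposition \ref{prop14}.)

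I do not anticipate any genuine obstacle. The only point needing attention is bookkeeping: the extra scalar twists of the form $t^{2[\cdot,\cdot]}$ and $t^{\langle\cdot,\cdot\rangle-\langle\cdot,\cdot\rangle}$ that distinguish the two-parameter setting from Lusztig's are nonzero constants and play no role in the vanishing, so the classical argument transfers with no modification.
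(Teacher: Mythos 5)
Your proof is correct and is precisely the argument the paper invokes (it cites Lusztig \cite[1.2.3]{Lusztigbook} without writing it out): property (d) of Proposition \ref{prop14} plus the form (\ref{eq74}) on ${}'\!\mathfrak{f}\otimes{}'\!\mathfrak{f}$ immediately give $(zx,y)=(xz,y)=0$ for $x$ in the radical, and the extra $t$-powers are harmless units. Nothing is missing.
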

Let $\mathfrak{f}={}'\! \mathfrak{f}/\mathfrak{I}$ be the quotient algebra of ${}'\! \mathfrak{f}$ by the ideal $\mathfrak{I}$. By (\ref{eq30}), $\mathfrak{I}$ is $\mathbb{N}^I$-graded.
 This implies that $\mathfrak{f}$ is also an $\mathbb{N}^I$-graded algebra over $\mathbb{Q}(v,t)$. By abuse of notation, we denote again by $\theta_i$ the image of $\theta_i$ in $\mathfrak{f}$ under the quotient map.
 Moreover, the bilinear form (,) on ${}'\! \mathfrak{f}$ induces a well-defined symmetric bilinear form, denoted again by (,), since $\mathfrak{I}$ is the radical of (,).

We claim that the bilinear form on $\mathfrak{f}$ is non-degenerate. Assume that it is not, then there exists a nonzero element, say $x$, in $\mathfrak{f}$ such that $(x,y)=0$ for all $y \in \mathfrak{f}$. Let $x' \in {}'\! \mathfrak{f}$ be a representative of $x$, then $(x'+\mathfrak{I}, z+\mathfrak{I})=0$ for all $z \in {}'\! \mathfrak{f}$. So $x'\in \mathfrak{I}$. A contradiction. The claim follows.

 We claim that the radical of the bilinear form
on ${}'\! \mathfrak{f}\otimes {}'\! \mathfrak{f}$ in Proposition \ref{prop14} is $\mathfrak{I} \otimes {}'\! \mathfrak{f}+{}'\! \mathfrak{f}\otimes \mathfrak{I}$. Assume that $x\otimes y$ is in the radical of the bilinear form on ${}'\! \mathfrak{f}\otimes {}'\! \mathfrak{f}$, then for any element $x' \otimes y' \in {}'\! \mathfrak{f}\otimes {}'\! \mathfrak{f}$, we have
$$(x \otimes y, x'\otimes y')=t^{2[|x|,|y|]}(x,x')(y, y')=0.$$
Hence $(x,x')=0$ or $(y,y')=0$, i.e., $x\in \mathfrak{I}$ or $y\in \mathfrak{I}$. Thus $x\otimes y \in \mathfrak{I} \otimes {}'\! \mathfrak{f}+{}'\! \mathfrak{f}\otimes \mathfrak{I}$. The claim follows.

Moreover, we have
\begin{equation}\label{eq33}
  r(\mathfrak{I}) \subset \mathfrak{I} \otimes {}'\! \mathfrak{f}+{}'\! \mathfrak{f}\otimes \mathfrak{I}.
\end{equation}
Indeed, if $x \in \mathfrak{I}$, then we have
$(r(x), y\otimes z)=(x, yz)=0,\quad  \forall y,z \in {}'\! \mathfrak{f}.$ This implies that $r(x)$ is in the radical of (,) on ${}'\! \mathfrak{f}\otimes {}'\! \mathfrak{f}$.

By (\ref{eq33}), the map $r$ induces an algebra  homomorphism $ \mathfrak{f} \rightarrow  \mathfrak{f}\otimes  \mathfrak{f}$, denoted by $r$ again.
Here the algebra structure on $\mathfrak{f}\otimes  \mathfrak{f}$ is defined by equation (\ref{eq25}).
It is clear that $r(\theta_i)=\theta_i \otimes 1+1 \otimes \theta_i$ for any $\theta_i \in \mathfrak{f}$. So the coassociativity property in Lemma \ref{lem7} still holds for $\mathfrak{f}$.

 Let $^-: \mathbb{Q}(v,t) \rightarrow \mathbb{Q}(v,t)$ be the unique $\mathbb{Q}(t)$-algebra involution such that
\begin{equation}\label{eq73}
\overline{v}=v^{-1}\ {\rm and}\ \overline{t}=t.
\end{equation}
Let $^-: {}'\! \mathfrak{f}\rightarrow {}'\! \mathfrak{f}$ be the unique $\mathbb{Q}(t)$-algebra involution such that
$$\overline{p \theta_i}=\overline{p} \theta_i\ {\rm for\ all}\ p\in \mathbb{Q}(v,t)\ {\rm and}\ i \in I.$$
From the definition of $``^-"$ on ${}'\! \mathfrak{f}$, we have $|\overline{x}|=|x|$ for any homogeneous element
$x \in {}'\! \mathfrak{f}$.
\begin{lem}\label{lem9}
If $x\in {}'\! \mathfrak{f}$ is a homogeneous element and $r(x)=\sum x_1\otimes x_2$, then we have
$$r(\overline{x})=\sum v^{-|x_1|\cdot|x_2|}t^{\langle |x_2|, |x_1|\rangle-\langle |x_1|,|x_2|\rangle}\overline{x}_2\otimes \overline{x}_1.$$
\end{lem}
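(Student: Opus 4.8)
The plan is to imitate Lusztig's proof of the analogous one-parameter identity in \cite{Lusztigbook}. Consider the two $\mathbb{Q}(t)$-linear maps ${}'\!\mathfrak{f}\to{}'\!\mathfrak{f}\otimes{}'\!\mathfrak{f}$ occurring in the statement: let $\Psi(x)=r(\overline{x})$ be the left-hand side, and let $\Psi'(x)$ be the right-hand side, obtained from $r(x)=\sum x_1\otimes x_2$ by applying the bar involution to each tensor factor, swapping the two factors, and inserting the scalar $v^{-|x_1|\cdot|x_2|}t^{\langle|x_2|,|x_1|\rangle-\langle|x_1|,|x_2|\rangle}$. Both maps are $\mathbb{Q}(t)$-linear and carry $vx$ to $v^{-1}$ times the image (since they are built from the bar involution), so, ${}'\!\mathfrak{f}$ being generated as a $\mathbb{Q}(v,t)$-algebra by the $\theta_i$, it suffices to check that $\Psi$ and $\Psi'$ agree on $1$ and on each $\theta_i$, and that $\{x:\Psi(x)=\Psi'(x)\}$ is closed under products.

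The case $x=1$ is immediate. For $x=\theta_i$ we have $r(\theta_i)=\theta_i\otimes1+1\otimes\theta_i$ and $\overline{\theta_i}=\theta_i$; in the formula for $\Psi'(\theta_i)$ each of the two summands carries a trivial twisting scalar (one tensor factor has degree $0$), and the swap merely exchanges the two summands, so $\Psi'(\theta_i)=1\otimes\theta_i+\theta_i\otimes1=r(\overline{\theta_i})=\Psi(\theta_i)$. For product-closure, fix homogeneous $x,y$ with $\Psi(x)=\Psi'(x)$ and $\Psi(y)=\Psi'(y)$. Since $^-$ is an algebra homomorphism and $r$ is an algebra homomorphism for the twisted multiplication (\ref{eq25}) on ${}'\!\mathfrak{f}\otimes{}'\!\mathfrak{f}$, we get $\Psi(xy)=r(\overline{x}\,\overline{y})=r(\overline{x})\,r(\overline{y})=\Psi(x)\,\Psi(y)$. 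It thus remains to verify that $\Psi'$ is multiplicative in the same sense, namely $\Psi'(xy)=\Psi'(x)\,\Psi'(y)$ with the product on the right taken in $({}'\!\mathfrak{f}\otimes{}'\!\mathfrak{f},(\ref{eq25}))$; then $\Psi'(xy)=\Psi'(x)\Psi'(y)=\Psi(x)\Psi(y)=\Psi(xy)$ and the induction closes.

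To check $\Psi'(xy)=\Psi'(x)\Psi'(y)$, write $r(x)=\sum x_1\otimes x_2$, $r(y)=\sum y_1\otimes y_2$, expand $r(xy)=r(x)r(y)$ by (\ref{eq25}), and compute both sides term by term. Setting $a=|x_1|$, $b=|x_2|$, $c=|y_1|$, $d=|y_2|$, so that $|x|=a+b$ and $|y|=c+d$, one finds the $v$-exponent of $\Psi'(xy)$ to be $c\cdot b-(a+c)\cdot(b+d)$ and that of $\Psi'(x)\Psi'(y)$ to be $-a\cdot b-c\cdot d-d\cdot a$, both equal to $-ab-ad-cd$ by symmetry of ``$\cdot$''. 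Likewise, using only bilinearity of $\langle\,,\rangle$, the $t$-exponents on both sides collapse to $\langle b,a\rangle+\langle d,a\rangle+\langle d,c\rangle-\langle a,b\rangle-\langle a,d\rangle-\langle c,d\rangle$. The $v$-part of this bookkeeping is exactly Lusztig's one-parameter computation; the genuinely new point---and the only place that needs care---is that the extra $t$-powers produced by (\ref{eq25}) during the expansion cancel precisely against the $t$-powers in the twisting scalars defining $\Psi'$. This exponent matching is the main (though routine) obstacle; everything else is formal.
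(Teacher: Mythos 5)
Your proposal is correct and takes essentially the same route as the paper's proof: an induction over products of generators, whose entire content is the verification that the twisted map $\Psi'$ is multiplicative for the product (\ref{eq25}), i.e., exactly the $v$- and $t$-exponent matching you carry out. The paper's inductive step ($x=x'x''$) performs the same expansion and cancellation explicitly.
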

\begin{proof}
  It is clear that the lemma holds if $x=\theta_i$ for any $i \in I$. Assume that the lemma holds for the homogeneous elements $x'$ and $x''$. We shall show that the lemma holds for $x=x'x''$. Let us write
  $$r(x')=\sum x'_1\otimes x'_2\quad {\rm and}\quad  r(x'')=\sum x''_1\otimes x''_2,$$
  such that all factors are homogeneous.  By assumption, we have
  \begin{eqnarray*}
  r(\overline{x'})=\sum v^{-|x'_1|\cdot|x'_2|}t^{\langle |x'_2|, |x'_1|\rangle-\langle |x'_1|,|x'_2|\rangle}\overline{x'_2}\otimes \overline{x'_1},\ \
  r(\overline{x''})=\sum  v^{-|x''_1|\cdot|x''_2|}t^{\langle |x''_2|, |x''_1|\rangle-\langle |x''_1|,|x''_2|\rangle}\overline{x''_2}\otimes \overline{x''_1}.
  \end{eqnarray*}
  Hence, $r(\overline{x})=r(\overline{x'})r(\overline{x''})$ is equal to
  \begin{equation*}
\sum v^{-(|x'_1|+|x''_1|)\cdot (|x'_2|+|x''_2|)}t^{\langle|x'_2|+|x''_2|, |x'_1|+|x''_1|\rangle-\langle|x'_1|+|x''_1|, |x'_2|+|x''_2|\rangle}
   v^{|x''_1|\cdot |x'_2|}t^{\langle|x''_1|, |x'_2|\rangle-\langle|x'_2|,|x''_1|\rangle}\overline{x'_2x''_2}\otimes \overline{x'_1x''_1}.
  \end{equation*}
  On the other hand,
  $$r(x)=r(x'x'')=\sum v^{-|x'_2|\cdot|x''_1|}t^{\langle|x''_1|, |x'_2|\rangle-\langle|x'_2|,|x''_1|\rangle}x'_1x''_1\otimes x'_2x''_2.$$
  By (\ref{eq73}), we have
  $$\overline{v^{-|x'_2|\cdot|x''_1|}t^{\langle|x''_1|, |x'_2|\rangle-\langle|x'_2|,|x''_1|\rangle}x'_2x''_2}=v^{|x''_1|\cdot |x'_2|}t^{\langle|x''_1|,\ \ |x'_2|\rangle-\langle|x'_2|,|x''_1|\rangle}\overline{x'_2x''_2}.$$
   Since  $|x'_1x''_1|=|x'_1|+|x''_1|$ and $|x'_2x''_2|=|x'_2|+|x''_2|$,
   the lemma follows by induction on ${\rm tr}(|x|)$.
\end{proof}
\begin{lem}\label{lem10}
The involution map $^-:  {}'\! \mathfrak{f}\rightarrow  {}'\! \mathfrak{f}$ sends $\mathfrak{I}$ onto itself.
\end{lem}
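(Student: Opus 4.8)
The plan is to prove $\overline{\mathfrak I}\subseteq\mathfrak I$; since $^-$ is an involution on ${}'\!\mathfrak f$, this already forces $\overline{\mathfrak I}=\mathfrak I$. Because the form $(,)$ is $\mathbb N^I$-graded by (\ref{eq30}) and $^-$ preserves the grading, it suffices to fix $\nu\in\mathbb N^I$, set $n={\rm tr}(\nu)$, take a homogeneous $x\in\mathfrak I\cap{}'\!\mathfrak f_\nu$, and show that $(\overline x,z)=0$ for all $z\in{}'\!\mathfrak f_\nu$. As the monomials $\theta_{i_1}\theta_{i_2}\cdots\theta_{i_n}$ with $i_1+\cdots+i_n=\nu$ span ${}'\!\mathfrak f_\nu$, it is enough to treat $z=\theta_{i_1}\cdots\theta_{i_n}$.

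The heart of the argument is to rewrite $(\overline x,\theta_{i_1}\cdots\theta_{i_n})$. First I would iterate property (c) of Proposition \ref{prop14}, using the coassociativity of $r$ (Lemma \ref{lem7}), to obtain $(\overline x,\theta_{i_1}\cdots\theta_{i_n})=(r^{(n-1)}(\overline x),\ \theta_{i_1}\otimes\cdots\otimes\theta_{i_n})$, where $r^{(n-1)}$ is the iterated comultiplication and the pairing on ${}'\!\mathfrak f^{\otimes n}$ is the one obtained by iterating (\ref{eq74}): a monomial in $t$, depending only on the degrees of the tensor factors, times the product of the slotwise pairings. Next I would iterate Lemma \ref{lem9}: writing $r^{(n-1)}(x)=\sum y_1\otimes\cdots\otimes y_n$ with all $y_k$ homogeneous, one gets $r^{(n-1)}(\overline x)=\sum\kappa(|y_1|,\dots,|y_n|)\,\overline{y_n}\otimes\cdots\otimes\overline{y_1}$, where $\kappa$ is a nonzero monomial in $v,t$ depending only on the sequence of degrees. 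Pairing this against $\theta_{i_1}\otimes\cdots\otimes\theta_{i_n}$ and using that each graded piece ${}'\!\mathfrak f_i$, $i\in I$, is one-dimensional (spanned by $\theta_i$), the degrees are forced to satisfy $|y_k|=i_{n+1-k}$ and each slotwise pairing collapses to a scalar multiple of $(\theta_i,\theta_i)$. Collecting the resulting constants yields an identity of the shape
$$(\overline x,\theta_{i_1}\cdots\theta_{i_n})=c\;\overline{(x,\ \theta_{i_n}\theta_{i_{n-1}}\cdots\theta_{i_1})}$$
with $c\in\mathbb Q(v,t)$ nonzero and independent of $x$ (the precise $c$ is the ratio of the $\kappa$ and the $t$-factors coming from (\ref{eq74}), together with $(\theta_i,\theta_i)/\overline{(\theta_i,\theta_i)}=-v_i^2$).

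The lemma then follows immediately: $\theta_{i_n}\cdots\theta_{i_1}$ is a monomial of degree $\nu$, so $x\in\mathfrak I$ forces $(x,\ \theta_{i_n}\cdots\theta_{i_1})=0$, whence $(\overline x,\theta_{i_1}\cdots\theta_{i_n})=0$; since the monomials span ${}'\!\mathfrak f_\nu$, we conclude $\overline x\in\mathfrak I$. I expect the main obstacle to be the bookkeeping in the displayed identity — verifying that the $v$- and $t$-powers produced by iterating (\ref{eq74}) and Lemma \ref{lem9} assemble into a single nonzero scalar $c$ which is insensitive to the chosen decomposition $r^{(n-1)}(x)=\sum y_1\otimes\cdots\otimes y_n$ and which matches the analogous expansion of $(x,\theta_{i_n}\cdots\theta_{i_1})$. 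No new idea beyond Lusztig's in Section 1.2 of \cite{Lusztigbook} is required; the essential structural input is that $\dim{}'\!\mathfrak f_i=1$ for each $i\in I$, so that pairing an element of the radical $\mathfrak I$ against any monomial automatically gives $0$.
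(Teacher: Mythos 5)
Your proposal is correct, but it proves the lemma by a different route than the paper. The paper fixes $x\in\mathfrak I$ homogeneous, splits the test monomial only once as $y=y'y''$ with ${\rm tr}(|y'|),{\rm tr}(|y''|)>0$, and runs an induction on ${\rm tr}(|x|)$: the essential input is (\ref{eq33}), i.e. $r(\mathfrak I)\subseteq \mathfrak I\otimes{}'\!\mathfrak f+{}'\!\mathfrak f\otimes\mathfrak I$, which combined with a single application of Lemma \ref{lem9} and the inductive hypothesis forces one of the two factors $(\overline{x_2},y')$, $(\overline{x_1},y'')$ in each term to vanish. You instead expand all the way down to the generators, exploiting coassociativity, the iterated form of (\ref{eq74}) and of Lemma \ref{lem9}, and the fact that $\dim{}'\!\mathfrak f_i=1$ for $i\in I$, to arrive at the closed identity $(\overline x,\theta_{i_1}\cdots\theta_{i_n})=c\,\overline{(x,\theta_{i_n}\cdots\theta_{i_1})}$ with $c$ depending only on $(i_1,\dots,i_n)$; this bypasses (\ref{eq33}) and the induction on $x$ entirely (and note you do not even need $c\neq0$ for the conclusion, only that it is a well-defined scalar). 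The bookkeeping you flag does go through: the only contributing component of $r^{(n-1)}(x)$ lives in the single multidegree $(i_n,\dots,i_1)$, where all the $v$- and $t$-exponents, the $\kappa$-factors, and the values $(\theta_{i_k},\theta_{i_k})/\overline{(\theta_{i_k},\theta_{i_k})}=-v_{i_k}^{2}$ are constants, so $c$ is independent of the chosen decomposition. Your argument is somewhat longer to write out carefully but yields a cleaner structural fact (the bar involution is, up to explicit scalars, adjoint to monomial reversal), whereas the paper's is shorter given that (\ref{eq33}) was already established.
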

\begin{proof}
Given any element $x\in \mathfrak{I}$, it is enough to show that $(\overline{x}, y)=0$ for any $y \in {}'\! \mathfrak{f}$. We can assume that $x$ is a homogenous element since $``^-"$ is additive. We shall show that
$(\overline{x}, y)=0$ by induction on ${\rm tr}(|x|)$. Without lost of generality, we assume that $y$ is a monomial and $y=y'y''$ for some monomials $y'$ and $y''$. Since $\theta_i \not \in \mathfrak{I}$, for any $i\in I$, we have ${\rm tr}(|x|)\geq 2$ for any element $x \in \mathfrak{I}$. Hence we can further assume that ${\rm tr}(|y'|), {\rm tr}(|y''|) >0$, i.e., $y'\not \in \mathbb{Q}(v,t)$ and $y''\not \in \mathbb{Q}(v,t)$.

Write $r(x)=\sum x_1\otimes x_2$. From (\ref{eq33}), $r(x) \in {}'\!\mathfrak{f}\otimes \mathfrak{I}+\mathfrak{I} \otimes {}'\!\mathfrak{f}$. Hence either $x_1 \in \mathfrak{I}$ or $x_2\in \mathfrak{I}$.
 If $x\in \mathfrak{I}$ satisfies that ${\rm tr}|x| \leq {\rm tr}|x'|$ for all $x'\in \mathfrak{I}$, then either $x_1 \in \mathbb{Q}(v,t)$ or $x_2 \in \mathbb{Q}(v,t)$, i.e., either ${\rm tr}(|x_1|)=0$ or ${\rm tr}(|x_2|)=0$. By Lemma \ref{lem9}, we have
\begin{eqnarray*}
(\overline{x}, y'y'')=(r(\overline{x}), y'\otimes y'')
=\sum v^{-|x_1|\cdot|x_2|}t^{\langle |x_2|,|x_1|\rangle-\langle |x_1|,|x_2|\rangle+2[|x_2|,|x_1|]}(\overline{x_2}, y')(\overline{x_1}, y'')=0.
\end{eqnarray*}
This shows that $r(\overline{x})\in \mathfrak{I}$ if  $x\in \mathfrak{I}$ and ${\rm tr}|x| \leq {\rm tr}|x'|$ for all $x'\in \mathfrak{I}$.

We now assume that $\overline{z}\in \mathfrak{I}$ for any $z\in \mathfrak{I}$ such that ${\rm tr}(|z|) <{\rm tr}(|x|)$. By Lemma \ref{lem9} again,
\begin{eqnarray*}
(\overline{x}, y'y'')=(r(\overline{x}), y'\otimes y'')
=\sum v^{-|x_1|\cdot|x_2|}t^{\langle |x_2|,|x_1|\rangle-\langle |x_1|,|x_2|\rangle+2[|x_2|,|x_1|]}(\overline{x_2}, y')(\overline{x_1}, y'').
\end{eqnarray*}
We may assume that both ${\rm tr}(|x_1|)<{\rm tr}(|x|)$ and ${\rm tr}(|x_2|)<{\rm tr}(|x|)$ by the assumption ${\rm tr}(|y'|)>0$ and ${\rm tr}(|y''|)>0$. Therefore, by the induction assumption, $\overline{x_1}, \overline{x_2} \in \mathfrak{I}$. This implies that $(\overline{x}, y'y'')=0$. This finishes the proof.
\end{proof}
By Lemma \ref{lem10}, the involution $^-: {}'\!\mathfrak{f} \rightarrow {}'\!\mathfrak{f}$ induces an involution on $\mathfrak{f}$, denoted by the same notation.

\subsection{Quantum Serre relations}

For any $i\in I$, let $r_i: {}'\!\mathfrak{f}\rightarrow {}'\!\mathfrak{f}$ be the unique linear map satisfying the following properties:
\begin{eqnarray*}
r_i(1)=0,\ \ r_i(\theta_j)=\delta_{ij} \quad \forall j\in I, \quad
{\rm and}\quad
r_i(xy)=v^{- i\cdot |y|}t^{\langle |y|,i\rangle-\langle i, |y|\rangle}r_i(x)y+xr_i(y),
\end{eqnarray*}
for any homogeneous elements $x$ and $y$. If we write $r(x)=\sum x_1\otimes x_2$ with $x_1,x_2$ homogenous and $x_2$'s of different degree, then
we have
\begin{eqnarray}\label{eq36}
x_1=r_i(x),\quad {\rm if}\ x_2=\theta_i.
\end{eqnarray}
Since both $r$ and $r_i$ are linear maps, it is enough to check this
by assuming that $x$ is a monomial. This can be done by induction on $tr(|x|)$. If $|x|_i=0$, then $r_i(x)=0$ and there is no term of the form $-\otimes \theta_i$ in $r(x)$. The claim holds in this case. Now assume that $|x|_i\not =0$, then we can write $x=x'\theta_i x''$ for some
monomials $x',x''$ such that $|x''|_i=0$. So,
\begin{eqnarray*}
 & & r_i(x)=r_i(x'\theta_ix'')
 =v^{-i\cdot |x''|}t^{\langle |x''|,i\rangle-\langle i, |x''|\rangle}r_i(x'\theta_i)x''\\
 &=&v^{-i\cdot |x''|-i\cdot i}t^{\langle |x''|,i\rangle-\langle i, |x''|\rangle}r_i(x')\theta_ix''+v^{-i\cdot |x''|}t^{\langle |x''|,i\rangle-\langle i, |x''|\rangle}x'x''.
\end{eqnarray*}
On the other hand,
$$r(x)=r(x')(\theta_i\otimes 1+1\otimes \theta_i)r(x'').$$
Since $|x''|_i=0$,  the term $-\otimes \theta_i$ only appears in $(x'\otimes 1)(1\otimes \theta_i)(x''\otimes 1)+(z\otimes \theta_i)(\theta_i\otimes 1)(x''\otimes 1)$ for some $z$. By the induction assumption, $z=r_i(x')$. By (\ref{eq25}), the term
$-\otimes \theta_i$ is
$$(v^{-i\cdot |x''|-i\cdot i}t^{\langle |x''|,i\rangle-\langle i, |x''|\rangle}r_i(x')\theta_ix''+v^{-i\cdot |x''|}t^{\langle |x''|,i\rangle-\langle i, |x''|\rangle}x'x'')\otimes \theta_i.$$
The claim follows.

Similarly, for any $i\in I$, there is a unique linear map $_ir: {}'\!\mathfrak{f}\rightarrow {}'\!\mathfrak{f}$ satisfying the following properties:
\begin{eqnarray*}
_ir(1)=0,\ \ _ir(\theta_j)=\delta_{ij},\quad \forall j\in I,\quad
{\rm and}\quad
_ir(xy)={}_ir(x)y+v^{- i\cdot |x|}t^{\langle |x|,i\rangle-\langle i, |x|\rangle}x\ {}_ir(y),
\end{eqnarray*}
for any homogeneous elements $x, y$. Moreover, we have $r(x)=\theta_i\otimes {}_ir(x)$ plus terms of other bihomogeneities.
\begin{lem}\label{lem12}
For any $i\in I$, the linear maps $_ir, r_i: {}'\!\mathfrak{f} \rightarrow {}'\!\mathfrak{f}$ send $\mathfrak{I}$ to itself.
\end{lem}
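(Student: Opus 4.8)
The plan is to realize $r_i$ and ${}_ir$ as (appropriately twisted) adjoints of right and left multiplication by $\theta_i$ with respect to the bilinear form $(\,,\,)$ on ${}'\!\mathfrak{f}$, following the strategy of the one-parameter case (compare \cite{Lusztigbook}). Since $\mathfrak{I}$ is by definition the radical of $(\,,\,)$, it suffices to show that for every $x\in\mathfrak{I}$ one has $(r_i(x),y)=0$ and $({}_ir(x),y)=0$ for all $y\in{}'\!\mathfrak{f}$. Because $r_i$ and ${}_ir$ are linear and, by (\ref{eq30}), $\mathfrak{I}$ is $\mathbb{N}^I$-graded, I may reduce to the case that $x$ is homogeneous, say $|x|=\nu$; also, if $\nu_i=0$ then $r_i(x)=0$ and there is nothing to prove, so I may assume $\nu_i\geq 1$.

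The main step is an explicit pairing identity. The key observation is that ${}'\!\mathfrak{f}_i=\mathbb{Q}(v,t)\theta_i$ is one-dimensional, so when $r(x)=\sum x_1\otimes x_2$ is written as a sum of bihomogeneous terms with the $x_2$ of pairwise distinct degrees, there is exactly one term whose second factor has degree $i$, and by (\ref{eq36}) that term is $r_i(x)\otimes\theta_i$. Combining part (c) of Proposition \ref{prop14}, the formula (\ref{eq74}) for the induced form on ${}'\!\mathfrak{f}\otimes{}'\!\mathfrak{f}$, the orthogonality (\ref{eq30}), and part (b) of Proposition \ref{prop14}, I would obtain, for any $y\in{}'\!\mathfrak{f}$,
$$(x, y\theta_i)=(r(x), y\otimes\theta_i)=\frac{t^{2[\nu-i,i]}}{1-v_i^{-2}}\,(r_i(x),y).$$
If $x\in\mathfrak{I}$ the left-hand side vanishes for all $y$, and since $i\cdot i=2\Omega_{ii}>0$ the scalar $t^{2[\nu-i,i]}/(1-v_i^{-2})$ is a nonzero element of $\mathbb{Q}(v,t)$; hence $(r_i(x),y)=0$ for all $y$, i.e.\ $r_i(x)\in\mathfrak{I}$. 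The argument for ${}_ir$ is the mirror image: using the remark that $r(x)=\theta_i\otimes{}_ir(x)$ plus terms of other bihomogeneities, together with part (c) of Proposition \ref{prop14} applied to $(x,\theta_i y)=(r(x),\theta_i\otimes y)$, the same three ingredients yield $(x, \theta_i y)=\dfrac{t^{2[i,\nu-i]}}{1-v_i^{-2}}\,({}_ir(x),y)$, whence ${}_ir(x)\in\mathfrak{I}$ whenever $x\in\mathfrak{I}$.

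I do not expect a genuine obstacle; the lemma is formal once the adjunction identity is in place. The only point needing care is the bookkeeping of the powers of $t$ produced by (\ref{eq74}) together with the multiplication rule (\ref{eq25}) on ${}'\!\mathfrak{f}\otimes{}'\!\mathfrak{f}$ (so that the exponent $2[\nu-i,i]$ comes out correctly), and the verification that the scalar relating $(x,y\theta_i)$ to $(r_i(x),y)$ is invertible in $\mathbb{Q}(v,t)$ — which holds because $\Omega_{ii}\in\mathbb{Z}_{>0}$ forces $1-v_i^{-2}=1-v^{-2\Omega_{ii}}\neq 0$ and the relevant value of $[\,,\,]$ is an integer. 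With Lemma \ref{lem12} established, $r_i$ and ${}_ir$ descend to well-defined linear endomorphisms of $\mathfrak{f}={}'\!\mathfrak{f}/\mathfrak{I}$.
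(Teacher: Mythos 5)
Your argument is correct. It differs from the paper's proof in its choice of key ingredient: the paper deduces the lemma from the containment $r(\mathfrak{I})\subset \mathfrak{I}\otimes{}'\!\mathfrak{f}+{}'\!\mathfrak{f}\otimes\mathfrak{I}$ established in (\ref{eq33}) — the bihomogeneous component $r_i(x)\otimes\theta_i$ of $r(x)$ must lie in $\mathfrak{I}_{|x|-i}\otimes{}'\!\mathfrak{f}_i+{}'\!\mathfrak{f}_{|x|-i}\otimes\mathfrak{I}_i$, and since $\theta_i\notin\mathfrak{I}$ forces $\mathfrak{I}_i=0$, one reads off $r_i(x)\in\mathfrak{I}$ — whereas you bypass (\ref{eq33}) entirely and instead establish the adjunction identity $(x,y\theta_i)=\frac{t^{2[|x|-i,\,i]}}{1-v_i^{-2}}(r_i(x),y)$ from Proposition \ref{prop14}(b),(c), formula (\ref{eq74}), the orthogonality (\ref{eq30}) and (\ref{eq36}), and then invoke the definition of $\mathfrak{I}$ as the radical. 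This identity is exactly the content of the paper's Lemma \ref{lem13}, which the authors prove immediately \emph{after} Lemma \ref{lem12} by the same computation you carry out; since that computation uses nothing downstream of Lemma \ref{lem12}, there is no circularity, and your route effectively proves Lemma \ref{lem13} first and obtains Lemma \ref{lem12} as a corollary. What your approach buys is self-containedness (no appeal to the structure of the radical of the form on ${}'\!\mathfrak{f}\otimes{}'\!\mathfrak{f}$, whose verification in Section \ref{sec2.3} is only sketched for pure tensors); what the paper's approach buys is that Lemma \ref{lem13} can then be stated and proved independently without the scalar bookkeeping appearing twice. Your checks that the scalar is invertible ($\Omega_{ii}>0$ so $1-v_i^{-2}\neq 0$, and $t^{2[|x|-i,i]}$ is a unit) are exactly the right points of care.
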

\begin{proof}
  For any $x\in \mathfrak{I}$, if $|x|_i=0$, then $r_i(x)=0\in \mathfrak{I}$. The lemma holds in this case.
  We now assume that $|x|_i\not =0$. Write $r(x)=\sum x_1\otimes x_2$. By (\ref{eq36}), $r_i(x)\otimes \theta_i$ is one of the summands. By (\ref{eq33}), either $x_1\in \mathfrak{I}$ or $x_2\in \mathfrak{I}$. What follows is that $r_i(x)\in \mathfrak{I}$ since $\theta_i \not \in \mathfrak{I}$. It is similar to prove that $_i r(x)\in \mathfrak{I}$.
\end{proof}
\begin{lem}\label{lem13}
  For any $x, y\in {}'\!\mathfrak{f}$, we have
  \begin{equation}\label{eq34}
    (y\theta_i, x)=t^{2[|y|,i]}(y, r_i(x))(\theta_i, \theta_i),\quad  (\theta_i y, x)=t^{2[i,|y|]}(\theta_i, \theta_i)(y, {}_ir(x)).
  \end{equation}
\end{lem}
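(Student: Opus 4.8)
The plan is to deduce both identities directly from the defining properties of the bilinear form in Proposition \ref{prop14} — specifically property (d), $(x'x'',z)=(x'\otimes x'',r(z))$ — together with the relation (\ref{eq36}), which identifies the $-\otimes\theta_i$ component of $r(x)$ with $r_i(x)\otimes\theta_i$, and the analogous statement (recorded just before Lemma \ref{lem12}) that the $\theta_i\otimes-$ component of $r(x)$ is $\theta_i\otimes{}_ir(x)$.

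First I would reduce to the case where $x$ and $y$ are homogeneous, using that $(\,,)$ is bilinear. If $|x|\neq |y|+i$, then $(y\theta_i,x)=0$ by (\ref{eq30}), and $(y,r_i(x))=0$ as well since $r_i$ lowers degree by $i$; so both sides vanish and there is nothing to prove. Assume now $|x|=|y|+i$. Applying property (d) of Proposition \ref{prop14} with $x'=y$, $x''=\theta_i$, and the third argument $x$, we get
\[
(y\theta_i,x)=(y\otimes\theta_i,r(x)).
\]
Writing $r(x)=\sum x_1\otimes x_2$ with homogeneous factors and the $x_2$'s of pairwise distinct degrees, the pairing formula (\ref{eq74}) gives $(y\otimes\theta_i,x_1\otimes x_2)=t^{2[|y|,i]}(y,x_1)(\theta_i,x_2)$; by (\ref{eq30}) this vanishes unless $|x_1|=|y|$ and $|x_2|=i$. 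Since ${}'\!\mathfrak f_i=\mathbb{Q}(v,t)\theta_i$ is one-dimensional, the unique contributing summand has $x_2=\theta_i$, and by (\ref{eq36}) its first factor is $r_i(x)$. Hence
\[
(y\theta_i,x)=(y\otimes\theta_i,\,r_i(x)\otimes\theta_i)=t^{2[|y|,i]}(y,r_i(x))(\theta_i,\theta_i),
\]
which is the first identity.

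The second identity is handled symmetrically: apply property (d) with $x'=\theta_i$, $x''=y$ to obtain $(\theta_i y,x)=(\theta_i\otimes y,r(x))$, then use that the component of $r(x)$ in ${}'\!\mathfrak f_i\otimes{}'\!\mathfrak f$ is $\theta_i\otimes{}_ir(x)$, together with (\ref{eq74}), to conclude $(\theta_i y,x)=t^{2[i,|y|]}(\theta_i,\theta_i)(y,{}_ir(x))$; again one first disposes of the degenerate degree case. I do not anticipate a genuine obstacle here, as the argument is a short bookkeeping exercise; the only point requiring mild care is tracking the power of $t$ from (\ref{eq74}), namely noting that its exponent involves only the degrees of the fixed pair $y\otimes\theta_i$ (resp. $\theta_i\otimes y$), and that a nonzero pairing forces these to match the degrees of the relevant factors of $r(x)$, so the exponent is unambiguously $2[|y|,i]$ (resp. $2[i,|y|]$).
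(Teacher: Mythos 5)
Your proof is correct and follows essentially the same route as the paper's: apply property (d) of Proposition \ref{prop14}, isolate the $-\otimes\theta_i$ (resp. $\theta_i\otimes-$) component of $r(x)$ via (\ref{eq36}), and evaluate with (\ref{eq74}). The degree bookkeeping you add is a harmless elaboration of what the paper leaves implicit.
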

\begin{proof}
  By the properties of $r_i$, we have
  \begin{eqnarray*}
    (y\theta_i, x)=(y\otimes \theta_i, r(x))
    =(y\otimes \theta_i, r_i(x)\otimes \theta_i)
    =t^{2[|y|,i]}(y, r_i(x))(\theta_i, \theta_i),
  \end{eqnarray*}
  where the last equality is due to (\ref{eq74}). The first one follows. The second one can be proved similarly.
\end{proof}

By Lemma \ref{lem12}, $_ir$ and $r_i$ induce well-defined linear maps on $\mathfrak{f}$,
 denoted again by the same notations, respectively. Moreover, the property (\ref{eq34}) holds in $\mathfrak{f}$.
\begin{lem}\label{lem14}
  Let $x\in \mathfrak{f}_{\nu}$ with $\nu\not=0$. 
  \begin{itemize}
    \item[(a)] If $r_i(x)=0$ for all $i\in I$, then $x=0$.
    \item[(b)] If $_ir(x)=0$ for all $i\in I$, then $x=0$.
  \end{itemize}
\end{lem}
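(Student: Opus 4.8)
The plan is to reduce both statements to the non-degeneracy of the bilinear form $(,)$ on $\mathfrak{f}$, which was proved just above (using that $\mathfrak{I}$ is precisely the radical of $(,)$ on ${}'\!\mathfrak{f}$), together with the adjunction identities of Lemma \ref{lem13}. The form is also homogeneous: $(x,y)=0$ whenever $x,y$ are homogeneous of different degree, by (\ref{eq30}). So to conclude $x=0$ it suffices to show $(z,x)=0$ for every $z\in\mathfrak{f}_{\nu}$.

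First I would record the elementary observation that, since $\mathfrak{f}$ is generated as a $\mathbb{Q}(v,t)$-algebra by the $\theta_i$ and $\nu\neq 0$, the homogeneous component $\mathfrak{f}_{\nu}$ is spanned by monomials $\theta_{i_1}\cdots\theta_{i_k}$ with $k={\rm tr}(\nu)\geq 1$. Each such monomial can be written as $y\theta_i$ with $i\in I$ and $y\in\mathfrak{f}_{\nu-i}$ (take $i=i_k$, $y=\theta_{i_1}\cdots\theta_{i_{k-1}}$), and dually as $\theta_i y'$ with $y'\in\mathfrak{f}_{\nu-i}$.

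For part (a): let $z\in\mathfrak{f}_{\nu}$ be arbitrary, expand it as a linear combination of monomials of the form $y\theta_i$ as above, and apply the first identity in (\ref{eq34}) to each term:
$$(y\theta_i,x)=t^{2[|y|,i]}(y,r_i(x))(\theta_i,\theta_i)=0,$$
since $r_i(x)=0$ for every $i\in I$. Hence $(z,x)=0$ for all $z\in\mathfrak{f}_{\nu}$, and by the homogeneity of $(,)$ this gives $(z,x)=0$ for all $z\in\mathfrak{f}$; non-degeneracy forces $x=0$. Part (b) is identical, now using the second identity in (\ref{eq34}) together with the expression of the spanning monomials of $\mathfrak{f}_{\nu}$ as $\theta_i y'$.

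I do not anticipate a genuine obstacle here: the entire argument is bookkeeping on top of two facts already available, namely the non-degeneracy of $(,)$ on $\mathfrak{f}$ and Lemma \ref{lem13}. The only point meriting a remark is that the scalar $(\theta_i,\theta_i)=(1-v_i^{-2})^{-1}$ appearing above is a nonzero element of $\mathbb{Q}(v,t)$ by Proposition \ref{prop14}(b), so it may be cancelled without affecting the conclusion. This lemma is the two-parameter analogue of the corresponding statement in \cite{Lusztigbook}, and the proof runs along the same lines.
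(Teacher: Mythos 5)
Your proof is correct and follows the same route as the paper's: both use Lemma \ref{lem13} to get $(y\theta_i,x)=0$, observe that $\mathfrak{f}_{\nu}=\sum_i\mathfrak{f}\theta_i\cap\mathfrak{f}_\nu$ for $\nu\neq 0$, and conclude by non-degeneracy of $(,)$ on $\mathfrak{f}$. No issues.
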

\begin{proof}
  If $r_i(x)=0$ for all $i\in I$, then, by Lemma \ref{lem13}, we have $(y\theta_i, x)=0$ for all $y$ and $\theta_i$.
   For any $z\in \mathfrak{f}_{\nu}$ with $\nu\not =0$, we have $z \in \sum_i \mathfrak{f}\theta_i$. Therefore $(z,x)=0$ for any $z\in \mathfrak{f}_{\nu}$. This implies $x$ is inside the radical of (,) on $\mathfrak{f}$. But (,) on $\mathfrak{f}$ is non-degenerate, so $x=0$. This finishes the proof of (a). (b) can be proved similarly.
\end{proof}

For any $n\in \mathbb{N}$, we set 
 $$\theta_i^{(n)}=\frac{\theta_i^n}{[n]^!_{v_i, t_i}},$$
where $v_i$ and $t_i$ are defined in Section \ref{sec2.01}.
\begin{lem}\label{lem16}
We have
$r(\theta_i^{(n)})=\sum_{p+p'=n}(v_it_i)^{-pp'}\theta^{(p)}_i \otimes \theta^{(p')}_i$, for any $n\in \mathbb{N}$.
\end{lem}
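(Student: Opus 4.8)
The plan is to prove the identity by induction on $n$, exploiting that $r$ is an algebra homomorphism for the twisted multiplication (\ref{eq25}) on ${}'\!\mathfrak{f}\otimes{}'\!\mathfrak{f}$. The cases $n=0$ and $n=1$ are immediate from $r(1)=1\otimes 1$ and $r(\theta_i)=\theta_i\otimes 1+1\otimes\theta_i$. For the inductive step I would first record the elementary recursion $\theta_i^{(p)}\theta_i=[p+1]_{v_i,t_i}\theta_i^{(p+1)}$, which is immediate from the definition $\theta_i^{(m)}=\theta_i^m/[m]^!_{v_i,t_i}$; in particular $\theta_i^{(n+1)}=[n+1]_{v_i,t_i}^{-1}\,\theta_i^{(n)}\theta_i$, so that $r(\theta_i^{(n+1)})=[n+1]_{v_i,t_i}^{-1}\,r(\theta_i^{(n)})\bigl(\theta_i\otimes 1+1\otimes\theta_i\bigr)$, into which I substitute the inductive formula for $r(\theta_i^{(n)})$.

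The heart of the step is then the expansion of $\bigl(\sum_{p+p'=n}(v_it_i)^{-pp'}\theta_i^{(p)}\otimes\theta_i^{(p')}\bigr)\bigl(\theta_i\otimes 1+1\otimes\theta_i\bigr)$ using the multiplication (\ref{eq25}). Here I would observe that right multiplication by $1\otimes\theta_i$ carries no twist, because the left tensor factor $1$ has degree $0$, whereas right multiplication by $\theta_i\otimes 1$ produces the scalar $v^{-p'(i\cdot i)}t^{p'(\langle i,i\rangle-\langle i,i\rangle)}=v_i^{-2p'}$, the $t$-exponent dropping out since $\langle i,i\rangle-\langle i,i\rangle=0$. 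Using the recursion above for both tensor factors and collecting the coefficient of $\theta_i^{(a)}\otimes\theta_i^{(b)}$ with $a+b=n+1$, the inductive step reduces to the scalar identity
\begin{equation*}
(v_i^{-1}t_i)^{b}\,[a]_{v_i,t_i}+(v_it_i)^{a}\,[b]_{v_i,t_i}=[a+b]_{v_i,t_i},
\end{equation*}
which I would verify in one line by writing $[m]_{v_i,t_i}=(\alpha_i^m-\gamma_i^m)/(\alpha_i-\gamma_i)$ with $\alpha_i=v_it_i$ and $\gamma_i=v_i^{-1}t_i$.

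I do not anticipate a real obstacle; the only delicate point is the bookkeeping of the twist scalars coming from (\ref{eq25}), and the key structural observation is that the antisymmetric part of $\langle\,,\,\rangle$ vanishes on the diagonal, so that the $t$-dependence of the answer enters solely through the divided-power normalization. If one prefers to avoid the induction, an alternative is to compute $r(\theta_i^n)=(\theta_i\otimes 1+1\otimes\theta_i)^n$ directly via the $q$-binomial theorem from the commutation $(1\otimes\theta_i)(\theta_i\otimes 1)=v_i^{-2}(\theta_i\otimes 1)(1\otimes\theta_i)$, obtaining $v_i$-binomial coefficients, and then to rewrite everything in divided powers using (\ref{eq92}), i.e.\ $[m]^!_{v_i,t_i}=t_i^{m(m-1)/2}[m]^!_{v_i}$; the stray $t_i$-powers then reassemble into $(v_it_i)^{-pp'}$ because the exponent $\tfrac{1}{2}\bigl(n(n-1)-p(p-1)-p'(p'-1)\bigr)$ equals $pp'$. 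I would present the inductive argument, as it is the shortest.
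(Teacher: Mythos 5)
Your argument is correct. The paper's proof is exactly your ``alternative'': it quotes the commutation $(1\otimes\theta_i)(\theta_i\otimes 1)=v_i^{-2}(\theta_i\otimes 1)(1\otimes\theta_i)$ and Lusztig's quantum binomial theorem (Section 1.3.5 of \cite{Lusztigbook}) to get $r(\theta_i^n)=\sum_{p+p'=n}(v_it_i)^{-pp'}\begin{bmatrix}n\\p\end{bmatrix}_{v_i,t_i}\theta_i^p\otimes\theta_i^{p'}$, and then divides by $[n]^!_{v_i,t_i}$; the conversion of the stray $t_i$-powers into $(v_it_i)^{-pp'}$ is precisely the identity (\ref{eq92}) together with your observation that $\tfrac12\bigl(n(n-1)-p(p-1)-p'(p'-1)\bigr)=pp'$. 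Your primary route, the induction on $n$, is a genuinely self-contained substitute: it avoids citing the binomial theorem at the cost of verifying the splitting identity $(v_i^{-1}t_i)^{b}[a]_{v_i,t_i}+(v_it_i)^{a}[b]_{v_i,t_i}=[a+b]_{v_i,t_i}$, which your factorization $[m]_{v_i,t_i}=(\alpha_i^m-\gamma_i^m)/(\alpha_i-\gamma_i)$ with $\alpha_i=v_it_i$, $\gamma_i=v_i^{-1}t_i$ does in one line. All the twist bookkeeping you flag as delicate checks out: right multiplication by $1\otimes\theta_i$ is untwisted since $|y_1|=0$ in (\ref{eq25}), and right multiplication by $\theta_i\otimes 1$ contributes only $v_i^{-2p'}$ because $\langle i,i\rangle-\langle i,i\rangle=0$. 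Either write-up would be acceptable; the paper's is shorter because it outsources the combinatorics to Lusztig, while yours makes the two-parameter deformation of that combinatorics explicit.
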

\begin{proof} Since $(1\otimes \theta_i)(\theta_i\otimes 1)=v_i^{-2}(\theta_i\otimes 1)(1\otimes \theta_i)$ and by Section 1.3.5 in \cite{Lusztigbook}, we have
$$r(\theta_i^n)=(1\otimes \theta_i+\theta_i\otimes 1)^n=\sum_{p+p'=n} (v_it_i)^{-pp'}\begin{bmatrix}
    n\\p
  \end{bmatrix}_{v_i, t_i}\theta_i^p\otimes \theta_i^{p'}.$$
  The lemma follows from the definitions of $\theta_i^{(n)}$ and $\begin{bmatrix}
    n\\p
  \end{bmatrix}_{v_i, t_i}$.
\end{proof}

For any $n\in \mathbb{N}$ and $i\in I$, by Lemma \ref{lem16} and (\ref{eq36}), we have
\begin{equation}\label{eq93}
r_i(\theta_i^{(n+1)})=(v_it_i)^{-n}\theta^{(n)}_i.
\end{equation}

\begin{prop}\label{prop15}
  The generators $\theta_i$ of $\mathfrak{f}$ satisfy the following identity.
  $$S_{ij}:=\sum_{p+p'=1-2\frac{i\cdot j}{i\cdot i}}(-1)^pt_i^{-p(p'-2\frac{\langle i,j\rangle}{i\cdot i}+2\frac{\langle j,i\rangle}{i\cdot i})}\theta_i^{(p)}\theta_j\theta_i^{(p')}=0, \quad \forall i\not =j\in I.$$
\end{prop}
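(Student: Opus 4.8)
The plan is to reduce the identity to the vanishing of the twisted derivations $r_k$ on $S_{ij}$. Since $S_{ij}$ is homogeneous of degree $\nu=ni+j$ with $n=1-2\tfrac{i\cdot j}{i\cdot i}\ge 1$, so that $\nu\neq 0$, and since the form $(,)$ on $\mathfrak{f}$ is non-degenerate, Lemma~\ref{lem14}(a) reduces the statement to proving $r_k(S_{ij})=0$ in $\mathfrak{f}$ for every $k\in I$. For $k\notin\{i,j\}$ this is immediate: $S_{ij}$ lies in the subalgebra generated by $\theta_i,\theta_j$ and $r_k$ is a twisted derivation with $r_k(\theta_i)=r_k(\theta_j)=0$. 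So the content is the two cases $k=j$ and $k=i$, which we carry out in $\mathfrak{f}$ using only: the derivation rule defining $r_k$; the values $r_k(\theta_\ell)=\delta_{k\ell}$; the identity $r_i(\theta_i^{(m+1)})=(v_it_i)^{-m}\theta_i^{(m)}$ of~(\ref{eq93}); the relation $\theta_i^{(p)}\theta_i^{(p')}=\begin{bmatrix}p+p'\\ p\end{bmatrix}_{v_i,t_i}\theta_i^{(p+p')}$; the equalities $v_i=v^{i\cdot i/2}$, $t_i=t^{i\cdot i/2}$, $i\cdot j=\langle i,j\rangle+\langle j,i\rangle$ (equivalently $i\cdot j=-\tfrac{i\cdot i}{2}(n-1)$); and~(\ref{eq92}).

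For $k=j$, two applications of the derivation rule together with $r_j(\theta_i^{(p)})=r_j(\theta_i^{(p')})=0$ and $r_j(\theta_j)=1$ give $r_j(\theta_i^{(p)}\theta_j\theta_i^{(p')})=v^{-p'\,i\cdot j}\,t^{p'(\langle i,j\rangle-\langle j,i\rangle)}\theta_i^{(p)}\theta_i^{(p')}$, hence, collecting the powers of $\theta_i$,
\[
r_j(S_{ij})=\Bigl(\sum_{p+p'=n}(-1)^p\,t_i^{-p(p'-2\frac{\langle i,j\rangle}{i\cdot i}+2\frac{\langle j,i\rangle}{i\cdot i})}\,v^{-p'\,i\cdot j}\,t^{p'(\langle i,j\rangle-\langle j,i\rangle)}\begin{bmatrix}n\\ p\end{bmatrix}_{v_i,t_i}\Bigr)\theta_i^{(n)}.
\]
Substituting $\begin{bmatrix}n\\ p\end{bmatrix}_{v_i,t_i}=t_i^{p(n-p)}\begin{bmatrix}n\\ p\end{bmatrix}_{v_i}$ from~(\ref{eq92}) and using $t_i=t^{i\cdot i/2}$, one checks that the total exponent of $t$ collapses to $n(\langle i,j\rangle-\langle j,i\rangle)$, \emph{independent of $p$} — this is exactly the effect for which the exponent in the definition of $S_{ij}$ was designed. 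Since $i\cdot j=-\tfrac{i\cdot i}{2}(n-1)$ we also have $v^{-p'\,i\cdot j}=v_i^{(n-p)(n-1)}$, so $r_j(S_{ij})$ equals $t^{\,n(\langle i,j\rangle-\langle j,i\rangle)}\bigl(\sum_{p=0}^{n}(-1)^p v_i^{(n-p)(n-1)}\begin{bmatrix}n\\ p\end{bmatrix}_{v_i}\bigr)\theta_i^{(n)}$. The parenthesized sum vanishes for every $n\ge 1$ by the standard quantum binomial identity (the same one underlying Lusztig's proof of the one-parameter Serre relations in~\cite{Lusztigbook}); as $n\ge 1$ always, $r_j(S_{ij})=0$.

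For $k=i$, the derivation rule together with $r_i(\theta_j)=0$ and~(\ref{eq93}) gives
\[
r_i(\theta_i^{(p)}\theta_j\theta_i^{(p')})=v^{-i\cdot j}\,t^{\langle j,i\rangle-\langle i,j\rangle}(v_it_i)^{1-p}v_i^{-2p'}\,\theta_i^{(p-1)}\theta_j\theta_i^{(p')}+(v_it_i)^{1-p'}\,\theta_i^{(p)}\theta_j\theta_i^{(p'-1)},
\]
so $r_i(S_{ij})$ is a linear combination of the monomials $\theta_i^{(a)}\theta_j\theta_i^{(b)}$ with $a+b=n-1$. Each such monomial occurs exactly twice: from the first summand with $(p,p')=(a+1,b)$ and from the second with $(p,p')=(a,b+1)$. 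Separating the powers of $v$ from those of $t$ and using $v_i=v^{i\cdot i/2}$, $t_i=t^{i\cdot i/2}$, $i\cdot j=-\tfrac{i\cdot i}{2}(n-1)$ and $\tfrac{i\cdot i}{2}\cdot 2\tfrac{\langle i,j\rangle}{i\cdot i}=\langle i,j\rangle$ (similarly for $\langle j,i\rangle$), a direct check shows the two coefficients are negatives of one another, so they cancel; the boundary cases $a=0$ and $b=0$ are handled identically, the ``missing'' contribution being $r_i(\theta_i^{(0)})=0$. Hence $r_i(S_{ij})=0$, and combining the three cases with Lemma~\ref{lem14}(a) yields $S_{ij}=0$ in $\mathfrak{f}$.

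I expect the only real obstacle to be the exponent bookkeeping in these two cancellations. The powers of $v$ reproduce Lusztig's one-parameter computation verbatim; the genuinely new point — and the reason for the particular $t$-exponent in $S_{ij}$ — is that it is precisely this exponent that simultaneously renders the $t$-contribution in the $k=j$ sum constant in $p$ (reducing that case to the classical binomial identity) and makes the adjacent terms in the $k=i$ expansion cancel in pairs. Once organized this way the verification is mechanical, but it must be carried out with care.
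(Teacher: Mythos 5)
Your proposal is correct and follows essentially the same route as the paper's own proof: reduce to $r_k(S_{ij})=0$ via Lemma~\ref{lem14}(a), dispose of $k\notin\{i,j\}$ trivially, reduce $r_j(S_{ij})=0$ to the one-parameter quantum binomial identity after checking the $t$-exponent is constant in $p$, and verify $r_i(S_{ij})=0$ by pairwise cancellation of adjacent terms. The only differences are notational (the paper introduces $a'=-2\tfrac{\langle i,j\rangle}{i\cdot i}$, $a''=-2\tfrac{\langle j,i\rangle}{i\cdot i}$, $N=a'+a''$, and your $v_i^{(n-p)(n-1)}$ versus its $v_i^{p(a'+a'')}$ are equivalent under $p\mapsto n-p$).
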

\begin{proof}
  We set $a'=-2\frac{\langle i,j\rangle}{i\cdot i},\ a''=-2\frac{\langle j,i\rangle}{i\cdot i}$ and $N=-2\frac{i\cdot j}{i\cdot i}$. So $N=a'+a''$ and we can rewrite $S_{ij}$ as follows:
  $$S_{ij}=\sum_{p+p'=N+1}(-1)^pt_i^{-p(p'+a'-a'')}
  \theta_i^{(p)}\theta_j\theta_i^{(p')}.$$
  By Lemma \ref{lem14}, we only need to show that $r_k(S_{ij})=0$ for any $k\in I$. It is clear that
  \begin{equation}\label{eq97}
  r_k(S_{ij})=0\quad {\rm if\ } k\not =i, j.
  \end{equation}
  By (\ref{eq93}) and the definition of $r_i$, we have
  \begin{eqnarray*}
    & &r_i(\theta_i^{(p)}\theta_j\theta_i^{(p')})
   =v^{-2p'\langle i,i\rangle}r_i(\theta_i^{(p)}\theta_j)\theta_i^{(p')}+(v_it_i)^{1-p'}
    \theta_i^{(p)}\theta_j\theta_i^{(p'-1)}\\
   &=&v_i^{-2p'}v^{-i\cdot j}t^{\langle j,i\rangle-\langle i,j\rangle}(v_it_i)^{1-p}\theta_i^{(p-1)}\theta_j\theta_i^{(p')}+(v_it_i)^{1-p'}
    \theta_i^{(p)}\theta_j\theta_i^{(p'-1)}\\
   &=&v_i^{-2p'+a'+a''+1-p}t_i^{a'+1-p-a''}\theta_i^{(p-1)}\theta_j\theta_i^{(p')}+
   (v_it_i)^{1-p'}\theta_i^{(p)}\theta_j\theta_i^{(p'-1)}.
  \end{eqnarray*}
So  $r_i(S_{ij})$ is equal to
\begin{eqnarray}\label{eq75}
   \sum_{1\leq p\leq N+1}(-1)^pt_i^{A_1}
   v_i^{A_2}t_i^{A_3}\theta_i^{(p-1)}\theta_j\theta_i^{(p')}
   +\sum_{0\leq p\leq N}(-1)^pt_i^{A_1} (v_it_i)^{1-p'}\theta_i^{(p)}\theta_j\theta_i^{(p'-1)}\\
   = \sum_{0\leq p\leq N}(-1)^{p+1}t_i^{A_4}
   v_i^{A_2+1}t_i^{A_3-1}\theta_i^{(p)}\theta_j\theta_i^{(p'-1)}
   +\sum_{0\leq p\leq N}(-1)^pt_i^{A_1} (v_it_i)^{1-p'}\theta_i^{(p)}\theta_j\theta_i^{(p'-1)},\nonumber
\end{eqnarray}
where $A_1=-p(p'+a'-a'')$, $A_2=-2p'+a'+a''+1-p$, $A_3=a'+1-p-a''$ and $A_4=-(p+1)(p'-1+a'-a'')$.
Since $a'+a''=N$ and $p+p'=N+1$, by comparing the exponents of $v_i$ and $t_i$ in (\ref{eq75}), we have \begin{equation}\label{eq98}
  r_i(S_{ij})=0.
\end{equation}

  By  (\ref{eq93}) and the definition of $r_j$ again, we have
  \begin{equation*}
r_j(\theta_i^{(p)}\theta_j\theta_i^{(p')})
     = v^{-p'j\cdot i}t^{p'(\langle i,j\rangle-\langle j,i\rangle)}  \theta_i^{(p)}\theta_i^{(p')}
     =v_i^{p'(a'+a'')}t_i^{p'(a''-a')}\theta_i^{(p)}\theta_i^{(p')}.
  \end{equation*}
  So $r_j(S_{ij})$ is equal to
  \begin{eqnarray*}
& &\sum_{p+p'=N+1}(-1)^pt_i^{-p(p'+a'-a'')}v_i^{p'(a'+a'')}t_i^{p'(a''-a')}
   \theta_i^{(p)}\theta_i^{(p')}\\
   &=&\sum_{p+p'=N+1}(-1)^pt_i^{-p(p'+a'-a'')}v_i^{p'(a'+a'')}t_i^{p'(a''-a')}\begin{bmatrix}
     N+1\\p
   \end{bmatrix}_{v, t}\theta_i^{(N+1)}.
  \end{eqnarray*}
  By (\ref{eq92}), to show that $r_j(S_{ij})=0$, it is enough to show that
  \begin{equation}\label{eq64}
\sum_{p+p'=N+1}(-1)^pt_i^{-p(p'+a'-a'')}v_i^{p'(a'+a'')}t_i^{p'(a''-a')+pp'}\begin{bmatrix}
     N+1\\p
   \end{bmatrix}_{v}=0.
  \end{equation}
   By using $a'+a''=N$ and $p+p'=N+1$, the left hand side of (\ref{eq64}) is
   $$t_i^{(N+1)(a''-a')}\sum_{p+p'=N+1}(-1)^pv_i^{p(a'+a'')}\begin{bmatrix}
     N+1\\p
   \end{bmatrix}_{v}.$$
   By Section 1.3.4 in \cite{Lusztigbook}, this is 0. So we have
   \begin{equation}\label{eq99}
     r_j(S_{ij})=0.
   \end{equation}
   By (\ref{eq97}), (\ref{eq98}), (\ref{eq99}), we have $r_k(S_{ij})=0,\ \forall k \in I$. This finishes the proof.
\end{proof}
In Section \ref{sec2.6}, we shall show that the ideal $\mathfrak{J}$ is generated by $\{S_{ij}, i \neq j\}$.

\subsection{Comparison}
\label{Comparison}
In this section, we compare the algebra $\mathfrak{f}$ with various versions of quantum algebras in literature.

(a). Two-parameter quantum algebras are defined case by case in \cite{benkart2001rep} and \cite{hu2012notes}. If we set
 $v=(rs^{-1})^{\frac{1}{2}}$ and $t=(rs)^{-\frac{1}{2}}$, then the quantum Serre relation for $\mathfrak{f}$ coincides with the one
in \cite{hu2012notes} and  \cite{benkart2001rep}.

(b). Given a Dynkin quiver, Reineke defines a $\mathbb{Q}(\alpha,\beta)$-algebra $\mathcal{H}_{\alpha, \beta}$ in \cite{Reineke2001}.
 By Proposition 6.3 in \cite{Reineke2001}, $\mathcal{H}_{\alpha, \beta}$ is isomorphic to the positive part of the two-parameter quantum algebra in \cite{hu2012notes} associated to the Dynkin quiver.
 Let $\Omega=Id-A$, $v=(\alpha\beta)^{\frac{1}{2}}$ and $t=(\alpha\beta^{-1})^{\frac{1}{2}}$, where $Id$ is the identity matrix and $A$ is the adjacent matrix of  the Dynkin quiver. Then $\mathcal{H}_{\alpha,\beta}$ is isomorphic to $\mathfrak{f}$.

 Theorem \ref{thm8} in Section \ref{sec5.6} shows that $\mathfrak{K}$ is a geometrization of ${}_{\mathfrak{A}}\mathfrak{f}$. By the sheaf-function correspondence, we obtain a Hall-algebra construction of $\mathfrak f$ associated to arbitrary $\Omega$, not just the one associated to a Dynkin quiver. In a sense, this generalizes Reineke's construction.

(c). In \cite{Humultipara}, Hu, Pei and Rosso define quantum algebras with multi-parameters $q_{ij}$ associated to $(I,\cdot)$.
 Let us recall the multi-parameter quantum Serre relations from \cite{Humultipara}. It is
\begin{equation}\label{eq39}
\sum_{p+p'=1-i \cdot j}(-1)^pq_{ii}^{-\frac{pi \cdot j}{2}}q_{ji}^p \begin{bmatrix}
  1-i \cdot j\\p
\end{bmatrix}_{q_{ii}^{\frac{1}{2}}}\theta_i^{p}\theta_j\theta_i^{p'}=0.
\end{equation}
Since we use different Gaussian binomial coefficients, (\ref{eq39}) is slightly different from the original one in \cite{Humultipara}.
On the other hand, we can rewrite the quantum Serre relations in Proposition \ref{prop15} as follows.
\begin{equation}\label{eq40}
\sum_{p+p'=1-i \cdot j}(-1)^pt^{p(\langle i,j\rangle-\langle j,i\rangle)}\begin{bmatrix}
  1-i \cdot j\\p
\end{bmatrix}_{v_i}
  \theta_i^{p}\theta_j\theta_i^{p'}=0.
\end{equation}
By setting $q_{ii}=v^{2}_i$ and $q_{ji}=v^{i\cdot j}t^{\langle i,j\rangle-\langle j,i\rangle}$, (\ref{eq39}) is reduced to (\ref{eq40}). In other words, $\mathfrak{f}$ is a specialization of a multi-parameter quantum algebra defined in \cite{Humultipara}.

\section{Specialization and deformation}
\subsection{Negative part}\label{sec2.6}

Recall that $(I, \cdot)$ is the Cartan datum associated to the matrix $\Omega$.
If we set $t=1$, the construction in Section \ref{sec2} is exactly Lusztig's construction in \cite{Lusztigbook}. In particular, the specialization of the bialgebra $(\mathfrak{f}, \cdot, r)$ in Section \ref{sec2.3} at $t=1$ is Lusztig's algebra in \cite{Lusztigbook} associated to $(I,\cdot)$, which we shall denote  by $(\mathbf{f}, \circ, \widetilde{r}_1)$,
where $\circ$ and  $\widetilde{r}_1$ are multiplication and comultiplication of $\mathbf{f}$, respectively.

Besides specialization, there is another way to relate these two bialgebras. Namely, we shall show that $(\mathbf{f}, \circ, \widetilde{r}_1)$  can be deformed to $\mathfrak{f}$. Let $\mathbf{f}_{v,t}=\mathbf{f} \otimes_{\mathbb{Q}(v)}\mathbb{Q}(v,t)$. The bialgebra structure on $\mathbf{f}$ can be naturally extended to $\mathbf{f}_{v,t}$, denoted again by $(\circ, \widetilde{r}_1)$.

We define a new multiplication $``\odot"$ on $\mathbf{f}_{v,t}$ by
\begin{equation}\label{eq80}
  x \odot y=t^{[ |x|,\ |y|]}x \circ y,
\end{equation}
for any homogenous elements $x, y \in \mathbf{f}_{v,t}$, where [,] is defined in (\ref{eq48}).

Define a new multiplication, denote again by $\odot$, on $\mathbf{f}_{v,t}\otimes \mathbf{f}_{v,t}$ as follows.
\begin{equation}\label{eq82}
(x_1 \otimes x_2)\odot (y_1 \otimes y_2)=v^{-|y_1|\cdot|x_2|}t^{\langle|y_1|,|x_2|\rangle-\langle |x_2|,|y_1|\rangle}(x_1\odot y_1) \otimes (x_2\odot y_2),
\end{equation}
for any homogeneous elements $x_1,x_2,y_1$ and $y_2$.

 For any $x \in \mathbf{f}_{v,t}$, we write $\widetilde{r}_1(x)=\sum x_1\otimes x_2$ with $x_1$, $x_2$ homogenous. Define a linear map $r_1: \mathbf{f}_{v,t}\rightarrow \mathbf{f}_{v,t}\otimes \mathbf{f}_{v,t}$ by
 \begin{equation}\label{eq81}
r_1(x)=\sum t^{-[|x_1|, |x_2|]} x_1\otimes x_2.
 \end{equation}

\begin{prop}\label{prop25}
The linear map $r_1: (\mathbf{f}_{v,t},\odot) \rightarrow (\mathbf{f}_{v,t}\otimes \mathbf{f}_{v,t}, \odot)$ is an algebra homomorphism. 
 \end{prop}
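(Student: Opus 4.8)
The plan is to reduce the identity $r_1(x\odot y)=r_1(x)\odot r_1(y)$ to the generators $\theta_i$ and then to a bookkeeping of powers of $t$ against the already-known $t=1$ statement. First I would set $X=\{x\in\mathbf f_{v,t}\mid r_1(x\odot y)=r_1(x)\odot r_1(y)\ \text{for all}\ y\in\mathbf f_{v,t}\}$. Since $r_1$, $\odot$ and the componentwise operations are $\mathbb Q(v,t)$-linear, $X$ is a subspace, and it clearly contains $1$. The point to check is that $X$ is closed under $\odot$: if $x,x'\in X$, then using associativity of $\odot$ on $\mathbf f_{v,t}$ and on $\mathbf f_{v,t}\otimes\mathbf f_{v,t}$ — both valid because $\odot$ is the $2$-cocycle twist (\ref{eq80}), resp.\ (\ref{eq82}), of an associative product and $[\,,\,]$ is a $2$-cocycle on $\mathbb N^I$ — one obtains
\begin{align*}
r_1\bigl((x\odot x')\odot y\bigr)
&=r_1(x)\odot r_1(x'\odot y)
=r_1(x)\odot\bigl(r_1(x')\odot r_1(y)\bigr)\\
&=\bigl(r_1(x)\odot r_1(x')\bigr)\odot r_1(y)
=r_1(x\odot x')\odot r_1(y),
\end{align*}
where $x\in X$ is used twice (with arguments $x'\odot y$ and $x'$) and $x'\in X$ once. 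Since $\odot$ differs from $\circ$ only by an invertible scalar in each bidegree, $\mathbf f_{v,t}$ is generated under $\odot$ by the $\theta_i$; hence $X$ is a $\odot$-subalgebra containing all generators, and it suffices to prove $\theta_i\in X$ for every $i\in I$.

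For the generator step I would invoke that the $t=1$ comultiplication $\widetilde r_1\colon(\mathbf f_{v,t},\circ)\to(\mathbf f_{v,t}\otimes\mathbf f_{v,t},\circ)$ is an algebra homomorphism with $\widetilde r_1(\theta_i)=\theta_i\otimes 1+1\otimes\theta_i$ (the $t=1$ case of the homomorphism $r$ of Section~\ref{sec2.3}). Writing $\widetilde r_1(y)=\sum y_1\otimes y_2$ in bihomogeneous pieces and expanding $\widetilde r_1(\theta_i\circ y)=\widetilde r_1(\theta_i)\circ\widetilde r_1(y)$ with the $t=1$ form of (\ref{eq25}) gives
\[
\widetilde r_1(\theta_i\circ y)=\sum(\theta_i\circ y_1)\otimes y_2+\sum v^{-i\cdot|y_1|}\,y_1\otimes(\theta_i\circ y_2),
\]
with summands in bidegrees $(i+|y_1|,|y_2|)$ and $(|y_1|,i+|y_2|)$. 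I would then apply the twist $r_1(z)=\sum t^{-[|z_1|,|z_2|]}z_1\otimes z_2$ together with $\theta_i\odot y=t^{[i,|y|]}\theta_i\circ y$ to obtain the left-hand side, and separately compute the right-hand side $r_1(\theta_i)\odot r_1(y)=(\theta_i\otimes 1+1\otimes\theta_i)\odot\sum t^{-[|y_1|,|y_2|]}y_1\otimes y_2$ from (\ref{eq80}) and (\ref{eq82}) (noting $[i,0]=[0,i]=0$, so $r_1(\theta_i)=\widetilde r_1(\theta_i)$). Both expressions are sums over the same two families of terms; equality of the coefficients of $(\theta_i\circ y_1)\otimes y_2$ reduces to $[i,|y|]-[i+|y_1|,|y_2|]=[i,|y_1|]-[|y_1|,|y_2|]$, i.e.\ bilinearity of $[\,,\,]$ since $|y|=|y_1|+|y_2|$, while equality of the coefficients of $y_1\otimes(\theta_i\circ y_2)$ reduces to $[i,|y_1|]-[|y_1|,i]=\langle|y_1|,i\rangle-\langle i,|y_1|\rangle$, which is immediate from (\ref{eq47}) and (\ref{eq48}) because the symmetric $2\delta$-term of $[\,,\,]$ cancels. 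The powers of $v$ agree automatically, being inherited unchanged from the $t=1$ computation.

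The substantive new input beyond the one-parameter case is therefore just these two scalar identities. The hard part — really the only place requiring care — will be the closure of $X$ under $\odot$, which rests on associativity of $\odot$ on $\mathbf f_{v,t}$ and on its tensor square; I would state and verify the relevant $2$-cocycle identities for $[\,,\,]$ explicitly rather than leave them implicit. The remaining $t$-exponent bookkeeping I would not carry out in full: its structure is identical to Lusztig's proof that $\widetilde r_1$ is an algebra map, merely carrying an extra factor $t^{[\,,\,]}$ through.
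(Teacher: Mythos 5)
Your proof is correct, and its computational core coincides with the paper's: both arguments twist the known fact that $\widetilde r_1$ is a $\circ$-algebra homomorphism by the appropriate powers of $t$ and reduce the matching of exponents to the identity $\langle i,j\rangle+[i,j]=\langle j,i\rangle+[j,i]$ (equation (\ref{eq31})) together with bilinearity of $[\,,\,]$. The one organizational difference is that you first reduce to the generators $\theta_i$ via the subalgebra $X$, which obliges you to verify associativity of $\odot$ on $\mathbf f_{v,t}$ and on $\mathbf f_{v,t}\otimes\mathbf f_{v,t}$; the paper skips this layer entirely and performs the comparison directly for arbitrary homogeneous $x,y$, writing $\widetilde r_1(x\circ y)=\sum v^{-|x_2|\cdot|y_1|}(x_1\circ y_1)\otimes(x_2\circ y_2)$ and reducing at once to the single scalar identity
\[
[|x|,|y|]-[|x_1|+|y_1|,|x_2|+|y_2|]=\langle|y_1|,|x_2|\rangle-\langle|x_2|,|y_1|\rangle-[|x_1|,|x_2|]-[|y_1|,|y_2|].
\]
Since the $t=1$ homomorphism property of $\widetilde r_1$ already handles arbitrary products, your generator reduction is sound but superfluous; the direct computation is shorter and avoids the associativity checks. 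Your two scalar identities for the generator step are exactly the specializations of the paper's single identity to $x=\theta_i$, so nothing is missing.
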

\begin{proof}
For any homogenous elements $x$ and $y$ in $\mathbf{f}_{v,t}$, we write $\widetilde{r}_1(x)=\sum x_1 \otimes x_2$ and $\widetilde{r}_1(y)=\sum y_1 \otimes y_2$ with $x_1,x_2, y_1$ and $y_2$ are all homogenous. Then $\widetilde{r}_1(x\circ y)=\sum v^{-|x_2| \cdot |y_1|}(x_1\circ y_1) \otimes (x_2\circ y_2)$. Therefore,
\begin{equation}\label{eq83}
  r_1(x \odot y)=t^{[|x|,|y|]} r_1(x \circ y)=\sum v^{-|x_2| \cdot |y_1|}t^{[|x|,|y|]}
  t^{-[|x_1|+|y_1|, |x_2|+|y_2|]}(x_1\circ y_1) \otimes (x_2\circ y_2).\nonumber
\end{equation}
On the other hand, we have
\begin{eqnarray}\label{eq84}
 & & r_1(x) \odot r_1(y)=\sum t^{-[|x_1|,|x_2|]-[|y_1|,|y_2|]}(x_1\otimes x_2)\odot (y_1\otimes y_2)\\
  &=&\sum t^{-[|x_1|,|x_2|]-[|y_1|,|y_2|]}v^{-|x_2| \cdot |y_1|}t^{\langle|y_1|,|x_2|\rangle-\langle|x_2|,|y_1|\rangle}(x_1\otimes x_2)\odot (y_1\otimes y_2).\nonumber
\end{eqnarray}
By comparing (\ref{eq83}) with (\ref{eq84}), it is reduced to check that
$$[|x|,|y|]-[|x_1|+|y_1|, |x_2|+|y_2|]=\langle|y_1|,|x_2|\rangle-\langle|x_2|,|y_1|\rangle-[|x_1|,|x_2|]-[|y_1|,|y_2|],$$
which follows from (\ref{eq31}).
\end{proof}
\begin{thm}\label{thm16}
 The assignment $\theta_i \mapsto \theta_i, \forall i\in I$ gives a twisted bialgebra isomorphism $\mathfrak{f}\simeq (\mathbf{f}_{v,t}, \odot, r_1)$.
\end{thm}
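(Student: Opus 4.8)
The plan is to lift the assignment $\theta_i\mapsto\theta_i$ to the free algebras, show it is an isomorphism there, and then check it descends. Let $B$ be the free unital $\mathbb{Q}(v,t)$-algebra on $\{\theta_i\}_{i\in I}$ with its ordinary (concatenation) product, so ${}'\!\mathfrak{f}=B$, and let ${}'\!\mathbf{f}_{v,t}$ denote $B$ equipped with the twisted product $x\odot y=t^{[|x|,|y|]}xy$ of (\ref{eq80}); then $\mathbf{f}_{v,t}$ is the quotient of ${}'\!\mathbf{f}_{v,t}$ by the radical $\mathbf{I}_{v,t}$ of Lusztig's bilinear form, which is a two-sided ideal for both products and, since each graded piece is finite dimensional over $\mathbb{Q}(v)$, equals $\mathbf{I}\otimes_{\mathbb{Q}(v)}\mathbb{Q}(v,t)$ with $\mathbf{f}_{v,t}={}'\!\mathbf{f}_{v,t}/\mathbf{I}_{v,t}$. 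Because $B$ is free on $\{\theta_i\}$ there is a unique algebra homomorphism $\Psi\colon ({}'\!\mathfrak{f},\text{ord.})\to({}'\!\mathbf{f}_{v,t},\odot)$ with $\Psi(\theta_i)=\theta_i$; unwinding $\odot$ and using bilinearity of $[\,,\,]$ gives $\Psi(\theta_{i_1}\cdots\theta_{i_n})=t^{\sum_{k<l}[i_k,i_l]}\,\theta_{i_1}\cdots\theta_{i_n}$, so $\Psi$ rescales each basis monomial by a nonzero scalar; in particular it is a linear, hence an algebra, isomorphism preserving the $\mathbb{N}^I$-grading.

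Next I would establish the comultiplication compatibility at the free level, namely $(\Psi\otimes\Psi)\circ r=r_1\circ\Psi$, where $\Psi\otimes\Psi$ is regarded as a map from $({}'\!\mathfrak{f}\otimes{}'\!\mathfrak{f})$ with multiplication (\ref{eq25}) to $({}'\!\mathbf{f}_{v,t}\otimes{}'\!\mathbf{f}_{v,t})$ with multiplication (\ref{eq82}); that this is an algebra homomorphism is a short calculation needing only $|\Psi(z)|=|z|$. Both $(\Psi\otimes\Psi)\circ r$ and $r_1\circ\Psi$ are then algebra homomorphisms from $({}'\!\mathfrak{f},\text{ord.})$ into $({}'\!\mathbf{f}_{v,t}\otimes{}'\!\mathbf{f}_{v,t},(\ref{eq82}))$ — for $r_1$ one invokes the free-algebra version of Proposition \ref{prop25} — and they agree on the generators since $r(\theta_i)=r_1(\theta_i)=\theta_i\otimes 1+1\otimes\theta_i$; hence they coincide. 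Comparing bidegrees and undoing the scalars in (\ref{eq81}), this identity is equivalent to $\widetilde{r}_1(\Psi(x))=\sum t^{[|x_1|,|x_2|]}\Psi(x_1)\otimes\Psi(x_2)$ whenever $r(x)=\sum x_1\otimes x_2$, which is the shape in which I shall use it.

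The central step is that $\Psi$ is an isometry: $(\Psi(x),\Psi(y))_{\mathbf{f}}=(x,y)$ for all $x,y\in{}'\!\mathfrak{f}$, where $(\,,\,)_{\mathbf{f}}$ is Lusztig's (symmetric) form on ${}'\!\mathbf{f}$ extended $\mathbb{Q}(v,t)$-bilinearly to ${}'\!\mathbf{f}_{v,t}$. I would prove this by checking that the pulled-back form $\langle x,y\rangle:=(\Psi(x),\Psi(y))_{\mathbf{f}}$ satisfies properties (a)--(d) of Proposition \ref{prop14} and then quoting the uniqueness statement there. Properties (a),(b) are immediate from $\Psi(1)=1$, $\Psi(\theta_i)=\theta_i$ and $(\theta_i,\theta_j)_{\mathbf{f}}=\delta_{ij}(1-v_i^{-2})^{-1}$. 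For (c) one writes $\Psi(y'y'')=\Psi(y')\odot\Psi(y'')=t^{[|y'|,|y''|]}\Psi(y')\Psi(y'')$, applies the defining recursion of $(\,,\,)_{\mathbf{f}}$ with respect to $\widetilde{r}_1$, and substitutes the reformulation of Step two for $\widetilde{r}_1(\Psi(x))$; since only terms lying in matching bidegrees contribute, the accumulated factor $t^{[|y'|,|y''|]}\cdot t^{[|x_1|,|x_2|]}$ collapses to exactly the twist $t^{2[|x_1|,|x_2|]}$ of (\ref{eq74}), and (c) falls out. Property (d) then follows from (c) and the symmetry of $(\,,\,)_{\mathbf{f}}$ and of the tensor form (\ref{eq74}). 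Keeping these $t$-twists straight is the step I expect to be the main technical obstacle; the rest is formal.

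Granting the isometry, the conclusion follows quickly. As $\Psi$ is bijective it carries $\mathfrak{I}=\operatorname{rad}(\,,\,)$ onto $\operatorname{rad}(\,,\,)_{\mathbf{f}}=\mathbf{I}_{v,t}$, the kernel of ${}'\!\mathbf{f}_{v,t}\twoheadrightarrow\mathbf{f}_{v,t}$; hence $\Psi$ induces an algebra isomorphism $\mathfrak{f}\cong(\mathbf{f}_{v,t},\odot)$ sending $\theta_i$ to $\theta_i$. Since $r(\mathfrak{I})\subset\mathfrak{I}\otimes{}'\!\mathfrak{f}+{}'\!\mathfrak{f}\otimes\mathfrak{I}$ by (\ref{eq33}) and $r_1$ has the analogous property (it descends to $\mathbf{f}_{v,t}$ by Proposition \ref{prop25}), the identity $(\Psi\otimes\Psi)\circ r=r_1\circ\Psi$ passes to the quotients, so the induced isomorphism intertwines $r$ and $r_1$ and is thus a twisted bialgebra isomorphism $\mathfrak{f}\simeq(\mathbf{f}_{v,t},\odot,r_1)$, as claimed. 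I would also record the byproduct that $\Psi$ sends $S_{ij}$ to a nonzero scalar multiple of Lusztig's quantum Serre element (a direct computation using (\ref{eq92})), which gives an alternative identification of $\mathfrak{I}$ with the ideal generated by the $S_{ij}$ once Lusztig's corresponding presentation of $\mathbf{I}$ in the symmetrizable case is invoked.
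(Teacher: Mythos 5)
Your proposal is correct and follows essentially the same route as the paper's proof: both define the map on the free algebra by $\theta_i\mapsto\theta_i$, pull back Lusztig's form $(\,,\,)_L$, verify the characterizing properties (a)--(d) of Proposition~\ref{prop14} to identify the pulled-back form with $(\,,\,)$ (the paper checks (d) directly and notes (c) is similar; you check (c) and deduce (d) by symmetry), conclude that the radical coincides with $\mathfrak{I}$ so the map descends to an algebra isomorphism, and obtain the compatibility with $r_1$ from the commutative square on generators. Your factoring through the free twisted algebra and the explicit monomial rescaling are cosmetic variations, not a different argument.
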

\begin{proof} Recall that $ {}'\!\mathfrak{f}$ is the free algebra generated by $\theta_i, i\in I$. Let ${}'\!\phi: {}'\!\mathfrak{f} \rightarrow \mathbf{f}_{v,t}$ be the algebra homomorphism sending $\theta_i$ to $\theta_i$, where the algebra structure of $\mathbf{f}_{v,t}$ is defined in (\ref{eq80}). Consider the following diagram.
 \begin{equation}\label{eq86}
\xymatrix{{}'\!\mathfrak{f} \ar[r]^-{{}'\!\phi} \ar[d]^{r} & \mathbf{f}_{v,t}\ar[d]^{r_1}\\
 {}'\!\mathfrak{f}\otimes {}'\!\mathfrak{f} \ar[r]^-{{}'\!\phi\otimes {}'\!\phi} &\mathbf{f}_{v,t}\otimes \mathbf{f}_{v,t}.
   }
 \end{equation}
   Since all maps are algebra homomorphisms, the diagram commutes by checking the image on $\theta_i, \ \forall i\in I$.

Recall that there is a unique non-degenerate bilinear form, $(,)_L$, on $\mathbf{f}$ defined in Chapter 1 in \cite{Lusztigbook} satisfying the following properties.
\begin{itemize}
    \item[(a)] $(1,1)_L=1,\quad (\theta_i, \theta_j)_L=\delta_{ij}\frac{1}{1-v^{-2}_i}$\quad for all $i, j \in I$,
    \item[(b)] $(x, y'\circ y'')_L=(\widetilde{r}_1(x), y' \otimes y'')_L$\quad for all $x, y', y'' \in \mathbf{f}$,
    \item[(c)] $(x'\circ x'', y)_L=(x'\otimes x'', \widetilde{r}_1(y))_L$\quad for all $x', x'', y \in \mathbf{f}$.
  \end{itemize}
  Since any element $x\in \mathbf{f}_{v,t}$ can be written into $a=\sum a_i \otimes t^i$, we can extend the bilinear form to $\mathbf{f}_{v,t}$
   by setting $(x \otimes t^m, y\otimes t^n)_L=t^{m+n}(x,y)_L$. Moreover, this bilinear form on $\mathbf{f}_{v,t}$ still satisfies the above properties (a), (b), (c).

Now we define a new bilinear form, $(,)'$, on ${}'\!\mathfrak{f}$ as follows.
\[(x,y)'=({}'\!\phi(x), {}'\!\phi(y))_L,\ \forall x, y \in {}'\!\mathfrak{f}.\]
We claim that  ${\rm Rad} (,)'={\rm Ker}({}'\!\phi)$, where ${\rm Rad} (,)' $ is the radical of $ (,)'$.
 It is clear that ${\rm Ker}({}'\!\phi)\subset {\rm Rad} (,)' $. Now for any $z\in \mathbf{f}_{v,t}$, there is $y\in {}'\!\mathfrak{f}$ such that ${}'\!\phi(y)=z$ since ${}'\!\phi$ is a surjective map. Therefore, for any $x\in {\rm Rad} (,)' $, $0=(x,y)'=({}'\!\phi(x), {}'\!\phi(y))_L=({}'\!\phi(x),z)_L$. Since $(,)_L$ is non-degenerate and $z$ is arbitrary, ${}'\!\phi(x)=0$. So $x\in {\rm Ker}({}'\!\phi).$

To prove that $\mathfrak{f}\simeq (\mathbf{f}_{v,t}, \odot)$, it is enough to show that $(,)'$ satisfying the properties (a),(b),(c),(d) in Proposition \ref{prop14}. The bilinear form on ${}'\!\mathfrak{f} \otimes {}'\!\mathfrak{f}$ is defined by
  \begin{equation}\label{eq85}
    (x_1 \otimes x_2, y_1 \otimes y_2)'=t^{2[|x_1|,|x_2|]}(x_1, y_1)' (x_2, y_2)'.
  \end{equation}
  Properties (a) and (b) are obvious. We now check property (d). For $y \in {}'\!\mathfrak{f}$, we write $r(y)=\sum y'\otimes y''$ and $\widetilde{r}_1({}'\!\phi(y))=\sum y_1\otimes y_2$. Then for any $a, b \in {}'\!\mathfrak{f}$, we have
  \begin{eqnarray*}
    &(a b,y)'=({}'\!\phi(a b), {}'\!\phi(y))_L=({}'\!\phi(a)\odot {}'\!\phi(b), {}'\!\phi(y))_L
    =t^{[|a|,|b|]}({}'\!\phi(a)\circ {}'\!\phi(b), {}'\!\phi(y))_L\\
   & =t^{[|a|,|b|]}({}'\!\phi(a)\otimes {}'\!\phi(b), \widetilde{r}_1({}'\!\phi(y)))_L
    =t^{[|a|,|b|]}\sum({}'\!\phi(a)\otimes {}'\!\phi(b), y_1\otimes y_2)_L.
  \end{eqnarray*}
  On the other hand, we have
  \begin{equation*}
  \begin{split}
    (a& \otimes b,r(y))'=\sum (a\otimes b, y'\otimes y'')'=\sum t^{2[|a|,|b|]}(a, y')'( b, y'')'\\
    &=\sum t^{2[|a|,|b|]}({}'\!\phi(a), {}'\!\phi(y'))_L( {}'\!\phi(b),{}'\!\phi( y''))_L
    =\sum t^{2[|a|,|b|]}({}'\!\phi(a)\otimes {}'\!\phi(b),{}'\!\phi(y')\otimes {}'\!\phi( y''))_L\\
   & =t^{2[|a|,|b|]}({}'\!\phi(a)\otimes {}'\!\phi(b),({}'\!\phi\otimes {}'\!\phi)(r(y)))_L
    =t^{2[|a|,|b|]}({}'\!\phi(a)\otimes {}'\!\phi(b),r_1({}'\!\phi(y)))_L\\
   & =t^{2[|a|,|b|]}({}'\!\phi(a)\otimes {}'\!\phi(b),\sum t^{-[|y_1|,|y_2|]} y_1\otimes y_2)_L
    =t^{[|a|,|b|]}\sum({}'\!\phi(a)\otimes {}'\!\phi(b), y_1\otimes y_2)_L.
  \end{split}
  \end{equation*}
  The last equality follows from the fact that $|y_1|=|a|$ and $|y_2|=|b|$. This proves that the property (d) and (c) can be proved similarly. The claim that ${\rm Rad} (,)'={\rm Ker}({}'\!\phi)$ holds.

   The coalgebra homomorphism follows from the commutativity of the Diagram (\ref{eq86}). This finishes the proof.
\end{proof}

On $\mathbf{f}_{v,t}$, we have two different bialgebra structures, $(\mathbf{f}_{v,t}, \circ, \widetilde{r}_1)$ and $(\mathbf{f}_{v,t}, \odot, r_1)$. By Theorem \ref{thm16}, we have $\mathfrak{f}\simeq (\mathbf{f}_{v,t}, \odot, r_1)$. We now define a second  bialgebra structure on $\mathfrak{f}$ corresponding to $(\mathbf{f}_{v,t}, \circ, \widetilde{r}_1)$. Define a new multiplication $``*"$ on $\mathfrak{f}$ by
\begin{equation}\label{eq89}
  x *y=t^{-[|x|, |y|]}xy,
\end{equation}
for any homogeneous elements $x$ and $y$. $(\mathfrak{f}, *)$ is an associative algebra due to the fact that $[,]$ is a bilinear form.
 Define a new multiplication, denote again by $*$, on $\!\mathfrak{f}\otimes \!\mathfrak{f}$ as follows.
\begin{equation}\label{eq55}(x_1 \otimes x_2)*(y_1 \otimes y_2)=v^{-|y_1|\cdot|x_2|}x_1*y_1 \otimes x_2*y_2,
\end{equation}
for any homogeneous elements $x_1,x_2,y_1$ and $y_2$.
 For any $x \in \mathfrak{f}$, we write $r(x)=\sum x_1\otimes x_2$ with $x_1$, $x_2$ homogenous. Define a linear map $\widetilde{r}: \mathfrak{f}\rightarrow \mathfrak{f}\otimes \mathfrak{f}$ by
 \begin{equation}\label{eq68}
\widetilde{r}(x)=\sum t^{[|x_1|, |x_2|]} x_1\otimes x_2.
 \end{equation}
By a similar argument as that of Proposition \ref{prop25}, we have that
 $\widetilde{r}: \mathfrak{f}\rightarrow \mathfrak{f}\otimes \mathfrak{f}$ is an algebra homomorphism with respect to the multiplications $*$ on both sides. By Theorem \ref{thm16}, we have

 \begin{prop}\label{prop28}
The assignment $\theta_i\mapsto \theta_i, \forall i \in I$ gives a twisted  bialgebra isomorphism
   $(\mathfrak{f}, *, \widetilde{r})\simeq (\mathbf{f}_{v,t},\circ, \widetilde{r}_1)$.
\end{prop}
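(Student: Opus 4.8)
The plan is to reuse the isomorphism constructed in Theorem~\ref{thm16} verbatim, after observing that the second bialgebra structures $(\mathfrak f,*,\widetilde r)$ and $(\mathbf f_{v,t},\circ,\widetilde r_1)$ are obtained from the first ones, $(\mathfrak f,\cdot,r)$ and $(\mathbf f_{v,t},\odot,r_1)$, by $t$-twists that are exactly inverse to one another (compare \eqref{eq89} with \eqref{eq80}, and \eqref{eq68} with \eqref{eq81}). So I would let $\psi\colon\mathfrak f\xrightarrow{\ \sim\ }(\mathbf f_{v,t},\odot,r_1)$ denote the $\mathbb N^I$-graded twisted bialgebra isomorphism of Theorem~\ref{thm16}, with $\psi(\theta_i)=\theta_i$, and show that the same $\psi$ is the desired isomorphism; in particular $|\psi(x)|=|x|$ for homogeneous $x$.

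First I would check the multiplicative part. For homogeneous $x,y\in\mathfrak f$, using \eqref{eq89}, then Theorem~\ref{thm16}, then \eqref{eq80}, and that $\psi$ preserves degrees,
\[
\psi(x*y)=t^{-[|x|,|y|]}\psi(xy)=t^{-[|x|,|y|]}\bigl(\psi(x)\odot\psi(y)\bigr)=t^{-[|x|,|y|]}\,t^{[|x|,|y|]}\,\psi(x)\circ\psi(y)=\psi(x)\circ\psi(y),
\]
so $\psi\colon(\mathfrak f,*)\to(\mathbf f_{v,t},\circ)$ is an algebra isomorphism. Since the $v$-twist in \eqref{eq55} matches Lusztig's twisted tensor-product multiplication on $\mathbf f_{v,t}\otimes\mathbf f_{v,t}$ (and neither carries a $t$-twist), this identity together with degree-preservation shows $\psi\otimes\psi$ is an algebra isomorphism of the two twisted tensor products as well.

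Next I would check the comultiplicative part. Write $r(x)=\sum x_1\otimes x_2$ with the summands bihomogeneous of complementary degrees $(\mu,\nu)$. Theorem~\ref{thm16} gives $(\psi\otimes\psi)r(x)=r_1(\psi(x))$; matching bihomogeneous components and using \eqref{eq81}, the bidegree-$(\mu,\nu)$ part of $(\psi\otimes\psi)r(x)$ equals $t^{-[\mu,\nu]}$ times the bidegree-$(\mu,\nu)$ part of $\widetilde r_1(\psi(x))$. Applying $\psi\otimes\psi$ to \eqref{eq68}, the factors $t^{[\mu,\nu]}$ and $t^{-[\mu,\nu]}$ cancel in each bidegree, yielding $(\psi\otimes\psi)\circ\widetilde r=\widetilde r_1\circ\psi$. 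Since $\widetilde r$ is an algebra map for $*$ (the remark preceding the proposition) and $\widetilde r_1$ is an algebra map for $\circ$ (Lusztig), this proves $\psi$ is a twisted bialgebra isomorphism $(\mathfrak f,*,\widetilde r)\simeq(\mathbf f_{v,t},\circ,\widetilde r_1)$ sending $\theta_i$ to $\theta_i$.

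I do not expect a genuine obstacle: everything is forced by Theorem~\ref{thm16}. The only point requiring care — and the closest thing to a main obstacle — is the bookkeeping of the $t$-exponents, i.e. verifying that the twisted tensor-product algebra structures on the two sides correspond under $\psi\otimes\psi$ and that the $t$-twists introduced in \eqref{eq68} are precisely undone by those in \eqref{eq81}.
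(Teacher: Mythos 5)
Your proposal is correct and follows essentially the same route as the paper: the paper likewise deduces the statement directly from Theorem~\ref{thm16}, noting that $(*,\widetilde r)$ and $(\odot,r_1)$ are obtained from $(\cdot\,,r)$ and $(\circ,\widetilde r_1)$ by mutually inverse $t^{[\,,\,]}$-twists (together with the remark that $\widetilde r$ is an algebra map for $*$, proved as in Proposition~\ref{prop25}). Your write-up merely makes the twist-cancellation bookkeeping explicit, which the paper leaves implicit.
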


Recall that $\mathfrak{f}={}'\mathfrak{f}/\mathfrak{J}$ and $\mathfrak{J}$ is the radical of the bilinear form of (,) on ${}'\mathfrak{f}$. By Proposition \ref{prop28}, we have

\begin{cor}\label{cor7}
  $\mathfrak{J}$ is generated by $S_{ij}, \forall i\not =j \in I$, where $S_{ij}$ is defined in Proposition \ref{prop15}.
\end{cor}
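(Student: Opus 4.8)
The plan is to transport the defining ideal $\mathfrak{J}$ across the isomorphism of Proposition~\ref{prop28} to the defining ideal of Lusztig's algebra $\mathbf{f}$, for which it is a theorem that the quantum Serre relations generate, and then to pull the statement back; the only real work is bookkeeping of the cocycle twist. One inclusion is free: by Proposition~\ref{prop15} each $S_{ij}$ maps to $0$ in $\mathfrak{f}={}'\!\mathfrak{f}/\mathfrak{J}$, so $S_{ij}\in\mathfrak{J}$, and hence the two-sided ideal $\mathfrak{J}'=\sum_{i\neq j}{}'\!\mathfrak{f}\,S_{ij}\,{}'\!\mathfrak{f}$ is contained in $\mathfrak{J}$. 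It remains to prove $\mathfrak{J}\subseteq\mathfrak{J}'$.

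For this I would lift the isomorphism $(\mathfrak{f},*,\widetilde{r})\simeq(\mathbf{f}_{v,t},\circ,\widetilde{r}_1)$ of Proposition~\ref{prop28} to an isomorphism $\Psi\colon({}'\!\mathfrak{f},*)\xrightarrow{\ \sim\ }({}'\!\mathbf{f}_{v,t},\circ)$ with $\Psi(\theta_i)=\theta_i$; this exists and is unique because both sides are free $\mathbb{Q}(v,t)$-algebras on the symbols $\theta_i$ for their respective products, where ${}'\!\mathbf{f}_{v,t}:={}'\!\mathbf{f}\otimes_{\mathbb{Q}(v)}\mathbb{Q}(v,t)$. On the common underlying vector space, $\Psi$ rescales each monomial $\theta_{i_1}\cdots\theta_{i_n}$ by a power of $t$ — since $*$ and $\circ$ differ from plain concatenation only by such scalars — so $\Psi$ is $\mathbb{N}^I$-graded. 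Because $\Psi$ intertwines the quotient maps onto $(\mathfrak{f},*)$ and $(\mathbf{f}_{v,t},\circ)$, it carries $\mathfrak{J}$ onto $\mathfrak{I}_{v,t}:=\mathfrak{I}\otimes_{\mathbb{Q}(v)}\mathbb{Q}(v,t)$, where $\mathfrak{I}\subseteq{}'\!\mathbf{f}$ denotes the radical of Lusztig's form, so that $\mathbf{f}={}'\!\mathbf{f}/\mathfrak{I}$.

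Next I would invoke the theorem (\cite{Lusztigbook}) that $\mathfrak{I}$ is the two-sided ideal of ${}'\!\mathbf{f}$ generated by the quantum Serre elements $S^0_{ij}=S_{ij}|_{t=1}$; hence $\mathfrak{I}_{v,t}$ is generated over $\mathbb{Q}(v,t)$ by the $S^0_{ij}$, and applying the graded isomorphism $\Psi^{-1}$ shows that $\mathfrak{J}$ is the two-sided ideal of ${}'\!\mathfrak{f}$ generated by the $\Psi^{-1}(S^0_{ij})$. (It is harmless here that $\Psi$ is an isomorphism for the twisted product $*$ rather than for $\cdot$: the two-sided ideal generated by an $\mathbb{N}^I$-graded subspace is the same for $*$ and for $\cdot$, since these products differ only by units of $\mathbb{Q}(v,t)$.) It then suffices to check that each $\Psi^{-1}(S^0_{ij})$ is a nonzero scalar multiple of $S_{ij}$. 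Both are homogeneous of degree $\lambda_{ij}:=(1-2\tfrac{i\cdot j}{i\cdot i})\,i+j$, and a short degree count identifies $\mathfrak{J}_{\lambda_{ij}}$ with the line $\mathbb{Q}(v,t)\,\Psi^{-1}(S^0_{ij})$: among all Serre elements, the only ones whose degree is $\leq\lambda_{ij}$ in $\mathbb{N}^I$ are $S^0_{ij}$ and — in the case $i\cdot j=0$ — $S^0_{ji}=-S^0_{ij}$, both of degree exactly $\lambda_{ij}$. Since $S_{ij}$ lies in $\mathfrak{J}$ by Proposition~\ref{prop15} and is nonzero in ${}'\!\mathfrak{f}$ (e.g.\ its $p=0$ summand $\theta_j\theta_i^{(p')}$ occurs with coefficient $1$), we get $\mathfrak{J}_{\lambda_{ij}}=\mathbb{Q}(v,t)\,S_{ij}$, hence $\langle S_{ij}\rangle=\langle\Psi^{-1}(S^0_{ij})\rangle$, and therefore $\mathfrak{J}=\mathfrak{J}'$.

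I expect the main obstacle to be purely a matter of bookkeeping rather than a genuine difficulty: one must verify with care that the passage between the present multiplication and Lusztig's is implemented by a \emph{graded} algebra isomorphism which merely rescales monomials by units, so that it matches up defining ideals and respects being generated by the degree-$\lambda_{ij}$ components, and that in degree $\lambda_{ij}$ the element $S_{ij}$ really spans the relevant line. The one-dimensionality of that line is exactly what lets one bypass the otherwise tedious explicit computation that $\Psi^{-1}$ sends Lusztig's Serre element to a multiple of $S_{ij}$ — a computation which would essentially re-derive Proposition~\ref{prop15} from the $t=1$ Serre relations.
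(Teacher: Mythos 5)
Your proof is correct and follows essentially the same route as the paper, which simply deduces the corollary from the twisted bialgebra isomorphism $(\mathfrak{f},*,\widetilde{r})\simeq(\mathbf{f}_{v,t},\circ,\widetilde{r}_1)$ of Proposition~\ref{prop28} together with Lusztig's theorem that the radical of his form is generated by the quantum Serre elements. Your additional bookkeeping -- lifting the isomorphism to the free algebras, noting that graded ideals agree for the twisted and untwisted products, and using the one-dimensionality of $\mathfrak{J}$ in degree $\lambda_{ij}$ to identify $\Psi^{-1}(S^0_{ij})$ with a nonzero multiple of $S_{ij}$ -- correctly supplies the details the paper leaves implicit.
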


Suppose $\Omega'$ is another matrix satisfying (a),(b),(c) in Section \ref{sec2.1}. Let $\mathfrak{f}(\Omega')$ be the bialgebra constructed in Section \ref{sec2.3} associated to $\Omega'$. By Proposition \ref{prop28}, we have

\begin{cor}\label{cor6}
If the associated Cartan datums of $\Omega$ and $\Omega'$ are the same, then the assignment $\theta_i \mapsto \theta_i, \forall i\in I$ gives a twisted bialgebra isomorphism
 $(\mathfrak{f},*, \widetilde{r})\simeq (\mathfrak{f}(\Omega'),*, \widetilde{r})$.
\end{cor}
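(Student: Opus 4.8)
The plan is to obtain this at once from Proposition~\ref{prop28}, whose key feature is that its target $(\mathbf{f}_{v,t},\circ,\widetilde{r}_1)$ depends on $\Omega$ only through the associated Cartan datum $(I,\cdot)$. Indeed, $\mathbf{f}$ is by definition Lusztig's algebra attached to $(I,\cdot)$, and its multiplication $\circ$, the twisted multiplication on $\mathbf{f}\otimes\mathbf{f}$, and the comultiplication $\widetilde{r}_1$ (the algebra map with $\widetilde{r}_1(\theta_i)=\theta_i\otimes 1+1\otimes\theta_i$) all involve only the symmetric form $i\cdot j$; likewise the base change to $\mathbf{f}_{v,t}=\mathbf{f}\otimes_{\mathbb{Q}(v)}\mathbb{Q}(v,t)$ is intrinsic. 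Hence, under the hypothesis that $\Omega$ and $\Omega'$ have the same Cartan datum, the twisted bialgebra $(\mathbf{f}_{v,t},\circ,\widetilde{r}_1)$ built from $\Omega$ coincides with the one built from $\Omega'$; denote this common object by $\mathbf{B}$.

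First I would apply Proposition~\ref{prop28} to $\Omega$, obtaining a twisted bialgebra isomorphism $\phi\colon(\mathfrak{f},*,\widetilde{r})\xrightarrow{\sim}\mathbf{B}$ with $\phi(\theta_i)=\theta_i$ for all $i\in I$. Next, applying Proposition~\ref{prop28} to $\Omega'$ (which by assumption also satisfies (a),(b),(c) of Section~\ref{sec2.1}) and using the identification of targets from the previous paragraph, I would get a twisted bialgebra isomorphism $\phi'\colon(\mathfrak{f}(\Omega'),*,\widetilde{r})\xrightarrow{\sim}\mathbf{B}$ with $\phi'(\theta_i)=\theta_i$. Then $(\phi')^{-1}\circ\phi\colon(\mathfrak{f},*,\widetilde{r})\to(\mathfrak{f}(\Omega'),*,\widetilde{r})$ is a composite of twisted bialgebra isomorphisms, hence itself a twisted bialgebra isomorphism, and it sends $\theta_i\mapsto\theta_i$ for every $i\in I$, which is exactly the assertion.

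The argument is thus essentially formal; the one point deserving care is the claim underlying $\mathbf{B}$, namely that nothing in the definition of $(\mathbf{f}_{v,t},\circ,\widetilde{r}_1)$ secretly remembers the non-symmetric form $\langle\,,\,\rangle$, which does differ between $\Omega$ and $\Omega'$ in general even though $i\cdot j=\langle i,j\rangle+\langle j,i\rangle$ does not. It would be tempting to route the argument through Theorem~\ref{thm16} instead, but there the target $(\mathbf{f}_{v,t},\odot,r_1)$ genuinely depends on $\Omega$ through the form $[\,,\,]$, so one must pass through the $\langle\,,\,\rangle$-free presentation $(\mathbf{f}_{v,t},\circ,\widetilde{r}_1)$ furnished by Proposition~\ref{prop28}. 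Once this is observed, the composition above completes the proof.
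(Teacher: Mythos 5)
Your proposal is correct and is exactly the paper's argument: the paper derives Corollary~\ref{cor6} directly from Proposition~\ref{prop28}, the point being precisely the one you isolate, namely that the common target $(\mathbf{f}_{v,t},\circ,\widetilde{r}_1)$ depends on $\Omega$ only through the Cartan datum $(I,\cdot)$, so the two instances of Proposition~\ref{prop28} can be composed. Your remark that Theorem~\ref{thm16} would not suffice (since $[\,,\,]$ remembers $\Omega$) is a correct and worthwhile clarification of why the route through $(\circ,\widetilde{r}_1)$ is the right one.
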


 Let $\phi: \mathfrak{f}\rightarrow (\mathbf{f}_{v,t},\odot)$ be the induced map from ${}'\!\phi: {}'\!\mathfrak{f}\rightarrow (\mathbf{f}_{v,t},\odot)$. This is an algebra isomorphism by Theorem \ref{thm16}. Moreover, one can easily check that
 \begin{equation}\label{eq95}
   \phi(a*b)=\phi(a)\circ \phi(b), \quad \forall a, b\in \mathfrak{f}.
 \end{equation}
  We define a new bilinear form, denoted by $(,)^*$, on $\mathfrak{f}$ and $\mathfrak{f}\otimes \mathfrak{f}$ by
 \begin{equation}\label{eq88}
   (a\otimes x, b\otimes y)^*=(\phi(a)\otimes \phi(x), \phi(b)\otimes \phi(y))_L,\ {\rm and}\
  (a, b)^*=(\phi(a), \phi(b))_L, \  \forall a,b,x, y\in \mathfrak{f}.
 \end{equation}
We notice that $(,)$ and $(,)^*$ are different on $\mathfrak{f}\otimes \mathfrak{f}$ by comparing (\ref{eq85}) with (\ref{eq88}).
Moreover, if we consider $\mathbf{f} \otimes 1$ as a subalgebra of $(\mathfrak{f},*)$ via the map $\phi$, then  the restriction of $(,)^*$ to $\mathbf{f} \otimes 1$ coincides with the bilinear form $(,)_L$ on $\mathbf{f}$ in the proof of Theorem \ref{thm16}. By (\ref{eq95}), (\ref{eq88}) and the property of $(,)_L$, we have
\begin{equation}\label{eq96}
(a*b, z)^*=(a\otimes b, \widetilde{r}(z))^*,\quad \forall a, b, z\in \mathfrak{f}.
\end{equation}

 \begin{prop}\label{prop23}
 For any $x,y \in \mathbf{f} \otimes 1$, we have $(x,y)=(x,y)^*.$
 \end{prop}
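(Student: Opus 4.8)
The plan is to read the statement off the isomorphism $\phi\colon\mathfrak f\xrightarrow{\sim}(\mathbf f_{v,t},\odot)$ of Theorem~\ref{thm16}. The point is that the proof of Theorem~\ref{thm16} already does the essential work: there the bilinear form $(x,y)':=({}'\!\phi(x),{}'\!\phi(y))_L$ on ${}'\!\mathfrak f$ is checked to satisfy properties (a)--(d) of Proposition~\ref{prop14}, so by the uniqueness clause of that proposition it agrees with $(,)$ on ${}'\!\mathfrak f$; consequently $\operatorname{Ker}({}'\!\phi)=\mathfrak I$ and the induced isomorphism $\phi$ carries $(,)$ on $\mathfrak f$ to $(,)_L$ on $\mathbf f_{v,t}$. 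Once this is recorded the proposition is immediate: for $x,y\in\mathfrak f$ one has $(x,y)=(\phi(x),\phi(y))_L=(x,y)^*$ by the definition \eqref{eq88}, and in particular for $x,y\in\mathbf f\otimes1$. (The same reasoning in fact gives $(,)=(,)^*$ on all of $\mathfrak f$; the two forms diverge only after tensoring, where $(,)$ on $\mathfrak f\otimes\mathfrak f$ carries the extra twist $t^{2[\,\cdot\,,\,\cdot\,]}$ of \eqref{eq85} that is absent from \eqref{eq88}.)

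For a self-contained alternative using only the recursions assembled so far, I would instead induct on $\operatorname{tr}(|x|)$. Both forms are graded, so one may assume $|x|=|y|$; the case $|x|=0$ reads $(1,1)=1=(1,1)^*$. For positive degree, write $x=\sum_k x_k*\theta_k$ with $x_k\in\mathbf f\otimes1$ of strictly smaller trace — possible since, by \eqref{eq95}, $\mathbf f\otimes1$ is generated under $*$ by the $\theta_k$ — and reduce by bilinearity to $x=x_0*\theta_i$. Using \eqref{eq89} and Lemma~\ref{lem13} one finds $(x_0*\theta_i,y)=t^{[|x_0|,i]}(x_0,r_i(y))(\theta_i,\theta_i)$, and using \eqref{eq96}, \eqref{eq68}, \eqref{eq36} together with the factorization $(a\otimes b,c\otimes d)^*=(a,c)^*(b,d)^*$ from \eqref{eq88} one finds $(x_0*\theta_i,y)^*=t^{[|y|-i,i]}(x_0,r_i(y))^*(\theta_i,\theta_i)^*$, only the bihomogeneous summand $r_i(y)\otimes\theta_i$ of $r(y)$ contributing. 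Since $|x_0|=|y|-i$ the $t$-prefactors agree, $(\theta_i,\theta_i)=(\theta_i,\theta_i)^*=(1-v_i^{-2})^{-1}$, and $(x_0,r_i(y))=(x_0,r_i(y))^*$ follows from the inductive hypothesis ($x_0\in\mathbf f\otimes1$ and $r_i(y)$ both have smaller trace, with $|r_i(y)|=|x_0|$).

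The one step that needs genuine care is that $(,)$ and $(,)^*$ really do differ on $\mathfrak f\otimes\mathfrak f$, so in the inductive argument the $t$-powers must be tracked and shown to cancel: the exponent $2[|x_0|,i]$ coming from Lemma~\ref{lem13}, the exponent $-[|x_0|,i]$ from passing between $*$ and the ordinary product via \eqref{eq89}, and the exponent $[|y|-i,i]$ introduced by $\widetilde r$ in \eqref{eq68} must sum to equal powers on both sides once the bihomogeneity constraint $|x_1|=|y|-i$ is imposed on the relevant summand of $r(y)$. This is precisely the cancellation already performed when property (d) of Proposition~\ref{prop14} was verified in the proof of Theorem~\ref{thm16}, which is why the first, quotient-level argument — where the bookkeeping is subsumed into the uniqueness statement — is the one I would actually write up.
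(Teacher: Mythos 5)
Your proposal is correct. Your second, inductive argument is essentially the paper's own proof: the paper also inducts on ${\rm tr}(|x|)$, computes $(x*\theta_i,z)$ via Lemma \ref{lem13} and \eqref{eq89}, computes $(x*\theta_i,z)^*$ via \eqref{eq96}, \eqref{eq68} and \eqref{eq36}, and matches the $t$-powers exactly as you describe (your exponent $t^{[|y|-i,i]}$ is the more carefully written form of the paper's $t^{[|x|,i]}$; they agree since $|y|=|x|+i$). The argument you say you would actually write up is, however, a genuinely different route that the paper does not take, and it is valid: the proof of Theorem \ref{thm16} really does verify that $(x,y)'=({}'\!\phi(x),{}'\!\phi(y))_L$ satisfies (a)--(d) of Proposition \ref{prop14} with the tensor-product form \eqref{eq85} matching \eqref{eq74}, so the uniqueness clause forces $(,)'=(,)$ on ${}'\!\mathfrak{f}$, and descending to the quotient gives $(x,y)=(\phi(x),\phi(y))_L=(x,y)^*$ on all of $\mathfrak{f}$ --- strictly stronger than the stated proposition, which only asserts equality on $\mathbf{f}\otimes 1$. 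The shortcut buys brevity and the stronger conclusion at the cost of leaning on the internal content of Theorem \ref{thm16}'s proof rather than its statement; the paper's induction is self-contained modulo the isomorphism itself and only uses the adjunction properties of the two forms. One small point shared by your inductive version and the paper's: $r_i(y)$ need not itself lie in $\mathbf{f}\otimes 1$ (it is $t^{-[|y|-i,i]}$ times an element of $\mathbf{f}\otimes 1$), so the inductive hypothesis applies only after pulling out that scalar; since both forms are $\mathbb{Q}(v,t)$-bilinear this is harmless, but it deserves a half-sentence.
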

 \begin{proof}
   We show it by induction on $tr(|x|)$. Since both $(,)$ and $(,)^*$ are bilinear on $\mathbf{f} \otimes 1$, we can assume that both $x$ and $y$ are monomials. If $tr(|x|)=1$, then $x=\theta_i$ for some $i \in I$ and $(\theta_i, \theta_i)=(\theta_i, \theta_i)^*$.

  If $x \in \mathbf{f} \otimes 1$, by Theorem \ref{thm16}, $x *\theta_i=t^{-[|x|,i]}x\theta_i \in \mathbf{f}\otimes 1$.
  We now assume that $(x,y)=(x,y)^*$ for any $y\in \mathbf{f}\otimes 1$ with $|y|=|x|$.
  We want to show that $(x*\theta_i,z)=(x*\theta_i,z)^*$ for any $i\in I$ and any $z\in \mathbf{f}\otimes 1$ with $|z|=|x|+i$.
  By Lemma \ref{lem13}, we have
  \begin{equation}\label{eq69}
    (x*\theta_i,z)=t^{-[|x|,i]}(x\theta_i, z)=t^{[|x|,i]}(x, r_i(z))(\theta_i,\theta_i).
  \end{equation}
  On the other hand, by (\ref{eq36}) and (\ref{eq68}), we have $\widetilde{r}(x)=t^{[|x|,i]}r_i(x)\otimes \theta_i$ modulo homogeneities terms of different degree at the second component. Therefore, by (\ref{eq96}), we have
  \begin{equation}\label{eq70}
    (x*\theta_i,z)^*=(x\otimes \theta_i,\widetilde{r}(z))^*=t^{[|x|,i]}(x, r_i(z))^*(\theta_i,\theta_i)^*.
  \end{equation}
  By the induction assumption, (\ref{eq69}) and (\ref{eq70}) are equal. Proposition follows.
 \end{proof}

\subsection{Entire algebra}\label{sec6}
\label{presentation}

 Recall that $\Omega=(\Omega_{ij})_{i,j\in I}$ is the matrix fixed in Section \ref{sec2.1}.
{\it The two-parameter quantum algebra} $U_{v,t}$ associated to $\Omega$ is an associative $\mathbb{Q}(v,t)$-algebra with 1 generated by symbols $E_i, F_i, K_i^{\pm 1}, K_i'^{\pm 1},$ $\forall i\in I$ and subject to the following relations.
\begin{eqnarray*}
  (R1)& & K_i^{\pm 1}K^{\pm 1}_j=K^{\pm 1}_jK_i^{\pm 1},\ \ K'^{\pm 1}_iK'^{\pm 1}_j=K'^{\pm 1}_jK'^{\pm 1}_i,\\
     & & K_i^{\pm 1}K'^{\pm 1}_j=K'^{\pm 1}_jK_i^{\pm 1},\ \ K_i^{\pm 1}K_i^{\mp 1}=1=K'^{\pm 1}_iK'^{\mp 1}_i.\\
  (R2)& &K_iE_jK^{-1}_i=v^{i\cdot j}t^{\langle i,j\rangle-\langle j,i\rangle}E_j,\ \ K'_iE_jK'^{-1}_i=v^{-i\cdot j}t^{\langle i,j\rangle-\langle j,i \rangle}E_j,\\
     & &K_iF_jK^{-1}_i=v^{-i\cdot j}t^{\langle j,i\rangle-\langle i,j\rangle}F_j,\ \ K'_iF_jK'^{-1}_i=v^{i\cdot j}t^{\langle j,i\rangle-\langle i,j\rangle}F_j.\\
  (R3)& &E_iF_j-F_j E_i=\delta_{ij}\frac{K_i-K'_i}{v_i-v^{-1}_i}.\\
  (R4)& & \sum_{p+p'=1-2\frac{i\cdot j}{i\cdot i}}(-1)^pt_i^{-p(p'-2\frac{\langle i,j\rangle}{i\cdot i}+2\frac{\langle j,i\rangle}{i\cdot i})}E_i^{(p')}E_j E_i^{(p)}=0,\quad {\rm if}\ i\not =j,\\
  & &\sum_{p+p'=1-2\frac{i\cdot j}{i\cdot i}}(-1)^pt_i^{-p(p'-2\frac{\langle i,j\rangle}{i\cdot i}+2\frac{\langle j,i\rangle}{i\cdot i})}F_i^{(p)}F_j F_i^{(p')}=0,\quad {\rm if}\ i\not =j,
\end{eqnarray*}
where $E^{(p)}_i=\frac{E_i^p}{[p]^!_{v_i,t_i}}$.
 The algebra $U_{v,t}$ has a Hopf algebra structure with the comultiplication $\Delta$, the counit $\varepsilon$ and the antipode $S$ given as follows.

  $$\begin{array}{llll}
   &\Delta(K_i^{\pm 1})=K_i^{\pm 1} \otimes K_i^{\pm 1},&\Delta(K'^{\pm 1}_i)=K'^{\pm 1}_i \otimes K'^{\pm 1}_i, &\vspace{4pt}\\
   &\Delta(E_i)=E_i\otimes 1+K_i\otimes E_i,& \Delta(F_i)=1 \otimes F_i+F_i\otimes K'_i, & \vspace{4pt}\\
  &\varepsilon(K_i^{\pm 1})=\varepsilon(K'^{\pm 1}_i)=1,& \varepsilon(E_i)=\varepsilon(F_i)=0,& S(K_i^{\pm 1})=K_i^{\mp 1},\vspace{4pt}\\
  & S(K'^{\pm 1}_i)=K'^{\mp 1}_i,&
  S(E_i)=-K_i^{-1}E_i,&S(F_i)=-F_iK'^{-1}_i.
  \end{array}$$
This can be proved by checking the above relations (R1)--(R4). We refer to Chapter 3 in \cite{Lusztigbook} for more details.

For any $\gamma=(\gamma_1, \gamma_2), \eta=(\eta_1,\eta_2) \in \mathbb{Z}^I \times \mathbb{Z}^I$, we define a bilinear form on $\mathbb{Z}^I \times \mathbb{Z}^I$ by
\begin{eqnarray*}
[ \gamma, \eta ]'=[\gamma_2, \eta_2]-[\gamma_1, \eta_1].
\end{eqnarray*}
 The algebra $U_{v,t}$  admits a $\mathbb{Z}^I \times \mathbb{Z}^I$-grading  by  defining the degrees of generators  as follows.
\begin{eqnarray*}
& &deg(E_i)=(i,0),\quad  deg(K_i)=(i,i)=deg(K'_i),\\
& &\ deg(F_i)=(0,i),\quad deg(K^{-1}_i)=(-i,-i)=deg(K'^{-1}_i).
\end{eqnarray*}
On $U_{v,t}$, we define a new multiplication $``\ast"$ by
\begin{equation}\label{eq63}
  x \ast y=t^{-[ |x|,\ |y|]'}xy,
\end{equation}
for any homogenous elements $x, y \in U_{v,t}$.
Since $[,]'$ is a bilinear form,
$(U_{v,t}, *)$ is an associative algebra over $\mathbb{Q}(v,t)$.
We define a multiplication, denoted by $``*"$, on $U_{v,t}\otimes U_{v,t}$ by
\begin{equation}\label{eq72}
(x\otimes y)*(x'\otimes y')=x*x' \otimes y*y'.
\end{equation}
This gives a new algebra structure on $U_{v,t}\otimes U_{v,t}$.
  $(U_{v,t}, *)$ has a Hopf algebra structure with the comultiplication $\Delta^*$, the counit $\varepsilon^*$ and the antipode $S^*$. The image of generators $E_i, F_i,K_i$ and $K_i^{-1}$ under the map $\Delta^*$ (resp. $\varepsilon^*$ and $S^*$) are the same as the ones under the map $\Delta$ (resp. $\varepsilon$ and $S$) defined in Section \ref{sec6}.

Under the new multiplication $``\ast"$,  the defining relations of $U_{v,t}$ in Section \ref{sec6} can be rewritten as follows.
\begin{eqnarray*}
  (R^*1)& & K_i^{\pm 1}\ast K^{\pm 1}_j=K^{\pm 1}_j\ast K_i^{\pm 1},\ \ K'^{\pm 1}_i\ast K'^{\pm 1}_j=K'^{\pm 1}_j\ast K'^{\pm 1}_i,\\
     & & K_i^{\pm 1}\ast K'^{\pm 1}_j=K'^{\pm 1}_j\ast K_i^{\pm 1},\ \ K_i^{\pm 1}\ast K_i^{\mp 1}=1=K'^{\pm 1}_i\ast K'^{\mp 1}_i.\\
  (R^*2)& &K_i\ast E_j\ast K^{-1}_i=v^{i \cdot j}E_j,\ \ \ \ \ \  K'_i\ast E_j\ast K'^{-1}_i=v^{-i \cdot j}E_j,\\
     & &K'_i\ast F_j\ast K'^{-1}_i=v^{i \cdot j}F_j,\ \ \ \ \ K_i\ast F_j\ast K^{-1}_i=v^{-i \cdot j}F_j.\\
  (R^*3)& &E_i\ast F_j-F_j\ast E_i=\delta_{ij}\frac{\widetilde{K}_i-\widetilde{K}'_i}{v_i-v^{-1}_i},\quad \forall i, j\in I.\\
  (R^*4)& & \sum_{p+p'=1-a_{ij}}(-1)^p\begin{bmatrix} 1-a_{ij}\\ p \end{bmatrix}_{v_i}E_i^{*p}\ast E_j\ast E_i^{*p'}=0,\quad {\rm if}\ i\not =j,\\
  & &\sum_{p+p'=1-a_{ij}}(-1)^p\begin{bmatrix} 1-a_{ij}\\ p \end{bmatrix}_{v_i}F_i^{*p}\ast F_j\ast F_i^{*p'}=0\quad {\rm if}\ i\not =j,
\end{eqnarray*}
 where $a_{ij}=2\frac{i\cdot j}{i\cdot i}$ and $E_i^{*p}=E_i*E_i*\cdots *E_i$ for $p$ copies. We notice that these relations are the specialization of (R1)-(R4) at $t=1$.

  {\it The one-parameter quantum algebra} $U_{v}(I, \cdot)$ associated to  $(I, \cdot)$ is the associative $\mathbb{Q}(v)$-algebra with 1 generated by symbols $E_i, F_i, K_i^{\pm 1}$, $ K_i'^{\pm 1}, \forall i\in I$ and  subject to relations (R*1)-(R*4).
  $U_{v}(I, \cdot)$ has a Hopf algebra structure with the comultiplication $\Delta_1$, the counit $\varepsilon_1$ and the antipode $S_1$.  The image of generators $E_i, F_i,K_i$ and $K_i^{-1}$ under the map $\Delta_1$ (resp. $\varepsilon_1$ and $S_1$) are the same as the ones under the map $\Delta$ (resp. $\varepsilon$ and $S$) defined in Section \ref{sec6}.

Let $U_{v,t}(I, \cdot):=U_{v}(I, \cdot)\otimes_{\mathbb{Q}(v)}\mathbb{Q}(v,t)$. The Hopf algebra structure on $U_{v}(I, \cdot)$ can be naturally extended to $U_{v,t}(I, \cdot)$.
From the above analysis, we have the following theorem.
\begin{thm}\label{thm14}
  If $(I, \cdot)$ is the Cartan datum associated to $\Omega$, then there is a Hopf-algebra isomorphism
 $$(U_{v,t}, \ast, \Delta^*, \varepsilon^*, S^*)\simeq (U_{v,t}(I,\cdot), \cdot, \Delta_1, \varepsilon_1, S_1),$$
 sending the generators in $U_{v,t}$ to the respective generators in $U_{v,t}(I,\cdot)$.
\end{thm}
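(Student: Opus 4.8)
The isomorphism is read off from the rewriting of the defining relations carried out just before the statement. Observe first that the product $\ast$ of (\ref{eq63}) is the twist of the $\mathbb{Z}^I\times\mathbb{Z}^I$-graded algebra $U_{v,t}$ by the bilinear form $t^{-[\,,\,]'}$; since $[\,,\,]'$ is integer valued this form takes values in $\mathbb{Q}(v,t)^{\times}$, and every bilinear form is a $2$-cocycle, so the twist is an invertible operation on graded algebras. Each of the relations (R1)--(R4) is homogeneous for the $\mathbb{Z}^I\times\mathbb{Z}^I$-grading, and, as shown in the discussion preceding the theorem, replacing in them every product by the $\ast$-product turns (R1)--(R4) into precisely the relations (R*1)--(R*4), the $t$-powers produced by the twist absorbing the explicit $t$-powers occurring in (R2) and (R4). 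Hence the generators $E_i,F_i,K_i^{\pm1},K_i'^{\pm1}$ of $(U_{v,t},\ast)$ satisfy (R*1)--(R*4); and since $U_{v,t}(I,\cdot)$ is by definition the $\mathbb{Q}(v,t)$-algebra on these symbols subject to (R*1)--(R*4), the universal property yields an algebra homomorphism $\Phi\colon U_{v,t}(I,\cdot)\to(U_{v,t},\ast)$ fixing all generators. It is surjective because a $\ast$-monomial in the generators is a unit multiple of the corresponding ordinary monomial, so these generators generate $(U_{v,t},\ast)$.

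For injectivity I would produce an inverse by running the rewriting backwards. The generators $E_i,F_i,K_i^{\pm1},K_i'^{\pm1}$ of $U_{v,t}(I,\cdot)$, equipped with the opposite twist $x\bullet y=t^{[\,|x|,\,|y|\,]'}xy$, satisfy (R1)--(R4)---this is the inverse of the rewriting above---so the universal property of $U_{v,t}$ gives an algebra homomorphism $U_{v,t}\to(U_{v,t}(I,\cdot),\bullet)$ fixing the generators; twisting by $t^{-[\,,\,]'}$, which is functorial for graded algebras, turns it into an algebra homomorphism $\Psi\colon(U_{v,t},\ast)\to U_{v,t}(I,\cdot)$ fixing the generators. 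Since $\Phi\Psi$ and $\Psi\Phi$ fix all generators they are identity maps, so $\Phi$ is an isomorphism of $\mathbb{Q}(v,t)$-algebras. (Alternatively one may establish the triangular decompositions $U_{v,t}\cong U_{v,t}^-\otimes U_{v,t}^0\otimes U_{v,t}^+$ and likewise for $U_{v,t}(I,\cdot)$, exactly as in \cite{Lusztigbook}, and conclude that the surjection $\Phi$ between algebras of equal graded dimension is bijective.)

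It remains to check compatibility with the Hopf structures. The product $\ast$ on $U_{v,t}\otimes U_{v,t}$ defined in (\ref{eq72}) is the tensor product of the products $\ast$ on the two factors, so $\Phi\otimes\Phi$ is an algebra homomorphism $(U_{v,t}(I,\cdot)\otimes U_{v,t}(I,\cdot),\,\cdot\,)\to(U_{v,t}\otimes U_{v,t},\ast)$. Then $(\Phi\otimes\Phi)\circ\Delta_1$ and $\Delta^{*}\circ\Phi$ are both algebra homomorphisms from $U_{v,t}(I,\cdot)$ to $(U_{v,t}\otimes U_{v,t},\ast)$ which, because $\Delta_1$ and $\Delta^{*}$ take the generators to the same elements as $\Delta$ of Section \ref{sec6} and $\Phi$ fixes the generators, agree on the generators; hence they coincide and $\Phi$ is a morphism of coalgebras. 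The same argument with the counits $\varepsilon_1,\varepsilon^{*}$ (algebra maps to $\mathbb{Q}(v,t)$) and with the antipodes $S_1,S^{*}$ (algebra anti-homomorphisms, for which one works with the opposite algebras) shows that $\Phi$ intertwines counits and antipodes. Therefore $\Phi$ is a Hopf-algebra isomorphism sending generators to generators, as claimed.

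The computational heart of the argument---that rewriting (R1)--(R4) in the product $\ast$ reproduces the $t$-free relations (R*1)--(R*4)---is exactly the bookkeeping with the forms $\langle\,,\,\rangle$, $[\,,\,]$, $i\cdot j$ and the identity (\ref{eq31}) carried out before the statement, and it is there that the dependence of the deformation on the second parameter alone becomes visible. The only place where more than formal manipulation is needed is the injectivity of $\Phi$, for which I would use the reverse-twist homomorphism (or, equivalently, the triangular decomposition).
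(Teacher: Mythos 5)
Your proposal is correct and follows essentially the same route as the paper, whose proof consists precisely of the rewriting of (R1)--(R4) into (R*1)--(R*4) under the $\ast$-product together with the remark ``from the above analysis.'' You in fact supply more detail than the paper does (the invertibility of the $2$-cocycle twist for the inverse map, and the generator-level check of compatibility with $\Delta^{*}$, $\varepsilon^{*}$, $S^{*}$), all of which is sound.
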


\section{The canonical basis}

\subsection{The canonical basis of $\mathfrak{f}$}\label{sec2.5}

 Let ${}_{\mathfrak{A}}\mathfrak{f}$ be the $\mathbb{N}^I$-graded $\mathfrak{A}$-subalgebra of $\mathfrak{f}$ generated by $\theta_i^{(n)}$ for various $i\in I$ and $n\in \mathbb{N}$.
Let $\mathcal{B}$ be the subset of all elements $x$ in $\mathfrak{f}$ satisfying that
\begin{equation}\label{eq91}
  x\in {}_{\mathfrak{A}}\mathfrak{f},\quad \overline{x}=x, \quad (x,x) \in 1+v^{-1}\mathbb{Z}[[v^{-1}]],
\end{equation}
where $^{``- "}$ is defined in Section \ref{sec2.3} and (,) is defined in Proposition \ref{prop14}.
\begin{prop}\label{prop27}
  $\mathcal{B}\subset \mathbf{f}\otimes 1$.
\end{prop}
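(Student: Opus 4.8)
The plan is to push the three defining conditions of $\mathcal{B}$ through the algebra isomorphism $\phi\colon\mathfrak{f}\xrightarrow{\,\sim\,}\mathbf{f}_{v,t}$ of Theorem~\ref{thm16} and then invoke Lusztig's characterization of the one-parameter canonical basis $\mathbf{B}$ of $\mathbf{f}$. Write $\mathcal{A}=\mathbb{Z}[v^{\pm1}]$ and, as in the paragraph preceding the proposition, identify $\mathbf{f}\otimes1\subset\mathbf{f}_{v,t}$ with $\phi^{-1}(\mathbf{f}\otimes1)\subset\mathfrak{f}$. I shall use the following facts from \cite{Lusztigbook}: $\mathbf{B}$ is a bar-invariant $\mathcal{A}$-basis of the integral form ${}_{\mathcal{A}}\mathbf{f}$, and $\mathbf{B}$ is almost orthonormal for $(,)_L$, i.e. $(b,b')_L\in\delta_{b,b'}+v^{-1}\mathbb{Z}[[v^{-1}]]$.

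First I would establish three properties of $\phi$. (i) Under $\phi$ one has $\theta_i^{(n)}\mapsto\theta_i^{(n)}$: indeed $\phi$ fixes $\theta_i$ and carries the multiplication of $\mathfrak{f}$ to $\odot$, which differs from Lusztig's multiplication by a power of $t$ that cancels exactly against the power of $t$ relating $[n]^!_{v_i,t_i}$ to $[n]^!_{v_i}$ in~(\ref{eq92}); consequently $\phi({}_{\mathfrak{A}}\mathfrak{f})\subseteq{}_{\mathcal{A}}\mathbf{f}\otimes_{\mathcal{A}}\mathfrak{A}=\bigoplus_{b\in\mathbf{B}}\mathfrak{A}\,b$. (ii) $\phi$ intertwines the bar involution of $\mathfrak{f}$ with the (extended) bar involution of $\mathbf{f}_{v,t}$, both being the $\mathbb{Q}(t)$-semilinear ring maps fixing every $\theta_i$ and sending $v\mapsto v^{-1}$. (iii) One has $(x,y)=(\phi(x),\phi(y))_L$ for \emph{all} $x,y\in\mathfrak{f}$; this is exactly the computation in the proof of Proposition~\ref{prop23} (an induction on ${\rm tr}|x|$ via Lemma~\ref{lem13}, (\ref{eq96}) and $\mathfrak{f}_\nu=\sum_{i}\mathfrak{f}_{\nu-i}\theta_i$), which nowhere uses the hypothesis $x,y\in\mathbf{f}\otimes1$.

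Now take $x\in\mathcal{B}$; note $x\neq0$ since $(0,0)=0\notin1+v^{-1}\mathbb{Z}[[v^{-1}]]$. Set $y=\phi(x)$ and, using (i), write $y=\sum_{b\in\mathbf{B}}C_b\,b$ with $C_b\in\mathfrak{A}=\mathbb{Z}[v^{\pm1},t^{\pm1}]$, not all zero. By (ii) and bar-invariance of $x$ and of the $b$'s, each $C_b$ is bar-invariant in $v$; hence, viewed as a Laurent polynomial in $v$ over $\mathbb{Z}[t^{\pm1}]$, its $v$-degree $d_b$ is $\geq0$ and (if $C_b\neq0$) its leading $v$-coefficient $A_b\in\mathbb{Z}[t^{\pm1}]$ is nonzero. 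By (iii), $(x,x)=(y,y)_L=\sum_{b,b'}C_bC_{b'}(b,b')_L$. Expanding in powers of $v$ and using almost orthonormality: if $D=\max\{d_b:C_b\neq0\}$ then the coefficient of $v^{2D}$ is $\sum_{d_b=D}A_b^2$, whereas every off-diagonal term lies in $v$-degree $\leq d_b+d_{b'}-1\leq2D-1$ and every diagonal term with $d_b<D$ lies in $v$-degree $<2D$. Since $\sum_{d_b=D}A_b^2$ is a nonzero sum of squares in the domain $\mathbb{Z}[t^{\pm1}]$ — look at its lowest $t$-degree term — while $(x,x)\in1+v^{-1}\mathbb{Z}[[v^{-1}]]$ has $v$-degree $\leq0$, we conclude $D=0$, so every $C_b\in\mathbb{Z}[t^{\pm1}]$. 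Comparing coefficients of $v^0$ now gives $\sum_bC_b^2=1$ in $\mathbb{Z}[t^{\pm1}]$; inspecting the extremal $t$-degrees forces every $C_b\in\mathbb{Z}$, and then $\sum_bC_b^2=1$ in $\mathbb{Z}$ forces $C_{b_0}=\pm1$ for a single $b_0$ and $C_b=0$ otherwise. Thus $\phi(x)=\pm b_0\in\mathbf{f}\otimes1$, i.e. $x\in\mathbf{f}\otimes1$.

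The main obstacle is twofold. First, the identity $(x,x)=(\phi(x),\phi(x))_L$ must be available for an arbitrary $x\in\mathfrak{f}$, not only for $x\in\mathbf{f}\otimes1$ as literally stated in Proposition~\ref{prop23}; this requires checking that its proof never uses the restriction. Second — and this is the only genuinely new ingredient beyond Lusztig's one-parameter argument "bar-invariant $+$ almost-orthonormal $\Rightarrow$ $\pm$ a basis element" — one must control the extra variable $t$: the coefficients now lie in $\mathbb{Z}[v^{\pm1},t^{\pm1}]$ rather than $\mathbb{Z}[v^{\pm1}]$, and one needs that a nonzero sum of squares in $\mathbb{Z}[t^{\pm1}]$ is nonzero and that a sum of squares in $\mathbb{Z}[t^{\pm1}]$ equal to $1$ has exactly one nonzero term, equal to $\pm1$. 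Both follow from inspecting extremal $t$-degrees (equivalently, from ordering the field $\mathbb{R}((t))$); the remainder is routine bookkeeping with $v$-degrees together with the properties of $\phi$ already proved in the paper.
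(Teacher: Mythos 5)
Your argument is correct, but it takes a substantially longer route than the paper's. The paper's own proof is a two-line extremal-degree argument in $t$ alone: writing $x=\sum_n y_n t^n$ with $y_n\in\mathbf{f}\otimes 1$ (using the decomposition $\mathfrak{f}=\bigoplus_n(\mathbf{f}\otimes1)t^n$), Proposition~\ref{prop23} shows each $(y_n,y_m)$ lies in $\mathbb{Q}(v)$, so the top and bottom $t$-degree terms of $(x,x)$ are $t^{2n'}(y_{n'},y_{n'})_L$ and $t^{2n''}(y_{n''},y_{n''})_L$ with $n'=\max\{n_b\}$, $n''=\min\{n_b\}$; since these self-pairings are nonzero (anisotropy of $(,)_L$ on $\mathbf{f}$, itself a consequence of almost orthonormality of the canonical basis $\mathbf{B}$) and $(x,x)\in1+v^{-1}\mathbb{Z}[[v^{-1}]]$ contains no $t$, one gets $n'\le0\le n''$, hence $x=y_0$. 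Thus the paper uses essentially only the normalization of $(x,x)$, plus integrality to guarantee finiteness of the sum. You instead expand $\phi(x)$ in $\mathbf{B}$ and run the full two-variable version of Lusztig's ``integral $+$ bar-invariant $+$ almost orthonormal $\Rightarrow\pm$ basis element'' argument, invoking all three conditions in (\ref{eq91}); this is more work, but it proves the stronger statement $\phi(\mathcal{B})\subseteq\pm\mathbf{B}$, so it absorbs Theorem~\ref{thm9}(a) as well, whereas the paper obtains that afterwards by citing Lusztig's Theorem 14.2.3 once Proposition~\ref{prop27} has reduced everything to $\mathbf{f}\otimes1$. One small repair to your step (iii): the extension of Proposition~\ref{prop23} to arbitrary $x,y\in\mathfrak{f}$ is best justified not by re-inspecting its inductive proof but simply by bilinearity, since $\mathbf{f}\otimes1$ spans $\mathfrak{f}$ over $\mathbb{Q}(v,t)$ and both $(,)$ and $(,)^*$ are $\mathbb{Q}(v,t)$-bilinear. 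With that noted, your degree bookkeeping in $v$ and $t$ (including the sum-of-squares observations in $\mathbb{Z}[t^{\pm1}]$) is sound.
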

\begin{proof}
For any $x\in \mathcal{B}$, $x$ can be written as $x=\sum_{b\in \mathbf{f}\otimes 1} bt^{n_b}$. Moreover, there are only finite nonzero summands. So $max\{n_b\}$ exists, denoted by $n'$. Therefore, $(x,x)=t^{2n'}$ plus lower power terms. Since $(x,x) \in 1+v^{-1}\mathbb{Z}[[v^{-1}]]$, we have $n' \leq 0$.
Similarly, let $n''=min\{n_b\}$. Then $(x,x)=t^{2n''}$ plus higher power terms. Since $(x,x) \in 1+v^{-1}\mathbb{Z}[[v^{-1}]]$, we have $n'' \geq 0$. Therefore $n_b=0$ for all $b\in \mathbf{f}\otimes 1$.
Proposition follows.
 \end{proof}

Recall that a signed basis of an algebra $M$ is a subset, say $B$, of $M$ such that $B=B'\cup (-B')$ for some basis $B'$ of $M$.

\begin{thm}\label{thm9}
\begin{itemize}
  \item[(a)] $\mathcal{B}$ is a signed basis of the $\mathfrak{A}$-algebra ${}_{\mathfrak{A}}\mathfrak{f}$ and that of the $\mathbb{Q}(v,t)$-algebra $\mathfrak{f}$;
  \item[(b)] $(b,b')\in \delta_{bb'}+ v^{-1}\mathbb{Z}[[v^{-1}]]$, for any $b', b \in \mathcal{B}$.
\end{itemize}
\end{thm}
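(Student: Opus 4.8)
The plan is to transport everything to the one-parameter world via Theorem~\ref{thm16} and then invoke Lusztig's results for $\mathbf{f}$. By Proposition~\ref{prop27} we already know $\mathcal{B}\subset \mathbf{f}\otimes 1$, and by Theorem~\ref{thm16} (together with Theorem~\ref{thm14}'s underlying philosophy) the map $\phi:\mathfrak{f}\xrightarrow{\sim}(\mathbf{f}_{v,t},\odot)$ is an algebra isomorphism restricting to the identity on $\mathbf{f}\otimes 1$. So the first step is to pin down exactly how the three defining conditions in~\eqref{eq91} interact with $\phi$: the integral form ${}_{\mathfrak{A}}\mathfrak{f}$, the bar involution $\overline{\ \cdot\ }$, and the bilinear form $(,)$. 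For the integral form, $\theta_i^{(n)}$ is defined with $[n]^!_{v_i,t_i}$ in $\mathfrak{f}$ whereas Lusztig's $\theta_i^{(n)}$ uses $[n]^!_{v_i}$; by~\eqref{eq92} these differ by an invertible power of $t$, so an element of $\mathfrak{f}$ lying in ${}_{\mathfrak{A}}\mathfrak{f}$ and also in $\mathbf{f}\otimes 1$ is the same as an element of the Lusztig integral form ${}_{\mathcal{A}}\mathbf{f}\otimes 1$ (up to the evident base change $\mathcal{A}\subset\mathfrak{A}$). For the bar involution, by~\eqref{eq67}/\eqref{eq73} it fixes $t$ and inverts $v$; restricted to $\mathbf{f}\otimes 1$ it is exactly Lusztig's bar involution on $\mathbf{f}$ (one checks it fixes $\theta_i$ and is a $\mathbb{Q}(t)$-algebra map, and Lemma~\ref{lem9} controls its behaviour on $r$). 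For the bilinear form, Proposition~\ref{prop23} gives precisely $(x,y)=(x,y)^{*}=(\phi(x),\phi(y))_L$ for $x,y\in\mathbf{f}\otimes 1$, so condition~\eqref{eq91}(iii) is literally the condition $(\phi(x),\phi(x))_L\in 1+v^{-1}\mathbb{Z}[[v^{-1}]]$.

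Having made these identifications, the second step is to observe that $\mathcal{B}$, viewed inside $\mathbf{f}$ via $\phi$, is exactly the set $\mathbf{B}$ of elements $b\in\mathbf{f}$ with $b\in{}_{\mathcal{A}}\mathbf{f}$, $\overline{b}=b$, and $(b,b)_L\in 1+v^{-1}\mathbb{Z}[[v^{-1}]]$. This is Lusztig's algebraic characterization of the signed canonical basis: by Theorem~14.2.3 in~\cite{Lusztigbook} (the characterization theorem for $\mathbf{B}$), $\mathbf{B}$ is precisely $\pm$(the canonical basis of $\mathbf{f}$), hence a signed $\mathcal{A}$-basis of ${}_{\mathcal{A}}\mathbf{f}$ and a signed $\mathbb{Q}(v)$-basis of $\mathbf{f}$, and moreover satisfies $(b,b')_L\in\delta_{b,b'}+v^{-1}\mathbb{Z}[[v^{-1}]]$. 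Transporting back through $\phi$, and extending scalars from $\mathcal{A}$ to $\mathfrak{A}=\mathbb{Z}[v^{\pm1},t^{\pm1}]$ (which is faithfully flat over $\mathbb{Z}[v^{\pm1}]$), we conclude that $\mathcal{B}$ is a signed $\mathfrak{A}$-basis of ${}_{\mathfrak{A}}\mathfrak{f}$ and a signed $\mathbb{Q}(v,t)$-basis of $\mathfrak{f}$, proving part (a); and that $(b,b')\in\delta_{b,b'}+v^{-1}\mathbb{Z}[[v^{-1}]]$, proving part (b). Note that the $t$-power ambiguities in the divided powers do not create sign or scalar issues because Proposition~\ref{prop23} shows the form $(,)$ on $\mathbf{f}\otimes 1$ has no $t$ in it at all.

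The main obstacle is verifying cleanly that the \emph{bar involution} of $\mathfrak{f}$ really does restrict to Lusztig's bar involution on $\mathbf{f}\otimes 1$ under $\phi$, and that the \emph{integral form} condition matches up despite the twist $x*y=t^{-[|x|,|y|]}xy$ relating $\mathfrak{f}$ to $(\mathbf{f}_{v,t},\circ)$: one must check that $\phi$ carries $\theta_i^{(n)}\in{}_{\mathfrak{A}}\mathfrak{f}$ into $t^{(\text{something})}\theta_i^{(n)}\otimes 1\in{}_{\mathcal{A}}\mathbf{f}\otimes\mathfrak{A}$, and more generally that products of divided powers $\theta_{i_1}^{(n_1)}\cdots\theta_{i_k}^{(n_k)}$ in $\mathfrak{f}$ land in $t^{\mathbb{Z}}\cdot({}_{\mathcal{A}}\mathbf{f})$. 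This follows from~\eqref{eq80}, \eqref{eq89}, \eqref{eq92} and~\eqref{eq95} by a bookkeeping argument on the $t$-exponents, but it is the one place where care is needed; once it is done, everything else is a direct appeal to~\cite[Chapter~14]{Lusztigbook}.
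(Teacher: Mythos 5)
Your proposal is correct and follows essentially the same route as the paper's proof: reduce to Lusztig's characterization of the signed canonical basis of $\mathbf{f}$ via Proposition~\ref{prop27}, Proposition~\ref{prop23} and Theorem~14.2.3 of \cite{Lusztigbook}, then extend scalars using ${}_{\mathfrak{A}}\mathfrak{f}=({}_{\mathcal{A}}\mathbf{f}\otimes 1)\otimes_{\mathcal{A}}\mathfrak{A}$. You simply spell out the matching of the integral form and bar involution in more detail than the paper does.
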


\begin{proof} By Proposition \ref{prop23}, Proposition \ref{prop27} and Theorem 14.2.3 in \cite{Lusztigbook}, Part (b) holds. Moreover, $\mathcal{B}$ is a signed basis of $\mathcal{A}$-module ${}_{\mathcal{A}}\mathbf{f}\otimes 1$, where $\mathcal{A}=\mathbb{Z}[v^{\pm 1}]$ and ${}_{\mathcal{A}}\mathbf{f}$ is the $\mathcal{A}$-subalgebra of $\mathbf{f}$ generated by $\theta_i^n/[n]_{v_i}^!$. Since ${}_{\mathfrak{A}}\mathfrak{f}=({}_{\mathcal{A}}\mathbf{f}\otimes 1)\otimes_{\mathcal{A}}\mathfrak{A}$, Part (a) follows.
 \end{proof}

We call $\mathcal{B}$ the {\it canonical signed basis} of $\mathfrak{f}$.

For any $i \in I$ and $n\in \mathbb{Z}_{\geq 0}$, let $\mathcal{B}_{i, \geq n}=\mathcal{B} \bigcap\theta_i^n\mathfrak{f}$ and $\mathcal{B}_{i, n}=\mathcal{B}_{i, \geq n}\setminus \mathcal{B}_{i, \geq n+1}$. Then we have a parition $\mathcal{B}_{i, \geq n}=\coprod_{n'\geq n}\mathcal{B}_{i, n'}$.

\begin{prop}\label{prop26}
If $b\in \mathcal{B}_{i,0}$, then there is a unique element $b' \in \mathcal{B}_{i,n}$ such that $t^{-n[i,|b|]}\theta_i^{(n)}b=b'$ plus an $\mathfrak{A}$-linear combination of elements in $\mathcal{B}_{i,\geq n+1}$. Moreover, there is a bijection $\pi_{i,n}: \mathcal{B}_{i,0}\rightarrow \mathcal{B}_{i,n}$ sending $b$ to $b'$.
\end{prop}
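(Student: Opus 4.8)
The plan is to deduce the proposition from Lusztig's corresponding statement for $\mathbf f$ by transporting everything through the deformation isomorphism $\phi\colon\mathfrak f\xrightarrow{\ \sim\ }(\mathbf f_{v,t},\odot)$ of Theorem \ref{thm16}. The normalizing factor $t^{-n[i,|b|]}$ in the statement is exactly what is needed to absorb the $t$-twist built into $\odot$, so that after transporting, the claim becomes a formal consequence of the one-parameter theory. First I would record how $\phi$ interacts with the ingredients. Since $\phi$ fixes the generators $\theta_j$ and carries the multiplication of $\mathfrak f$ to $\odot$, a telescoping computation using $[ki,i]=k[i,i]=k(i\cdot i)/2$ gives
\begin{equation*}
\phi(\theta_i^{n})=\theta_i^{\odot n}=t^{\sum_{k=1}^{n-1}[ki,i]}\,\theta_i^{\circ n}=t_i^{\,n(n-1)/2}\,\theta_i^{\circ n},
\end{equation*}
where $\theta_i^{\circ n}$ is the $n$-th power in $\mathbf f$; combining this with $[n]^!_{v_i,t_i}=t_i^{\,n(n-1)/2}[n]^!_{v_i}$ from (\ref{eq92}) (applied with $v,t$ replaced by $v_i,t_i$) shows that $\phi$ takes the divided power $\theta_i^{(n)}$ of $\mathfrak f$ to Lusztig's divided power $\theta_i^{\circ n}/[n]^!_{v_i}$ in $\mathbf f$. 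Then, for $b\in\mathfrak f_\nu$, equation (\ref{eq80}) gives $\phi(\theta_i^{(n)}b)=\phi(\theta_i^{(n)})\odot\phi(b)=t^{\,n[i,|b|]}\bigl(\theta_i^{(n)}\circ\phi(b)\bigr)$, hence
\begin{equation*}
\phi\bigl(t^{-n[i,|b|]}\,\theta_i^{(n)}b\bigr)=\theta_i^{(n)}\circ\phi(b).
\end{equation*}

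Next I would identify the combinatorial data. By Proposition \ref{prop27} we have $\mathcal B\subset\mathbf f\otimes1$, and by Proposition \ref{prop23} together with the argument in the proof of Theorem \ref{thm9}, $\phi$ matches $(\,,\,)$ on (the preimage of) $\mathbf f\otimes1$ with Lusztig's $(\,,\,)_L$ and the bar involution of $\mathfrak f$ with that of $\mathbf f$; therefore $\phi$ identifies the canonical signed basis $\mathcal B$ with Lusztig's canonical (signed) basis $\mathbf B$ of $\mathbf f$ sitting inside $\mathbf f_{v,t}$, and identifies ${}_{\mathfrak A}\mathfrak f$ with ${}_{\mathcal A}\mathbf f\otimes_{\mathcal A}\mathfrak A$. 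Moreover the subspace $\theta_i^{n}\mathfrak f$ is carried by $\phi$ to $\theta_i^{\odot n}\odot\mathbf f_{v,t}=\theta_i^{\circ n}\circ\mathbf f_{v,t}$, since the difference between $\odot$ and $\circ$ only rescales each homogeneous component by an invertible scalar and so does not change the subspace; consequently $\mathcal B_{i,\ge n}$ and $\mathcal B_{i,n}$ correspond to Lusztig's $\mathbf B_{i;\ge n}$ and $\mathbf B_{i;n}$.

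Finally I would invoke the one-parameter result (Lusztig, Chapter~14 of \cite{Lusztigbook}, in particular \S14.3): for $b\in\mathbf B_{i;0}$ there is a unique $b'\in\mathbf B_{i;n}$ with $\theta_i^{(n)}\circ b-b'\in\sum_{b''\in\mathbf B_{i;\ge n+1}}\mathcal A\,b''$, and $b\mapsto b'$ is a bijection $\mathbf B_{i;0}\to\mathbf B_{i;n}$. Transporting this along $\phi^{-1}$ and using the displayed identity $\phi(t^{-n[i,|b|]}\theta_i^{(n)}b)=\theta_i^{(n)}\circ\phi(b)$ yields the asserted expansion of $t^{-n[i,|b|]}\theta_i^{(n)}b$ with coefficients in $\mathcal A\subset\mathfrak A$; uniqueness of $b'$ is immediate because $\mathcal B_{i,n}$ and $\mathcal B_{i,\ge n+1}$ are disjoint subsets of a basis, and $\pi_{i,n}$ is the bijection transported from Lusztig's. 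I expect no essential difficulty beyond the $t$-bookkeeping in the first step — checking that the normalization $t^{-n[i,|b|]}$ exactly cancels the twist in $\odot$ and that the two $t_i$-powers in the normalizations of the divided powers agree; once that is done, the statement is purely a reinterpretation of Lusztig's theorem.
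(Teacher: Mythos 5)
Your proposal is correct and follows essentially the same route as the paper: both arguments transport the statement through the two-cocycle deformation isomorphism (you via $\phi$ and the $\odot$-product of Theorem \ref{thm16}, the paper via the equivalent $*$-product formulation of Proposition \ref{prop28}, which are the same map by (\ref{eq95})), use Propositions \ref{prop23} and \ref{prop27} to identify $\mathcal{B}$ with Lusztig's signed basis inside $\mathbf{f}\otimes 1$, and then invoke Theorem 14.3.2(e) of \cite{Lusztigbook}, with the factor $t^{-n[i,|b|]}$ arising exactly as the twist between $\theta_i^{(n)}b$ and $\theta_i^{*n}/[n]^!_{v_i}*b$. Your explicit check that $\phi$ carries $\theta_i^{(n)}$ to Lusztig's divided power and that $\theta_i^n\mathfrak{f}$ corresponds to $\theta_i^{\circ n}\circ\mathbf{f}_{v,t}$ (so that $\mathcal{B}_{i,n}$ matches $\mathbf{B}_{i;n}$) is just a more detailed spelling-out of what the paper leaves implicit.
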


 \begin{proof} By Proposition \ref{prop27}, Proposition \ref{prop28} and Theorem 14.3.2(e) in \cite{Lusztigbook}, there is a unique 1-1 correspondence between $\mathcal{B}_{i,0}$ and $\mathcal{B}_{i,n}$ such that $\frac{\theta_i^{*n}}{[n]_{v_i}^!}*b=b'$ plus an $\mathcal{A}$-linear combination of elements in $\mathcal{B}_{i,\geq n+1}$, where $\theta_i^{*n}=\theta_i *\theta_i*\cdots *\theta_i$ for $n$ copies. By (\ref{eq89}), $\theta_i^{(n)}=\frac{\theta_i^{*n}}{[n]_{v_i}^!}$ and $\frac{\theta_i^{*n}}{[n]_{v_i}^!}*b=t^{-n[i,|b|]}\theta_i^{(n)}b$. Proposition follows.
 \end{proof}

Given any $\nu\in \mathbb{N}^I$, we define a subset $\mathfrak{B}_{\nu}$ of $\mathcal{B}$ by induction on ${\rm tr}(\nu)$. Let $\mathfrak{B}_0=\{1\}$. If ${\rm tr}(\nu) >0$, we set
$$\mathfrak{B}_{\nu}=\cup_{i \in I, n>0, \nu_i\geq n}\pi_{i,n}(\mathfrak{B}_{\nu-ni}\cap \mathcal{B}_{i,0}),$$
where $\pi_{i,n}$ is in Proposition \ref{prop26}.
Let
\begin{equation}\label{eq100}
  \mathfrak{B}=\sqcup_{\nu \in \mathbb{N}^I} \mathfrak{B}_{\nu}.
\end{equation}
The following theorem is an analogue of Theorem 14.4.3 in \cite{Lusztigbook}.
\begin{thm}\label{thm11}
  \begin{itemize}
    \item[(a)] $\mathcal{B}=\mathfrak{B} \cup (-\mathfrak{B})$;
    \item[(b)] For any $\nu \in \mathbb{N}^I$, $\mathfrak{B}_{\nu} \cap (-\mathfrak{B}_{\nu})=\varnothing$;
    \item[(c)] For any $\nu \in \mathbb{N}^I$, $\mathfrak{B}_{\nu}$ is a basis of the $\mathfrak{A}$-algebra ${}_{\mathfrak{A}}\mathfrak{f}_{\nu}$ and a basis of the $\mathbb{Q}(v,t)$-algebta $\mathfrak{f}_{\nu}$;
    \item[(d)] $\mathfrak{B}$ is a basis of the $\mathfrak{A}$-algebra ${}_{\mathfrak{A}}\mathfrak{f}$ and a basis of the $\mathbb{Q}(v,t)$-algebra $\mathfrak{f}$.
  \end{itemize}
\end{thm}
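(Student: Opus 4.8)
The plan is to establish (a) and (b) by induction on $\mathrm{tr}(\nu)$, and to deduce (c) and (d) formally from them together with Theorem \ref{thm9}. Indeed, once one knows that $\mathcal{B}_\nu = \mathfrak{B}_\nu \cup (-\mathfrak{B}_\nu)$ with $\mathfrak{B}_\nu \cap (-\mathfrak{B}_\nu) = \varnothing$ for every $\nu$, then $\mathfrak{B}_\nu$ contains exactly one element of each pair $\{b,-b\}$ with $b \in \mathcal{B}_\nu$; since $\mathcal{B}$ is a signed $\mathfrak{A}$-basis of ${}_{\mathfrak{A}}\mathfrak{f}$ and a signed $\mathbb{Q}(v,t)$-basis of $\mathfrak{f}$ by Theorem \ref{thm9}(a), and both are $\mathbb{N}^I$-graded, it follows at once that $\mathfrak{B}_\nu$ is a basis of ${}_{\mathfrak{A}}\mathfrak{f}_\nu$ and of $\mathfrak{f}_\nu$, and, summing over $\nu$, that $\mathfrak{B}$ is a basis of ${}_{\mathfrak{A}}\mathfrak{f}$ and of $\mathfrak{f}$. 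So the whole content is in (a) and (b).

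For (a) and (b) the idea is to transport Theorem 14.4.3 of \cite{Lusztigbook} across the isomorphism $\phi$ of Theorem \ref{thm16}. Recall that $\phi\colon (\mathfrak{f},\cdot) \to (\mathbf{f}_{v,t},\odot)$ carries ${}_{\mathfrak{A}}\mathfrak{f}$ onto $({}_{\mathcal{A}}\mathbf{f}\otimes 1)\otimes_{\mathcal{A}}\mathfrak{A}$ and, by Proposition \ref{prop27} and the proof of Theorem \ref{thm9}, carries the signed canonical basis $\mathcal{B}$ of $\mathfrak{f}$ onto (the image of) Lusztig's signed canonical basis of $\mathbf{f}$. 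Since the products $\odot$ and $\circ$ differ only by invertible scalars (powers of $t$), $\phi$ sends $\theta_i^n \mathfrak{f}$ onto $\theta_i^{\circ n}\mathbf{f}_{v,t}$, hence $\mathcal{B}_{i,\geq n}$, $\mathcal{B}_{i,n}$ (and in particular $\mathcal{B}_{i,0}$) onto the corresponding pieces for $\mathbf{f}_{v,t}$. The point that needs checking is the normalization of the bijections $\pi_{i,n}$ of Proposition \ref{prop26}: using (\ref{eq92}) and the identity $t_i = t^{[i,i]}$ one finds that $\phi(\theta_i^{(n)})$ equals the divided power of $\theta_i$ formed with $\circ$, and then, since $x \odot y = t^{[|x|,|y|]}x\circ y$ and $[ni,\mu] = n[i,\mu]$,
\[
\phi\bigl(t^{-n[i,|b|]}\theta_i^{(n)}b\bigr)\;=\;t^{-n[i,|b|]}\,t^{[ni,|b|]}\,\phi(\theta_i^{(n)})\circ\phi(b)\;=\;\phi(\theta_i^{(n)})\circ\phi(b),
\]
which is exactly the expression defining Lusztig's bijection in Theorem 14.3.2(e) of \cite{Lusztigbook}. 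Hence $\phi$ intertwines $\pi_{i,n}$ with its counterpart for $\mathbf{f}_{v,t}$.

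Comparing the inductive definition (\ref{eq100}) of $\mathfrak{B}_\nu$ with Lusztig's inductive definition (14.4.2 in \cite{Lusztigbook}), an easy induction on $\mathrm{tr}(\nu)$ — base case $\phi(\mathfrak{B}_0)=\{1\}$ — then shows that $\phi(\mathfrak{B}_\nu)$ equals Lusztig's $\mathbf{B}_\nu$ for every $\nu$; the extension of scalars from $\mathbb{Q}(v)$ to $\mathbb{Q}(v,t)$ (and from $\mathcal{A}$ to $\mathfrak{A}$) is harmless because both that definition and Theorem 14.4.3 involve only the one-parameter, integral structure. Transporting Theorem 14.4.3 of \cite{Lusztigbook} back through $\phi$ now yields (a)--(d).

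The step I expect to be most delicate is the normalization check in the second paragraph: one must verify that the various powers of $t$ inserted along the way — inside $\theta_i^{(n)}$ via (\ref{eq92}), inside the passage from $\cdot$ to $\odot$, and inside the explicit twist $t^{-n[i,|b|]}$ built into $\pi_{i,n}$ — cancel exactly, so that $\pi_{i,n}$ matches Lusztig's bijection on the nose rather than only up to an invertible scalar (otherwise the identification $\phi(\mathfrak{B}_\nu)=\mathbf{B}_\nu$ could fail). This is the same type of computation already carried out in the proofs of Propositions \ref{prop25}--\ref{prop28}, so no essentially new difficulty arises. A fully self-contained alternative would instead repeat Lusztig's inductive argument directly in $\mathfrak{f}$, using Propositions \ref{prop23}, \ref{prop26}, \ref{prop27} and Theorem \ref{thm9}; the one external input still required is that for $\nu\neq 0$ every $b\in\mathcal{B}_\nu$ lies in some $\mathcal{B}_{i,n}$ with $n\geq 1$, which one again reads off from the $\phi$-identification with $\mathbf{f}$.
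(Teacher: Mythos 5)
Your proposal is correct and follows essentially the same route as the paper: both arguments reduce to Lusztig's Theorem 14.4.3 through the twist isomorphism $\phi$ of Theorem \ref{thm16}, with the decisive point being exactly the normalization check that $\phi$ intertwines $\pi_{i,n}$ with Lusztig's bijection (this is the content of Proposition \ref{prop26}, whose proof contains the same $t$-power cancellation you carry out). The only presentational difference is that the paper runs the induction on $\mathrm{tr}(\nu)$ internally in $\mathfrak{f}$ using Proposition \ref{prop26}, whereas you transport the final statement wholesale after identifying $\phi(\mathfrak{B}_\nu)$ with Lusztig's $\mathbf{B}_\nu$; and you are right to flag, as an input from Lusztig's theory, that every $b\in\mathcal{B}_\nu$ with $\nu\neq 0$ lies in some $\mathcal{B}_{i,n}$ with $n\geq 1$ — the paper's own induction implicitly uses this as well.
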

\begin{proof}
   By definition of $\pi_{i,n}$ and $\mathcal{B}_{i.\geq n}$, we have $\mathfrak{B}\cup (-\mathfrak{B})\subset \mathcal{B}$.
   For any $\nu\in \mathbb{N}^I$ and any $x\in \mathcal{B}_{\nu}$, we are going to show that either $x\in \mathfrak{B}_{\nu}$ or $-x\in \mathfrak{B}_{\nu}$ by induction on ${\rm tr}(\nu)$.
   The case that ${\rm tr}(\nu)=0$ is trivial since $\mathfrak{B}_0=\{ 1\} $. Now assume that this statement is true for any $y \in \mathcal{B}$ with ${\rm tr}(|y|)<{\rm tr}(\nu)$.

  Since we have a partition $\mathcal{B}=\sqcup_{n'\geq 0}\mathcal{B}_{i,n'}$, $x\in \mathcal{B}_{i,m}$ for some $m \geq 0$.
  By Proposition \ref{prop26}, there exists $x' \in \mathcal{B}_{i,0}$ such that $x=\pi_{i,m}(x')$. Moreover, $x' \in \mathcal{B}_{\nu-mi}$.
  By induction assumption, either $x'\in \mathfrak{B}_{\nu-mi}$ or $-x'\in \mathfrak{B}_{\nu-mi}$.
   Therefore $x'\in \mathfrak{B}_{\nu-mi}\cap \mathcal{B}_{i,0}$ or $-x'\in \mathfrak{B}_{\nu-mi}\cap \mathcal{B}_{i,0}$. This implies that $x\in \mathfrak{B}_{\nu}$ or $-x\in \mathfrak{B}_{\nu}$. Part (a) follows.

 Part (b) is trivial for ${\rm tr}(\nu)=0$. For any $x\in \mathfrak{B}_{\nu}$, by the definition of $\mathfrak{B}_{\nu}$, there exists $x'\in \mathfrak{B}_{\nu-ni}$ for some $n\in \mathbb{N}$ and $i\in I$ such that $x=\pi_{i,n}(x')$. If $-x\in \mathfrak{B}_{\nu}$, then $-x'\in \mathfrak{B}_{\nu-ni}$. This is a contradiction by an induction on ${\rm tr}(\nu)$.

 Since $\mathcal{B}$ is a signed basis, part (c) follows from part (a) and (b).
 Part (d) follows from part (c).
\end{proof}

\begin{defn}
 The set $\mathfrak{B}$ defined in (\ref{eq100}) is called the canonical basis of $\mathfrak{f}$.
\end{defn}
 Recall that $\phi: \mathfrak{f}\rightarrow (\mathbf{f}_{v,t},\odot)$ is the algebra isomorphism in Theorem \ref{thm16}. Let $\mathfrak{B}(\phi)$ be the basis of $\mathfrak{f}$ such that the image of $\mathfrak{B}(\phi)$ under the map $\phi$ is the canonical basis of $\mathbf{f}_{v,t}$ defined in Theorem 14.4.3 in \cite{Lusztigbook}. Both $\mathfrak{B}$ and $\mathfrak{B}(\phi)$ consist of elements in $\mathfrak{f}$ satisfying (\ref{eq91})  by  Propositions \ref{prop23} and \ref{prop27}. Since $\mathfrak{B}_0=\{1\}=\mathfrak{B}_0(\phi)$, where  $\mathfrak{B}_0$ is the subset of $\mathfrak{B}$ consisting of all degree 0 elements, by the uniqueness of $\mathfrak{B}$, we have the following corollary.
\begin{cor}\label{cor8}
  $\mathfrak{B}(\phi)=\mathfrak{B}$. In other words, the canonical basis of $\mathfrak{f}$ is the same as that of $\mathbf{f}$ up to a 2-cocycle deformation. Moreover, if the associated Cartan data of $\Omega$ and $\Omega'$ are the same, then the canonical bases of $\mathfrak{f}$ and $\mathfrak{f}(\Omega')$ are the same if we present both elements by the multiplication $``*"$ in (\ref{eq89}).
\end{cor}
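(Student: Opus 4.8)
The plan is to identify both $\mathfrak{B}$ and $\mathfrak{B}(\phi)$ with the subset of the signed basis $\mathcal{B}$ produced by the recursive procedure of Theorem~\ref{thm11}, exploiting that $\phi$ is compatible with every structure that enters that procedure.

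First I would show $\mathfrak{B}(\phi)\subseteq\mathcal{B}$, i.e. that $\phi^{-1}(b)$ satisfies the three conditions in (\ref{eq91}) whenever $b$ lies in Lusztig's canonical basis $\mathbf{B}$ of $\mathbf{f}_{v,t}$ (Theorem 14.4.3 in \cite{Lusztigbook}). Integrality is the identification ${}_{\mathfrak{A}}\mathfrak{f}=({}_{\mathcal{A}}\mathbf{f}\otimes 1)\otimes_{\mathcal{A}}\mathfrak{A}$ from the proof of Theorem~\ref{thm9}, since $b\in {}_{\mathcal{A}}\mathbf{f}\otimes 1$. Bar-invariance follows because the bar involution of $\mathfrak{f}$ preserves the subalgebra $\mathbf{f}\otimes 1$ (the multiplication $*$ differs from $\cdot$ only by $t$-powers, which bar fixes) and, under the identification $\phi$, restricts there to Lusztig's bar involution, which fixes $b$. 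Almost-orthonormality $(\phi^{-1}(b),\phi^{-1}(b))\in 1+v^{-1}\mathbb{Z}[[v^{-1}]]$ is Proposition~\ref{prop23} together with the definition (\ref{eq88}) of $(,)^*$ and the almost-orthonormality of $\mathbf{B}$ for $(,)_L$. The same reasoning applied to $\phi^{-1}$ shows that $\phi$ maps $\mathcal{B}$ onto the signed canonical basis $\mathbf{B}\cup(-\mathbf{B})$ of $\mathbf{f}_{v,t}$.

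Next, the core step: $\phi$ intertwines the recursive construction of $\mathfrak{B}$ with that of $\mathbf{B}$. Viewed as an algebra isomorphism $(\mathfrak{f},*)\to(\mathbf{f}_{v,t},\circ)$ via (\ref{eq95}), $\phi$ sends $\theta_i^{(n)}$ to $\theta_i^{(n)}$ and $\theta_i^{n}\mathfrak{f}$ onto $\theta_i^{n}\mathbf{f}_{v,t}$, hence it matches the filtration steps $\mathcal{B}_{i,\geq n}$ with their $\mathbf{f}_{v,t}$-counterparts and therefore $\mathcal{B}_{i,n}$ with $\mathbf{B}_{i,n}$ and $\mathcal{B}_{i,0}$ with $\mathbf{B}_{i,0}$. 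Since in Proposition~\ref{prop26} one has $t^{-n[i,|b|]}\theta_i^{(n)}b=\theta_i^{(n)}*b$, applying $\phi$ turns the defining relation of $\pi_{i,n}$ into the defining relation of Lusztig's map $\tilde\pi_{i,n}$ (Theorem 14.3.2(e) of \cite{Lusztigbook}), so $\phi\circ\pi_{i,n}=\tilde\pi_{i,n}\circ\phi$ on $\mathcal{B}_{i,0}$. Consequently $\phi$ carries $\mathfrak{B}_\nu=\bigcup_{i,n}\pi_{i,n}(\mathfrak{B}_{\nu-ni}\cap\mathcal{B}_{i,0})$ to $\bigcup_{i,n}\tilde\pi_{i,n}(\phi(\mathfrak{B}_{\nu-ni})\cap\mathbf{B}_{i,0})$; as $\phi(\mathfrak{B}_0)=\{1\}=\mathbf{B}_0$, an induction on ${\rm tr}(\nu)$ gives $\phi(\mathfrak{B})=\mathbf{B}$. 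By the definition of $\mathfrak{B}(\phi)$ this reads $\phi(\mathfrak{B})=\phi(\mathfrak{B}(\phi))$, hence $\mathfrak{B}=\mathfrak{B}(\phi)$, which is the first assertion.

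For the comparison of $\Omega$ with $\Omega'$, write $\phi'$ for the isomorphism of Theorem~\ref{thm16} attached to $\Omega'$; the Cartan data, hence $\mathbf{f}_{v,t}$, being the same, the first part gives $\mathfrak{B}=\phi^{-1}(\mathbf{B})$ and $\mathfrak{B}(\Omega')=(\phi')^{-1}(\mathbf{B})$. Both $\phi$ and $\phi'$ send $\theta_i$ to $\theta_i$ and are algebra isomorphisms onto $(\mathbf{f}_{v,t},\circ)$ when their sources carry the multiplication $*$, so $(\phi')^{-1}\circ\phi$ is exactly the twisted bialgebra isomorphism $(\mathfrak{f},*,\widetilde r)\simeq(\mathfrak{f}(\Omega'),*,\widetilde r)$ of Corollary~\ref{cor6}, and it sends $\mathfrak{B}$ to $\mathfrak{B}(\Omega')$. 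I expect the main obstacle to be the bookkeeping in the third paragraph: one must check that $\phi$, a priori only an algebra map for the pair $(\cdot,\odot)$, is simultaneously compatible via (\ref{eq95}) with the pair $(*,\circ)$ used to define $\pi_{i,n}$, that it respects the two-sided filtrations by powers of $\theta_i$, and that the $t$-normalization hidden in Proposition~\ref{prop26} is precisely the one making $\pi_{i,n}$ correspond to Lusztig's $\tilde\pi_{i,n}$ rather than to a $t$-rescaled variant; once these are in place the remaining induction is formal.
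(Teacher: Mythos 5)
Your proposal is correct and follows essentially the same route as the paper: the paper also observes that both $\mathfrak{B}$ and $\mathfrak{B}(\phi)$ lie in $\mathcal{B}$ (via Propositions \ref{prop23} and \ref{prop27}) and agree in degree zero, and then appeals to ``the uniqueness of $\mathfrak{B}$''. Your third paragraph simply makes that uniqueness step explicit by checking that $\phi$ intertwines $\pi_{i,n}$ with Lusztig's $\tilde\pi_{i,n}$ — exactly the compatibility already built into the proof of Proposition \ref{prop26} — so the two arguments coincide in substance.
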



\begin{ex}
 Let $I=\{i\}$ and $\Omega_{ii}=1$, then
  $\mathfrak{B}=\{\theta_i^{(n)}\ |\ n\in \mathbb{N}\}.$
\end{ex}

\begin{ex}
Let $I=\{i,j\}$ and $\Omega_{ii}=\Omega_{jj}=1, \Omega_{ij}=-1$, $\Omega_{ji}=0$. For any $a,b,c \in\mathbb{N}$ such that $a+c \leq b$, we set
  $$\mathfrak B_1=\{ t^{-a(b+c)}\theta_i^{(a)}\theta_j^{(b)}\theta_i^{(c)}\},\quad \mathfrak B_2=\{ t^{-a(b+c)}\theta_j^{(c)}\theta_i^{(b)}\theta_j^{(a)}\}.$$
By Section 14.5.4 in \cite{Lusztigbook},  $\theta_i^{(a)}\theta_j^{(b)}\theta_i^{(c)}=\theta_j^{(c)}\theta_i^{(b)}\theta_j^{(a)}$ if $b=a+c$. By identifying these two elements,
  $\mathfrak{B}=\mathfrak  B_1\cup \mathfrak  B_2.$
 \end{ex}

\subsection{The canonical basis of $L(\lambda, \epsilon)$}

Let $U_{v,t}^-$ be the negative part of $U_{v,t}$ generated by $F_i$ for all $i \in I$.
As  shown in Corollary ~\ref{cor7}, the algebra $U^-_{v, t}$ can be identified with  the algebra  $\mathfrak f$  by sending the generator $F_i$ to $\theta_i$ for any $i\in I$.
By abuse of notation, we denote by $\mathfrak{B}$ the image of the canonical  basis in $\mathfrak  f$ under the identification. For any pair
 $(\lambda, \epsilon) \in \mathbb{N}^I\times \mathbb{Q}(v,t)^I$ with $\epsilon \neq 0$, there exists a $U_{v,t}$-module $L(\lambda, \epsilon)$ containing a nonzero vector $\xi_0\in L(\lambda, \epsilon)$ and subject to
\begin{itemize}
 \item[(a)]$E_i \xi_0=0, K_i\xi_0=\epsilon_iv^{\lambda_i}\xi_0$ and $K_i'\xi_0=\epsilon_iv^{-\lambda_i}\xi_0$ for all $i \in I$,

 \item[(b)] The map $\varrho: U_{v,t}^- \rightarrow L(\lambda, \epsilon)$ given by $z\mapsto z\xi_0$ is surjective and its kernel is $\sum_{i\in I}U_{v,t}^-F_i^{\lambda_i+1}$.
\end{itemize}
Let $\mathfrak{B}(\lambda, \epsilon)=\varrho(\mathfrak{B} \setminus ((\sum_{i \in I}U_{v,t}^-F_i^{\lambda_i+1})\cap \mathfrak{B}))$.

\begin{prop}\label{prop29}
  (a) For any $\lambda \in \mathbb{N}^I$, the intersection $(\sum_{i \in I}\theta_i^{\lambda_i}\mathfrak{f})\cap \mathfrak{B}$ is a $\mathbb{Q}(v,t)$-basis of $\sum_{i \in I}\theta_i^{\lambda_i}\mathfrak{f}$.

  (b) For any $\lambda\in \mathbb{N}^I$, the intersection $(\sum_{i \in I}\mathfrak{f}\theta_i^{\lambda_i})\cap \mathfrak{B}$ is a $\mathbb{Q}(v,t)$-basis of $\sum_{i \in I}\mathfrak{f}\theta_i^{\lambda_i}$.
\end{prop}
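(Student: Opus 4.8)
The plan is to reduce both statements to the corresponding facts about Lusztig's algebra $\mathbf f$, using the identification $\phi\colon\mathfrak f\xrightarrow{\sim}(\mathbf f_{v,t},\odot)$ from Theorem \ref{thm16} together with Corollary \ref{cor8}, which tells us that $\phi$ carries the canonical basis $\mathfrak B$ of $\mathfrak f$ onto the canonical basis of $\mathbf f_{v,t}$ (equivalently, onto Lusztig's canonical basis of $\mathbf f$, which lies in $\mathbf f\otimes 1$ by Proposition \ref{prop27}). The key point is that the twisting cocycle only rescales each graded piece by a power of $t$, so it sends $\theta_i^{\lambda_i}\mathfrak f$ to $\mathbf f_{v,t}\,\theta_i^{\lambda_i}$-type subspaces up to a scalar; more precisely, since $\phi(a*b)=\phi(a)\circ\phi(b)$ by \eqref{eq95} and $\theta_i^{\lambda_i}=t^{c}\,\theta_i^{*\lambda_i}$ for an explicit scalar $c$ depending only on $|b|$ and $\lambda_i$ via the form $[\,,\,]$, we get $\phi(\theta_i^{\lambda_i}\mathfrak f)=\theta_i^{\lambda_i}\mathbf f_{v,t}$ as graded subspaces (the $t$-powers being units in $\mathfrak A$ and in $\mathbb Q(v,t)$). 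Hence $\sum_{i\in I}\theta_i^{\lambda_i}\mathfrak f$ maps isomorphically onto $\sum_{i\in I}\theta_i^{\lambda_i}\mathbf f_{v,t}$ under $\phi$, and intersections with the canonical bases correspond.

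The main step is then to invoke the known result for $\mathbf f$. In Lusztig's book, Theorem 14.4.13 (and the surrounding discussion of the partition $\mathcal B_{i,\geq n}=\coprod_{n'\geq n}\mathcal B_{i,n'}$) establishes exactly that $\mathcal B\cap\theta_i^{n}\mathbf f$ is a basis of $\theta_i^{n}\mathbf f$; iterating over $i\in I$ and taking sums one obtains that $\mathcal B\cap\big(\sum_i\theta_i^{\lambda_i}\mathbf f\big)$ is a basis of $\sum_i\theta_i^{\lambda_i}\mathbf f$. I would carry this out concretely using Proposition \ref{prop26}: the partition $\mathcal B_{i,\geq n}=\coprod_{n'\geq n}\pi_{i,n'}(\mathcal B_{i,0})$ together with the triangularity statement (namely $t^{-n[i,|b|]}\theta_i^{(n)}b=b'+(\text{higher terms in }\mathcal B_{i,\geq n+1})$) shows that the $\mathfrak A$-span of $\mathcal B_{i,\geq n}$ is exactly $\theta_i^{n}\,{}_{\mathfrak A}\mathfrak f$ (and likewise over $\mathbb Q(v,t)$), because $\theta_i^{(n)}b$ differs from the leading canonical basis element by a unit and by a combination of elements that are themselves in $\theta_i^{n}\mathfrak f$. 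Summing over $i$ requires one small additional observation: a canonical basis element $b$ lies in $\sum_i\theta_i^{\lambda_i}\mathfrak f$ if and only if it lies in some single $\theta_i^{\lambda_i}\mathfrak f$; this again follows by transporting to $\mathbf f$, where it is part of the standard theory of the crystal operators $\tilde f_i$ acting on the canonical basis.

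For part (b) I would run the same argument on the other side, using the map $_ir$ and the second identity in \eqref{eq34} in place of $r_i$, and the bijection analogous to $\pi_{i,n}$ built from left multiplication by $\theta_i^{(n)}$; equivalently, one applies part (a) to the opposite algebra, or uses the anti-automorphism of $\mathfrak f$ fixing the generators (which exists because the defining relations $S_{ij}$ are stable under reversing words). The expected obstacle is bookkeeping the $t$-powers in the comparison $\theta_i^{\lambda_i}\mathfrak f\leftrightarrow\theta_i^{*\lambda_i}\mathbf f_{v,t}$ so as to be certain that the correspondence of subspaces is exact and not merely up to lower-order terms; this is handled cleanly by noting that $x\mapsto t^{[\,|x|,\,\cdot\,]}x$ is a graded rescaling, so it maps any graded subspace onto itself, and in particular $\theta_i^{\lambda_i}\mathfrak f$ (which is graded) is literally preserved as a set by passing between the $\cdot$ and $*$ products. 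Once that is in place, parts (a) and (b) follow formally from the corresponding statements for $\mathbf f$ in \cite{Lusztigbook}.
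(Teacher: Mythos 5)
Your proposal is correct and follows essentially the same route as the paper, whose entire proof is to transfer Corollary 11.8 of \cite{Lusztigquiver} (the statement that the canonical basis of $\mathbf f$ is compatible with $\sum_i\theta_i^{\lambda_i}\mathbf f$, including the nontrivial summation over $i$) through the twisted isomorphism of Theorem \ref{thm16}; your observation that the cocycle twist is a graded rescaling and hence preserves the graded subspaces $\theta_i^{\lambda_i}\mathfrak f$ is exactly the point that makes this transfer work. The only quibble is that the result you want from \cite{Lusztigbook} is not Theorem 14.4.13 (positivity) but rather the compatibility statement proved in \cite{Lusztigquiver}, Corollary 11.8, which is the reference the paper actually uses.
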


\begin{proof} By Corollary 11.8 in \cite{Lusztigquiver} and Theorem \ref{thm16}.
\end{proof}

By Proposition \ref{prop29} and the identification of $U_{v,t}^-$ with $\mathfrak{f}$, we have that
  $\mathfrak{B}(\lambda, \epsilon)\subset L(\lambda, \epsilon)$  is a $\mathbb{Q}(v,t)$-basis of $L(\lambda, \epsilon)$.
\begin{defn}
   $\mathfrak{B}(\lambda, \epsilon)$  is called the canonical basis of $L(\lambda, \epsilon)$.
\end{defn}

\subsection{Positivity}\label{sec5.6}
Recall that to the matrix $\Omega$ of symmetric type, we have constructed an algebra $\mathfrak{f}$ in Section \ref{sec2.3} and an algebra $\mathfrak{K}$ in Section \ref{sec3.8}.

\begin{thm}\label{thm8}
  The assignment $\theta_i^{(n)}\mapsto \mathfrak{L}_{ni}$ gives a twisted bialgebra isomorphism $\chi: {}_{\mathfrak{A}}\mathfrak{f}
  \simeq \mathfrak{K}$. Moreover, $\chi^{-1}(\widetilde{\mathfrak{B}})$ is the canonical basis of $\mathfrak{f}$ in Theorem \ref{thm11}, where $\widetilde{\mathfrak{B}}$ is the set of all isomorphism classes of simple perverse sheaves of weight 0.
\end{thm}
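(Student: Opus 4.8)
The plan is to construct $\chi$ on the free algebra ${}'\!\mathfrak{f}$, push it through the radical $\mathfrak{J}$ and restrict to the integral form, then prove bijectivity and compatibility with comultiplication, bar involution and bilinear form, and finally identify $\chi^{-1}(\widetilde{\mathfrak{B}})$ with $\mathfrak{B}$ by a positivity argument. First, since ${}'\!\mathfrak{f}$ is free on the $\theta_i$, there is a unique $\mathbb{Q}(v,t)$-algebra map ${}'\!\chi\colon {}'\!\mathfrak{f}\to \mathfrak{K}\otimes_{\mathfrak{A}}\mathbb{Q}(v,t)$ with ${}'\!\chi(\theta_i)=\mathfrak{L}_i$; by Proposition \ref{prop3}(b) it sends a word in the $\theta_i$ to the matching $\mathfrak{L}_{\underline{\nu}}$, and iterating (\ref{eq23}) (with $v_i=v$, $t_i=t$ in symmetric type) gives ${}'\!\chi(\theta_i^{n})=[n]^!_{v,t}\mathfrak{L}_{ni}$, so ${}'\!\chi(\theta_i^{(n)})=\mathfrak{L}_{ni}$. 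By Corollary \ref{cor7} the radical $\mathfrak{J}$ is generated by the $S_{ij}$, and ${}'\!\chi(S_{ij})$ is, after relabelling $i\leftrightarrow j$ and using $\mathfrak{L}_{j^p,i,j^{p'}}=[p]^!_{v,t}[p']^!_{v,t}\mathfrak{L}_{pj,i,p'j}$, a scalar multiple of the two-parameter quantum Serre relation valid in $\mathfrak{K}$ by Example~2. Hence ${}'\!\chi$ descends to $\chi\colon \mathfrak{f}\to \mathfrak{K}\otimes_{\mathfrak{A}}\mathbb{Q}(v,t)$, and as ${}_{\mathfrak{A}}\mathfrak{f}$ is generated by the $\theta_i^{(n)}$ with $\chi(\theta_i^{(n)})=\mathfrak{L}_{ni}\in \mathfrak{K}$, it restricts to an $\mathbb{N}^I$-graded $\mathfrak{A}$-algebra map $\chi\colon {}_{\mathfrak{A}}\mathfrak{f}\to \mathfrak{K}$.

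For surjectivity, Theorem \ref{thm1} gives that the $\mathfrak{L}_{\underline{\nu}}$ span $\mathfrak{K}$ over $\mathfrak{A}$; by Proposition \ref{prop3}(b) each $\mathfrak{L}_{\underline{\nu}}$ is the product $\mathfrak{L}_{(\nu^1)}\cdots\mathfrak{L}_{(\nu^m)}$ of its layers, and since the support of a discrete layer carries no arrow the relevant $E_V$ is a point, so each $\mathfrak{L}_{(\nu^l)}$ is a product of $\mathfrak{L}_{ni}$'s, one per vertex in its support; hence every $\mathfrak{L}_{\underline{\nu}}$ lies in the image of $\chi$. For injectivity, ${}_{\mathfrak{A}}\mathfrak{f}_\nu$ is a free $\mathfrak{A}$-module with basis $\mathfrak{B}_\nu$ (Theorem \ref{thm11}(c)) and $\mathfrak{K}_\nu$ is a free $\mathfrak{A}$-module with basis the weight-$0$ simple perverse sheaves (Theorem \ref{thm2}(2)); their ranks agree, being $\dim_{\mathbb{Q}(v,t)}\mathfrak{f}_\nu=\dim_{\mathbb{Q}(v)}\mathbf{f}_\nu$ (Theorem \ref{thm16}) on one side, and the number of simples of $\mathcal{Q}_V=\mathfrak{Q}^{\leq 0}_V\cap \mathfrak{Q}^{\geq 0}_V$, again $\dim_{\mathbb{Q}(v)}\mathbf{f}_\nu$, on the other. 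Then $\chi_\nu\otimes_{\mathfrak{A}}\mathbb{Q}(v,t)$ is a surjection of $\mathbb{Q}(v,t)$-vector spaces of equal finite dimension, hence an isomorphism; since ${}_{\mathfrak{A}}\mathfrak{f}_\nu$ embeds into its localization $\mathfrak{f}_\nu$, $\chi_\nu$ is injective, and $\chi\colon {}_{\mathfrak{A}}\mathfrak{f}\simeq \mathfrak{K}$.

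I would then check the extra structures. For the coalgebra: the twisted products on the tensor squares in (\ref{eq13}) and (\ref{eq25}) have the same formula and $\chi$ is graded, so $\chi\otimes\chi$ is an algebra map; since $\mathfrak{r}(\mathfrak{L}_{ni})=\sum_{p+p'=n}(vt)^{-pp'}\mathfrak{L}_{pi}\otimes\mathfrak{L}_{p'i}$ by Corollary \ref{cor2} matches $r(\theta_i^{(n)})$ of Lemma \ref{lem16}, the algebra maps $\mathfrak{r}\circ\chi$ and $(\chi\otimes\chi)\circ r$ ($\mathfrak{r}$ being an algebra map by Proposition \ref{prop30}) agree on generators, hence coincide. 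For the bar involution, $\overline{\theta_i^{(n)}}=\theta_i^{(n)}$ (as $\overline{[n]^!_{v,t}}=[n]^!_{v,t}$) and $\overline{\mathfrak{L}_{ni}}=\mathfrak{D}(\mathfrak{L}_{ni})=\mathfrak{L}_{ni}$ by (\ref{eq103}), so the $v$-semilinear $\mathbb{Q}(t)$-algebra involutions of $\mathfrak{f}$ and $\mathfrak{K}$ (Proposition \ref{prop13}) are intertwined by $\chi$. For the bilinear form, pulling $(\,,\,)$ on $\mathfrak{K}$ back along $\chi$ and the quotient ${}'\!\mathfrak{f}\to\mathfrak{f}$ gives a symmetric form on ${}'\!\mathfrak{f}$ satisfying Proposition \ref{prop14}(a),(b) --- the latter since $\mathfrak{L}_i=\mathbf{1}_{\mathrm{pt}}$ has weight $0$ and $(\mathbf{1}_{\mathrm{pt}},\mathbf{1}_{\mathrm{pt}})=\sum_{j\ge 0}v^{-2j}=(1-v^{-2})^{-1}$ --- and (c),(d) via the adjunction $(\mathfrak{Ind}(K\boxtimes L),M)=(K\boxtimes L,\mathfrak{r}(M))$ of Proposition \ref{prop9}, whose twist $t^{2d}$ is the twist $t^{2[\underline{\dim}T,\underline{\dim}W]}$ of (\ref{eq74}) because $d=[\underline{\dim}T,\underline{\dim}W]$ in symmetric type; by the uniqueness in Proposition \ref{prop14} this form is $(\,,\,)_{{}'\!\mathfrak{f}}$, so $\chi$ is an isometry of twisted bialgebras.

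Finally, each weight-$0$ simple $L$ is Verdier self-dual, so $\overline{L}=\mathfrak{D}(L)=L$, and $\mathrm{wt}(L)=0$ makes the $t$-power in (\ref{eq41}) trivial, so $(L,L')=\sum_j d_j(L,L')v^{-j}$ is Lusztig's one-parameter inner product, which lies in $\delta_{LL'}+v^{-1}\mathbb{Z}[[v^{-1}]]$ (Theorem 14.2.3 in \cite{Lusztigbook}). Transporting along the isometry $\chi$, every element of $\chi^{-1}(\widetilde{\mathfrak{B}})$ satisfies the three conditions (\ref{eq91}), so $\chi^{-1}(\widetilde{\mathfrak{B}})\subseteq \mathcal{B}=\mathfrak{B}\sqcup(-\mathfrak{B})$; being itself a genuine $\mathfrak{A}$-basis of ${}_{\mathfrak{A}}\mathfrak{f}$ (Theorem \ref{thm2}(2)), $\chi^{-1}(\widetilde{\mathfrak{B}})$ differs from $\mathfrak{B}$ only by signs. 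These are pinned down exactly as in Lusztig's construction of the canonical basis: products $\mathfrak{L}_{ni}\cdot L$ of weight-$0$ simples decompose into weight-$0$ simples with multiplicities in $\mathbb{N}[v^{\pm 1},t^{\pm 1}]$, so running the inductive $\pi_{i,n}$-construction of $\mathfrak{B}$ (Proposition \ref{prop26}) on $\widetilde{\mathfrak{B}}$ introduces only $+$ signs, whence $\chi^{-1}(\widetilde{\mathfrak{B}})=\mathfrak{B}$; alternatively one specializes $t=1$, where the picture becomes Lusztig's geometric construction (cf.\ Section \ref{sec2.6}) and $\mathfrak{B}|_{t=1}$ is Lusztig's canonical basis, which consists of the simple perverse sheaves, so $\mathfrak{B}|_{t=1}$ being a genuine basis forces each sign to be $+1$ (and Corollary \ref{cor8} matches this with the deformed picture). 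I expect the main obstacle to be this last sign-fixing step --- the faithful transplantation of Lusztig's positivity argument to the mixed setting --- together with the rank comparison underlying injectivity; the Tate-twist bookkeeping in the bialgebra and isometry steps is routine.
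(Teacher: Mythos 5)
Your proposal is correct and follows essentially the same route as the paper: the paper delegates the first part verbatim to Lusztig's Theorem 13.2.11 (whose argument you spell out in the two-parameter setting, including the geometric Serre relations, the additive-generator surjectivity, and the rank comparison), and for the second part it likewise shows $\chi^{-1}(\widetilde{\mathfrak{B}})\subset\mathcal{B}$ via integrality, self-duality of weight-zero simples and almost-orthonormality, then fixes the signs by observing that the inductive $\pi_{i,n}$-construction is realized geometrically by the (suitably Tate-twisted, hence pure of weight zero) induction product, whose decomposition into weight-zero simples has non-negative coefficients. The only cosmetic difference is that the paper phrases the sign-fixing as "$\widetilde{\pi}_{i,n}$ preserves weights" while you phrase it as positivity of the decomposition multiplicities; these are the same purity fact.
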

\begin{proof}
  The proof of the first part is the same as the proof of Theorem 13.2.11 in \cite{Lusztigbook}. We now show the second part. By Property 8.1.10 (d) in \cite{Lusztigbook}, (\ref{eq67}) and (\ref{eq91}), we have $\chi^{-1}(\widetilde{\mathfrak{B}})\subset \mathcal{B}$.
Let $\widetilde{\mathcal{B}}_{i,n}=\chi(\mathcal{B}_{i,n})$, where $\mathcal{B}_{i,n}$ is defined in Section \ref{sec2.5}.
For any $b\in \mathfrak{f}$, we write $\widetilde{b}=\chi(b)$. Let $\widetilde{\pi}_{i,n}=\chi \pi_{i,n}\chi^{-1}$, where $\pi_{i,n}$ is defined in Proposition \ref{prop26}.

We claim that $\widetilde{\pi}_{i,n}: \widetilde{\mathcal{B}}_{i,0}\rightarrow \widetilde{\mathcal{B}}_{i,n}$ preserves weights.
In fact, for any $\widetilde{b}\in \widetilde{\mathcal{B}}_{i,0}$, by Theorem \ref{thm8} and Proposition \ref{prop2}, $\widetilde{\pi}_{i,n}(\widetilde{b})$ is a direct summand of $\mathfrak{Ind}^{ni+|b|}_{ni, |b|}(\mathfrak{L}_{ni}\boxtimes \widetilde{b}(-\frac{n[i,|b|]}{2}))$. By Proposition \ref{prop3}, ${\rm wt}(\widetilde{\pi}_{i,n}(\widetilde{b}))={\rm wt}(\widetilde{b})$.

By the construction of $\mathfrak{B}$, all complexes whose isomorphism classes are in $\chi^{-1}(\mathfrak{B})$ have weight 0. Theorem follows.
\end{proof}

From  Theorem 14.4.13 in \cite{Lusztigbook}, Theorem \ref{thm16} and Proposition \ref{prop23}, we have
\begin{thm}{\rm (Positivity)}\label{thm13}
If $\Omega_{ii}=1$ for all $i\in I$, then we have
  \begin{itemize}
    \item[(a)]
    $bb'=\sum\limits_{b''\in \mathfrak{B}, n\in \mathbb{Z}}c_{b,b',b'',n}v^nt^{[|b|,|b'|]}b''$
   such that $c_{b,b',b'',n}\in \mathbb{N}$ are zero except for finitely many $b''$ and $n$ for all $ b,b'\in \mathfrak{B}$; \vspace{6pt}

    \item[(b)] $r(b)=\sum\limits_{b',b''\in \mathfrak{B}, n\in \mathbb{Z}} d_{b,b',b'',n}v^nt^{-[|b'|,|b''|]}b'\otimes b''$
    such that $ d_{b,b',b'',n}\in \mathbb{N}$ are zero except for finitely many $b', b''$ and $n$ for all $b\in \mathfrak{B}$;\vspace{6pt}

    \item[(c)]  $(b,b')=\sum\limits_{n\in \mathbb{N}} g_{b,b',n}v^{-n}$
    such that $g_{b,b',n}\in \mathbb{N}$ for all $b,b'\in \mathfrak{B}$.
  \end{itemize}
\end{thm}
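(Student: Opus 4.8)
The plan is to deduce all three statements from Lusztig's positivity theorem for the one-parameter canonical basis by transporting it along the twisted bialgebra isomorphism $\phi\colon\mathfrak{f}\to(\mathbf{f}_{v,t},\odot,r_1)$ of Theorem~\ref{thm16}, while carefully tracking the powers of $t$ introduced by the twist. Note first that the hypothesis $\Omega_{ii}=1$ is exactly what is needed: it forces the associated Cartan datum $(I,\cdot)$ to be symmetric, and Theorem~14.4.13 in \cite{Lusztigbook} (positivity of the structure constants of $\mathbf{B}$ for multiplication, comultiplication, and the bilinear form) is proved geometrically and holds in the symmetric case. By Corollary~\ref{cor8} the map $\phi$ sends the canonical basis $\mathfrak{B}$ of $\mathfrak{f}$ onto Lusztig's canonical basis $\mathbf{B}$ of $\mathbf{f}_{v,t}$, and by Proposition~\ref{prop27} we have $\mathfrak{B}\subset\mathbf{f}\otimes 1$. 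Since $\phi$ is a graded algebra map with $\phi(\theta_i)=\theta_i$, it preserves $\mathbb{N}^I$-degrees, so $|\phi(b)|=|b|$ for $b\in\mathfrak{B}$; I shall write $\beta=\phi(b)$, $\beta'=\phi(b')$, and so on.

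For part (a): rewriting (\ref{eq89}) gives $bb'=t^{[|b|,|b'|]}(b*b')$, and (\ref{eq95}) gives $\phi(b*b')=\beta\circ\beta'$, hence $\phi(bb')=t^{[|b|,|b'|]}\,\beta\circ\beta'$. By Theorem~14.4.13(a) in \cite{Lusztigbook}, $\beta\circ\beta'=\sum_{\gamma\in\mathbf{B}}P_{\gamma}(v)\,\gamma$ with $P_\gamma(v)\in\mathbb{N}[v,v^{-1}]$, all but finitely many zero; applying $\phi^{-1}$ and expanding each $P_\gamma$ in powers of $v$ yields $bb'=t^{[|b|,|b'|]}\sum_{b'',n}c_{b,b',b'',n}v^nb''$ with $c_{b,b',b'',n}\in\mathbb{N}$, which is (a) (the exponent $[|b|,|b'|]$ is constant because the only $b''$ occurring have $|b''|=|b|+|b'|$). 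For part (b): Theorem~\ref{thm16} gives $(\phi\otimes\phi)\circ r=r_1\circ\phi$, so $(\phi\otimes\phi)(r(b))=r_1(\beta)$. By Theorem~14.4.13(b) in \cite{Lusztigbook}, $\widetilde r_1(\beta)=\sum_{\gamma',\gamma''\in\mathbf{B}}Q_{\gamma',\gamma''}(v)\,\gamma'\otimes\gamma''$ with $Q_{\gamma',\gamma''}(v)\in\mathbb{N}[v,v^{-1}]$, so by the definition (\ref{eq81}) of $r_1$, $r_1(\beta)=\sum_{\gamma',\gamma''}t^{-[|\gamma'|,|\gamma''|]}Q_{\gamma',\gamma''}(v)\,\gamma'\otimes\gamma''$; applying $\phi^{-1}\otimes\phi^{-1}$ and expanding each $Q$ in powers of $v$ gives (b), the factor $t^{-[|b'|,|b''|]}$ being precisely the one built into $r_1$.

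For part (c): since $\mathfrak{B}\subset\mathbf{f}\otimes 1$ by Proposition~\ref{prop27}, Proposition~\ref{prop23} gives $(b,b')=(b,b')^{*}$ for $b,b'\in\mathfrak{B}$, and by the remarks following Corollary~\ref{cor8} the restriction of $(\,,\,)^{*}$ to $\mathbf{f}\otimes 1$ is Lusztig's form $(\,,\,)_L$ transported by $\phi$; hence $(b,b')=(\beta,\beta')_L$. Theorem~14.4.13(c) in \cite{Lusztigbook} then gives $(\beta,\beta')_L\in\mathbb{N}[[v^{-1}]]$, which is exactly (c). The only genuinely delicate point in all of this is the bookkeeping of the $t$-exponents: one must verify that, because multiplication and comultiplication in $\mathfrak{f}$ and in $\mathbf{f}_{v,t}$ are $\mathbb{N}^I$-graded and $\phi$ respects the grading, the factors $t^{[|b|,|b'|]}$ and $t^{-[|b'|,|b''|]}$ are constant on each homogeneous component and therefore pull cleanly out of the sums, so that the actual positivity --- non-negativity of the integer coefficients $c$, $d$, $g$ --- is inherited unchanged from Lusztig's one-parameter theorem rather than being spoiled by a spurious $t$-dependence.
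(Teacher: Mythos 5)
Your proposal is correct and follows exactly the route the paper takes: the paper's entire proof is the citation of Lusztig's Theorem 14.4.13 together with Theorem~\ref{thm16} and Proposition~\ref{prop23}, i.e.\ transporting the one-parameter positivity along the twisted isomorphism $\phi$ and tracking the $t$-exponents coming from $*$ and $r_1$. You have simply written out the bookkeeping that the paper leaves implicit.
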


 The structure constants with respect to the canonical bases between $\mathfrak{f}$ and Lusztig's algebra $\mathbf{f}$ differ by a certain power of $t$ due to Theorem \ref{thm16}. In particular, the specialization of the structure constants of $\mathfrak{f}$ with respect to $\mathfrak{B}$ at $t=1$ gives the structure constants of $\mathbf{f}$ with respect to the canonical basis of $\mathbf{f}$.

\section{A categorification of ${}_{\mathfrak{A}}\mathfrak{f}$}
We shall give a categorification of ${}_{\mathfrak{A}}\mathfrak{f}$ for arbitrary $\Omega$ based on a categorification of the integral form  ${}_{\mathcal{A}}\mathbf{f}$ of Lusztig's algebra $\mathbf f$.
The followings are some examples of categorifications of ${}_{\mathcal{A}}\mathbf{f}$.

 \begin{ex}
The triple  $(\oplus_{\nu \in \mathbb{N}^I} \mathcal{Q}_{\nu},\Ind,\Res)$ constructed in
  ~\cite[Chapter 9]{Lusztigbook} is a categorification of ${}_{\mathcal{A}}\mathbf{f}$. Note that $\mathcal{Q}_{\nu}=\mathfrak{Q}_{\nu}^{\leq 0}\cap \mathfrak{Q}_{\nu}^{\geq 0}$.
 \end{ex}
 \begin{ex}
   The triple $(\oplus_{\nu\in \mathbb{N}^I}R_{\nu}$-$mod,\Ind,\Res)$ in \cite{Khovanov2009diag} is a categorification of ${}_{\mathcal{A}}\mathbf{f}$, where $R_{\nu}$-$mod$ is a category of certain projective modules.
 \end{ex}

We fix a categorification $(\oplus_{\nu\in \mathbb{N}^I}\mathcal{Q}_{\nu}, \Ind, \Res)$ of $\mathcal{A}$-bialgebra ${}_{\mathcal{A}}\mathbf{f}$.
Given any $n\in \mathbb{Z}$, for each $\nu\in \mathbb{N}^I$, let $\mathcal{Q}_{n,\nu}$ be a category which is identical to $\mathcal{Q}_{\nu}$. We identify $\mathcal{Q}_{\nu}$ with $\mathcal{Q}_{0,\nu}$. For a fix $\nu\in \mathbb{N}^I$, the category $\mathcal{Q}_{n,\nu}$ are all identical to each other for different $n\in \mathbb{Z}$.
 Denote by $\mathcal{T}: \mathcal{Q}_{n-1,\nu} \rightarrow \mathcal{Q}_{n,\nu}$ the identity functor. We also denote by $\mathcal{T}^n: \mathcal{Q}_{k,\nu} \rightarrow \mathcal{Q}_{n+k,\nu}$ the composition functor of $\mathcal{T}$.

Let $\iota_{\nu}: \mathcal{Q}_{\nu} \rightarrow \oplus_{\nu}\mathcal{Q}_{\nu}$ and $p_{\nu}: \oplus_{\nu}\mathcal{Q}_{\nu} \rightarrow\mathcal{Q}_{\nu}$ be the natural embedding and projection functor, respectively. For any $\nu=\tau+\omega$, denote $\Ind^{\nu,0}_{\tau, \omega}=p_{\nu}\circ \Ind \circ (\iota_{\tau}\times \iota_{\omega})$ and $\Res^{\nu,0}_{\tau, \omega}= (p_{\tau}\times p_{\omega})\circ\Res\circ \iota_{\nu}$. We define
\begin{equation}\label{eq43}
\Ind^{\nu,n,m}_{\tau, \omega}:  \mathcal{Q}_{n, \tau}\times  \mathcal{Q}_{m, \omega}\rightarrow  \mathcal{Q}_{n+m, \nu},\quad
(L,M) \mapsto \mathcal{T}^{n+m}\circ \Ind^{\nu,0}_{\tau, \omega}(\mathcal{T}^{-n}L, \mathcal{T}^{-m}M),\ {\rm and}
\end{equation}
\begin{equation}\label{eq44}
\Res^{\nu,n,m}_{\tau, \omega}:  \mathcal{Q}_{n+m, \nu} \rightarrow  \mathcal{Q}_{n, \tau}\times  \mathcal{Q}_{m, \omega},\quad
L \mapsto (\mathcal{T}^{n}\times \mathcal{T}^m) \circ \Res^{\nu,0}_{\tau, \omega}\circ \mathcal{T}^{-(n+m)}L.
\end{equation}

  Let $\mathfrak{Q}_{\nu}=\oplus_{n\in \mathbb{Z}} \mathcal{Q}_{n,\nu}$ and  $\mathfrak{Q}=\oplus_{\nu\in \mathbb{N}^I}\mathfrak{Q}_{\nu}$. Define a $\mathbb{Z}[t^{\pm 1}]$-action on the split Grothendieck group $K_0(\mathfrak{Q}_{\nu})$ of $\mathfrak{Q}_{\nu}$ by
  $$t\cdot[L]=[\mathcal{T}(L)],$$
  where $[L]$ is the isomorphism class of $L$.
  Since $K_0(\mathcal{Q}_{n,\nu})$ carries an $\mathcal{A}$-module structure for each pair $(n,\nu)$, the above action defines an $\mathfrak{A}$-module structure on $K_0(\mathfrak{Q}_{\nu})$.

 Given a functor $\mathfrak{F}$ between any two categories, we denote by $[\mathfrak{F}]$ the induced map between the corresponding Grothendieck groups.
  By (\ref{eq43}) and (\ref{eq44}),  we have
  \begin{equation}\label{eq45}
    [\Ind^{\nu,n,m}_{\tau, \omega}] \circ (t^n \times t^m)=t^{n+m} \circ [\Ind^{\nu,0}_{\tau, \omega}],\  {\rm and}\ (t^n \times t^m)\circ [\Res^{\nu,n,m}_{\tau, \omega}]=[\Res^{\nu,0}_{\tau, \omega}] \circ t^{n+m}.
  \end{equation}

  For any $\nu=\tau+\omega$, we define functors
  \begin{eqnarray}\label{eq60}
&\mathfrak{Ind}^{\nu,n,m}_{\tau, \omega}: \mathfrak{Q}_{\tau}\times \mathfrak{Q}_{\omega} \rightarrow  \mathfrak{Q}_{\nu},\quad
(L,M) \mapsto  \mathcal{T}^{[ \tau,\omega]}\circ \Ind^{\nu,n,m}_{\tau, \omega}(L,M),\ {\rm and} \nonumber
  \end{eqnarray}
\begin{eqnarray}\label{eq61}
&\mathfrak{Res}^{\nu,n,m}_{\tau, \omega}:\mathfrak{Q}_{\nu}  \rightarrow  \mathfrak{Q}_{\tau}\times \mathfrak{Q}_{\omega}, \quad
L \mapsto  \mathcal{T}^{-[ \tau,\omega]}\circ \Res^{\nu,n,m}_{\tau, \omega}L, \nonumber
  \end{eqnarray}
where [,] is defined in (\ref{eq48}). By assembling $\mathfrak{Ind}^{\nu,n,m}_{\tau, \omega}$ together, we have a functor $\mathfrak{Ind}: \mathfrak{Q}\otimes \mathfrak{Q}\rightarrow \mathfrak{Q}$. Similarly, we have a functor $\mathfrak{Res}=\oplus_{\tau+\omega=\nu}\mathfrak{Res}^{\nu,n,m}_{\tau, \omega}$.

  \begin{thm}\label{thm15}
  If $(\mathcal{Q}, \Ind, \Res)$ is a categorification of the $\mathcal{A}$-bialgebra ${}_{\mathcal{A}}\mathbf{f}$, then
$(\mathfrak{Q}, \mathfrak{Ind}, \mathfrak{Res})$ is a categorification of the $\mathfrak{A}$-bialgebra ${}_{\mathfrak{A}}\mathfrak{f}$.
  \end{thm}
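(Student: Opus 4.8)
The plan is to reduce the statement to the fixed categorification $(\mathcal{Q},\Ind,\Res)$ of ${}_{\mathcal{A}}\mathbf{f}$ together with the algebraic comparison of $\mathfrak{f}$ with $\mathbf{f}_{v,t}$ from Theorem \ref{thm16}. First I would check that the $\mathfrak{A}$-module $K_0(\mathfrak{Q})=\oplus_{\nu}K_0(\mathfrak{Q}_{\nu})$ is obtained from $K_0(\mathcal{Q})$ by free base change. Since $\mathcal{T}\colon\mathcal{Q}_{n-1,\nu}\to\mathcal{Q}_{n,\nu}$ is an equivalence (it is the identity functor between identical categories), $t$ acts invertibly and $K_0(\mathfrak{Q}_{\nu})=\oplus_{n\in\mathbb{Z}}K_0(\mathcal{Q}_{n,\nu})$ is free over $\mathbb{Z}[t^{\pm1}]$ on $K_0(\mathcal{Q}_{0,\nu})=K_0(\mathcal{Q}_{\nu})$; the class of an indecomposable object $\mathcal{T}^nL$ of $\mathfrak{Q}_{\nu}$ is $t^n[L]$. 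Combining this with the $\mathcal{A}$-module identification $K_0(\mathcal{Q})\cong{}_{\mathcal{A}}\mathbf{f}$ supplied by the hypothesis yields an $\mathfrak{A}$-module isomorphism $K_0(\mathfrak{Q})\cong{}_{\mathcal{A}}\mathbf{f}\otimes_{\mathcal{A}}\mathfrak{A}$, and the right-hand side is exactly ${}_{\mathfrak{A}}\mathfrak{f}$ (sitting inside $\mathbf{f}_{v,t}$) by the integral-form identity ${}_{\mathfrak{A}}\mathfrak{f}=({}_{\mathcal{A}}\mathbf{f}\otimes 1)\otimes_{\mathcal{A}}\mathfrak{A}$ used in the proof of Theorem \ref{thm9}.

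Next I would match the functors with the (twisted) bialgebra operations. By (\ref{eq45}) the map $[\Ind^{\nu,n,m}_{\tau,\omega}]$ is the $\mathbb{Z}[t^{\pm1}]$-linear extension of $[\Ind^{\nu,0}_{\tau,\omega}]$, so after base change it is the original multiplication $\circ$ of $\mathbf{f}_{v,t}$ on the $(\tau,\omega)$-graded piece; the extra shift $\mathcal{T}^{[\tau,\omega]}$ built into $\mathfrak{Ind}$ multiplies it by $t^{[\tau,\omega]}$, which is precisely the twisted product $\odot$ of (\ref{eq80}). Symmetrically, (\ref{eq45}) and the shift $\mathcal{T}^{-[\tau,\omega]}$ identify $[\mathfrak{Res}]$ with the twisted comultiplication $r_1$ of (\ref{eq81}), and the twisted multiplication (\ref{eq82}) on $K_0(\mathfrak{Q})\otimes K_0(\mathfrak{Q})$ is exactly the one with respect to which $r_1$ is an algebra homomorphism in Proposition \ref{prop25}. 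Hence $(K_0(\mathfrak{Q}),[\mathfrak{Ind}],[\mathfrak{Res}])\cong(\mathbf{f}_{v,t},\odot,r_1)$ as a twisted bialgebra, which by Theorem \ref{thm16} is $({}_{\mathfrak{A}}\mathfrak{f},\cdot,r)$; the coassociativity and compatibility axioms need not be re-verified, since they already hold in $\mathfrak{f}$ by Lemma \ref{lem7} and Section \ref{sec2.3}, and $r$ preserves ${}_{\mathfrak{A}}\mathfrak{f}$ by Lemma \ref{lem16} and multiplicativity of $r$.

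Finally I would identify the distinguished basis. Up to the $\mathcal{T}$-action (equivalently, up to the $\mathfrak{A}$-action) the indecomposable objects of $\mathfrak{Q}$ are the indecomposables of $\mathcal{Q}$ in the copies indexed by $n=0$; by hypothesis their classes form the canonical basis of ${}_{\mathcal{A}}\mathbf{f}$, which under $\phi^{-1}$ of Theorem \ref{thm16} is the canonical basis $\mathfrak{B}$ of $\mathfrak{f}$ by Corollary \ref{cor8}. Thus $(\mathfrak{Q},\mathfrak{Ind},\mathfrak{Res})$ categorifies the $\mathfrak{A}$-bialgebra ${}_{\mathfrak{A}}\mathfrak{f}$, with the classes of indecomposable objects corresponding to $\mathfrak{B}$.

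The step I expect to be the main obstacle is purely bookkeeping: keeping the several $t$-twists straight — the one hidden in the $\mathbb{Z}[t^{\pm1}]$-linearity of $\Ind^{\nu,n,m}_{\tau,\omega}$ and $\Res^{\nu,n,m}_{\tau,\omega}$ via (\ref{eq45}), the explicit shifts $\mathcal{T}^{\pm[\tau,\omega]}$ inserted in $\mathfrak{Ind}$ and $\mathfrak{Res}$, and the twist (\ref{eq82}) on the tensor square — and verifying that their combined effect on Grothendieck groups reproduces formulas (\ref{eq80})--(\ref{eq82}) and (\ref{eq81}) exactly, not merely up to an overall power of $t$. If one wants the categorification at the functorial level rather than after passing to $K_0$, one additionally imports the corresponding functor isomorphisms from the fixed categorification $(\mathcal{Q},\Ind,\Res)$ and twists them by the evident natural isomorphisms $\mathcal{T}^a\circ\mathcal{T}^b\cong\mathcal{T}^{a+b}$, which is routine.
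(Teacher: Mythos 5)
Your proposal is correct and follows essentially the same route as the paper: base-change the given categorification along $\mathcal{A}\subset\mathfrak{A}$ using the $\mathcal{T}$-action, and then absorb the shifts $\mathcal{T}^{\pm[\tau,\omega]}$ into the $2$-cocycle deformation relating $\mathfrak{f}$ to $\mathbf{f}_{v,t}$. The only (cosmetic) difference is that you phrase the deformation via Theorem \ref{thm16} and the pair $(\odot,r_1)$ on $\mathbf{f}_{v,t}$, whereas the paper first identifies $(K_0(\mathfrak{Q}),[\Ind],[\Res])$ with $({}_{\mathfrak{A}}\mathfrak{f},*,\widetilde r)$ via Proposition \ref{prop28} and then untwists at the last step; these are equivalent formulations of the same cocycle.
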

  \begin{proof} Recall that the pair $(*, \widetilde{r})$ defined in (\ref{eq89}) and (\ref{eq68}) gives a new bialgebra structure on ${}_{\mathfrak{A}}\mathfrak{f}$.

    Since $(\mathcal{Q}, \Ind, \Res)$ is a categorification of ${}_{\mathcal{A}}\mathbf{f}$, there exists a bialgebra isomorphism $\chi: {}_{\mathcal{A}}\mathbf{f} \rightarrow K_0(\mathcal{Q})$.
    Therefore, $\chi\otimes 1: {}_{\mathcal{A}}\mathbf{f}\otimes_{\mathcal{A}}\mathfrak{A} \rightarrow K_0(\mathcal{Q})\otimes_{\mathcal{A}}\mathfrak{A}$ is a bialgebra isomorphism.
    The bialgebra structure on ${}_{\mathcal{A}}\mathbf{f}\otimes_{\mathcal{A}}\mathfrak{A}$ (resp. $K_0(\mathcal{Q})\otimes_{\mathcal{A}}\mathfrak{A}$) can be obtained by field extension.

  Recall that there is a bialgebra isomorphism $\rho: {}_{\mathcal{A}}\mathbf{f}\otimes_{\mathcal{A}}\mathfrak{A}\rightarrow ({}_{\mathfrak{A}}\mathfrak{f}, *, \widetilde{r})$ (see Proposition \ref{prop28}).
  Consider the $\mathfrak{A}$-linear map
   \begin{equation*}
   \psi: K_0(\mathcal{Q})\otimes_{\mathcal{A}}\mathfrak{A}\rightarrow (K_0(\mathfrak{Q}), [\Ind], [\Res]),\quad
   L\otimes t^n \mapsto t^n\cdot L.
   \end{equation*}
 We want to show that $\psi$ is a bialgebra isomorphism. It is a bijective map as an $\mathfrak{A}$-linear map. So it is enough to show that it is a bialgebra homomorphism. Firstly, $\psi$ is an algebra homomorphism, since
    \begin{eqnarray*}
&\psi((L\otimes t^n)(M\otimes t^m))=\psi([\Ind](L,M)\otimes t^{n+m})
=t^{n+m}\cdot [\Ind](L,M)\\
&=[\Ind](t^nL, t^mM)
=[\Ind](\psi(L\times t^n),\psi(M\times t^m)).\hspace{70pt}
    \end{eqnarray*}
   Secondly, $\psi$ is a coalgebra homomorphism, because
    \begin{eqnarray*}
     ( \psi\times \psi)([\Res](L\otimes t^n)) = ( \psi\times \psi)([\Res](L)\otimes t^n)
     =(t^n \otimes 1)\cdot [\Res](L).
    \end{eqnarray*}
    On the other hand, we have
    $$[\Res](\psi(L\otimes t^n)) =[\Res](t^n \cdot L )=(t^n\otimes 1) \cdot [\Res](L).$$
   Therefore, we have the following diagram,
   $$\xymatrix{{}_{\mathcal{A}}\mathbf{f}\otimes _{\mathcal{A}}\mathfrak{A} \ar[r]^-{\chi\otimes 1} \ar[d]^{\rho} & K_0(\mathcal{Q})\otimes_{\mathcal{A}}\mathfrak{A} \ar[d]^-{\psi}\\
   ({}_{\mathfrak{A}}\mathfrak{f}, *, \widetilde{r}) \ar[r]^-{\widetilde{\chi}} & (K_0(\mathfrak{Q}), [\Ind],[\Res]),
   }$$
   where $\widetilde{\chi}=\psi\circ (\chi\otimes 1) \circ \rho^{-1}$. Since $\psi,\ \chi\otimes 1,\  \rho^{-1}$ are all bialgebra isomorphisms.
   This forces $\widetilde{\chi}$ to be also a bialgebra isomorphism.

   Lastly, we show that  $\widetilde{\chi}: {}_{\mathfrak{A}}\mathfrak{f} \rightarrow (K_0(\mathfrak{Q}), \mathfrak{Ind},\mathfrak{Res})$ is also a bialgebra isomorphism. As a $\mathfrak{A}$-linear map, $\widetilde{\chi}$ is a bijective map. So it is enough to show that $\widetilde{\chi}$ is a bialgebra homomorphism. $\widetilde{\chi}$ is an algebra homomorphism, since,  for any homogeneous elements $L,M\in {}_{\mathfrak{A}}\mathfrak{f}$, we have
   \begin{eqnarray*}
\widetilde{\chi}(LM)=t^{[|L|,|M|]}\widetilde{\chi}(L *M)
=t^{[|L|,|M|]}\Ind(\widetilde{\chi}(L),\widetilde{\chi}(M))
=\mathfrak{Ind}
   (\widetilde{\chi}(L),\widetilde{\chi}(M)).
   \end{eqnarray*}
  For any $L \in  {}_{\mathfrak{A}}\mathfrak{f}$, let us write $\widetilde{r}(L)=\sum L_1\otimes L_2$. Then we have,
  \begin{eqnarray*}
\widetilde{\chi}(r(L))=\widetilde{\chi}(t^{-[|L_1|,|L_2|]}L_1\otimes L_2)
=\sum t^{-[|L_1|,|L_2|]}(\widetilde{\chi}(L_1)\otimes \widetilde{\chi}(L_2))
=\mathfrak{Res}
   (\widetilde{\chi}(L)).\end{eqnarray*}
   This finishes the proof.
  \end{proof}


\end{document}